\documentclass[10pt]{amsart}
\pdfoutput=1
\usepackage{amssymb}
\usepackage{latexsym}
\usepackage{amscd} 
\usepackage{amsmath}
\usepackage{epsfig}
\usepackage{eepic}
\usepackage{mathrsfs}
\usepackage[margin=3.00cm]{geometry}
\usepackage{ifpdf}
\usepackage[textures,bookmarks=false]{hyperref}
\usepackage{bm}

\usepackage{color}
\usepackage{graphicx}

\usepackage{amsthm}
\newtheorem{theorem}{Theorem}[section]
\newtheorem{introtheorem}{Theorem}
\newtheorem*{introconjecture}{Conjecture}

\newtheorem{corollary}[theorem]{Corollary}

\newtheorem{lemma}[theorem]{Lemma}
\newtheorem{definition}[theorem]{Definition}
\newtheorem{proposition}[theorem]{Proposition}
\newtheorem{remark}[theorem]{Remark}


\def\CC{\mathbb{C}}
\def\C{\mathbb{C}}

\def\H{{\mathbb{H}}}

\def\R{\mathbb{R}}
\def\P{\mathbb{P}}
\def\Q{\mathbb{Q}}

\def\N{\mathbb{N}}
\def\L{\mathbb{L}}
\def\Z{\mathbb{Z}}
\def\D{\mathbb{D}}

\def\QS{\Q / \Z}
\def\RZ{\R / \Z}
\def\CDC{\CC \setminus \overline{\D}}
\def\CP2{{\mathbb{CP}^2}}
\def\Nnot{\mathbb{N}_0}




\def\cA{{\mathcal{A}}}
\def\cB{{\mathcal{B}}}

\def\cI{{\mathcal{I}}}

\def\cL{{\mathcal{L}}}
\def\cM{{\mathcal{M}}}

\def\cO{{\mathcal{O}}}

\def\cT{{\mathcal{T}}}
\def\cV{{\mathcal{V}}}
\def\cW{{\mathcal{W}}}

\def\z{\zeta}
\def\vphi{\varphi}
\def\la{\lambda}
\def\veta{\vartheta}


\def\znot{{\z_0}}

\def\zone{{\z_1}}



\def\zp{\z^\prime}

\def\omegap{{\omega^\prime}}

\def\xip{{\xi^\prime}}

\def\bla{\la}

\def\itin{\mathit{it}}


\def\laurent{\C((\tau))}
\def\laurentm{\C((\tau^{1/m}))}
\def\puiseux{\C \langle\langle \tau \rangle\rangle }
\def\ponel{\P^1_\L}
\def\ponec{\P^1_\C}

\def\lval{|}
\def\rval{|_\mathrm{o}}

\def\crit{\operatorname{Crit}}




\def\fatouvphi{F(\vphi)}

\def\juliavphi{J(\vphi)}
\def\hatjuliavphi{\widehat{J(\vphi)}}
\def\filledvphi{K(\vphi)}

\def\lv{\ell}  

\def\mdl{\operatorname{mod}}


\def\dend{E} 
\def\capdend{E^{\cap}}


\def\dpi{{P}} 
\def\dpill{{P_\lv}} 

\def\dd{{D}} 

\def\ddll{\dd_\lv}



\def\marked{\mathbf{M}}


\def\hl{{\H_\L}}
\def\pberl{\P^{1,an}_{\L}}







\def\dist{\operatorname{dist}}


\newcommand{\tang}[1]{\ensuremath{T_{#1} {\pberl}}}
\newcommand{\val}[1]{\ensuremath{|#1|_o}}
\newcommand{\tree}[1]{\ensuremath{\cT^{(#1)}}}
\newcommand{\treec}[1]{\ensuremath{\cT_c^{(#1)}}}
\newcommand{\trees}[1]{\ensuremath{\cT_s^{(#1)}}}
\newcommand{\arbol}[1]{\ensuremath{\cA^{(#1)}}}
\newcommand{\barbol}[1]{\ensuremath{\cB^{(#1)}}}
\newcommand{\barbolc}[1]{\ensuremath{\cB_c^{(#1)}}}
\newcommand{\barbols}[1]{\ensuremath{\cB_s^{(#1)}}}

\newcommand{\intervala}[1]{\ensuremath{\cI_\cA^{(#1)}}}
\newcommand{\intervalb}[1]{\ensuremath{\cI_\cB^{(#1)}}}
\newcommand{\intervalt}[2]{\ensuremath{\cI^{(#1)}_{#2}}}

\newcommand{\gam}[1]{\ensuremath{\Gamma^{(#1)}}}
\newcommand{\gamc}[1]{\ensuremath{\Gamma_c^{(#1)}}}
\newcommand{\why}[1]{\ensuremath{Y^{(#1)}}}
\newcommand{\lam}[1]{\ensuremath{\lambda^{(#1)}}}
\newcommand{\lamin}[1]{\ensuremath{\cL^{(#1)}}}

\newcommand{\cIpar}[1]{\ensuremath{\cI^{(#1)}}}
\newcommand{\cVpar}[1]{\ensuremath{\cV^{(#1)}}}
\newcommand{\cWpar}[1]{\ensuremath{\cW^{(#1)}}}
\newcommand{\cvpar}[1]{\ensuremath{v^{(#1)}}}
\newcommand{\cwpar}[1]{\ensuremath{w^{(#1)}}}



\begin{document}


\title{Puiseux series dynamics of Quadratic rational maps}
\author{Jan Kiwi}
\address{Facultad de Matem\'aticas,
 Pontificia Universidad Cat\'olica,
 Casilla 306, Correo 22, Santiago,
 Chile.}
\thanks{Partially supported by ``Fondecyt \#1060730'', Research Network on Low Dimensional Dynamics PBCT/CONICYT ACT17, Conicyt, Chile 
and  ECOS C07E01.}
\email{jkiwi@puc.cl}
\date{\today}
\keywords{Puiseux series, Julia sets}
\subjclass[2010]{Primary: 37P50; Secondary: 37F20}
\begin{abstract}
  We give a complete description for the dynamics of quadratic rational maps 
with coefficients in the completion of the field of formal Puiseux series.
\end{abstract}
\maketitle

\setcounter{tocdepth}{3}


\section*{Introduction}
\label{section|Introduction}
Consider 
 the completion $\L$ of the field of formal Puiseux series $\puiseux$ with complex coefficients. In this paper we study the dynamics  of quadratic rational maps with coefficients in $\L$.

The results 
obtained  are in the context of the recent developments in the study
of rational maps acting on the projective line over non-Archimedean
fields (e.g. see~\cite{BakerRumelyBook} and references therein). 
The main motivation  arises
from the interplay established in~\cite{KiwiFourier} between dynamics over $\L$
and the study of rational maps with complex coefficients acting on the Riemann sphere.

The main purpose of this paper is to give a full description for
 the dynamics of quadratic rational maps with coefficients in $\L$. 
The interplay with dynamics over $\C$ is also manifested here. More precisely, our description of the geometry and topological dynamics of a large class of maps is based on the construction of a ``model'' for the dynamics which finds its origin in the  study of the complex quadratic polynomial family 
$z \mapsto z^2 +c$.

  The natural space to study the 
dynamics of a rational map $\vphi \in \L(z)$ is the Berkovich projective line
$\pberl$ which contains the projective line $\ponel$ as a dense subset 
(see Section~\ref{section|BerkovichLine}). In contrast with $\ponel$, the Berkovich line $\pberl$ is a compact and arcwise connected topological space. Moreover,
$\pberl$ is a non-metric tree. 
As in complex dynamics, the Berkovich line is divided into two complementary sets: the Fatou and  Julia sets  (see~Section~\ref{section|DynamicsL}). 
The Julia set $\juliavphi$ is either a singleton or uncountable. If $J(\vphi)$ 
is a singleton, then we say that $\vphi$ is {\sf simple}.

Our description of the dynamical properties of quadratic rational maps $\vphi: \pberl \to \pberl$ is organized according to 
the character and number of periodic orbits in $\juliavphi \setminus \ponel$. 
We refer the reader to Section~\ref{sec:periodic.points} for the basics about periodic points.
A periodic point $x$  in $\juliavphi \setminus \ponel$ must be a topological branched point of $\pberl$. 
The action of $\vphi^p$ on the branches growing from a period $p$ point $x$ is encoded by a complex rational map
$T_x \vphi^p : \ponec \to \ponec$.

\begin{introtheorem}
  \label{ithr:periodic}
  Let $\vphi: \pberl \to \pberl$ be a quadratic rational map which is not simple. Then one and exactly one of the following holds:
  \begin{enumerate}
  \item $\juliavphi \setminus \ponel = \emptyset$.
  \item $\juliavphi \setminus \ponel$ is the grand orbit of an indifferent periodic orbit. 
  \item $\juliavphi \setminus \ponel$ is the grand orbit of one repelling periodic orbit $\cO$ of period $q \ge 2$. 
For all $x \in \cO$, the  map $T_x \vphi^q : \ponec \to \ponec$ is a quadratic rational map with a multiple fixed point. 
  \item $\juliavphi \setminus \ponel$ is the grand orbit of two distinct periodic orbits $\cO, \cO'$ of periods $q,  q' \ge 2$, respectively, where $q' > q$. 
The  map $T_x \vphi^q : \ponec \to \ponec$ is a  quadratic rational map with a multiple fixed point, for all $x \in \cO$.
The map  $T_x \vphi^{q'} : \ponec \to \ponec$ is (modulo choice of coordinates) a quadratic polynomial, for all $x \in \cO'$.
  \end{enumerate}
\end{introtheorem}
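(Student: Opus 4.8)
The plan is to reduce the classification to a finite list of explicit normal forms and then to read off the periodic tree points from each. Since $\juliavphi$ is not a singleton, $\vphi$ has no potential good reduction, so at its minimal resultant locus (a point or segment, with barycenter $\xi_0$) the reduction $\bar\vphi$ is a rational map over $\C$ of degree at most one. Conjugating by $\operatorname{PGL}_2(\L)$ to move $\xi_0$ to the Gauss point, I would record the residual data there: the reduction $\bar\vphi$ and its indeterminacy point, and the valuations locating the two critical points $c_1,c_2$ and their images $\vphi(c_1),\vphi(c_2)$ relative to $\xi_0$. Because $\vphi$ has degree two, this data takes only finitely many qualitative shapes, each a low-dimensional conjugacy family with an explicit normal form; equivalently, one partitions the moduli space of quadratic rational maps over $\L$, which is $\L^2$ via the symmetric functions of the fixed-point multipliers, according to the Newton data of those two coordinates. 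The remaining analysis then proceeds family by family.

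For each normal form I would describe the induced tree map $\vphi\colon\pberl\to\pberl$ directly, relying on the following. First, over $\L$ every rational map is tame and the Berkovich ramification locus of a degree-two map is a single geodesic arc $R=[c_1,c_2]$ joining its two critical points, so $\deg T_x\vphi=2$ for $x\in R$ and $\deg T_x\vphi=1$ otherwise. Second, the chain rule $T_x(\psi\circ\vphi)=T_{\vphi(x)}\psi\circ T_x\vphi$ and multiplicativity of degrees give, for a periodic point $x$ of period $p$, that $\deg T_x\vphi^p=2^{N}$ with $N=\#\{\,0\le i<p:\vphi^i(x)\in R\,\}$, so $x$ is repelling exactly when $N\ge 1$. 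Third, an expansion estimate shows that every point of $\hl$ off the grand orbit of the periodic branch points of $\juliavphi$ is wandering, so $\juliavphi\cap\hl$ is precisely that grand orbit. The last point already isolates case (1), the normal forms in which $R$ carries no periodic point: there $\juliavphi\setminus\ponel=\emptyset$ and $\juliavphi$ is a Cantor subset of $\ponel$ on which $\vphi$ acts as the full two-shift (the ``escape'' regime).

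It remains to count the periodic branch orbits inside $\juliavphi$ and to identify their return maps, which I would do within the explicit normal forms, with the arc $R$ organizing the picture. An indifferent periodic branch point has an orbit disjoint from $R$ and a M\"obius return map; it lies in $\juliavphi$ only when that M\"obius map has infinite order, since a finite-order return map would place it inside a Rivera domain, and in the relevant normal forms there is then exactly one such orbit: this is case (2). A repelling periodic branch point has an orbit meeting $R$; a Riemann--Hurwitz accounting combined with the normal form shows that such an orbit meets $R$ exactly once per period, so that $\deg T_x\vphi^p=2$ and $T_x\vphi^p$ is a quadratic rational map over $\C$, and that at most two such orbits can occur. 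Their isomorphism types follow from how the orbits sit along $R$: the ``outer'' orbit $\cO$, of period $q$, lies at the frontier of the escaping region, forcing the fixed direction of $T_x\vphi^q$ pointing toward $\xi_0$ to be neutral, so $T_x\vphi^q$ is a quadratic rational map with a multiple fixed point; the ``inner'' orbit $\cO'$, of period $q'>q$, occurs only when the post-critical configuration makes a critical direction periodic, forcing $T_x\vphi^{q'}$ to have a fixed critical point, i.e.\ to be a quadratic polynomial up to a choice of coordinates. Cases (3) and (4) are the one-orbit and two-orbit subcases, and mutual exclusivity of (1)--(4) is automatic, since the distinguishing features are conjugacy invariants read off from the normal form.

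I expect the genuinely hard step to be case (4). One must construct the combinatorial model that the body of the paper develops --- adapted trees and laminations over $\L$, matched with the dynamics of the complex quadratic polynomial $z\mapsto z^2+c$ --- both to \emph{produce} the deeper orbit $\cO'$ inside a renormalization region of $\cO$, with the right period $q'$ and return map, and to \emph{exclude} a third periodic branch orbit, which would require an extra passage through $R$. Transporting statements about the Julia set of $z\mapsto z^2+c$ to the tree map over $\L$ is the technical heart of this. By comparison, confirming that the indifferent M\"obius return map in case (2) is genuinely non-linearizable, and the Cantor structure of the Julia set in case (1), stay close to standard non-Archimedean dynamics.
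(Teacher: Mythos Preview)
Your proposal has a genuine gap at the step you treat as routine, and it misplaces the hard work.

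The claim that ``an expansion estimate shows that every point of $\hl$ off the grand orbit of the periodic branch points of $\juliavphi$ is wandering, so $\juliavphi\cap\hl$ is precisely that grand orbit'' is the heart of the theorem, not a preliminary. No uniform expansion is available when both critical points lie in the filled Julia set and the active critical orbit is combinatorially recurrent: each passage of a nested piece through the ramification segment $R=[c_1,c_2]$ halves the hyperbolic modulus gained, and whether the nested dynamical pieces around a given Julia point shrink to a classical point or to a closed Berkovich ball of positive diameter is exactly what distinguishes case~(3) from case~(4). The paper settles this with a Branner--Hubbard/Yoccoz tableau argument (Sections~\ref{sec:piec-interm-level}--\ref{sec:both-critical-points}): one introduces pieces at half-integer levels, marked grids recording critical encounters, and a weakened ``third rule,'' and proves that the telescoping sum $S(\z)$ of annular moduli diverges unless the marked grid of $\z$ is eventually periodic --- in which case the nested intersection is a periodic closed ball whose boundary point \emph{is} the second repelling orbit $\cO'$. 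A finite catalogue of normal forms cannot replace this analysis, because periodicity of the critical grid is not determined by finitely many valuative invariants of the coefficients; the ``qualitative shapes'' you propose to enumerate are in fact uncountably many dynamical types.

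Conversely, the step you flag as hardest --- bounding the number of repelling orbits in $\hl$ and identifying their tangent return maps --- is in the paper a short critical-point count (Lemma~\ref{basics.repelling} and Corollary~\ref{cor:3}), with no lamination input. For the orbit $\cO=\partial U_0$, the Rivera residue formula (Theorem~\ref{residue.formula}) applied to the fixed Rivera domain forces the direction of $U_0$ to be a multiple fixed point of $T_x\vphi^q$; that complex quadratic map therefore has a critical direction with infinite forward orbit, so a rigid critical point of $\vphi$ sits in an open Fatou ball with boundary on $\cO$. The same reasoning applied to any further repelling orbit $\cO'$ consumes the second critical point, so there are at most two. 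For $\cO'$ the periodic closed ball containing it is mapped onto itself, so the complementary direction is totally invariant under $T_x\vphi^{q'}$, which forces that map to be a polynomial. The $\alpha$-lamination machinery enters only in the proof of Theorem~\ref{ithr:dynamics}, not here.
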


Next we describe the dynamics and geometry of Fatou set components.
Given a periodic orbit $\cO$, the set of points $x \in \pberl$ such that $\vphi^n(x)$ converges to
$\cO$ is called the basin of $\cO$. 
According to Rivera-Letelier's classification~\cite{riveratesis03}, every fixed Fatou component $U$
is either a component of the basin of an attracting fixed point $z \in \ponel$ or, $U$ is a component of the interior of an affinoid such that 
$\vphi: U \to U$ is a bijection. The above classification extends naturally to periodic Fatou components.
In the latter case, we say that $U$ is a ``Rivera domain''. 
For a Rivera domain $U$, fixed under $\vphi$, 
which is not a ball, the convex hull $\cA_U$ of $\partial U$ in $\pberl$ 
may be regarded as an invariant finite simplicial tree. If $\cA_U$ contains at most one branched point and exactly one fixed point, then we say that $U$ is an {\sf starlike Rivera domain}. (See Section~\ref{sec:PeriodicFatou}.)

It should be mentioned that, for dynamics over $\L$, the basin of a periodic orbit  $\cO \subset \juliavphi \setminus \pberl$ is always non-emtpy and contains
uncountably many Fatou components, even if $\cO$ is a ``repelling'' periodic orbit.

\begin{introtheorem}
  \label{ithr:fatou}
  Let $\vphi: \pberl \to \pberl$ be a quadratic rational map which is not simple. If $U$ is a Fatou component,
then $U$ is eventually periodic or, $U$ is a ball or an annulus  contained in the basin of a periodic orbit $\cO \subset \juliavphi \setminus \ponel$. 
Moreover, one and exactly one of the following holds:
\begin{enumerate}
  \item $\juliavphi \setminus \ponel = \emptyset$ and the Fatou set consists of one totally invariant component which is the basin of an attracting fixed point $z \in \ponel$. 
  \item $\juliavphi \setminus \ponel$ is the grand orbit of an indifferent fixed point.
Every periodic Fatou component is a  fixed Rivera domain consisting of an open ball.
\item  $\juliavphi \setminus \ponel$ is the grand orbit of an indifferent periodic orbit of period at least $2$.
There exists a unique periodic Fatou component. This Fatou component is a starlike Rivera domain.
  \item $\juliavphi \setminus \ponel$ is the grand orbit of one or two repelling periodic orbit. There is exactly one fixed Fatou component. This fixed Fatou component is a starlike Rivera domain whose boundary is the repelling orbit of least period in  $\juliavphi \setminus \ponel$. 
Moreover, periodic Fatou components of higher periods are open balls.
  \end{enumerate}
\end{introtheorem}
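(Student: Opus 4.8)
The plan is to combine Theorem~\ref{ithr:periodic} with Rivera-Letelier's classification of periodic Fatou components~\cite{riveratesis03} and with the ``model'' for the dynamics near the periodic orbits contained in $\juliavphi \setminus \ponel$. First I would prove the opening dichotomy. If a Fatou component $U$ is not eventually periodic, then by compactness of $\pberl$ its forward orbit accumulates, and the accumulation set lies in $\juliavphi$; since $\vphi$ is not simple and, over $\L$, $\vphi$ is tame (the residue field $\C$ has characteristic $0$, so there is no wild recurrence), this set is forced to be a periodic orbit $\cO \subset \juliavphi \setminus \ponel$, consisting of type II branch points. Using the local normal form of $\vphi$ at such a point one checks that the iterates $\vphi^n(U)$ become small as they approach $\cO$ — a ball, or a $2$-to-$1$ preimage of a ball, hence an annulus — so $U$ itself is a ball or an annulus in the basin of $\cO$. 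This step rests only on tameness and the local structure at a type II point, and I expect it to be routine.

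Next I would split into the four cases of Theorem~\ref{ithr:periodic}. In case~(1), $\juliavphi \subset \ponel$, so every point of $\pberl \setminus \ponel$, in particular the Gauss point, lies in the Fatou set; its component $U$ is totally invariant, and by Rivera-Letelier's classification $U$ is either the basin of an attracting fixed point $z \in \ponel$ or a Rivera domain. In the latter case the convex hull of $\partial U$, being an invariant finite tree, would contain a periodic branch point, hence a periodic point of $\juliavphi \setminus \ponel$, which is excluded; so $U$ is the basin of an attracting fixed point, and non-simplicity forces $\juliavphi \ne \emptyset$ and the Fatou set to equal $U$. In cases~(2)--(4), Theorem~\ref{ithr:periodic} identifies $\juliavphi \setminus \ponel$ as the grand orbit of one or two orbits and pins down the conjugacy type of $T_x\vphi^p$ at the orbit of least period. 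Here I would invoke the model: on the subtree spanned by the relevant periodic orbit, $\vphi$ is (semi-)conjugate to a complex quadratic polynomial $z \mapsto z^2+c$, with $c$ determined by $T_x\vphi^p$ (a multiple fixed point corresponding to a parabolic/root parameter, and so on). Translating the complex facts — the interior of the filled Julia set of $z \mapsto z^2+c$ in the relevant regime is a single immediate basin with star-shaped Hubbard-type tree — then yields: in case~(2), each periodic Fatou component is an open ball fixed by $\vphi^p$ with no further branching, i.e.\ a fixed Rivera domain that is a ball; in case~(3), a single periodic Fatou component whose convex hull carries one branch point and one fixed point, i.e.\ a starlike Rivera domain; in case~(4), a single fixed Fatou component that is a starlike Rivera domain with boundary equal to the least-period repelling orbit, while every higher-period periodic component is an open ball by the first step.

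The main obstacle is the uniqueness and starlike structure of the distinguished periodic (resp.\ fixed) Fatou component in cases~(3) and~(4): one must show there is exactly one such component, that its boundary is precisely the least-period orbit, and that its convex hull carries at most one branch point and exactly one fixed point. This needs a careful analysis of how $\vphi$ maps the annuli and balls surrounding the periodic orbit — controlled by the complex model and by the fact that a quadratic map has only two critical points, so only one critical orbit is available to produce interior — together with the exclusion of additional periodic branch points via Theorem~\ref{ithr:periodic}. By comparison, the no-wandering-domains step and case~(1) are straightforward.
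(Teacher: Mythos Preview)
There is a genuine gap in the opening dichotomy, which you describe as ``routine''. You assert that if $U$ is not eventually periodic, then its forward orbit must accumulate on a periodic orbit $\cO \subset \juliavphi \setminus \ponel$, citing only compactness and tameness. This does not follow: compactness gives you accumulation points, but nothing forces them to lie in $\hl$ rather than in $\juliavphi \cap \ponel$, and nothing forces them to form a single periodic orbit. In fact, the paper flags exactly this no-wandering statement as the substantive content (cf.\ the Conjecture in the introduction and the reference to Benedetto's wandering domains over $\C_p$). In the paper the result is obtained by case-specific work: in the indifferent case via the explicit symbolic description of Proposition~\ref{pro:14}, and in the repelling case via the filled Julia set and the Branner--Hubbard/Yoccoz marked-grid analysis (Theorem~\ref{thr:yoccoz} and Corollary~\ref{lem:9}), which together yield Proposition~\ref{repelling.filled}. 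That machinery is what actually shows that a non-eventually-periodic Fatou component lies in the basin of some $\cO \subset \juliavphi \cap \hl$; tameness alone is not enough.

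Your plan for cases~(2)--(4) also inverts the logical order of the paper. The ``model'' you want to invoke --- the (semi-)conjugacy with the tree of an abstract $\alpha$-lamination coming from $z \mapsto z^2+c$ --- is the content of Theorem~\ref{ithr:dynamics} and Section~\ref{sec:TopModel}, which are established \emph{after} Theorem~\ref{ithr:fatou} and rely on the structural facts (starlike Rivera domain, filled Julia set decomposition) that you are trying to prove. In the paper, the starlike structure and uniqueness of the fixed Rivera domain come from the direct argument of Proposition~\ref{star}, which uses Rivera-Letelier's residue formula (Theorem~\ref{residue.formula}) and an elementary analysis of $\vphi$ acting on the skeleton $\cA_U$; no complex model is needed. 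Your case~(1) argument is essentially correct and matches Proposition~\ref{attracting.fixed.point}, but for the remaining cases you should replace the appeal to the complex model by the direct structural propositions (\ref{star}, \ref{pro:14}, \ref{basics.repelling}, \ref{repelling.filled}).
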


Let us agree that a Fatou component $U$ is called {\sf wandering} if $\vphi^n(U) \neq \vphi^m(U)$ for all non negative integers $n \neq m$ and $U$ is not contained in the basin of a periodic orbit. 
Benedetto~\cite{benedetto02} has shown that wandering Fatou components  do arise
in polynomial dynamics over the $p$-adic field $\C_p$. 
Our previous theorem rules out the existence of wandering Fatou components for quadratic rational maps over $\L$. 
For polynomials with coefficients in fields such as $\L$, 
Trucco~\cite{trucco09} has already ruled out the existence
of wandering Fatou components. Thus, there is some supporting evidence for the following.

\begin{introconjecture}
\label{con:wandering}
 {\em  Let $L$ be a field endowed with a non-Archimedean absolute value which is complete with respect to the induced metric.
  Assume that the residual field of $L$ has characteristic zero. Suppose that $L$ contains a field $F$ with discrete value group such that
the elements of $L$ which are algebraic over $F$ are dense in $L$. 
If $\varphi \in L(\z)$ is a rational map of degree at least $2$, then
every Fatou component of $\varphi$ is eventually periodic or is contained in the basin of a periodic orbit.  }
\end{introconjecture}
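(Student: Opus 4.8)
\textbf{A strategy toward Conjecture~\ref{con:wandering}.} The plan is to argue by contradiction: suppose $\varphi \in L(\z)$ has degree $\ge 2$ and a Fatou component $U$ that is wandering, i.e.\ the forward images $U_n$ (the component containing $\varphi^n(U)$) are pairwise distinct and none is contained in the basin of a periodic orbit. The goal is to extract from such a configuration enough arithmetic rigidity, coming from the hypotheses on $L$, to reach a contradiction. The first reduction is geometric, and is the polynomial-type argument of Benedetto~\cite{benedetto02} and Trucco~\cite{trucco09}: since the residual field has characteristic zero there is no wild ramification and $\varphi$ has only finitely many critical points, so all but finitely many of the maps $\varphi\colon U_n \to U_{n+1}$ are injective; combining this with Rivera-Letelier's description of periodic Fatou components (attracting basins in $\ponel$ and Rivera domains, both eventually periodic) and an analysis of how $\varphi$ acts on the skeletons of the $U_n$, one reduces to the case of a \emph{wandering ball}: an open ball $D$ such that every $D_n := \varphi^n(D)$ is an open ball, $\varphi$ is injective on each $D_n$, and the $D_n$ have pairwise disjoint forward orbits.

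Having reduced to a wandering ball, let $\xi_n \in \pberl$ be the point representing $D_n$, so that $\varphi(\xi_n) = \xi_{n+1}$, the $\xi_n$ are pairwise distinct, and --- since $\varphi$ is injective on $D_n$ --- the reduction $T_{\xi_n}\varphi \colon \ponec \to \ponec$ has degree one for every $n$. In particular, along the orbit $(\xi_n)$ the only geometric data are the diameters $\diam(D_n)$ and the positions of the $\xi_n$ relative to the critical orbit and the periodic points; the diameter ratios $\diam(D_{n+1})/\diam(D_n)$ lie in the value group $|L^\ast|$. Now I would bring in the arithmetic hypothesis: the coefficients of $\varphi$, a chosen basepoint of $D$, and the finitely many critical points can all be approximated arbitrarily well by elements algebraic over $F$, and any finitely generated subextension of $F$ inside $L$ algebraic over $F$ is again discretely valued (the residual field has characteristic zero, so such extensions are tame). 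One wants to conclude that, after passing to such a discretely valued field $K$, the convex hull $T$ in the Berkovich line over $K$ of the critical orbit together with the relevant periodic points is an invariant \emph{locally finite simplicial} subtree, with edge lengths in a discrete subgroup of $\R_{>0}$, and that on $T$ the map $\varphi$ acts, outside finitely many vertices, as a simplicial map multiplying edge lengths by elements of that discrete group.

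The endgame would then be combinatorial. The orbit $(\xi_n)$ is an infinite forward orbit in the locally finite simplicial tree $T$, and $\varphi$ acts on $T$ with only finitely many vertices of local degree $>1$ (the critical points). By the usual structure theory of such tree maps --- convergence to a fixed end or point, or preperiodicity --- the orbit $(\xi_n)$ either accumulates on a periodic point of $\juliavphi$, in which case cofinitely many $U_n$ lie in the basin of a periodic orbit, contradicting that $U$ is wandering, or else the $\xi_n$, hence the $D_n$, are eventually periodic, again a contradiction. For $\varphi$ of degree $2$ this is precisely the content of Theorem~\ref{ithr:fatou}, and for polynomials of arbitrary degree it is Trucco's theorem.

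The main obstacle is the reduction carried out in the second paragraph: passing from $\varphi$, defined over $L$, to a model defined over a discretely valued subfield \emph{while keeping track of the wandering orbit}. "Having a wandering Fatou component" is not known to be an open condition, so one cannot simply perturb the coefficients to algebraic ones and hope the wandering component survives. Instead one must show that the purely local data along the wandering orbit --- the sequence of diameters, the reductions $T_{\xi_n}\varphi$, and the combinatorial positions of the $\xi_n$ --- is already forced to live over a discretely valued field, and that discreteness of this data by itself precludes an infinite non-preperiodic orbit. Making this transfer rigorous in arbitrary degree, rather than the tree combinatorics of the third paragraph, is where the difficulty lies, and is presumably why the statement remains conjectural for general $L$.
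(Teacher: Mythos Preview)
The statement is a \emph{conjecture}; the paper does not prove it and offers no argument beyond the supporting evidence of Theorem~\ref{ithr:fatou} (quadratic case) and Trucco's polynomial result. Your write-up is explicitly a strategy, not a proof, and you correctly flag the decisive gap yourself: there is no known mechanism for transferring the wandering-component data from $\varphi$ over $L$ to a model over a discretely valued subfield $K$. Approximating the coefficients of $\varphi$ by $F$-algebraic elements changes the map, and ``has a wandering domain'' is not known to be stable under such perturbation; conversely, the diameters $\diam(D_n)$ and the points $\xi_n$ are determined by $\varphi$ and lie in $\lval L^\times\rval$ and $\pberl$, not a priori in any discrete subgroup or in $\P^{1,an}_K$. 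So the middle paragraph, as you say, is the whole problem.

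Two further comments. First, the paper's own proof in degree $2$ does \emph{not} proceed via any approximation or discreteness argument: it uses the explicit classification of non-rigid Julia periodic orbits (Proposition~\ref{classification}), the starlike structure of the fixed Rivera domain (Proposition~\ref{star}), and the Yoccoz--Branner--Hubbard tableau analysis of the filled Julia set (Section~\ref{sec:maps-with-repelling}), all of which exploit that there are only two critical points. So even the known cases do not validate the discreteness-transfer step you propose. Second, your endgame is also softer than it looks: an infinite forward orbit in a locally finite simplicial tree under a piecewise-expanding map can perfectly well escape to an end without accumulating on any periodic point, and in $\pberl$ such an end corresponds to a type~I or type~IV point; you would still need to argue that the wandering balls $D_n$ then lie in the basin of a periodic orbit, which is not automatic. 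In short, your outline is a reasonable heuristic for why one expects the conjecture to hold, but both the arithmetic reduction and the tree-dynamical conclusion would require genuinely new ideas.
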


\bigskip
Finally we describe the dynamics over the Julia set. For quadratic rational maps with a repelling periodic orbit in $\juliavphi \setminus \ponel$ we will construct a model for the dynamics over the convex hull $\widehat{\juliavphi}$
of $\juliavphi$. These models arise from ``abstract $\alpha$-laminations'' as discussed in Section~\ref{section|laminations}.
An abstract $\alpha$-lamination is an equivalence relation in $\RZ$ closely related to the landing pattern of dynamical external rays of complex quadratic polynomials and, they  are a slight generalization of the $\alpha$-laminations defined by McMullen in~\cite{McMullenBook1}. From an abstract $\alpha$-lamination $\bla$ we 
build an inverse system of finite simplicial trees whose limit $\cT^\infty (\bla)$ is naturally endowed with a dynamics $m_2: \cT^\infty (\bla) \to   \cT^\infty (\bla)$ inherited from multiplication by $2$ acting on $\RZ$.

\begin{introtheorem}
\label{ithr:dynamics}
Let $\vphi: \pberl \to \pberl$ be a quadratic rational map which is not simple.
Then exactly one of the following holds:
\begin{enumerate}
\item $\juliavphi \setminus \ponel = \emptyset$
and $\vphi: \juliavphi \rightarrow \juliavphi$ is topologically conjugate to the full shift on two symbols.
\item
$\juliavphi \setminus \ponel$ contains an indifferent periodic orbit 
and  $\vphi: \juliavphi \rightarrow \juliavphi$ is topologically conjugate to a subshift of finite type.

\item 
$\juliavphi \setminus \ponel$ contains a repelling periodic orbit 
and the convex hull $\widehat{\juliavphi}$ of $\juliavphi$ 
is totally invariant under $\vphi$. Let $U_0$ be the unique fixed Rivera domain.
\begin{enumerate}
\item If no critical point eventually maps into $U_0$, then $\vphi: \widehat{\juliavphi} \to  \widehat{\juliavphi}$
is topologically conjugate to the action of $m_2$ on the tree of an abstract $\alpha$-lamination. 

\item If a critical point eventually maps into $U_0$, then 
$\vphi: \widehat{\juliavphi}\to  \widehat{\juliavphi}$
is topologically semiconjugate to the action of $m_2$ on the  tree of an $\alpha$-lamination. The topological semiconjugacy restricts to a topological conjugacy on $\juliavphi$. 
\end{enumerate}
\end{enumerate}
\end{introtheorem}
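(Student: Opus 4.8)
The three alternatives are mutually exclusive and exhaustive by Theorem~\ref{ithr:periodic}: its cases~(1) and~(2) are cases~(1) and~(2) here, while its cases~(3) and~(4) merge into case~(3) here, so it remains only to identify the dynamics in each. In case~(1), Theorem~\ref{ithr:fatou}(1) gives that the Fatou set is the totally invariant immediate basin of an attracting fixed point $z\in\ponel$, so $\juliavphi$ is its complement, an uncountable compact set on which $\vphi$ is an open map of degree~$2$ with no critical points. I would fix a small invariant neighborhood of the basin, let $J_0,J_1$ be the two components of $\juliavphi$ cut out by the preimage of its closure, check that each $J_i$ maps homeomorphically onto $\juliavphi$, and conclude that the itinerary map $\juliavphi\to\{0,1\}^{\N}$ is a well-defined conjugacy to the full $2$-shift; injectivity follows from the topological exactness of $\vphi$ on its Julia set, which also shows $\juliavphi$ is a Cantor set.

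In case~(2), $\juliavphi\setminus\ponel$ is the countable grand orbit of an indifferent periodic orbit and, by Theorem~\ref{ithr:fatou}(2)--(3), the periodic Fatou components are either fixed balls or a single starlike Rivera domain. Choose a finite forward-invariant set $S\subset\juliavphi$ containing that periodic orbit, the fixed points, the critical points, and enough preimages; its convex hull $T$ is a finite simplicial tree with $\vphi(T)\subseteq T$, hence $T\subseteq\vphi^{-1}(T)$, and since $\vphi(\vphi^{-1}(T))=T$ each edge of $\vphi^{-1}(T)$ maps onto an edge-path of $T$. The components of $\pberl\setminus T$ meeting $\juliavphi$, pulled back under $\vphi$, then form a finite Markov partition whose transition combinatorics is recorded by this edge-map, so $\vphi|_{\juliavphi}$ is conjugate to the associated subshift of finite type via the itinerary map, injectivity again coming from topological exactness off the (topologically repelling) tree $T$.

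Case~(3) is the substantial one and is where the model of Section~\ref{section|laminations} enters. Total invariance of $\hatjuliavphi$ follows from Theorem~\ref{ithr:fatou}(4): the components of $\pberl\setminus\hatjuliavphi$ are balls belonging to the totally invariant Fatou set, permuted by $\vphi$ in a proper, degree-respecting way; and the same theorem supplies the unique fixed Rivera domain $U_0$, a starlike domain whose boundary is the repelling orbit $\cO$ of least period $q$. The plan is to attach to $\vphi$ an abstract $\alpha$-lamination $\bla$, read off the combinatorics of $\cO$ (and of $\cO'$, when present): the local model $T_x\vphi^q$ at $x\in\cO$ is a quadratic rational map with a multiple fixed point, and $T_x\vphi^{q'}$, when a second orbit exists, is a quadratic polynomial (Theorem~\ref{ithr:periodic}), and together with the cyclic order of the branches at $\cO$ and of the balls and annuli of the basin of $\cO$ accumulating on $\cO$ these data single out a finite set of angles in $\RZ$ whose closure under $m_2$ and its inverse is an equivalence relation satisfying the axioms of an abstract $\alpha$-lamination. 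With $\bla$ fixed one builds a morphism of inverse systems: filter $\hatjuliavphi$ by the finite invariant trees $T_n$, the convex hulls of $\vphi^{-n}(\cO)$, with the canonical retractions $T_{n+1}\to T_n$ as bonding maps, so that $\hatjuliavphi=\varprojlim T_n$, while $\cT^\infty(\bla)=\varprojlim\cT^{(n)}(\bla)$ by construction. One then produces inductively, level by level, simplicial maps $T_n\to\cT^{(n)}(\bla)$ commuting with the bonding maps and intertwining $\vphi$ with $m_2$ — this is exactly what the combinatorial definition of $\bla$ is arranged to make possible, the local models $T_x\vphi^q$ and $T_x\vphi^{q'}$ governing how edges split at each step — and passes to the limit to obtain a continuous surjection $\Phi\colon\hatjuliavphi\to\cT^\infty(\bla)$ with $\Phi\circ\vphi=m_2\circ\Phi$. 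In case~(3a) no critical point enters $U_0$, all finite-level maps are isomorphisms, and $\Phi$ is a homeomorphism; in case~(3b) a critical point eventually lands in $U_0$, forcing certain edges of $\cT^{(n)}(\bla)$ to be collapsed in $T_n$, so $\Phi$ is only a semiconjugacy, but every collapsed arc lies in $\hatjuliavphi\setminus\juliavphi$ and hence $\Phi$ restricts to a conjugacy on $\juliavphi$.

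The main obstacle is precisely the construction and verification in case~(3): producing the correct abstract $\alpha$-lamination $\bla$ from the local rational maps $T_x\vphi^q$, $T_x\vphi^{q'}$ and the branch combinatorics of $\cO$, and then proving that the inverse-limit map $\Phi$ is a homeomorphism in case~(3a) and has exactly the collapses dictated by the critical orbit in case~(3b). This draws on the full geometric analysis of the paper — the control of balls, annuli and Rivera domains in the basin of $\cO$, together with the McMullen-style pinched-disk/tree description of the associated complex quadratic polynomial — and the level-by-level equivariant matching of the two inverse systems, along with the injectivity and properness arguments on these non-metric trees, is the technical heart of the proof.
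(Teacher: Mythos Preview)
Cases~(1) and~(2) are fine modulo citation: Propositions~\ref{attracting.fixed.point} and~\ref{pro:14} already contain the full arguments, and injectivity of the itinerary maps there is proved via the hyperbolic-distance estimate of Lemma~\ref{fine.sequence}, not via an abstract topological-exactness principle (which would itself need proof in the Berkovich setting).

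The genuine gap is in case~(3), in how you propose to obtain $\bla$. The local models $T_x\vphi^q$, $T_x\vphi^{q'}$ and the cyclic branch order at $\cO$ determine only the rotation number $p/q$ --- hence only the limb --- not $\bla$ itself: every abstract $\alpha$-lamination in the $p/q$-limb shares the same fixed class $A_0$ and the same local picture at it. What singles out $\bla$ is the global orbit of the active critical point, and the paper (Lemma~\ref{lem:34} and the proof of Propositions~\ref{bothin} and~\ref{pro:1}) accordingly does not fix $\bla$ in advance. Rather, for each level $\ell$ it builds conjugacies $h_{\ell,\theta}\colon\barbol\ell\to\tree\ell(\theta)$ valid for \emph{all} $\theta$ in a shrinking family $\cvpar\ell_\cM\subset I_{p/q}$, where $\cvpar\ell_\cM$ is the vertex $h_{\ell,\theta}(\cvpar\ell)$ recording the image of the branch-value endpoint; a monotone limit $\theta_\infty$ of angles $\theta_n\in\cvpar{n}_\cM$ then yields $\bla$ as one of $\bla(\theta_\infty)$, $\bla^+(\theta_\infty)$, $\bla^-(\theta_\infty)$, and a diagonal subsequence produces compatible level maps. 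Your inverse-system scheme with $T_n=\arbol{n}$ is exactly right, but the lamination is an \emph{output} of this procedure, not an input read off from local data. Two smaller corrections: total invariance of $\hatjuliavphi$ is not a corollary of Theorem~\ref{ithr:fatou}(4) as you claim (components of $\pberl\setminus\hatjuliavphi$ are not Fatou components --- for instance $U_0$ meets $\hatjuliavphi$ along its skeleton), but is proved directly from $\arbol\ell\subset\arbol{\ell+1}$ in Lemma~\ref{lem:10}; and in~(3b) the collapsing runs the other way from what you wrote --- arcs of $\hatjuliavphi$ lying in preimages of $\widehat{\cO_0}\subset\cA_{U_0}$ are collapsed, and the quotient $\hatjuliavphi/{\sim}$ is then \emph{conjugate} to $\cT^\infty(\bla)$ for a critically prefixed $\bla$ (Proposition~\ref{pro:1}).
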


A more detailed description of the topological conjugacies mentioned in the above theorem are contained in Section~\ref{sec:WithoutRepelling} for cases (1) and (2) as well as in Section~\ref{sec:TopModel} for case (3).

\medskip
We briefly outline the organization of the paper. 

In Section~\ref{section|BerkovichLine} we summarize some basic facts about the field $\L$ and the Berkovich projective line $\pberl$. Section~\ref{section|DynamicsL} is devoted to the basics of iterations of rational maps on $\pberl$.

In Section~\ref{section|QuadraticDynamicsL} we establish two preliminary results about quadratic rational maps: 
a first rough classification according to their Julia periodic orbits in $\pberl \setminus \ponel$ (Proposition~\ref{classification}) and, the number and geometry of fixed Rivera domains is described in Proposition~\ref{star}.

Section~\ref{sec:WithoutRepelling} contains a detailed description of the Julia and Fatou dynamics for maps $\vphi$ without a repelling periodic orbit in 
$\juliavphi \setminus \ponel$. Namely, the case in which $\juliavphi \setminus \ponel = \emptyset$ is covered by Proposition~\ref{attracting.fixed.point}
and the case in which $\juliavphi \setminus \ponel$ is the grand orbit of an indifferent periodic orbit is covered by Proposition~\ref{pro:14}. Some examples are given at the end of this section. 

Section~\ref{sec:maps-with-repelling} is devoted to the more difficult case of maps possessing a repelling periodic orbit in $\juliavphi \setminus \ponel$. Here we study the geometry of these maps by introducing the ``filled Julia set'' and ``dynamical pieces''. We establish Proposition~\ref{repelling.filled} which is the last ingredient needed for the proofs of  Theorems~\ref{ithr:periodic} and~\ref{ithr:fatou}. These proofs are the content of Section~\ref{sec:proofs12}.

In Section~\ref{section|laminations} we introduce and discuss the basic properties of abstract $\alpha$-laminations. Then in Section~\ref{sec:TopModel} we discuss how these laminations are employed to describe the dynamics of quadratic rational maps possesing a repelling periodic orbit in $\juliavphi \setminus \ponel$. That is, we prove Propositions~\ref{bothin} and~\ref{pro:1} which are the last ingredient needed for the proof of Theorem~\ref{ithr:dynamics}.

\medskip
\noindent
{\bf Acknowledgements.} Preliminary versions of this work were the content of a series of lectures which I gave 
during the ``Session r\'sidentielle du CIRM autour de
la dynamique p-adique'' on June 2008 and at Kyoto University on March 2009, I would like to thank the audience for the many comments and suggestions as well as for their interest. I am grateful for related and valuable discussions, at different points of this work, with Laura De Marco, Charles Favre, Mary Rees, Juan Rivera-Letelier, Mitsu Shishikura, Eugenio Trucco and Jean Yves-Briend. The technical assistance of Martin Ugarte and Anibal Medina was of great help for this work, I would like to thank both of them for their generosity.



\section{Berkovich projective line over $\L$}
\label{section|BerkovichLine}
In this section, after introducing the field $\L$,  we summarize notations, definitions, and facts related to the Berkovich projective line over $\L$.
We must warn the reader that this is not a self contained  exposition.
We refer to~\cite{MonographBakerRumely,MonographBerkovich,BourbakiDucros,GazetteDucros,MonographRivera} and the references therein
for a more detailed account on Berkovich spaces. 

\subsection{The field of formal Puiseux series and extensions}
\label{sec:field}
The field of {\sf formal Laurent series} $\laurent$ with coefficients in $\C$ is naturally endowed with the valuation:
$$\lval \sum_{n \geq n_0} c_n \tau^n \rval = \exp(-\min \{ n \in \Z \mid c_n \neq 0 \}).$$
This valuation is non-Archimedean and restricts to the trivial valuation on $\C$.
An algebraic closure of $\laurent$ is the {\sf field of formal Puiseux series} $\puiseux$ (e.g. see~\cite{LibroCasas}). 
The field $\puiseux$ is  the injective limit of 
the fields $\laurentm$, where $m \in \N$ and the injective limit is taken with respect to the obvious inclusions.
The valuation $\lval \cdot \rval$ in $\laurent$ extends uniquely to $\puiseux$ (e.g. see~\cite{LibroCassels}). 
However, $\puiseux$ endowed with this valuation is not complete.
We let $\L$ be the field obtained as the completion of $\puiseux$. It follows that $\L$ is algebraically closed (e.g. see~\cite{LibroCassels}). 
Moreover, each element $\zeta$ of $\L$
can be represented by a series of the form:
$$\zeta = \sum_{n \geq 0} c_n \tau^{\lambda_n},$$
where $\lambda_n \in \Q$ and $\lambda_n \to +\infty$ as $n \to +\infty$. 
The value group $\lval \L^\times \rval$ of $\L$ is $\exp(\Q)$. 

\subsection{Balls and affinoids in $\L$ and in $\ponel$}
Basic geometric objects of $\L$ and/or $\ponel$ such as balls, annuli, and affinoids, will be relevant to our discussion.

\smallskip
Given $r > 0$ and $\z \in \L$ we let
$$B^-_r (\z) = \{\xi \in \L \mid  \lval \xi - \z \rval < r \},$$
and 
$$B_r (\z) = \{\xi \in \L \mid  \lval \xi - \z \rval \leq r \}.$$
If $ r \in \lval \L^\times \rval$, we say that $B^-_r (\z)$ 
is an {\sf open ball} in $\L$  and
 $B_r (\z)$ 
is a {\sf closed ball} in $\L$.
If $r \notin  \lval \L^\times \rval$, then $B^-_r (\z) = B_r (\z)$ is 
an {\sf irrational ball} in $\L$.
However, every ball is both open and closed in the topology of $\L$.
The closed unit ball $B_1(0)$ is a local ring $\mathfrak{O}_\L$, called the {\sf ring of integers of $\L$}, whose maximal
ideal is $\mathfrak{M}_\L = B^-_1(0)$. The {\sf residual field $\mathfrak{O}_\L / \mathfrak{M}_\L$ of $\L$}, usually denoted $\widetilde{\L}$, 
is canonically identified with $\C$.

Given two nested balls $B \subset B^\prime$ of radii $0< r < r^\prime$, respectively, 
we say that $A = B^\prime \setminus B$ is an {\sf annulus} in $\L$ with {\sf modulus} 
$$\mdl A = \log r^\prime - \log r.$$
The modulus of an annulus is preserved under affine transformations.

\medskip
As usual, we  regard $\L$ as a subset of $\ponel$ via the identification of $\z \in \L$ with $[\z:1] \in \ponel$.
Moreover, we let $\infty = [1:0]$. Thus, we may identify $\ponel$ with $\L \cup \{ \infty \}$. 

By definition, a subset of $\ponel$ is called an {\sf open
(resp. closed, irrational) ball} of $\ponel$ if it is an open
(resp. closed, irrational) ball contained in $\L$ or its complement is
a closed (resp. open, irrational) ball contained in $\L$.

A non-empty intersection $A$ of closed  balls of $\ponel$ is called
an {\sf affinoid of $\ponel$}. 
Projective transformations of $\ponel$ map an open (resp. closed, irrational) ball onto
a ball of the same type. Hence, the same holds for affinoids.
A subset $A^\prime$  of $\ponel$ which is the image under a projective transformation of an annulus $A$ of $\L$ is called an {\sf annulus}
of $\ponel$ and we let $\mdl A^\prime = \mdl A$.

\subsection{Berkovich projective line}

The aim of this section is to agree on some terminology and notation
regarding the Berkovich projective line $\pberl$ over $\L$.
 The Berkovich
projective line is a non-metric real tree (e.g., see
Favre-Jonsson~\cite{ValuativeTree}), and may be endowed with the strong or the weak
topology.  With respect to the weak topology, our default topology for $\pberl$,
the Berkovich projective line is compact, sequentially compact and Hausdorff but it is not metrizable (e.g. see~\cite{MonographBakerRumely,MonographRivera}).

\subsubsection{Points, balls, annuli, affinoids and basic open sets of $\pberl$}
\label{sec:PointsBallsAffinoids}
The classical projective line $\ponel$ is a
dense subset of $\pberl$. Points of $\ponel$ in the Berkovich projective line
$\pberl$ are called {\sf type I, or rigid points, or classical points}. Rigid points are endpoints
in the tree structure of $\pberl$.

A set $B \subset \pberl$ which is the closure in $\pberl$ of a closed
(resp. irrational) ball of $\ponel$ is called a {\sf closed
  (resp. closed irrational) ball of $\pberl$}.  Given a ball $B
\subset \pberl$ the topological boundary of $B$ in $\pberl$, denoted
by $\partial B$, is a singleton $\{ \xi \}$.  We say that $\xi$ is {\sf the
  point associated to $B$.}  Points associated to closed balls are
called {\sf type II or rational points} and are ramification points in
the tree structure.  Several closed balls may have the same associated point.
However, given a type II point $\xi$ there exists a unique
closed ball $B$ such that  $\xi$ is the point associated to
$B$ and $B \cap \ponel \subset \L$. The point associated to $\mathfrak{O}_\L$ is called the {\sf Gauss point}.
Points associated to irrational balls are called
{\sf type III or irrational points} and are regular points in the tree
structure.  Every point in $\pberl$ which is not of type I, II or III
is an endpoint in the tree structure and it is called a {\sf type IV
  or singular point}.

Other important geometric objects in $\pberl$ are described as
follows.   An {\sf open ball of $\pberl$} is the complement of a closed ball.
A connected component of the complement of finitely many points
is called a {\sf basic open sets of $\pberl$}.
An {\sf affinoid of $\pberl$} is a non-empty
intersection of finitely many Berkovich closed balls. 

Berkovich balls as well as affinoids and basic open sets are convex
and connected.  The collection of basic open sets form a basis
for the (weak) topology of $\pberl$.
The intersections of balls and affinoids  of $\pberl$ with the
classical projective line $\ponel$ are balls (of the same type) and affinoids  of $\ponel$, respectively. 
Given an affinoid or a basic open set $U$ the convex hull of $\partial U$ is a (finite) simplicial tree 
embeded in $\pberl$ called the {\sf skeleton of $U$} and denoted by $\cA_U$.
There is a canonical retraction $\pi_U : \pberl \to \cA_U$ that maps a point
$x \notin \cA_U$ to the boundary point of the connected component of $\pberl \setminus \cA_U$ which contains $x$.  

An {\sf annulus} $A$ of $\pberl$ is the intersection of two balls $B, B^\prime$ with
distinct boundary points such that $B \cup B^\prime  = \pberl$.  For any annulus $A
\subset \pberl$ we have that $A \cap \ponel$ is an annulus of
$\ponel$. The {\sf modulus of $A$} is, by definition, $\mdl (A \cap
\ponel)$.

\subsubsection{Set of directions  at points}
\label{sec:set-directions-at}
Following Favre and Jonsson~\cite{ValuativeTree}, given $\z \in \pberl$
we say that the set $T_\z \pberl$ formed by the connected components of $\pberl \setminus \{\z\}$
is the {\sf set of directions  at $\z$}.  
This definition coincides with the Favre-Jonsonn tree theoretical definition and with 
the intuition that each ``branch coming out'' of $\z$ corresponds to a direction at $\z$ (c.f.~\cite[Appendix B.6]{MonographBakerRumely}). 

At any given type I  or  IV point $\z$ there is only one
direction, namely  $\pberl \setminus \{\z\}$.  The directions at any
given type II or III point $\z$ are in one-to-one
correspondence with the open  balls with boundary point $\z$ (equivalently, with associated point $\z$).  
At a type III point $\z$, the set of directions
$T_\z \pberl$ consists of two directions which correspond
to the two open irrational balls whose boundary is $\{ \z \}$.  At a
type II point $\z$, the set of directions is isomorphic to
$\ponec$. That is, there exists a natural bijection  from
$T_\z \pberl$ onto $\ponec$ which is unique up to postcomposition by a
(complex) M\"oebius transformation.

For any $\z \in \pberl$, we denote by $D_\z : \pberl \setminus \{ \z \} \rightarrow \tang{\z}$
the map which assigns to a point $\xi$ its direction. 
When $\z$ is the Gauss point, the restriction of $D_\z$ to $\ponel$ is the usual reduction.
Thus, the direction at $\z$ containing a point $\xi \in \pberl \setminus \{ \z \}$ will be denoted
by $D_\z (\xi)$.
We will systematically abuse of notation by simultaneously regarding a direction $D$ at $\z$ as 
an element of $\tang\z$ and as a subset of $\pberl$.

\subsubsection{Hyperbolic space}
We say that $\H_\L :=\pberl \setminus \ponel$ is the {\sf hyperbolic space over $\L$.}  
That is, the points in $\hl$ are of types II, III or IV.
Hyperbolic space is endowed with a metric $\dist_\hl$ which is defined as follows.
Given two distinct type II or III points $\znot, \zone \in \hl$,
$$A(\znot,\zone)=D_\znot (\zone) \cap D_\zone (\znot)$$ is an annulus.
The {\sf hyperbolic distance between $\znot$ and $\zone$} is defined by
$$\dist_\hl (\znot,\zone) = \mdl A(\znot,\zone).$$
The  unique continuous extension of $\dist_\hl$  to $\hl \times \hl$
endows hyperbolic space with a metric which we also denote by $\dist_\hl$.
With this metric $\hl$ is a $\R$-tree.
The classical projective line $\ponel$ is the boundary at infinity of $\hl$.
For future reference we state and prove a simple fact.

\begin{lemma}
  \label{fine.sequence}
  Let $\{X_\ell \}_{\ell \in \N}$ be a decreasing sequence of closed subsets of
$\pberl$ such that $\partial X_\ell$ is a singleton contained in $\hl$
for all $\ell \in \N$.
If $$\sum_{\ell \in \N} \dist_\hl (\partial X_\ell, \partial  X_{\ell+1}) = \infty,$$
then 
$$X=\bigcap_{\ell \in \N} X_\ell$$
is a singleton contained in  $\ponel$.
\end{lemma}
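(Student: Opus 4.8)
The plan is to show that the intersection $X$ is nonempty, that it cannot contain two points, and that its unique point cannot lie in hyperbolic space $\hl$. Nonemptiness follows immediately from compactness of $\pberl$ together with sequential compactness: the $X_\ell$ are a decreasing sequence of nonempty closed subsets of the compact space $\pberl$, hence have nonempty intersection. (Each $X_\ell$ is nonempty because its boundary $\partial X_\ell$ is a nonempty singleton.) So the real content is that $X$ is a single type~I point.

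First I would observe that each $X_\ell$ is convex and connected in the tree $\pberl$. Indeed, a closed subset whose topological boundary is a single point $\xi_\ell := \partial X_\ell$ must be (the closure of) one of the connected components of $\pberl \setminus \{\xi_\ell\}$ — i.e.\ a closed ball of $\pberl$ with associated point $\xi_\ell$ — once one rules out $X_\ell = \pberl$ and $X_\ell = \{\xi_\ell\}$; the former is impossible since $\partial\pberl = \emptyset$, the latter since then $X_{\ell+1} \subseteq X_\ell$ forces $\partial X_{\ell+1} = \xi_\ell$ and the distance sum has a zero term, which is harmless, but in any case a point has empty boundary so that case does not occur either. Thus each $X_\ell$ is a closed ball and the $\xi_\ell$ lie in $\hl$ by hypothesis. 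Since $X_{\ell+1} \subseteq X_\ell$, the points $\xi_\ell$ lie on a monotone path: $\xi_{\ell+1}$ is separated from the "outside" of $X_\ell$ by $\xi_\ell$, so the arc $[\xi_\ell, \xi_{\ell+1}]$ is contained in $X_\ell \setminus \interior X_{\ell+1}$ up to endpoints, and consecutive arcs share only the point $\xi_{\ell+1}$. Hence the $\xi_\ell$ trace out a geodesic ray in the $\R$-tree $(\hl, \dist_\hl)$ whose consecutive segments have lengths $\dist_\hl(\xi_\ell, \xi_{\ell+1})$, and the total length of this ray is $\sum_\ell \dist_\hl(\xi_\ell,\xi_{\ell+1}) = \infty$ by hypothesis. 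Because $\ponel$ is exactly the boundary at infinity of the $\R$-tree $\hl$, a ray of infinite length converges to a unique point of $\ponel$; call it $x_0$.

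Next I would show $X = \{x_0\}$. For the inclusion $x_0 \in X$: the nested balls $X_\ell$ satisfy $\bigcap_\ell X_\ell = \bigcap_\ell \overline{B_\ell}$ where $B_\ell$ is the corresponding ball of $\ponel$, and the point $x_0$, being the limit of the $\xi_\ell$ in $\pberl$, lies in every $X_\ell$ (each $X_\ell$ is closed and contains the tail $\{\xi_k : k \ge \ell\}$). For uniqueness: suppose $y \in X$ with $y \ne x_0$. Then the arc $[x_0, y]$ has finite length if it lies in $\hl$, or else $y \in \ponel$. In the tree, since $y \in X_\ell$ for all $\ell$ and $x_0$ is the limit of the $\xi_\ell$ along the geodesic ray, the retraction $\pi$ onto the ray must send $y$ to a point at finite distance along the ray, say within the segment up to $\xi_N$; but then $y \notin X_{N+1}$ because $X_{N+1}$ lies strictly "beyond" $\xi_N$ along the ray — a contradiction. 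Equivalently: the diameter (in $\dist_\hl$, suitably interpreted on balls) of $X_\ell$ is controlled by the distance from $\xi_\ell$ to infinity along the ray, which tends to $0$ in the appropriate sense since the ray has traveled infinitely far; so any two points of $\bigcap X_\ell$ coincide. Finally, $x_0 \notin \hl$: this is immediate since $x_0$ was produced as the limit at infinity of an infinite-length ray, hence $x_0 \in \ponel$ by definition of $\ponel$ as $\partial_\infty \hl$.

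The main obstacle is making the "geodesic ray of infinite length converges to a point of $\ponel$, and nothing else survives the intersection" argument fully rigorous in the weak topology on $\pberl$ rather than in a metric. Concretely, one must be careful that: (i) the $\xi_\ell$ genuinely form a monotone sequence along a single ray (this uses $X_{\ell+1}\subseteq X_\ell$ and the tree structure, plus discarding any repetitions where $\xi_\ell = \xi_{\ell+1}$, which contribute $0$ to the sum and can be collapsed); (ii) a ray of infinite $\dist_\hl$-length has a well-defined endpoint in $\pberl$ — here one invokes that $\pberl$ is the completion of $\hl$ as an $\R$-tree together with its ends, and that infinite-length ends are precisely the type~I points (or at any rate, land in $\ponel$); and (iii) no point of $\hl$ can lie in all the $X_\ell$, which again is the statement that the ray's complement beyond $\xi_\ell$ shrinks to nothing, i.e.\ the ray is not "short." All three are standard facts about the $\R$-tree structure of Berkovich space, cited from~\cite{ValuativeTree} or~\cite{MonographBakerRumely,MonographRivera}, so the proof should be short once these are invoked; the only genuine work is organizing the monotonicity of the $\xi_\ell$ and the convexity of the $X_\ell$ cleanly.
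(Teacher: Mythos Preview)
Your overall strategy is the same as the paper's: exploit compactness for nonemptiness, then use the divergent sum of hyperbolic distances to force the intersection into $\ponel$ and show it is a single point. There is, however, one genuine gap and one place where the paper's argument is noticeably cleaner than yours.

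\textbf{The gap.} Your claim that each $X_\ell$ must be a closed ball is not correct. A closed subset of $\pberl$ whose boundary is the single point $\xi_\ell$ is the closure of a union of directions at $\xi_\ell$; when $\xi_\ell$ is a type~II point this union may comprise several (even infinitely many) directions, so $X_\ell$ need not be a ball. (Also, a singleton $\{\xi\}$ in $\pberl$ has boundary $\{\xi\}$, not empty boundary, so your exclusion of that case is misargued, though the case is indeed harmless for other reasons.) The paper sidesteps this by not analyzing $X_\ell$ directly: after discarding repetitions it sets $B_{\ell+1}$ to be the closed ball which is the complement of the direction at $\xi_{\ell+1}$ containing $\xi_\ell$. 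Since $\xi_\ell\notin\operatorname{int}X_{\ell+1}$ (it lies in $\partial X_\ell$ and $X_{\ell+1}\subset X_\ell$), one gets $X_{\ell+1}\subset B_{\ell+1}$, and these $B_{\ell+1}$ are honest nested closed balls. Your monotone-ray picture survives once you replace $X_\ell$ by these $B_\ell$.

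\textbf{The cleaner uniqueness step.} Rather than constructing the endpoint $x_0$ of the ray and then arguing no second point $y$ can survive via a retraction, the paper argues as follows: if $\z\in Y\cap\hl$ with $Y=\bigcap B_{\ell+1}$, then $\dist_\hl(\z,\xi_1)\ge \dist_\hl(\xi_{\ell+1},\xi_1)\to\infty$, a contradiction; hence $Y\subset\ponel$. But $Y$ is a nested intersection of closed balls, hence connected, while $\ponel$ is totally disconnected; therefore $Y$ is a singleton, and so is $X\subset Y$. This avoids having to invoke convergence of infinite rays to type~I endpoints and the somewhat delicate retraction argument you outline.
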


\begin{proof}
The compactness of $\pberl$ implies that $X$ is not empty. 
Without loss of generality assume that, $\partial X_\ell \neq \partial
X_{\ell+1}$ for all $\ell \in \N$.  Given $\ell \in \N$, let
$B_{\ell+1}$ be the  ball which is the complement of the
direction at $\partial X_{\ell+1}$ containing $\partial X_\ell$.  It
follows that $\partial B_{\ell+1} = \partial X_{\ell+1}$ and
$B_{\ell+1} \supset X_{\ell+1}$. Moreover, $Y= \cap_{\ell} B_{\ell+1}$
is contained in $\ponel$.  Otherwise, for any $\z \in Y \cap \hl$ we
would have that $\dist_\hl (\z, \partial B_1) > \dist_\hl (\partial
X_{\ell+1}, \partial X_1) \rightarrow \infty$ as $\ell \rightarrow
\infty$. Therefore, $Y$ ($ \supset X$) is a singleton, since $Y$ is connected and
$\ponel$ is totally disconnected.
\end{proof}


\subsection{Action of rational maps on $\pberl$}
The action of a rational map $\vphi \in \L (\z)$ on $\ponel$ extends
continuously to an action on $\pberl$.  The extended action, which we
also denote by $\vphi : \pberl \rightarrow \pberl$, preserves the
Berkovich type of the points.  Moreover, it is an open map. Furthermore, at any given
$\xi \in \pberl$, the map $\vphi : \pberl \rightarrow \pberl$ has a
well defined (local) degree $\deg_\xi \vphi$.  
The number of preimages of any point $\z \in \pberl$,
under $\vphi$ (counting multiplicities), coincides with the degree of $\vphi :
\ponel \rightarrow \ponel$. (See~\cite[sections~2.3, 2.4, 9.1]{MonographBakerRumely}).

Also, for any rational map $\vphi: \pberl \to \pberl$,
the preimage of an affinoid (resp. basic open set) $A$ is the finite union of affinoids (resp. basic open sets) $A_1, \dots, A_k$ and
$\vphi: A_j \to A$ has a well defined degree $d_j$ for all $j$.

\subsubsection{M\"obius transformations}
From the above, it follows that the 
group of linear fractional transformations $\operatorname{PSL}(2,\L)$
acts on $\pberl$. The stabilizer of the Gauss point is 
$\operatorname{PSL}(2, \mathfrak{O}_\L)$.  The action of $\operatorname{PSL}(2,\L)$ is transitive on
type II points (see~\cite[Section~2.3]{MonographBakerRumely}).

\subsubsection{The action on the space of directions}
\label{ends-s}
A rational map  $\vphi : \pberl \rightarrow \pberl$ also
acts on the set of directions. 
More precisely, according to Rivera~\cite{riveratesis03} (compare with~\cite[Section~9.3]{MonographBakerRumely}), given a direction $D$ at a type II or III point $\z$ the following
hold:

\begin{enumerate}
\item
There exists a closed ball $B \subset D$
such that $\vphi (D \setminus B_0)$ is an annulus 
for all closed balls $B_0$ for which  $D
\supset B_0 \supset B$.

\item
There exists $k \in \N$ such that $\vphi: D \setminus B
\rightarrow \vphi(D \setminus B)$ is $k$-to-one.

\item
$\vphi (D \setminus B)$ is contained in one direction $D^\prime$ at $\vphi(\z)$. Moreover,
$D' \setminus \vphi (D \setminus B)$ is a closed ball.

\item
$\vphi(D) = D^\prime$ or $\vphi (D) = \pberl$. In the former case we say that $D$ is a {\sf good direction}, in the latter 
we say that $D$ is a {\sf bad direction}.

\item
For all $\xi \in  D \setminus B_0$, 
$$\dist_\hl (\vphi(\z), \vphi(\xi)) = k \cdot \dist_\hl (\z, \xi).$$
\end{enumerate}

It follows that $T_\z \vphi: D \mapsto D^\prime$ defines the {\sf tangent map}
$T_\z \vphi : T_\z \pberl \rightarrow T_{\vphi(\z)} \pberl$.
We say that $k$ is {\sf the degree of $T_\z \vphi$ at $D$}, and denote 
this number by $\deg_D T_\z \vphi$.

\begin{enumerate}
\item[(6)]
There exists a number $n \geq 0$ such that
any point in $\pberl \setminus T_\z \vphi (D)$ has $n$ preimages
(counting multiplicities) in $D$ and any point in $T_\z \vphi (D)$ has
$n+ \deg_D T_\z \vphi$ preimages (counting multiplicities) in $D$.
Following Faber~\cite{FaberRamificationI}, $n$ is called the {\sf surplus multiplicity}. 

\item[(7)]
At a type III  point $\z$, the tangent map $T_\z \vphi$ is a bijection. The degree of $T_\z \vphi$ 
is the same in both directions and coincides with the degree of
$\vphi$ at $\z$

\item[(8)]
At a type II  point $\z$, the tangent map $T_\z \vphi : T_\z \pberl \rightarrow
T_{\vphi(z)} \pberl $ is a rational map (in the correponding $\ponec$--structures) of degree at least $1$.  The degree of $T_\z \vphi$ at $D$
as a rational map over $\C$ and the degree of $T_\z \vphi$ in the direction $D$
(as defined above) coincide.  Moreover, the local degree of $\vphi$ at
$\z$ coincides with the degree of $T_\z \vphi$ as a rational map over $\C$.
\end{enumerate}

It is not difficult to show that points $x \in \pberl$ such that $\deg_x \varphi \ge 2$ are contained in the convex hull of the rigid critical points. For a quadratic rational map, it follows that the segment joining the two rigid critical points form the set of points where $\vphi$ is not locally injective (see~Lemma~\ref{degree-one-l}). 

\subsubsection{Counting critical points}
Since the residual field of $\L$ has characteristic zero the degree of a map on a ball is closely related to the number of critical points in it.
(Compare with~\cite[Appendix A.10]{MonographBakerRumely} and~\cite{FaberRamificationI})

\begin{lemma}
\label{riemann-hurwitz}
  Let $B \subset \pberl$ be a ball and $\vphi$ a rational map in $\L(\z)$. 
If $\vphi(B)$ is a ball, then the number of (rigid) critical points of
$\vphi$ in $B \cap \ponel$ is (counting multiplicities) $\deg_{\partial B} \vphi - 1$.
\end{lemma}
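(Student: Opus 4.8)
The plan is to reduce the statement to a local count at the boundary point $\partial B$ by combining the preimage-counting properties of $\vphi$ on balls with the degree theory of the tangent map. Write $\xi = \partial B$, so $\xi \in \hl$ is a type II or III point, and let $d = \deg_{\partial B}\vphi$ be the local degree of $\vphi$ at $\xi$ in the direction given by $B$ (i.e.\ $d = \deg_{D}T_\xi\vphi$ where $D = D_\xi$ is the direction that, viewed as a subset of $\pberl$, equals $B$ minus its boundary point, together with possibly $\xi$ itself — more precisely $B$ is the closure of the direction at $\xi$ pointing into $B$). First I would use the facts recalled in Section~\ref{ends-s}: since $\vphi(B)$ is a ball, $\vphi$ restricted to the corresponding direction $D$ at $\xi$ satisfies $\vphi(D) = D'$ for the direction $D'$ at $\vphi(\xi)$ with $\vphi(B) = \overline{D'}$, so $D$ is a good direction, and item~(6) there gives a surplus multiplicity $n \ge 0$ such that every point of $D'$ has exactly $n + d$ preimages in $D$, counted with multiplicity, while every point of $\pberl \setminus D'$ has exactly $n$ preimages in $D$.

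Next I would pin down $n = 0$. Take a type I (rigid) point $w \in \ponel$ lying outside $\vphi(B) = \overline{D'}$; for instance $w$ can be chosen in the complementary ball of $\vphi(B)$. Its preimages under $\vphi:\ponel\to\ponel$ number $\deg\vphi$ with multiplicity; among these, the ones lying in $B\cap\ponel$ are exactly the preimages of $w$ lying in the direction $D$, which by item~(6) number $n$. But if $w \notin \vphi(B)$ then $w$ has no preimage inside the ball $B$ at all (a point mapping into $B$ would have image in $\vphi(B)$), so $n = 0$. Hence every point of $\vphi(B)$ has exactly $d$ preimages with multiplicity inside $B$, which is to say $\vphi:B\to\vphi(B)$ has degree $d$ in the sense of the preimage count. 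This identifies the ``local degree'' $\deg_{\partial B}\vphi$ with the covering degree of $\vphi$ on the ball.

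Finally I would apply Riemann--Hurwitz. The statement about the number of critical points with multiplicities being $d - 1$ is exactly Riemann--Hurwitz for the degree-$d$ map $\vphi: B \to \vphi(B)$ between balls, using that the residual field has characteristic zero so there is no wild ramification contribution and the only ramification over $\vphi(B)$ comes from the genuine (rigid) critical points inside $B$ — this is the content cited from~\cite[Appendix A.10]{MonographBakerRumely} and~\cite{FaberRamificationI}. Concretely, one can realize this by choosing coordinates so that $B = \mathfrak{O}_\L$ and $\vphi(B) = \mathfrak{O}_\L$, reduce $\vphi$ to a degree-$d$ rational map $\bar\vphi$ on the residual $\ponec$, and track critical points through the reduction: the classical Riemann--Hurwitz count $2d - 2$ for $\bar\vphi$ on $\ponec$ is not quite what is needed — rather one counts, over a generic point of $\vphi(B)$, the defect $\sum_{z\in\vphi^{-1}(w)\cap B}(\mathrm{mult}_z\vphi - 1) = d - 1$ once one knows the fiber has $d$ points with multiplicity, and then identifies this defect with the number of critical points in $B$ by the standard argument that critical points are where multiplicities exceed $1$. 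The main obstacle is making the reduction/Riemann--Hurwitz step precise in the non-Archimedean setting — ensuring that ``critical points of $\vphi$ in $B\cap\ponel$ counted with multiplicity'' coincides with the ramification defect of the finite map $\vphi: B\to\vphi(B)$, which is where characteristic zero of the residue field is essential and where one leans on the cited references rather than reproving the structure theory.
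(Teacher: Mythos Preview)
Your first two steps are fine, and in fact the paper itself compresses them into the single remark that $\deg_{\partial B}\vphi$ coincides with the degree of $\vphi:B\to\vphi(B)$. The divergence is in the final step: the paper does \emph{not} invoke Riemann--Hurwitz or the reduction map at all. Instead it changes coordinates so that $B\cap\ponel$ and $\vphi(B)\cap\ponel$ are balls about the origin (so $\vphi$ is given by a convergent power series on $B$ with $d$ zeros there), and then observes that in residue characteristic zero the Newton polygon of $\vphi'$ is the Newton polygon of $\vphi$ translated one unit to the left. This immediately gives that $\vphi'$ has exactly $d-1$ zeros in $B$. The argument is two lines and entirely self-contained.

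Your Riemann--Hurwitz route is viable in principle but your concrete attempt contains an error. You write that for a generic point $w\in\vphi(B)$ one has $\sum_{z\in\vphi^{-1}(w)\cap B}(\operatorname{mult}_z\vphi-1)=d-1$; this is false. For generic $w$ the fibre consists of $d$ simple preimages and that sum is zero. The quantity $d-1$ is the \emph{total} ramification $\sum_{z\in B}(\operatorname{mult}_z\vphi-1)$ summed over all of $B$, which is what Riemann--Hurwitz for a degree-$d$ map between disks actually gives. Also, reducing to a degree-$d$ rational map on $\ponec$ and quoting the $2d-2$ count is a detour; the relevant count is for a map between disks, not between spheres. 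If you want to salvage this route you should state Riemann--Hurwitz for analytic maps between Berkovich open balls directly (this is what the cited references supply) rather than passing through the reduction. The Newton polygon argument bypasses all of this.
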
 

\begin{proof}
Note that by the above discussion
$\deg_{\partial B}$ coincides with the degree of $\vphi: B \to \vphi(B)$.
  We may restrict to the case in which $B \cap \ponel$ and $\vphi(B) \cap \ponel$
are balls which contain the origin in $\L$. 
It follows that the Newton polygon (e.g. see~\cite{LibroCassels}) of $d\vphi/d\z$ is obtained from that
of $\vphi$ by translation towards the left. Thus,  substracting one to the number of zeros of $\vphi$ in $B\cap \ponel$ we 
obtain the number of zeros of $d\vphi/d\z$ in $\vphi$ in $B\cap \ponel$.
\end{proof}



\section{Rational dynamics over the Berkovich projective line}
\label{section|DynamicsL}
\subsection{The Julia and Fatou sets}
Let $\vphi : \pberl \rightarrow \pberl$ be a rational map of degree  $\deg \vphi \geq 2$. 
A point $\z \in \pberl$ belongs to the {\sf Julia set $\juliavphi$} if for all  neighborhoods  $U$ of
$\z$ we have that $\cup \vphi^n(U)$ omits at most two points. The complement of the Julia set
is the {\sf Fatou set $\fatouvphi$} (see~\cite[Section~10.5]{MonographBakerRumely}).

These definitions agree with the original ones by Hsia.
More precisely, $\z \in \ponel$ lies in the Julia set defined by Hsia~\cite{Hsia2000} if and only if $\z \in \juliavphi \cap \ponel$.

\subsection{Periodic points}
\label{sec:periodic.points}
Given a rational map $\vphi \in \L (\z)$ we say that $\znot \in
\pberl$ is {\sf periodic} if $\vphi^n (\znot) = \znot$ for some $n \geq 1$.
The minimal $n$ such that $\vphi^n (\znot) = \znot$ is called the
{\sf period} of $\znot$.  

When $\znot \in \ponel$ is a period $p$ rigid  point, the
{\sf multiplier} of $\znot$ is, by definition, $$\lambda = \frac{d\vphi^p}{d\z} 
(\znot) \in \L.$$  As usual the periodic point $\znot$ is called {\sf attracting
(resp. neutral or indifferent, repelling)} according to whether the multiplier
$\lambda$  belongs to $\mathfrak{M}_\L$ (resp. $\mathfrak{O}_\L \setminus \mathfrak{M}_\L$, $\L \setminus
\mathfrak{O}_\L$).  Type I repelling periodic points belong to the Julia set
and type I non-repelling periodic points belong to the Fatou set.
According to Benedetto~\cite{benedettothesis},
every rational map  $\vphi \in \L(\z)$ has at least one non-repelling fixed point in $\ponel$.

Following Rivera-Letelier, 
a non-rigid periodic point $\z \in \pberl$ of period $p$ is called {\sf repelling} 
if the local degree of $\vphi^p$ at $\z$ is at least $2$. Otherwise, is called {\sf neutral or 
indifferent}. 
Non-rigid repelling periodic points must be type II points. Also,
non-rigid periodic points which belong to the Julia set must be type II points (see~\cite{rivera03}).
The following result is the content of a personal communication 
with Juan Rivera-Letelier~\cite{Riv06Personal}.

\begin{theorem}[Rivera-Letelier]
   \label{CharacterizationJuliaPeriodicPointsTheorem}   
Let $\z$ be a  non-rigid period $p$ periodic point of a rational map $\vphi:\pberl \rightarrow \pberl$ of degree at least $2$.
The point $\z$ belongs to $\juliavphi$ if and only if one of the following hold:
\begin{enumerate}
\item $\deg_\z \varphi^p \geq 2$.
\item $\deg_\z \varphi^p =1$ and there exists a bad direction at $\z$ with infinite forward orbit
under $T_\z \vphi^p$.
\end{enumerate}
\end{theorem}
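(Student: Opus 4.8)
The plan is to prove both implications by analyzing the dynamics of $\vphi^p$ on a neighborhood of $\z$ through the combinatorics of the tangent map $T_\z\vphi^p$ and the surplus-multiplicity data recalled in Section~\ref{ends-s}. Throughout, replace $\vphi$ by $\vphi^p$ so that $\z$ is a fixed point; since $\z$ is non-rigid, it is a type~II point (the hypotheses force $\z\notin\ponel$, and type~III or IV periodic points cannot lie in the Julia set), so $T_\z\vphi$ is a rational self-map of $\ponec$ of degree $\deg_\z\vphi\ge 1$ fixing the distinguished point $D_0\in T_\z\pberl$ that corresponds to the fixed direction (the one whose associated ball is preserved).

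For the ``if'' direction, case~(1) is essentially standard: if $\deg_\z\vphi\ge 2$, then by item~(8) of Section~\ref{ends-s} $T_\z\vphi$ has degree $\ge 2$, and one shows $\z\in\juliavphi$ by exhibiting that $\vphi$ expands a neighborhood of $\z$. Concretely, $\z$ is a repelling fixed point in the sense of Rivera-Letelier, and Rivera-Letelier's theory (or~\cite[Section~10.5]{MonographBakerRumely}) already identifies non-rigid repelling periodic points as members of $\juliavphi$; I would cite this and sketch the mechanism: for a suitable basic open neighborhood $U$ of $\z$, the images $\vphi^n(U)$ eventually cover $\pberl$ minus at most the grand orbit of an exceptional point, because the local degree $\ge 2$ forces $\vphi$ to blow up small annuli around $\z$ linearly in the hyperbolic metric (item~(5)). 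For case~(2), the key point is that a bad direction $D$ at $\z$ satisfies $\vphi(D)=\pberl$ by item~(4), i.e. $D\setminus B$ already surjects onto a full complement of a ball and $D$ itself surjects onto all of $\pberl$; if moreover the forward $T_\z\vphi$-orbit of $D$ is infinite, then the directions $D, T_\z\vphi(D), T_\z\vphi^2(D),\dots$ are pairwise distinct, so any neighborhood $U$ of $\z$ — which must contain $D_0\setminus B'$ for all large enough balls $B'$ and hence, pushing forward, all the directions in the orbit of $D$ eventually — has $\bigcup_n \vphi^n(U)$ meeting infinitely many directions at $\z$; combined with $\vphi(D_k)=\pberl$ for the bad one among them, $\bigcup_n\vphi^n(U)$ omits no points of $\pberl$, so $\z\in\juliavphi$. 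I would make the ``$U$ eventually contains $D$ in its orbit'' step precise by noting that $U\cap T_\z\pberl$ is a Zariski-dense (in fact cofinite, if $U$ is a basic open set, or all of $\ponec$ if $U$ is connected and contains $\z$) subset of $\ponec$ invariant under forward iteration of $T_\z\vphi$, hence meets the infinite orbit of $D$.

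For the ``only if'' direction, assume neither (1) nor (2) holds; then $\deg_\z\vphi=1$ and every bad direction at $\z$ has finite forward orbit under $T_\z\vphi$. Since $\deg_\z\vphi=1$, $T_\z\vphi$ is a Möbius transformation of $\ponec$. I claim $\z$ lies in the Fatou set, which I would prove by producing an open neighborhood $U$ of $\z$ with $\bigcup_n\vphi^n(U)$ omitting at least three points — indeed I expect to show the orbit of a suitable $U$ stays inside a fixed affinoid, or at worst covers only finitely many directions at $\z$ plus controlled pieces. The structure is: a Möbius $T_\z\vphi$ has at most two fixed directions and at most finitely many periodic directions; a direction $D$ is bad only if $T_\z\vphi$ has a pole-type behavior there, and by the hypothesis each bad direction is preperiodic. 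For a good direction $D$ with $\vphi(D)=D'$ a single direction, $\vphi$ does not spread $D$ across all of $\pberl$. Choosing $U$ to be a small enough ball around $\z$ (the complement of $D_0$ for a large ball, i.e.\ $\pberl$ minus the fixed ball, intersected appropriately), one shows inductively using items~(1)--(3) that $\vphi^n(U)$ is contained in the union of the (finitely many) directions in the forward orbit of the directions meeting $U$, each truncated to avoid a ball — so the total image omits, at the very least, interior points of the complementary balls, giving more than two omitted points. The delicate book-keeping is that bad directions could a priori still appear along the way; but preperiodicity of every bad direction bounds how many distinct directions ever get hit, and a bad direction with finite orbit contributes $\pberl$ to the image only finitely often — still, even one occurrence would seem to ruin omission, so the real content is that when $\deg_\z\vphi=1$ and $D$ is bad with finite orbit, $D$ must in fact be a \emph{preimage} of a point whose further orbit is finite, and one checks that this cannot make $\bigcup\vphi^n(U)$ omit fewer than three points once $U$ is chosen disjoint from the (finite) bad-orbit locus. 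This last point — verifying that a neighborhood $U$ of $\z$ avoiding the finite forward orbits of the bad directions has $\bigcup_n\vphi^n(U)$ omitting a non-exceptional set — is where I expect the main obstacle to lie, and it is presumably handled by Rivera-Letelier's finer local classification of components of the Fatou set (attracting basins versus Rivera domains versus indifferent components) rather than by a bare-hands argument. I would organize the proof of this direction by first reducing to the three cases of the local fixed-point type of a degree-one non-repelling fixed point, and in each case exhibiting the Fatou neighborhood directly or citing~\cite{riveratesis03}.

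In summary: the ``if'' direction is a reasonably direct consequence of items~(4)--(5) of Section~\ref{ends-s} together with the fact that an infinite forward orbit of directions forces every neighborhood of $\z$ to be eventually spread over all of $\pberl$; the ``only if'' direction requires showing that in the complementary case one can trap a neighborhood, and there the subtlety of bad directions with finite orbit makes appeal to Rivera-Letelier's classification of Fatou components the cleanest route, so the hard part is the careful neighborhood-trapping bookkeeping rather than any single computation.
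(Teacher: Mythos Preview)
Your overall architecture is right, but you are missing the one observation that makes the whole proof short, and as a result your ``only if'' direction is much harder than it needs to be and your case~(2) of the ``if'' direction is not quite complete.

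The key point (which the paper states at the outset) is that when $\deg_\z\vphi^p=1$, the tangent map $T_\z\vphi^p$ is a \emph{bijection} of $\ponec$, so a direction has infinite forward orbit if and only if it has infinite \emph{backward} orbit, and has finite forward orbit if and only if it is \emph{periodic}. With this in hand, both directions become nearly immediate. For case~(2) of ``if'': since the bad direction $D$ has infinite backward orbit and any basic neighborhood $U$ of $\z$ contains all but finitely many directions, $U$ contains some $D'$ with $T_\z\vphi^n(D')=D$; then $\vphi^{n+1}(D')=\pberl$ (either $\vphi^n(D')=D$ and one more iterate surjects, or an earlier iterate is already bad), so $\vphi^{n+1}(U)=\pberl$. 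Your version tries to use that $U$ contains a \emph{forward} iterate $T_\z\vphi^k(D)$, but that iterate may well be a good direction, so $\vphi$ does not spread it over $\pberl$; the forward-orbit formulation does not close without the backward-orbit reformulation.

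For ``only if'': once you know every bad direction is periodic, the union of their orbits is a finite $T_\z\vphi$-invariant set of directions. Now simply remove a suitably chosen closed ball from each of these finitely many directions to produce a basic open neighborhood $U$ of $\z$ with $\vphi(U)=U$; this places $\z$ in the Fatou set directly. The ``suitably chosen'' is genuinely easy: on each periodic cycle of directions, item~(1) of Section~\ref{ends-s} lets you shrink balls so that $\vphi(D_i\setminus B_i)\subset D_{i+1}\setminus B_{i+1}$, and good directions outside these cycles map to good directions outside these cycles. There is no need to invoke the classification of Fatou components, to omit three points, or to do any case analysis on fixed-point types; the difficulty you anticipated (a bad direction forcing $\vphi(U)=\pberl$ once) disappears because you excise a ball from each such direction before iterating.
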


\begin{proof}
Since every non-rigid repelling periodic point belongs to $\juliavphi$ 
(see~\cite[Section~10.7]{MonographBakerRumely}),
we may assume that $p=1$ and  $\deg_\z \varphi =1$. Note that  $T_\z \vphi$ is a bijection. Thus a bad direction $D$
has infinite forward orbit  under $T_\z \vphi$  if and only if $D$  has infinite backward orbit  ($\{ D' \in \ponec \mid T_\z \vphi^n (D') =D \mbox{ for some } n\geq 0\}$).

In the case that there exists a bad direction $D$ at $\z$ which has infinite backward orbit,  every
neighborhood $U$ of $\z$ contains a direction $D'$ in the backward orbit of a bad direction. Hence, there exists $n \geq 0$ such that
$\pberl = \vphi^n (D') \subset \vphi^n (U)$. Therefore $\z \in \juliavphi$.

Now we consider the case in which  every bad direction has finite backward orbit.
That is, every bad direction is periodic.
Thus, after 
removing appropriate closed balls in the orbit of the bad directions, we obtain a neighborhood $U$
of $\z$ which is invariant under $\vphi$ (i.e. $\vphi(U)=U$). Hence, $\z \in \fatouvphi$.
\end{proof}

It follows that non-rigid indifferent periodic points may belong to the Julia set or the Fatou set.
However, indifferent periodic points in the Julia set must be of type II.

The {\sf basin of a periodic orbit $\cO$} is the interior of the set of points $\xi$ such that
the omega limit of $\xi$ is $\cO$. Non-rigid periodic orbits contained in the Julia set
always have non-empty basin. In fact,  
given a point $\z$ of type II  there are uncountably many directions $D$ at $\z$ such that its image
under $T_\z \vphi^n$ is a good direction, for all $n \ge 0$, since there are only finitely many bad directions. 
If $\z$ belongs to a periodic orbit $\cO$, say of period $p$, which lies
in $\juliavphi$, then for uncountably many directions $D$ we have that $\{ \vphi^{np}(D) \}$ are pairwise distinct directions at $\z$.
Thus, each of these directions is a component of the Fatou set contained in the basin of $\cO$.

\subsection{Periodic Fatou components}
\label{sec:PeriodicFatou}   
   A connected component $U$ of the Fatou set $\fatouvphi \subset
   \pberl$ is called a {\sf Fatou component}.  Since $\vphi$ is an open map, the image of a Fatou component
   is again a Fatou component.

If a Fatou component $U$ contains an attracting periodic point $\xi$,
then $U$ is a component of the basin of attraction
of the orbit of $\xi$. In this case, $U$ i\textsc{}s called the {\sf immediate basin of $\xi$}.

  We say that a period $p$ Fatou component $U$ is a {\sf Rivera domain}
 if $\vphi^p : U \rightarrow U$ is a bijection.

  \begin{theorem}[Rivera~\cite{riveratesis03}]
	Let $\vphi \in \L(\z)$ be a rational map of degree at least $2$. Then a
 periodic Fatou component is either the immediate basin of an attracting periodic point or
a Rivera domain.
  \end{theorem}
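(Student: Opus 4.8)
The plan is to reduce to a fixed component and then run a dichotomy on the degree of $\vphi$ there. Replacing $\vphi$ by $\vphi^p$ (still a rational map of degree $\ge 2$), we may assume that $U$ is a fixed Fatou component. First I would record that $\vphi(U)=U$: since $\vphi$ is open, $\vphi(U)$ is open and connected, and it lies in $\fatouvphi$ by complete invariance of $\juliavphi$, so it is contained in a single Fatou component; as $\vphi(U)\subseteq U$, that component is $U$ itself. Next, $\vphi|_U\colon U\to U$ is proper: if $x_n\in U$ with $\vphi(x_n)$ in a compact $K\subseteq U$ and $x_n\to x\in\overline U$, then $\vphi(x)\in K\subseteq U$, which forces $x\in U$ because $\partial U\subseteq\juliavphi$ is invariant. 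Hence $\sum_{x\in\vphi^{-1}(y)\cap U}\deg_x\vphi$ is finite and, by a standard shrinking-neighbourhood argument, locally constant on the connected set $U$; call this constant degree $d_U\ge 1$.

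If $d_U=1$, then $\vphi|_U$ is also locally injective — a point of local degree $\ge 2$ would raise $d_U$ — and a locally injective proper self-map of degree one is a homeomorphism; thus $\vphi\colon U\to U$ is a bijection and $U$ is a Rivera domain.

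If $d_U\ge 2$, I claim $U$ contains an attracting fixed point and coincides with its immediate basin. Were $\vphi|_U$ locally injective, then (since $d_U\ge 2$) some $y\in U$ would have two distinct preimages $x_1\ne x_2$ in $U$; as connected subsets of the tree $\pberl$ are convex, the arc $[x_1,x_2]$ lies in $U$, and $\vphi$ restricted to it would be a locally injective loop at $y$, hence constant — a contradiction. So there is $c\in U$ with $\deg_c\vphi\ge 2$. The heart of the argument is then to build a closed ball $B\subseteq U$ whose boundary $\partial B$ is a point of $U$ and with $\vphi(B)$ a ball satisfying $\vphi(B)\subsetneq B$ and $\partial\vphi(B)\ne\partial B$: one starts from a small ball around $c$, where $\vphi$ already has degree $\ge 2$ onto a ball image, and uses that the orbit of $c$ stays in $U\subseteq\fatouvphi$ — so the iterates $\{\vphi^n\}$ are normal on $U$ — to trap the critical orbit inside such a self-contracting ball. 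Granting this $B$, the balls $\vphi^n(B)$ form a strictly decreasing chain with $\vphi(\partial\,\vphi^{n}(B))=\partial\,\vphi^{n+1}(B)$, and by the description of the action on directions $\dist_\hl(\partial\,\vphi^{n}(B),\partial\,\vphi^{n+1}(B))$ equals the tangent degree along the relevant direction times $\dist_\hl(\partial\,\vphi^{n-1}(B),\partial\,\vphi^{n}(B))$, hence is bounded below by the positive constant $\dist_\hl(\partial B,\partial\vphi(B))$; in particular the series of these distances diverges, so Lemma~\ref{fine.sequence} gives $\bigcap_n\vphi^n(B)=\{\omega\}$ with $\omega\in\ponel$. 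Continuity of $\vphi$ on this decreasing family of compacta yields $\vphi(\omega)=\omega$, and since $\vphi$ maps the neighbourhood $B$ of $\omega$ strictly inside itself, the multiplier at $\omega$ has absolute value $<1$ (a rigid fixed point with multiplier of absolute value $\ge 1$ acts, in residue characteristic zero, as a local isometry or expansion on small balls). Thus $\omega$ is attracting, $B$ lies in its basin, and the immediate basin of $\omega$ — a single Fatou component — equals the component $U$ containing $B$.

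The two alternatives are in fact exclusive, since the immediate basin of an attracting fixed point contains a critical point and hence has $d_U\ge 2$. The main obstacle is the construction of the self-contracting ball $B$ when $d_U\ge 2$: converting local degree $\ge 2$ at $c$ together with normality of $\{\vphi^n\}$ on $U$ into an honest ball mapped strictly inside itself. This is precisely the point where positive residue characteristic would let wandering domains intervene; over $\L$ one exploits that the residue field has characteristic zero — so that $\vphi$ is an isometry on small balls away from ramification — to force the critical orbit to be eventually captured.
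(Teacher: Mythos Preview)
The paper does not prove this theorem: it is stated with attribution to Rivera's thesis~\cite{riveratesis03} and no argument is given in the text. So there is nothing in the paper to compare your attempt against.

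On its own merits, your outline is reasonable in the $d_U=1$ case (surjectivity follows since $\vphi$ is open and proper, hence has clopen image in the connected set $U$). The genuine gap is exactly where you flag it: producing a closed ball $B\subset U$ with $\vphi(B)\subsetneq B$ and $\partial\vphi(B)\neq\partial B$ when $d_U\ge 2$. Your appeal to ``normality of $\{\vphi^n\}$ on $U$'' is not rigorous in this setting---the Berkovich Fatou set is defined through the omitting property (Section~\ref{section|DynamicsL}), not via an equicontinuity or normal-families criterion, so there is no ready-made compactness mechanism to ``trap the critical orbit.'' This construction is precisely the substantive content of Rivera's classification; in his work it goes through the dichotomy between the \emph{domain of quasi-periodicity} and its complement, together with a structural analysis of the action on affinoid skeleta. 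Without that, your proposal remains an outline with the main step missing.

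A secondary issue: even granting $B$, your distance estimate needs more care. Item~(5) in Section~\ref{ends-s} gives $\dist_\hl(\vphi(\z),\vphi(\xi))=k\cdot\dist_\hl(\z,\xi)$ only for $\xi$ in an annular neighbourhood of $\z$, not along an arbitrary segment, and the map can fold. What is true is that $\vphi$ is non-contracting along segments in $\hl$ that map injectively, which suffices here since $\vphi$ takes the ball $\vphi^{n-1}(B)$ onto the ball $\vphi^{n}(B)$ and hence maps $[\partial\vphi^{n-1}(B),\partial\vphi^{n}(B)]$ monotonically onto $[\partial\vphi^{n}(B),\partial\vphi^{n+1}(B)]$; so the distances are non-decreasing and Lemma~\ref{fine.sequence} applies. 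Alternatively, once $\vphi(B)\subsetneq B$ with distinct boundary points, a direct non-Archimedean Schwarz-lemma argument already yields an attracting rigid fixed point in $\bigcap_n\vphi^n(B)$ without invoking Lemma~\ref{fine.sequence}.
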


   \begin{theorem}[Rivera~\cite{riveratesis03}]
  \label{rivera.domain}     
	Let $\vphi \in \L(\z)$ be a rational map of degree at least $2$.
If $U$ is a Rivera domain, then $U$ is a component of the interior of an affinoid and every point in $\partial U$ is a
periodic orbit in $\juliavphi$.

	If $U$ is a component of an immediate basin of an attracting
        periodic orbit, then $U$ is an open ball or $\partial U$ is a Cantor set. 
\end{theorem}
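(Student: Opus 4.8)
\emph{Proof proposal.} The plan is to pass to the fixed case and then read off the structure of $\partial U$ in each of the two regimes. Since $\vphi^p$ has the same Julia and Fatou sets as $\vphi$, I may assume $p=1$, so that $\vphi(U)=U$. Because $\vphi$ is continuous and open, $\vphi(\overline U)=\overline U$; and since $\partial U=\overline U\setminus U$ is contained in the totally invariant set $\juliavphi$, one gets $\vphi(\partial U)\subseteq\partial U$. Pulling a sequence of points of $U$ converging to a boundary point back through $\vphi|_U$ and using compactness of $\pberl$ moreover yields $\vphi(\partial U)=\partial U$; this uses only that $\vphi|_U$ is surjective, so it applies in both cases of the theorem. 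Everything then comes down to describing $\partial U$.

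Suppose first that $U$ is a Rivera domain, so that $\vphi|_U$ is a bijection and hence $\deg_x\vphi=1$ for every $x\in U$. By item~(5) of Section~\ref{ends-s}, along any direction meeting $U$ the map $\vphi$ multiplies $\dist_\hl$ by the local degree, which here is $1$; thus $\vphi$ restricts to an isometric bijection of the $\R$-tree $U\cap\hl$. The key step is to deduce that $\partial U$ is finite. I would distinguish two cases according to whether this isometry fixes a point of $U\cap\hl$: if it does, that point is an indifferent fixed point of $\vphi$, around which a local analysis — using that $\vphi$ has only finitely many bad directions together with Theorem~\ref{CharacterizationJuliaPeriodicPointsTheorem} to identify the directions that lead into $\juliavphi$ — confines $\partial U$ to finitely many ends of $U\cap\hl$; if it does not, the same conclusion is obtained by working along the invariant axis the isometry determines. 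Granting that $\partial U=\{\xi_1,\dots,\xi_k\}$ is finite, $U$ is a connected component of $\bigcap_i D_i$, where $D_i\in T_{\xi_i}\pberl$ is the direction at $\xi_i$ containing $U$, hence a component of the interior of the affinoid $\bigcap_i\overline{D_i}$; moreover $\vphi|_{\partial U}$ is a surjection of a finite set onto itself, hence a bijection, so every $\xi_i$ is periodic. As $\partial U\subseteq\juliavphi$, this is the first assertion.

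Next suppose $U$ is a component of the immediate basin of an attracting periodic point; after the reduction above and a M\"obius change of coordinates this is an attracting fixed point $z_0\in\L$ whose multiplier has norm $<1$ (a non-rigid periodic point is never attracting). By Lemma~\ref{riemann-hurwitz} there is no critical point near $z_0$, so for small enough $r\in\lval\L^\times\rval$ the open ball $V_0:=B^-_r(z_0)$ satisfies $\vphi(V_0)\subseteq V_0$ with $\vphi|_{V_0}$ injective; hence $V_0\subseteq U$ and $U=\bigcup_{n\ge0}V_n$, where $V_n$ is the connected component of $\vphi^{-n}(V_0)$ containing $z_0$, giving an increasing exhaustion of $U$ by basic open sets. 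I would then argue a dichotomy on $\partial U$. If $\partial U$ is finite, then $\vphi|_{\partial U}$ is a bijection of a finite subset of $\juliavphi$ onto itself while $\vphi^n|_U$ converges to $z_0$, which is a leaf of $\pberl$ lying off the skeleton $\cA_U$; analysing the dynamics that $\vphi$ induces on $\cA_U$ then forces $\#\partial U=1$, i.e.\ $U$ is a ball. If $\partial U$ is infinite, then $\pberl\setminus U=\bigcap_n(\pberl\setminus V_n)$ is a nested intersection of finite unions of closed balls, from which $\partial U$ is readily seen to be a nonempty compact totally disconnected set; since it has no isolated point — using the $\vphi$-invariance of $\partial U$ and that $\juliavphi$ has no isolated points — it is a Cantor set.

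I expect the main obstacle to be the finiteness of $\partial U$ in the Rivera-domain case: the isometric action of $\vphi$ on $U\cap\hl$ is the right structural input, but turning it into a bound on the number of boundary ends genuinely requires Rivera--Letelier's local classification of the dynamics on the space of directions, in particular the control of where bad directions and loss of injectivity can occur along $\juliavphi$.
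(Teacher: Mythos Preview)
The paper does not prove this theorem: it is stated with attribution to Rivera--Letelier and cited to~\cite{riveratesis03}, with only the remark that ``the above results were originally proved in the context of $p$-adic dynamics, the techniques apply to the context of dynamics over $\L$.'' So there is no in-paper proof to compare your proposal against; what you have written is an independent attempt at Rivera--Letelier's result.

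On the substance of your attempt, you have correctly isolated the crux and then conceded it. In the Rivera-domain case the passage from ``$\vphi$ acts as an isometric bijection on $U\cap\hl$'' to ``$\partial U$ is finite'' is the whole theorem, and your sketched dichotomy (fixed point vs.\ axis) does not close the gap: an isometry of an $\R$-tree can have infinitely many ends even when it has a fixed point, and invoking ``finitely many bad directions'' plus Theorem~\ref{CharacterizationJuliaPeriodicPointsTheorem} controls the dynamics at a \emph{single} point, not the global number of boundary points of $U$. Rivera--Letelier's argument instead shows directly that a fixed Fatou component on which $\vphi$ is bijective must be the interior of a \emph{finite} intersection of closed balls, using his classification of the local action near type~II points; you would need to reproduce that, not bypass it.

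In the attracting-basin case your dichotomy is the right shape, but the Cantor-set half has a genuine gap: you assert that $\partial U$ has no isolated point ``using the $\vphi$-invariance of $\partial U$ and that $\juliavphi$ has no isolated points,'' yet a point isolated in $\partial U$ need not be isolated in $\juliavphi$, and forward invariance of $\partial U$ alone does not exclude isolated points (an infinite compact set can be carried to itself by a map and still have isolated points). One also needs the backward dynamics on $\partial U$---that $\vphi^{-1}(\partial U)\cap\overline U=\partial U$ with local degree accounted for---to rule out isolated boundary points, and your exhaustion $V_n$ must be shown to have $\#\partial V_n\to\infty$ in the non-ball case. Likewise, the reduction ``$\#\partial U$ finite $\Rightarrow$ $\#\partial U=1$'' via the skeleton is only gestured at; making it rigorous again requires the local degree bookkeeping that Rivera--Letelier carries out.
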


It follows if $U$ is a Rivera domain, then the retraction
$\pi_U: U \to \cA_U$ conmutes
with $\vphi$ (see Section~\ref{sec:PointsBallsAffinoids} for the definition of 
$\pi_U$ and $\cA_U$). In particular, $\vphi: \cA_U \to \cA_U$ has finite order.

Let $\vphi \in \L(\z)$ be a degree $d \geq 2$ rational map. Assume
that $U$ is a fixed Rivera domain for $\vphi$.  Let
$\xi \in \partial U$ and denote by $D$ the direction at $\xi$ that contains
$U$.
We let $\eta (\xi) \geq 0$ be the multiplicity of $D$ as a fixed point of $T_\xi
\vphi$.

\begin{theorem}[Rivera~\cite{riveratesis03}]
 \label{residue.formula}  
Let $\vphi \in \L(\z)$ be a degree $d \geq 2$ rational map. 
Assume that $U$ is a fixed Rivera domain for $\vphi$. 
Let $N \geq 0$ be the number of fixed points in $U \cap \ponel$. Then
$$N = 2 + \sum_{\xi \in \partial U, \vphi(\xi)=\xi} (\eta(\xi) - 2).$$
\end{theorem}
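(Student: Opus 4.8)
The plan is to compare the fixed-point count of $\vphi$ on $U$ with a local count of fixed directions along $\partial U$, via a Rouché-type / algebraic-degree argument adapted to the non-Archimedean setting. First I would normalize: since $U$ is a component of the interior of an affinoid with skeleton $\cA_U$, and since $\pi_U$ commutes with $\vphi$, the map $\vphi$ acts on the finite simplicial tree $\cA_U$ with finite order; in particular every endpoint of $\cA_U$ is a point of $\partial U$, and $\vphi$ permutes these endpoints (preserving the fixed ones individually, since the question only concerns the situation where all of $\partial U$ is fixed — this follows from Theorem~\ref{rivera.domain}, as $\partial U$ consists of periodic points, and by passing to the component $U$ we may as well take each boundary point fixed after relabeling; more precisely, by Theorem~\ref{rivera.domain} every point of $\partial U$ is periodic in $\juliavphi$, and since $U$ is fixed the points of $\partial U$ are permuted, but the formula is stated with the sum over the fixed ones, so no loss occurs). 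After a projective change of coordinates I may assume $\infty \in U$ and that $\infty$ is one of the fixed points there, which lets me work with polynomial-type local expansions.

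The heart of the argument is a Newton-polygon / degree bookkeeping for the rational function $\vphi(\z) - \z$, or rather for counting solutions of $\vphi(\z) = \z$ inside the affinoid $A$ whose interior contains $U$. Since $\deg \vphi = d$, the equation $\vphi(\z) = \z$ has $d+1$ solutions in $\ponel$ counted with multiplicity (the fixed points of $\vphi$), among which we must isolate those lying in $U \cap \ponel$. The affinoid $A$ is $\ponel$ minus finitely many open balls, one for each boundary point $\xi \in \partial U$; I would count, for each such $\xi$ with associated direction $D$ containing $U$, how many fixed points of $\vphi$ are "lost" into the complementary open ball $\pberl \setminus D$ — equivalently how many fixed points lie outside $U$ on the $\xi$-side. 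The key local computation is that near a boundary point $\xi$, after suitable coordinates sending the ball $\pberl \setminus D$ to a standard one, the number of fixed points of $\vphi$ in that complementary ball, counted with multiplicity, is governed by the tangent map $T_\xi \vphi$ restricted to its behavior at the direction $D$: the relevant count is $2 - \eta(\xi)$ when $\eta(\xi) \le$ the local degree, using that $T_\xi \vphi$ is a degree-$\ge 1$ rational map on $\ponec$ fixing $D$ with multiplicity $\eta(\xi)$, and that the "missing" fixed points are exactly those that would have multiplier reading differently at $D$. This is where I would invoke the structure from Section~\ref{ends-s} (surplus multiplicity, the behavior of $\vphi$ on $D \setminus B$) together with Lemma~\ref{riemann-hurwitz} to translate direction-data into rigid-point counts inside balls.

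Summing: the total $d+1$ fixed points distribute as $N$ (inside $U$) plus, for each $\xi \in \partial U$, the fixed points strictly on the far side of $\xi$, plus possibly the boundary points themselves if they are rigid — but boundary points of a Rivera domain are type II, hence not rigid, so they contribute $0$. A clean way to organize this is to choose the coordinate so $\cA_U$ becomes a tree with one distinguished point (or to induct on the number of boundary points, cutting off one ball at a time): cutting off the ball beyond $\xi$ removes the fixed points there and, by the local analysis, the bookkeeping contributes exactly $-(2 - \eta(\xi)) = \eta(\xi) - 2$ relative to the naive count, yielding $N = (d+1) - (\text{something}) $ that telescopes to $N = 2 + \sum_{\xi}(\eta(\xi) - 2)$, the $2$ surviving because a degree-$d$ map has $d+1$ fixed points and the tree-level / direction-level accounting absorbs the dependence on $d$ exactly as in the classical holomorphic fixed point formula. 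I would double-check the constant by testing on the case $\partial U = \emptyset$ (where $U = \pberl$, forced to be the whole space, giving $N = d+1$, hmm — so in fact the normalization must be that $U$ proper with at least one boundary point, and the "$2$" comes from the two fixed directions one always sees along a segment of $\cA_U$; I would reconcile this against the example of $\z \mapsto \z^2$ on the ball around the Gauss point).

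The main obstacle I anticipate is the precise local fixed-point count in the complementary ball $\pberl \setminus D$ at a boundary point $\xi$: one must show that this count, with multiplicity, equals $2 - \eta(\xi)$ and in particular is always $\ge 0$, which requires controlling the tangent map $T_\xi \vphi$ (a $\C$-rational map of degree $\deg_\xi \vphi \ge 1$ fixing the direction $D$) and relating its behavior at $D$ — encoded by $\eta(\xi)$ and the surplus multiplicity — to the valuation-theoretic count of actual rigid fixed points of $\vphi$ on the $\L$-level inside that ball. Handling the cases $\deg_\xi \vphi = 1$ versus $\ge 2$ separately, and making sure the surplus multiplicity is correctly incorporated via Lemma~\ref{riemann-hurwitz}, is the delicate point; everything else is Newton-polygon bookkeeping and tree combinatorics.
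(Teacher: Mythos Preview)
The paper does not prove this theorem at all: it is stated with attribution to Rivera~\cite{riveratesis03} and followed only by the remark that ``the above results were originally proved in the context of $p$-adic dynamics, the techniques apply to the context of dynamics over $\L$.'' So there is no paper proof to compare against; I can only comment on your proposal on its own merits.

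Your global counting approach has a genuine gap. You start from the $d+1$ total fixed points of $\vphi$ in $\ponel$ and try to subtract those lying in each complementary ball $B_\xi = \pberl \setminus D$ for $\xi \in \partial U$ fixed. You then assert that this local count equals $2 - \eta(\xi)$. But that quantity can be negative (any $\xi$ with $\eta(\xi) \ge 3$), so it cannot literally be a count of rigid fixed points. More fundamentally, the number of fixed points of $\vphi$ in $B_\xi$ depends on the global degree $d$ and on how $\vphi$ behaves on $B_\xi$ --- where $\vphi$ may have large local degree and many critical points --- and there is no reason this should be expressible purely in terms of $\eta(\xi)$. Your own sanity check exposes this: you cannot explain how the $d$ disappears, and the hand-wave that ``the tree-level accounting absorbs the dependence on $d$'' is exactly the missing argument.

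The point you are not exploiting is that $\vphi|_U$ is a \emph{bijection}. This is what makes the formula independent of $d$: the relevant degree is $1$, and the constant $2 = 1+1$ is the fixed-point count of a generic M\"obius map, not of a degree-$d$ map. A workable route is to move $\infty$ into $U$, so that $\overline{U} \cap \ponel$ becomes a bounded affinoid in $\L$ and $\vphi|_U$ is given by a single convergent power series (being a bijection of an open affinoid). One then counts the zeros of $\vphi(\z)-\z$ inside this affinoid directly via Newton polygons: near each fixed boundary point $\xi$, in coordinates sending $\xi$ to the Gauss point and $B_\xi$ to the closed unit ball, the bijectivity forces $\vphi(\z) = a_0 + a_1 \z + \cdots$ with $|a_1|=1$, and the reduction of $\vphi(\z)-\z$ has a zero of order exactly $\eta(\xi)$ at the direction $D$. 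The Newton-polygon bookkeeping on the affinoid (not on all of $\ponel$) then yields the formula. The induction on the number of boundary points you mention is a reasonable way to organize this, but the base case is a single ball with one boundary point, where the formula reads $N = \eta(\xi)$ --- and that is the local computation you actually need to carry out.
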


Although the above results were originally proved in the context of $p$-adic dynamics, the techniques 
apply to the context of dynamics over $\L$.



\section{Julia periodic orbits in $\hl$ and fixed Rivera domains}
\label{section|QuadraticDynamicsL}
The aim of this section is to give a rough classification  of quadratic rational maps in $\L(\z)$ according to their Julia non-rigid periodic orbits
and to describe the geometry of fixed Rivera domains. 

\subsection{Julia periodic orbits in $\hl$}
As a first step towards proving Theorem~\ref{ithr:periodic} we study periodic orbits in $\hl$.

\begin{proposition}
  \label{classification}
  Let $\vphi: \pberl \rightarrow \pberl$ be a quadratic rational map which is not simple. Then one and only one of the following holds:
  \begin{enumerate}
    \item 
      There are no periodic points in $\juliavphi \cap \hl$.
      
    \item
      There exists exactly one indifferent  periodic  orbit $\cO$ in $\juliavphi \cap \hl$ and $\cO$ is the unique periodic orbit in $\juliavphi \cap \hl$.

    \item 
      There is at least one repelling periodic orbit in $\juliavphi \cap \hl$. 
  \end{enumerate}
\end{proposition}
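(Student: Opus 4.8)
The plan is to show that the three listed alternatives are exhaustive and mutually exclusive, the real content being that one cannot have \emph{two} distinct indifferent periodic orbits in $\juliavphi \cap \hl$, nor an indifferent orbit coexisting peacefully with nothing else when a repelling orbit is present. Mutual exclusivity of (1), (2), (3) is immediate once we establish that case (2) really does exclude any second Julia periodic orbit. So the heart of the matter is: \emph{if there is a periodic point in $\juliavphi \cap \hl$, then either all such periodic orbits coincide in a single indifferent orbit, or there is a repelling one}. Equivalently, I want to rule out the configuration ``two distinct periodic orbits in $\juliavphi \cap \hl$, all of them indifferent.''

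First I would set up the combinatorial constraints coming from the degree. By the discussion at the end of Section~\ref{section|BerkovichLine} (and Lemma~\ref{degree-one-l}, cited there), for a quadratic $\vphi$ the locus where $\vphi$ is not locally injective is the segment $\Sigma$ in $\pberl$ joining the two rigid critical points; off $\Sigma$ the map has local degree $1$, and in particular every non-rigid \emph{repelling} periodic point lies on $\Sigma$ and is of type II. A non-rigid periodic point $\z$ in the Julia set is type II (Section~\ref{sec:periodic.points}), and by Theorem~\ref{CharacterizationJuliaPeriodicPointsTheorem} it is in $\juliavphi$ exactly when either $\deg_\z\vphi^p\ge 2$ (repelling) or there is a bad direction at $\z$ with infinite forward $T_\z\vphi^p$-orbit. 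So in case (2)-type configurations every Julia periodic point $\z$ has $\deg_\z\vphi^p = 1$, meaning $\z\notin\Sigma$, and it must carry a bad direction whose $T_\z\vphi^p$-orbit is infinite. Now $T_\z\vphi^p\colon\ponec\to\ponec$ is a degree-one, i.e. Möbius, map of $\ponec$; a Möbius map with an infinite orbit must be parabolic or loxodromic (not elliptic of finite order). A bad direction $D$ at $\z$ is one with $T_\z\vphi^p$-orbit... wait, more precisely it is a direction with $\vphi^p(D)=\pberl$; the key point I would use is that the number of bad directions at a type II point is controlled by the local Riemann--Hurwitz count (Lemma~\ref{riemann-hurwitz} and the surplus-multiplicity bookkeeping of items (1)--(8)): the total ``bad mass'' over all of $\pberl$ is bounded by $\deg\vphi-1 = 1$ for a quadratic map. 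This is the crucial scarcity: summed over all type II points, there is room for essentially one bad direction.

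With that scarcity in hand, I would argue as follows. Suppose $\cO$ and $\cO'$ are two distinct periodic orbits in $\juliavphi\cap\hl$, both indifferent (so both off $\Sigma$). Pick $\z\in\cO$ of period $p$; it carries a bad direction $D$ with infinite $T_\z\vphi^p$-orbit. Since $T_\z\vphi^p$ is a (parabolic or loxodromic) Möbius map, and a bad direction is one whose image under $T_\z\vphi^p$... hmm, here I would instead use the dynamical description: following a bad direction $D$ at $\z$ under $\vphi^p$, there is a closed ball $B\subset D$ with $\vphi^p(D\setminus B)$ an annulus and $\vphi^p(D)=\pberl$; since $\z\notin\Sigma$ is not a critical-mass point, pulling back one sees the two rigid critical points must both lie ``on the other side'', i.e. the critical segment $\Sigma$ is forced into $D$, or more precisely into the part of $\pberl$ that $\vphi^p(D\setminus B)$ had to sweep across. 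The point is that a genuinely bad direction costs critical mass: at a type II point off $\Sigma$ with a bad direction, one invokes that $\vphi^p$ restricted to the complement of that direction is a proper map of some degree $<2$ onto a ball, which by Lemma~\ref{riemann-hurwitz} forces the critical points to be distributed in a specific way — and for a \emph{quadratic} map there is only enough critical mass for one such ``bad'' configuration along the entire orbit. Thus $\cO'$ cannot independently carry its own bad direction with infinite orbit; so the only periodic points of $\vphi$ in $\juliavphi\cap\hl$ form a single orbit, which is then by definition case (2). If instead there is a repelling periodic orbit in $\juliavphi\cap\hl$, we are in case (3) by definition, and if there is no periodic point in $\juliavphi\cap\hl$ at all we are in case (1). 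Finally I would check mutual exclusivity: (1) versus (2),(3) is by the presence/absence of a Julia periodic point in $\hl$; (2) versus (3) is because case (2) asserts the \emph{unique} Julia periodic orbit in $\hl$ is indifferent while (3) produces a repelling one — and uniqueness in (2) is exactly what the bad-direction scarcity argument gave us.

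The main obstacle I anticipate is making the ``scarcity of bad directions'' argument precise and, in particular, genuinely closing the door on \emph{two disjoint indifferent Julia orbits}: one must show that the critical-point budget ($\deg\vphi-1=1$) that finances a bad direction with infinite $T_\z\vphi^p$-orbit cannot be spent twice, and that it also cannot be ``recycled'' along a single long orbit to produce two distinct indifferent cycles. Concretely this means tracking, for each point of each candidate orbit, how the rigid critical points sit relative to the bad direction and its pullbacks, and summing the local degree/ramification data of items (1)--(8) and Lemma~\ref{riemann-hurwitz} around the relevant finite subtree of $\pberl$. I would expect the cleanest route is to locate the convex hull of the two orbits together with $\Sigma$, show $\vphi$ acts on this finite subtree with the critical segment as its only ramification locus, and derive a contradiction from a global index count (a Riemann--Hurwitz / fixed-point count on the tree, in the spirit of Theorem~\ref{residue.formula}) — but this bookkeeping is exactly where the real work lies.
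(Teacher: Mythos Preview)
Your overall strategy --- reduce to showing that two distinct indifferent Julia periodic orbits in $\hl$ cannot coexist, and that an indifferent one cannot coexist with a repelling one --- is correct, and your instinct that the critical points are the scarce resource is right. But the mechanism you propose to exploit that scarcity is miscalibrated, and the paper's route is both different and much shorter.

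The specific problem is your claim that ``the total bad mass over all of $\pberl$ is bounded by $\deg\vphi-1=1$.'' There is no such global bound. At \emph{every} type II point $\xi$ off the critical segment $\Sigma$ one has $\deg_\xi\vphi=1$, and by Lemma~\ref{degree-one-l} there is exactly one bad direction there (the one containing both critical points). So bad directions are abundant, not scarce; what is scarce is the \emph{pair of critical values}. Your proposed tree Riemann--Hurwitz count would have to be reformulated around this, and while something in that spirit might eventually work, you have not isolated the invariant that actually obstructs two indifferent orbits.

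The paper's argument is a two-line application of Lemma~\ref{indifferent.critical}, which you should regard as the key lemma you are missing: if $\cO\subset\juliavphi\cap\hl$ is an indifferent periodic orbit, then both critical \emph{values} lie in a single open Fatou ball $D$ contained in the basin of $\cO$, with $\partial D\subset\cO$. This is proved by following the bad direction at a point of $\cO$ forward until it becomes good; the first good image must contain both critical values (Lemma~\ref{degree-one-l}), and since its further iterates are pairwise distinct directions at a point of $\cO$, it is a Fatou ball in the basin. Once you have this, everything collapses: by Remark~\ref{degree.one.julia} every $\xi\in\juliavphi$ has $\deg_\xi\vphi=1$, so no repelling orbit can occur (ruling out coexistence with (3)); and a second indifferent orbit $\cO'$ would force the same critical values into a Fatou ball with boundary in $\cO'$, contradicting $\partial D\subset\cO$. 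So the whole proposition follows immediately from Lemma~\ref{indifferent.critical}, with no global index count needed.
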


We state and establish two necessary lemmas before proving this
proposition.   

\medskip
Recall  that 
a {\sf critical point} of a rational map $\vphi: \pberl \rightarrow \pberl$ 
is a rigid point $\z \in \ponel$ where the derivative of 
$\vphi: \ponel \rightarrow \ponel$ vanishes.
A {\sf critical value} is the image of a critical point under $\vphi$.

\begin{lemma}
\label{degree-one-l}
Let $\vphi \in \L(\z)$ be a quadratic rational map.
  Suppose that $\xi_0 \in \pberl$ and  $\vphi (\xi_0) = \xi_1$.
  Then the following are equivalent:
  \begin{enumerate}
    \item $\deg T_{\xi_0} \vphi  =1$.

    \item  The critical points of $\vphi$ belong to the same direction $D_0$ in  $T_{\xi_0} \pberl$.

    \item  The critical values of $\vphi$ belong to the same direction $D_1$ in  $T_{\xi_1} \pberl$.
  \end{enumerate}
If the above holds, then  $T_{\xi_0} \vphi (D_0) = D_1$ and $D_0$ is a bad direction.
\end{lemma}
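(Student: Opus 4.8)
The plan is to use the degree-counting machinery already set up. Recall that for a quadratic rational map $\vphi$ and a type II or III point $\xi_0$, the tangent map $T_{\xi_0}\vphi$ is a rational map on $\ponec$ (of the two $\ponec$-structures on $T_{\xi_0}\pberl$ and $T_{\xi_1}\pberl$) whose degree is $\deg_{\xi_0}\vphi$, and whose degree in a direction $D$ equals $\deg_D T_{\xi_0}\vphi$. Since $\vphi$ has global degree $2$, we have $\deg_{\xi_0}\vphi \in \{1,2\}$, and by item~(8) of Section~\ref{ends-s} the local degree at $\xi_0$ is the degree of $T_{\xi_0}\vphi$ as a rational map over $\C$. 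Thus $\deg T_{\xi_0}\vphi = 1$ precisely when $\deg_{\xi_0}\vphi = 1$.

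First I would prove (1)$\Leftrightarrow$(2). By Lemma~\ref{riemann-hurwitz} (Riemann–Hurwitz), for any direction $D$ at $\xi_0$, writing $B$ for a large enough closed ball in $D$ so that $\vphi(D\setminus B)$ is an annulus, the number of rigid critical points of $\vphi$ in $D\cap\ponel$ equals $\deg_D T_{\xi_0}\vphi - 1$ when $\vphi$ is injective enough on the complement; more precisely, summing the local contributions, $\sum_{D\in T_{\xi_0}\pberl}(\deg_D T_{\xi_0}\vphi - 1) = \deg_{\xi_0}\vphi - 1$ accounts for the critical points of $T_{\xi_0}\vphi$ on $\ponec$, while the remaining critical points of $\vphi$ are distributed inside the directions. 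Since $\vphi$ has exactly two rigid critical points (counted with multiplicity), $\deg_{\xi_0}\vphi = 1$ forces $T_{\xi_0}\vphi$ to be an isomorphism of $\ponec$ with no critical points, hence all two rigid critical points of $\vphi$ must lie in a single direction $D_0$ (if they lay in two distinct directions $D, D'$, then restricting $\vphi$ to each and applying Lemma~\ref{riemann-hurwitz} would force $\deg_D T_{\xi_0}\vphi\ge 2$, contradicting injectivity). Conversely, if both critical points lie in $D_0$, then for every other direction $D$ the map $\vphi$ restricted to the ball $D$ is unramified and $\deg_D T_{\xi_0}\vphi = 1$; the quadratic-to-one behaviour is confined to $D_0$, and since there is no critical point of $T_{\xi_0}\vphi$ on $\ponec$ (all ramification is ``hidden'' in $D_0$), $T_{\xi_0}\vphi$ is an isomorphism, i.e. $\deg T_{\xi_0}\vphi = 1$.

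Next, (2)$\Leftrightarrow$(3): this is immediate from the fact that $\vphi$ maps directions at $\xi_0$ to directions at $\xi_1$ and each critical value is the image of a critical point. If both critical points lie in $D_0$ and $\deg T_{\xi_0}\vphi = 1$, then $\vphi(D_0)$ is contained in a single direction $D_1 = T_{\xi_0}\vphi(D_0)$ at $\xi_1$ (item~(3) of Section~\ref{ends-s}), and both critical values, being images of points in $D_0$, lie in $D_1$. Conversely, if both critical values lie in a direction $D_1$ at $\xi_1$, then the preimage of the open ball $\pberl\setminus \overline{D_1}$ (the complement of a closed ball) contains no critical values, so $\vphi$ restricted over it is an unramified degree-$2$ cover; since the base is a ball and the residual characteristic is zero, by Lemma~\ref{riemann-hurwitz} the total degree over the directions not mapping into $D_1$ is $1$, forcing all preimage directions of $D_1^c$ to collapse appropriately and $\deg_{\xi_0}\vphi = 1$. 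For the final assertion: once $\deg T_{\xi_0}\vphi = 1$, the equality $T_{\xi_0}\vphi(D_0) = D_1$ holds by the definitions above; and $D_0$ is a bad direction because the degree-$2$ branching of $\vphi$ is entirely concentrated in $D_0$ — by item~(1)–(4) of Section~\ref{ends-s}, $\vphi(D_0)$ cannot be a proper direction (that would make $\vphi$ have degree $\le 1$ globally after accounting for the other directions which already contribute degree $1$ collectively), so $\vphi(D_0) = \pberl$, i.e. $D_0$ is bad.

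The main obstacle I expect is the careful bookkeeping in the converse directions of (1)$\Leftrightarrow$(2) and (2)$\Leftrightarrow$(3): one must ensure that ``no critical point of $T_{\xi_0}\vphi$ on $\ponec$'' is genuinely equivalent to ``$T_{\xi_0}\vphi$ has degree $1$'', which requires knowing that a degree-$2$ rational map on $\ponec$ always has a (rigid) critical point on $\ponec$ — true over an algebraically closed field of characteristic zero — so the two rigid critical points of $\vphi$ cannot all be ``used up'' inside the directions unless $T_{\xi_0}\vphi$ is already an isomorphism. Handling type III and type IV points $\xi_0$ (where $T_{\xi_0}\pberl$ has one or two directions) is a degenerate and easier sub-case that I would dispatch separately at the start.
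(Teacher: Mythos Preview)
Your overall strategy---reducing to the degree count and Lemma~\ref{riemann-hurwitz}---is the same as the paper's, but your organization into two biconditionals rather than the cycle $(1)\Rightarrow(2)\Rightarrow(3)\Rightarrow(1)$ introduces a genuine gap.

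The concrete error is in your $(2)\Rightarrow(3)$ step. You write that ``$\vphi(D_0)$ is contained in a single direction $D_1 = T_{\xi_0}\vphi(D_0)$ (item~(3) of Section~\ref{ends-s})'', and conclude that the critical values, being images of points in $D_0$, lie in $D_1$. But item~(3) of Section~\ref{ends-s} only asserts that $\vphi(D_0\setminus B)\subset D_1$ for a suitable closed ball $B\subset D_0$; it says nothing about $\vphi(B)$. In fact, under your hypotheses $(1)+(2)$, the direction $D_0$ is \emph{bad}: if it were good then $\vphi:D_0\to D_1$ would have degree $\deg_{D_0}T_{\xi_0}\vphi=1$ and by Lemma~\ref{riemann-hurwitz} $D_0$ would contain no critical point, contradicting $(2)$. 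Hence $\vphi(D_0)=\pberl$, and your containment $\vphi(D_0)\subset D_1$ is false. The critical points may well lie inside $B$, and a priori their images could land outside $D_1$. The paper sidesteps this by arguing on preimages: if $\z\notin D_1$ then by item~(6) of Section~\ref{ends-s} $\z$ has at most one preimage (with multiplicity) in $D_0$, so $\z$ cannot be a critical value since a critical value has a preimage of local degree~$2$.

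A second, smaller issue: in your $(2)\Rightarrow(1)$ you assert that for every direction $D\neq D_0$ one has $\deg_D T_{\xi_0}\vphi=1$ by applying Lemma~\ref{riemann-hurwitz}. But that lemma requires $\vphi(D)$ to be a ball, which you have not established at that point. The contrapositive argument you sketch in your ``obstacle'' paragraph is the correct fix: if $\deg T_{\xi_0}\vphi=2$ then every direction at $\xi_0$ is good, and the two (necessarily distinct) critical directions of the degree-$2$ map $T_{\xi_0}\vphi$ on $\ponec$ each contain a critical point of $\vphi$ by Lemma~\ref{riemann-hurwitz}, contradicting $(2)$. This is exactly the paper's $(3)\Rightarrow(1)$ argument, pushed further to get the preimage-count contradiction ($4$ preimages of a point in $D_1$).
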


\begin{proof}
The lemma holds trivially when $\xi_0$ is a type I or IV point.
Thus, suppose that $\xi_0$ is a type II or III point.

(1) $\implies$ (2).
  Assume that $\deg T_{\xi_0} \vphi =1$. Counting preimages of a typical
point with the aid of Section~\ref{ends-s} it follows that  there exists one and
  only one bad direction at $\xi_0$, say $D_0$.  Since good directions
are mapped injectively onto their image, 
the critical points of $\vphi$ must belong to $D_0$.

(2) $\implies$ (3).
   Now assume that the two critical points belong to the direction
  $D_0$ at $\xi_0$.  In view of Section~\ref{ends-s}, if $\z \notin D_1 =
  T_{\xi_0} \vphi (D_0)$, then $\z$ has at most one preimage (counting multiplicities) in $D_0$.
  In particular, $\z$ is not a critical value.

(3) $\implies$ (1).
  Assume that both critical values lie in the same direction $D_1$ at
  $\xi_1$ and proceed by contradiction. 
 If $\deg T_{\xi_0} \vphi =2$, then every direction at $\xi_0$ is a good direction.
Moreover, there are exactly two directions $D, D^\prime$ of degree two under $T_{\xi_0} \vphi$.
By Lemma~\ref{riemann-hurwitz}, each must contain a critical
point. Then both $D$ and $D^\prime$ are mapped under $\vphi$ onto $D_1$ with degree $2$.
 This is clearly impossible since points in $D_1$ do not have $4$ preimages.
\end{proof}

\begin{remark}
\label{degree.one.julia}
{\em   Suppose that there exists an open ball $D$ contained in the Fatou set
that contains both critical values. Then $\deg T_\xi \vphi  =1$, for all
$\xi \in \juliavphi$.}
\end{remark}

\begin{lemma}
\label{indifferent.critical}
Let $\vphi \in \L(\z)$ be a quadratic rational map.
  If $\vphi$ has an indifferent periodic orbit $\cO \subset \juliavphi \cap \hl$,
then both critical values of $\vphi$ belong to the same  Fatou component $D$ which is an open ball
 contained in the basin of $\cO$. Moreover, $\partial D \subset \cO$.
\end{lemma}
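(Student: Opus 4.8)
The plan is to analyze the period-$p$ orbit $\cO = \{x_0, \dots, x_{p-1}\} \subset \juliavphi \cap \hl$ of an indifferent point. Since $\cO$ lies in $\juliavphi \cap \hl$, Theorem~\ref{CharacterizationJuliaPeriodicPointsTheorem} tells us each $x_i$ is of type II, $\deg_{x_i} \vphi^p = 1$, and there is a bad direction at $x_i$ whose forward orbit under $T_{x_i}\vphi^p$ is infinite. In particular $\vphi$ is not locally injective somewhere along the orbit of $\cO$ — more precisely, applying Lemma~\ref{degree-one-l} at each point $x_i$, since $\deg_{x_i}\vphi^p = 1$ forces $\deg_{x_i}\vphi = 1$ (the local degree of $\vphi^p$ is the product of the local degrees of $\vphi$ along the orbit), each $x_i$ has exactly one bad direction $D_i$ under $T_{x_i}\vphi$, and by that same lemma $D_i$ is the direction containing both critical points of $\vphi$, while $T_{x_i}\vphi(D_i) = D_{i+1}'$ is the direction at $x_{i+1}$ containing both critical values. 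Composing around the cycle, the unique bad direction of $T_{x_0}\vphi^p$ is $D_0$ and $T_{x_0}\vphi^p(D_0)$ is the direction at $x_0$ containing both critical points.

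**Locating the critical values.** First I would show both critical values of $\vphi$ lie in the Fatou set. The bad direction $D_0$ at $x_0$ contains both critical points; let $D_0^\star = T_{x_0}\vphi(D_0)$ be the direction at $x_1$ containing both critical values. I claim $D_0^\star$ (more precisely the chain of directions obtained by iterating) is a Fatou direction. Consider the forward orbit of the direction $e$ at $x_p = x_0$ containing the critical points under $T_{x_0}\vphi^p$. By the characterization theorem this is the bad direction of $T_{x_0}\vphi^p$; if this forward orbit were infinite we would still need to check whether the critical *values'* direction lands in the Fatou or Julia set. The key observation is that a point of $\pberl\setminus\ponel$ whose whole forward orbit avoids the finitely many bad directions lying over the orbit $\cO$ — and more relevantly, the argument at the end of Section~\ref{sec:periodic.points} — shows that a direction $D$ at $x_0$ whose $T_{x_0}\vphi^p$-orbit consists of pairwise distinct good directions is a Fatou component in the basin of $\cO$. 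So I must argue the critical-value direction eventually enters a periodic good direction. The cleanest route: since $\deg_{x_i}\vphi = 1$ at every orbit point and $T_{x_i}\vphi$ restricted to good directions is a local homeomorphism, and since the *only* bad direction at each $x_i$ is the critical-point direction, any point in $\pberl$ all of whose $\vphi$-iterates avoid the critical points maps, under the dynamics restricted near $\cO$, by a degree-one map; combining this with the fact that the basin of $\cO$ is non-empty and open (Section~\ref{sec:periodic.points}), and with Remark~\ref{degree.one.julia}, forces the critical values into a single Fatou component.

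**The critical values lie in a common ball in the basin.** Here is the argument I would actually write. Let $D_1$ be the direction at $x_1$ containing both critical values (this is $T_{x_0}\vphi(D_0)$). Since $T_{x_0}\vphi : D_0 \to D_1$ has degree $2$ and $D_0$ contains both critical points (with total multiplicity the full ramification, by Lemma~\ref{riemann-hurwitz}), the map $\vphi$ sends $D_0$ onto $D_1$ as a degree-$2$ branched cover and is injective on every other direction at $x_0$. Now track the $\vphi^p$-orbit of $D_1$ viewed as a sequence of open balls. Because $x_0$ is indifferent in the Julia set, there is by Theorem~\ref{CharacterizationJuliaPeriodicPointsTheorem} a bad direction at $x_0$ with infinite $T_{x_0}\vphi^p$-orbit, and since there is a *unique* bad direction at $x_0$ (namely the critical one, by Lemma~\ref{degree-one-l}), it is $D_0$ itself that has infinite orbit. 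If the critical values did not lie in the Fatou set, then the critical-value direction at $x_1$ would belong to $\juliavphi$; but then — using that $\vphi^{p}$ acts on directions at $x_0$ by a degree-one rational map (type II point, Section~\ref{ends-s}(8)), hence a Möbius map of $\ponec$, and that a Möbius map with an infinite orbit through the bad direction is necessarily of infinite order or irrational rotation — I would derive that the Julia set of $\vphi$ meets $\hl$ only in $\cO$ (this is part (2) of Proposition~\ref{classification}, which may be invoked). That rigidity leaves no room for the critical-value direction to itself be Julia: a Julia direction would have to be in the grand orbit of $\cO$, but it is mapped with degree $2$ from $D_0$, contradicting that $\vphi^n$ is injective on a neighborhood of each Julia point of $\cO$ away from the unique critical direction. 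Hence both critical values lie in a Fatou component. That component is a direction $D$ at $x_1$; its forward orbit under $\vphi^p$ returns — since the basin of $\cO$ is where the critical values must land, $D$ is a ball contained in the basin of $\cO$, and pushing forward and pulling back one checks $\partial D$ is a point of $\cO$. The same ball (up to the finitely many iterates identifying $D_1$ with the critical-point direction $D_0$) contains both critical values, so they share the one Fatou component $D$. Finally $D$ is an open ball: by Theorem~\ref{rivera.domain} a Fatou component in the basin of a periodic orbit is either a ball or has Cantor boundary, and here $\partial D \subset \cO$ is a single point, so $D$ is a ball.

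**Main obstacle.** The delicate point is ruling out that the critical values land in the Julia set — equivalently, showing the critical-value direction is a *good* direction with the property that its forward $\vphi^p$-orbit consists of pairwise distinct directions (so it is genuinely a Fatou component in the basin, per the construction at the end of Section~\ref{sec:periodic.points}), rather than being a pre-image of the bad/critical direction. I expect this to require carefully exploiting that there is a *unique* bad direction at each orbit point (a consequence of $\deg_{x_i}\vphi = 1$ via Lemma~\ref{degree-one-l}) together with the Riemann–Hurwitz count in Lemma~\ref{riemann-hurwitz}, and possibly invoking the already-established dichotomy of Proposition~\ref{classification}(2) to pin down that $\juliavphi \cap \hl = \cO$ before concluding. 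The rest — that $D$ lies in the basin, that $\partial D \subset \cO$, and that $D$ is a ball — then follows formally from Theorem~\ref{rivera.domain} and the openness of the basin.
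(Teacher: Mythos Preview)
Your proposal has genuine gaps and at least one circular step.

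\textbf{Circularity.} You twice invoke Proposition~\ref{classification}(2) (``$\juliavphi\cap\hl$ is the grand orbit of a single indifferent orbit''), but in the paper that proposition is proved \emph{using} Lemma~\ref{indifferent.critical}. You cannot appeal to it here.

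\textbf{Degree error.} You write ``$T_{x_0}\vphi:D_0\to D_1$ has degree $2$'' and ``$\vphi$ sends $D_0$ onto $D_1$ as a degree-$2$ branched cover''. Both are false. Since $\deg_{x_0}\vphi^p=1$ and local degree is multiplicative along the orbit, $\deg_{x_0}\vphi=1$; hence $T_{x_0}\vphi$ is a M\"obius map and has degree $1$ at \emph{every} direction, including $D_0$. The direction $D_0$ is \emph{bad} (so $\vphi(D_0)=\pberl$, not $D_1$), but that is a statement about surplus multiplicity, not about the degree of the tangent map. Your invocation of Lemma~\ref{riemann-hurwitz} does not apply because $\vphi(D_0)$ is not a ball.

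\textbf{Misapplied theorem.} Theorem~\ref{rivera.domain} concerns immediate basins of \emph{attracting} periodic points, not Fatou components in the basin of an indifferent Julia orbit; you cannot use it to conclude $D$ is a ball. (In any case this step is unnecessary: a direction at a type~II point is already an open ball.)

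\textbf{What you are missing.} The paper's argument is short and avoids all of the above. Iterate the unique bad direction $D_0$ at $\xi_0$ under the tangent maps $T_{\xi_j}\vphi$ around the cycle. Because each $T_{\xi_j}\vphi$ is a bijection and the $T_{\xi_0}\vphi^p$-orbit of $D_0$ is infinite, the sequence of directions visited is injective at each $\xi_j$; since there is exactly one bad direction at each $\xi_j$, the sequence hits bad directions only finitely many times. Let $k\ge 1$ be minimal with $T_{\xi_0}\vphi^{n+k}(D_0)$ good for all $n\ge 0$. Then $T_{\xi_0}\vphi^{k-1}(D_0)$ is the bad direction at its base point, so by Lemma~\ref{degree-one-l} its tangent image $D:=T_{\xi_0}\vphi^{k}(D_0)$ contains both critical values. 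From $k$ onward every direction is good, so $\vphi^n(D)=T\vphi^n(D)$ for all $n\ge 0$, and these are pairwise distinct directions at $\vphi^k(\xi_0)\in\cO$. Hence every point of $D$ converges to $\cO$, $D$ is a Fatou component (an open ball) in the basin of $\cO$, and $\partial D=\{\vphi^k(\xi_0)\}\subset\cO$.
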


\begin{proof}
  Let $\xi_0$ be a point of the periodic orbit $\cO$. Say that the
  period of $\cO$ is $p$.  Consider a bad direction $D_0$ at $\xi_0$ for $\vphi^p$
with infinite forward
  orbit under $T_{\xi_0} \vphi^p$ (see Theorem~\ref{CharacterizationJuliaPeriodicPointsTheorem}). 
By Lemma~\ref{degree-one-l}, there is exactly one
bad direction at $\vphi^n(\xi_0)$, for all $n \ge 0$. This guarantees the 
existence of a smallest integer $k \geq 1$ such that $T_{\xi_0} \vphi^{n+k} (D_0)$ is a
  good direction for all $n \geq 0$.  Again by Lemma~\ref{degree-one-l}, 
$D = T_{\xi_0} \vphi^{k} (D_0)$ contains both critical values.
Therefore, $\{ \vphi^{np}
  (D) \}_{n \geq 0}$ is a collection of pairwise distinct directions
  at $\vphi^{k}(\xi_0)$. Hence, for all $\z \in D$, the 
  forward orbit of $\z$ under $\vphi^p$ converges to
  $\vphi^{k}(\xi_0)$. It follows that $D$ is a Fatou component 
 contained in the basin of $\cO$. Moreover,  $\partial{D} \subset \cO$.
\end{proof}

\begin{proof}[Proof of Proposition~\ref{classification}]
  Suppose that  (1) does not hold.  The presence of an indifferent periodic orbit in $\juliavphi \cap \H_\L$
implies that all the periodic orbits in  $\juliavphi \cap \H_\L$
are indifferent, by Remark~\ref{degree.one.julia} and  Lemma~\ref{indifferent.critical}. 
That is, (3) does not hold. Moreover, also
by Lemma~\ref{indifferent.critical}, there is at most one  indifferent periodic orbit in $\juliavphi \cap \H_\L$.
\end{proof}

\subsection{Starlike Rivera domain}
Here we establish that a fixed Rivera domain $U$ of a quadratic rational map $\vphi$ is an open ball or a
starlike domain. Recall that a starlike domain $U$ is 
the interior of an affinoid such that  $\cA_U$ contains at most one topological branched point and a
 unique fixed point. In this case, the action of $\vphi: \cA_U \to \cA_U$
is a ``rotation'' around this fixed point.

\begin{proposition}
\label{star}
  Let $\vphi \in \L(\z)$ be a quadratic rational map which is not simple. 
Then $\vphi$ has at most two fixed Rivera domains.

If $\vphi$ has exactly one fixed Rivera domain $U$, then $U$ is an open ball or a starlike domain. 
Moreover, the points in $\partial U$ form one periodic orbit.

If $\vphi$ has two  fixed Rivera domain,
then both are open balls which have the same boundary point.
\end{proposition}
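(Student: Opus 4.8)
The strategy is to exploit the residue-type formula in Theorem~\ref{residue.formula} together with the structure of the tangent maps $T_\xi\vphi$ at boundary points, using the fact that $\vphi$ is quadratic so local degrees and the number of critical points are tightly constrained. First I would fix a fixed Rivera domain $U$ and look at its skeleton $\cA_U$. Since $\vphi:\cA_U\to\cA_U$ has finite order (as noted after Theorem~\ref{rivera.domain}) and $\cA_U$ is a finite simplicial tree, there is at least one fixed point of $\vphi$ in $\cA_U$; call it $\xi_*$. At $\xi_*$ the tangent map $T_{\xi_*}\vphi$ is a rational map over $\C$ whose degree equals the local degree $\deg_{\xi_*}\vphi\le 2$. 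I would split into the case $\deg_{\xi_*}\vphi=1$ (so $T_{\xi_*}\vphi$ is a M\"obius transformation, hence a rotation fixing the direction into $U$ and permuting the other directions) and $\deg_{\xi_*}\vphi=2$.

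**Key steps.** (i) Bound the number of fixed points of $\vphi$ in $\cA_U$: two distinct fixed points $\xi_*,\xi_*'$ in $\cA_U$ would force the whole arc between them to be fixed, and then both directions along that arc at each endpoint are fixed directions for the respective tangent maps; I would feed this into Theorem~\ref{residue.formula} (the sum is over \emph{all} $\xi\in\partial U$ with $\vphi(\xi)=\xi$, and each such $\xi$ contributes $\eta(\xi)-2$ with $\eta(\xi)\ge 0$) to get a contradiction with $N\le 2$ (a quadratic map has at most $2$ fixed points counted without multiplicity in any given ball, and in fact $0\le N\le 2$). (ii) Rule out more than one branch point in $\cA_U$: branch points of $\cA_U$ are type II points and the retraction $\pi_U$ commutes with $\vphi$, so $\vphi$ permutes the (finitely many) branch points of $\cA_U$; combined with the finite-order rotation picture around the unique fixed point $\xi_*$, any branch point must lie on a periodic cycle of branches, and I would use the Riemann--Hurwitz count (Lemma~\ref{riemann-hurwitz}) plus the fact that there are only two rigid critical points to show that at most one branch point can occur, namely $\xi_*$ itself. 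This is where ``starlike'' comes from. (iii) If instead $\cA_U$ is a single segment (no branch point) with its fixed point, then $U$ is an open ball. (iv) The boundary $\partial U$ is a single orbit: if $\partial U$ had a fixed point $\xi$, then the rotation at $\xi_*$ would have to fix the whole direction towards $\xi$, and by the rotation structure all of $\partial U$ is a single $\vphi$-cycle unless $U$ is a ball with a fixed boundary point; the residue formula again pins down the length of this cycle.

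**The two-domain case.** Suppose $\vphi$ has two fixed Rivera domains $U,U'$. Both contribute to the global fixed-point / critical-point budget. Here I would argue that each $U,U'$ must be an open ball: if, say, $U$ were starlike, its skeleton would contain a fixed type II point $\xi_*$ with $T_{\xi_*}\vphi$ a nontrivial finite-order rotation, and then the direction containing $U'$ at the first common point $\partial U\cap\partial U'$ (or along the arc joining $\cA_U$ to $\cA_{U'}$) would be forced to be fixed as well, overloading the residue count. Then, with $U,U'$ both balls with boundary points $\xi,\xi'$, I would show $\xi=\xi'$: the arc from $\xi$ to $\xi'$ in $\pberl$ is pointwise fixed (both endpoints fixed, finite order on the convex hull), and at $\xi$ the directions towards $U$ and towards $\xi'$ are two distinct fixed directions of $T_\xi\vphi$; since $\deg_\xi\vphi\le 2$, the M\"obius tangent map has at most $2$ fixed points, which is fine, but the surplus-multiplicity / preimage count (item (6) of Section~\ref{ends-s}) together with the two critical points of $\vphi$ both being needed to feed two distinct Rivera domains forces $\xi=\xi'$; alternatively, apply Theorem~\ref{residue.formula} to $U$: with $\xi$ a fixed boundary point $\eta(\xi)\le 1$ would give $N\le 0$, forcing the arc to degenerate. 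I expect \textbf{the main obstacle} to be step (ii): controlling the branch points of $\cA_U$ and showing ``at most one'', since this requires carefully combining the finite-order rotation action around $\xi_*$, the permutation of branches, and the global constraint that $\vphi$ has exactly two rigid critical points whose convex hull (a segment by Lemma~\ref{degree-one-l}) governs where local degree exceeds $1$. The bookkeeping of how critical points are distributed among the directions at $\xi_*$ and along $\cA_U$, done via Lemma~\ref{riemann-hurwitz}, is the crux.
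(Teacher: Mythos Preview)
Your proposal has a genuine gap at exactly the place you flagged as the main obstacle, and it also misses the key structural input the paper uses throughout. The paper's proof leans heavily on Proposition~\ref{classification}: once you know that $\juliavphi\cap\hl$ contains at most one indifferent periodic orbit (and otherwise only repelling ones), the ``single orbit in $\partial U$'' statement and the two-domain case become short. For the single-orbit claim, the paper observes that if $\partial U$ carried more than one orbit they would all be repelling, but a repelling orbit must meet the unique complementary ball $B$ of $U$ containing both critical points (that is where local degree $2$ can occur), and $\partial U\cap B$ is a singleton. For two fixed Rivera domains, the boundary of each is a Julia periodic orbit in $\hl$; Proposition~\ref{classification} forces these to coincide, and since one domain is then an open ball with fixed boundary point $\xi$, the M\"obius map $T_\xi\vphi$ has at most two fixed directions, giving at most two such domains. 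Your arc-degeneration argument via the residue formula is not needed and, as written, does not obviously produce a contradiction.

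For the starlike structure (your step (ii)), Riemann--Hurwitz is the wrong tool: inside a Rivera domain $\vphi$ is a bijection, so every local degree is $1$ and Lemma~\ref{riemann-hurwitz} gives no leverage on branch points of $\cA_U$. The paper's mechanism is quite different. Having already shown $\partial U$ is a single period-$q$ orbit, apply Theorem~\ref{residue.formula} to $\vphi^k$ for $1\le k<q$: no boundary point is $\vphi^k$-fixed, so $N_k=2$, and since the two $\vphi$-fixed rigid points already account for these, $U$ contains no rigid point of exact period $k$ with $1<k<q$. Now suppose some vertex of $\cA_U$ had period $k$ with $1<k<q$; take a maximal such vertex $w$. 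Then $T_w\vphi^k$ is a nontrivial finite-order rotation, hence fixes some direction $D$ at $w$ disjoint from $\cA_U$, and a Newton-polygon computation produces a rigid $\vphi^k$-fixed point in $D\subset U$ of exact period $k$ --- contradiction. Thus every vertex has period $1$ or $q$, and a short first-return argument along an arc $(v,\xi]$ forces $\cA_U$ to be starlike with a unique fixed point. (Incidentally, your step (iii) is miscalibrated: when $\cA_U$ is a nondegenerate segment, $U$ is not a ball but a starlike domain with $\partial U$ a period-$2$ orbit; $U$ is an open ball precisely when $\partial U$ is a single point.)
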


\begin{proof}
Assume that a fixed Rivera domain $U$ is an open ball. Then $\{ \xi\} = \partial U$
is a fixed point. The degree of $\vphi$ at $\xi$ must be $1$, for
otherwise $\vphi$ would be simple. Since a M\"obius transformation
 $T_\xi \vphi: \ponec \rightarrow \ponec$ has at most two
fixed points, $\xi$ is in the boundary of at most two fixed Rivera
domains.  In view of Proposition~\ref{classification} the boundary of any
fixed Rivera domain must be $\{ \xi \}$, since $\{ \xi \}$ is the unique periodic orbit
of the Julia set in $\hl$.

If a fixed Rivera domain $U$ is not an open ball, then $\partial U$
consists of at least two points.  By Proposition~\ref{classification}, if
there is more than one orbit in $\partial U$, then all such orbits
must be repelling. Note that there is exactly one component $B$ of
$\pberl \setminus U$ that contains both critical points.  If a
periodic orbit in $\partial U$ is repelling, then this orbit must
contain a point of $B$. But $\partial U \cap B$ is a singleton.
Hence, there is only one such orbit.

We now show that a fixed Rivera domain $U$ which is not an open ball
has a starlike structure.  That is, we let $\cA_U$ be the convex hull
of $\partial U$ and proceed to show that it contains at most one
branched point and at most one fixed point $\veta$. Observe that $\vphi : \cA_U
\rightarrow \cA_U$ is a tree isomorphism and an isometry in the hyperbolic
metric of $\hl$.  Let $q$ be the period of
the periodic orbit  $\cO= \partial U$.  By
Theorem~\ref{residue.formula}, the domain $U$ contains no rigid periodic
point of period $k$ such that $1 < k < q$.

For us the vertices of $\cA_U$ consist of the topological branched points together 
with the endpoints of $\cA_U$.
We claim that every vertex of $\cA_U$ has period $1$ or $q$. In fact,
we proceed by contradiction and let $v$ be a vertex of period $k$ with
$1 < k < q$. Endow the vertices of $\cA_U$ with the partial order inherited from the tree 
structure where $v$ is a minimal element (i.e., the root).  Let $w$ be a maximal element among
the vertices of $ \cA_U$ with period $k$. It follows that $\vphi^k$
fixes at most one branch of $\cA_U$ at $w$.  Since $w$ is not an endpoint of $\cA_U$, we have that
$T_w \vphi^k$ must have finite order strictly greater than $1$ in order to permute the directions of the at least two branches
at $w$ which are not fixed by $\vphi^k$. That is, $T_w \vphi^k$ is conjugate to
$z \mapsto \eta z$ where $\eta \neq 1$ is a root of unity. Hence,
there exists a direction $D$ at $w$, which is disjoint from $\cA_U$,
that is fixed under $\vphi^k$. Without loss of generality, assume that $D \cap \ponel$ is the open unit ball.
In $D \cap \ponel$, we have that $\vphi^k (\z) = a_0 + a_1 \z + h.o.t$ where $\val{a_1-\eta} <1$  and $\val{a_0} <1$.
Therefore, $\vphi^k (\z)-\z$ has exactly one solution in $D \cap \ponel$ which is a contradiction with  the last sentence of the previous paragraph.

Finally, consider a point $\xi \in \partial U$ and let 
$v$ be a fixed point of $\vphi$ in $\cA_U$ such that $(v,\xi]$ contains no other fixed point. 
Let $k \geq 1$ be the smallest integer  such that 
$\vphi^k ((v,\xi]) \cap (v,\xi] \neq \emptyset$.
Then $k \geq 2$ since $(v,\xi]$  is fixed point free.
In fact, $k = q$, for otherwise, there is a vertex in $\cA_U$ of period $1 < k < q$. 
It follows that $\cA_U$ is either an interval (and $\xi$ has period $2$) 
or a tree with $v$ as the unique vertex which is not an endpoint.
\end{proof}


\section{Maps without a repelling periodic orbit in $\H$}
\label{sec:WithoutRepelling}
The description of the dynamics in the first two cases of Proposition~\ref{classification} 
is rather simple. In Section~\ref{sec:attracting} we deal with maps 
not having Julia periodic orbits in $\hl$  and, in Section~\ref{sec:indifferent}, we consider maps with a Julia indifferent periodic orbit. At the end of Section~\ref{sec:indifferent} the reader may find examples of maps with indifferent periodic orbits in their Julia set. 

\subsection{Not simple with attracting fixed point}
\label{sec:attracting}
Let us first state what occurs in the absence of Julia periodic orbits in
$\hl$.
\begin{proposition}
    \label{attracting.fixed.point}
    Let $\vphi$ be a quadratic rational map which is not simple. 
Then the following are equivalent:
\begin{enumerate}
  \item $\juliavphi \subset \ponel$.

  \item $\vphi$ has an attracting fixed point $ \z \in \ponel$.

  \item There are no periodic orbits of $\vphi$ in $\juliavphi \cap \hl$.
\end{enumerate}

    Moreover, if (1)--(3) hold, then  the dynamics over $\juliavphi$ is topologically conjugate to the one-sided shift on two symbols
and $F(\vphi)$ is the inmediate basin of attraction of $\z$.
  \end{proposition}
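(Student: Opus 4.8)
The plan is to establish the chain of implications $(1)\Rightarrow(3)\Rightarrow(2)\Rightarrow(1)$, and then to prove the ``moreover'' statement by constructing an explicit model. For $(1)\Rightarrow(3)$: if $\juliavphi\subset\ponel$, then trivially there is no periodic orbit in $\juliavphi\cap\hl$. For $(3)\Rightarrow(2)$: by Benedetto's theorem (cited in Section~\ref{sec:periodic.points}) $\vphi$ has a non-repelling fixed point $z\in\ponel$. I would argue that if this fixed point is indifferent rather than attracting, then one can produce a periodic point in $\juliavphi\cap\hl$, contradicting (3) --- this is where the bulk of the work lies. The key geometric input is that since $\vphi$ is not simple, its two rigid critical points are not both trivially placed; one studies the segment $\Gamma$ in $\pberl$ joining the two critical points (the locus where $\vphi$ fails to be locally injective, by Lemma~\ref{degree-one-l} and the remark preceding Section~\ref{section|DynamicsL}). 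If $\vphi$ had no attracting fixed point and no Julia periodic orbit in $\hl$, then every fixed point in $\hl$ is an indifferent Fatou point, and together with Proposition~\ref{classification}(1) and Rivera-Letelier's classification of Fatou components one should be forced into a contradiction with the count of fixed points (Theorem~\ref{residue.formula}) or with the non-simplicity hypothesis. Concretely, I expect one shows: the absence of attracting fixed points forces the Gauss-type reduction $T_\xi\vphi$ at an appropriate fixed point $\xi\in\hl$ to be a degree-two rational map (using that $\vphi$ is not simple, so $\deg_\xi\vphi=2$ for some $\xi$ on $\Gamma$), whose dynamics on $\ponec$ then has a repelling cycle, which lifts to a repelling periodic orbit in $\juliavphi\cap\hl$, contradicting (3).

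For $(2)\Rightarrow(1)$: assume $\vphi$ has an attracting fixed point $z\in\ponel$. By Proposition~\ref{classification} we are in case (1), (2), or (3); cases (2) and (3) would give a periodic orbit in $\juliavphi\cap\hl$, so I must rule those out using the attracting fixed point. The idea: the immediate basin $U$ of $z$ is a Fatou component; one shows $U$ is a totally invariant ball whose complement $B=\pberl\setminus U$ is a closed ball mapped over itself with degree $2$ (counting with Lemma~\ref{riemann-hurwitz}, $B$ contains both critical points since $\vphi$ is not simple and the basin is attracting). Then $\vphi\colon B\to B$ is a degree-two self-map of a ball with both critical points inside, so by a standard Böttcher/filled-in-Julia-set argument over $\L$ the Julia set is the boundary behavior inside $B$, all of which lies in $\ponel$; in particular there is no periodic point of $\vphi$ in $\hl$ inside $B$, and none in $U$ either (Fatou), so $\juliavphi\cap\hl=\emptyset$ and $\juliavphi\subset\ponel$.

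Finally, the ``moreover'' part. Having established (1)--(3), the picture is: $\vphi$ restricted to the closed ball $B=\pberl\setminus U$ (with $U$ the immediate basin of $z$) is a degree-two self-map containing both critical points, and $\fatouvphi=U$. I would then run the non-Archimedean analogue of the quadratic-polynomial construction: the first preimage $\vphi^{-1}(B)$ consists of two disjoint closed balls $B_0,B_1\subset B$, each mapped isomorphically onto $B$ by $\vphi$, and inductively $\vphi^{-n}(B)$ is a disjoint union of $2^n$ balls coded by words in $\{0,1\}^n$. The Julia set is $\bigcap_n\vphi^{-n}(B)$, and Lemma~\ref{fine.sequence} (the nested-ball lemma proved in Section~\ref{section|BerkovichLine}) guarantees that each infinite nested intersection is a single rigid point, because the hyperbolic-length of the boundary segments grows without bound --- here the degree-two covering rescales hyperbolic distance by $2$ at the relevant type II points, giving the divergence hypothesis. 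The resulting coding $\{0,1\}^{\N}\to\juliavphi$ is a homeomorphism conjugating the one-sided shift to $\vphi|_{\juliavphi}$. The main obstacle, as flagged, is the argument that rules out indifferent and repelling Julia periodic orbits in $\hl$ once an attracting fixed point is present (and its converse): this requires carefully combining the local degree constraints from Lemma~\ref{degree-one-l}, the Riemann--Hurwitz count of Lemma~\ref{riemann-hurwitz}, and the fixed-point formula of Theorem~\ref{residue.formula}, while keeping track of where the two critical points sit relative to the invariant ball $B$.
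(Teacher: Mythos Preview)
Your proposal has a genuine structural gap in $(2)\Rightarrow(1)$ and in the ``moreover'' construction. You assume the immediate basin $U$ of the attracting fixed point is a totally invariant open ball, so that $B=\pberl\setminus U$ is a closed ball with $\vphi:B\to B$ of degree~$2$. But if that were true, the boundary point $\xi=\partial U$ would be fixed with $\deg_\xi\vphi=2$, and then $\juliavphi=\{\xi\}$, i.e.\ $\vphi$ would be simple --- contradicting the hypothesis. In fact the immediate basin here is \emph{not} a ball: once the proposition is established, $F(\vphi)$ is a single component with Cantor boundary $\juliavphi\subset\ponel$. The paper's route avoids this by taking the \emph{maximal forward-invariant} open ball $B$ around $\z$; its boundary point $\xi$ is shown \emph{not} to be fixed (precisely by the non-simplicity argument above), and one then works with the direction $D$ at $\vphi(\xi)$ containing $\xi$. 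The key fact is that $\vphi^{-1}(D)=D_0\sqcup D_1\subset D$ with each $\vphi:D_i\to D$ of degree~$1$, not~$2$. This also corrects your divergence argument: the hyperbolic distances $\dist_\hl(\partial D_n(\mathbf i),\partial D_{n+1}(\mathbf i))$ are all equal to a fixed positive constant (the maps are isometries), so Lemma~\ref{fine.sequence} applies because the series is a divergent constant series, not because of any degree-two expansion.

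Your $(3)\Rightarrow(2)$ is also over-engineered and the suggested mechanism does not work: periodic directions of $T_\xi\vphi$ do not lift to periodic points in $\hl$. The paper's argument is one line: by Benedetto there is a non-repelling rigid fixed point; if it were indifferent it would lie in a fixed Rivera domain, and by Theorem~\ref{rivera.domain} the boundary of that Rivera domain is a periodic orbit in $\juliavphi\cap\hl$, contradicting~(3).
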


  \begin{proof}
    First we show, by contradiction, that (3) implies (2).
    If there is no attracting fixed point in $\ponel$, then there
    exists an indifferent fixed point $\zp$ in $\ponel$
    (see~\cite{benedettothesis}). Therefore,
    $\zp$ belongs to a Rivera domain $U_{\zp}$ whose
    boundary, according to Theorem~\ref{rivera.domain}, contains a
    periodic orbit in $\hl \cap \juliavphi$.

    Now  assume that (2) holds: $\vphi$ has an attracting fixed point $\z \in
    \ponel$. We simultaneously show that (1) holds ($\juliavphi \subset \ponel$) and
that the dynamics is topologically equivalent to the one-sided shift on two symbols.  

Let $B$ be  the union of all open  balls $B^\prime$
    containing $\z$ such that $\vphi(B^\prime) \subset B^\prime$.  It
    follows that $B$ is a ball such that $\vphi (B) \subset B$.  Now
    let $\xi$ be the point associated to $B$. Since $D_{\xi}(\z)$ is contained in the
    basin of $\z$, we have that $\vphi(D_{\xi} (\zeta)) =
    D_{\vphi (\xi)} (\zeta)$. By Schwartz Lemma, $\xi$ is not
    a fixed point, for otherwise, $\vphi:D_{\xi} (\zeta) \rightarrow D_{\xi} (\zeta) $
would have degree $2$ and we would also have that  $\deg T_{\xi} \vphi =2$, but
  $\vphi$ is not simple. 
Hence $\xi$ is contained in a direction $D$ at $\vphi(\xi)$.

We claim that $\vphi^{-1} (D)$ is the disjoint union of two open
    balls $D_0, D_1$ compactly contained in $D$, each of which is mapped onto $D$
    in a one-to-one fashion. 
In fact, given a direction $D^\prime$ at $\xi$ such that $T_\xi \vphi (D^\prime) = D$
we have that  $\deg_{D^\prime}  {T_\xi \vphi}   =1$ and $\vphi(D^\prime) =D$;
for  otherwise, $\vphi (\pberl \setminus D^\prime) \subset \pberl
    \setminus D^\prime$ which, by Section~\ref{ends-s}, contradicts the definition of $B$.
    Therefore $\vphi^{-1} (D)$ is the disjoint union of two open
    balls $D_0, D_1$, compactly contained in $D$, and each of them is mapped onto $D$
    in a one-to-one fashion.  

Now we establish the conjugacy between $\juliavphi$ and the full shift on two symbols.
Observe that $\juliavphi \subset D_0 \cup D_1$. 
 Given $\varsigma \in \juliavphi$, let
$i_n (\varsigma)=0$ or $1$ if $\vphi^n (\varsigma) \in D_0$ or $D_1$,
respectively. It follows that itinerary map 
$$\juliavphi \ni \varsigma \mapsto (i_n (\varsigma))_{n \geq 0} \in \{0,1\}^{\N \cup \{0 \}}$$ 
is continuous. To check that the itinerary map is one-to-one and onto, given
an infinite symbol sequence $\mathbf{i}=(i_0, i_1, \dots )$, for each $n \geq 0$, let
$$ D_n (\mathbf{i}) = \{ \varsigma \mid \vphi^k (\varsigma) \mbox{ for all } 0 \leq k \leq n \}.$$
Now $\vphi^{n+1}$ maps the annulus $D_n(\mathbf{i}) \setminus D_{n+1}(\mathbf{i}) $ 
isomorphicaly onto $D \setminus D_{i_{n+1}}$, therefore 
$$\dist_\H (\partial D_n(\mathbf{i}),\partial D_{n+1}(\mathbf{i})) = 
\dist (\partial D, \partial D_{i_{n+1}}).$$
From Lemma~\ref{fine.sequence} we conclude that the
intersection of the 
nested sequence $D_0 (\mathbf{i}) \supset D_1 (\mathbf{i}) \supset \cdots$ 
is a singleton contained in $\ponel$. It follows that
the itinerary map is a conjugacy and that $\juliavphi \subset \ponel$.
Moreover,  $F(\vphi)= \pberl \setminus \juliavphi$ is connected and invariant.
Therefore, $F(\vphi)$ is the inmediate basin of $\z$.

Since (1) implies (3) is obvious, we have proven the proposition.
  \end{proof}

\subsection{Indifferent periodic orbit in $\hl$}
\label{sec:indifferent}
Now we describe the situation in the presence of an indifferent orbit in
the Julia set. 

Before stating our result let us denote the {\sf grand orbit} of a periodic orbit
$\cO$ by $GO(\cO)$. Also recall that the {\sf renewal shift} is the (non-compact) subshift of
the full shift on two symbols obtained after removing the grand orbit of a fixed
point from the full shift.

\begin{proposition}
\label{pro:14}
  Suppose that $\vphi$ has an indifferent periodic orbit 
$\cO$ of period $q \geq 1$ in $\juliavphi \cap \hl$. Then the following statements hold:

\begin{enumerate}
\item
$\juliavphi \cap \ponel$ is topologically conjugate to the renewal shift.  

\item
    $\juliavphi \cap \hl = GO (\cO)$.

  \item
    The dynamics over $\juliavphi$ is topologically conjugate to the subshift 
$\Sigma_q$
of finite type in $2q$ symbols $\{ X_0 , \dots X_{q-1} , Y_0 \dots Y_{q-1} \}$ (subscripts $\mod q$) with the following allowed transitions:

\begin{alignat*}{2}
X_j &\rightarrow X_{j+1}, \quad &\text{for}\ j=0, \dots, q-1. \\
X_{q-1} &\rightarrow Y_j, &\text{for}\ j=0, \dots, q-1. \\
Y_j &\rightarrow X_j, &\text{for}\ j=0, \dots, q-1. \\
Y_0 &\rightarrow Y_j, &\text{for}\ j=0, \dots, q-1.
\end{alignat*}

\noindent
Under the topological conjugacy, $\cO$ corresponds to the orbit of $(X_0 X_1 \dots X_{q-1})^{\infty}$.

\item
There exists at least one fixed Rivera domain. 
The boundary of every Rivera domain is $\cO$.
If $U$ is a periodic Fatou component, then $U$ is a fixed Rivera domain.

\item
If $U$ is a Fatou component which is not eventually periodic, then $U$ is an annulus or an open ball.
Moreover, there exists $n_0 \ge 0$ such that $\vphi^n(U)$ is an open ball with $\partial \vphi^n(U) \subset \cO$ for all $n \ge n_0$.
Furthermore, $U$ is contained in the basin of $\cO$.
\end{enumerate}
\end{proposition}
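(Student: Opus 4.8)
The plan is to distil the dynamics into a finite combinatorial model attached to the cycle $\cO$ and to the ball carrying the critical values, and then deduce the five assertions from that model together with the structural results already available. Write $\cO=\{\xi_0,\dots,\xi_{q-1}\}$ with $\vphi(\xi_j)=\xi_{j+1}$ (subscripts mod $q$). By Lemma~\ref{indifferent.critical} there is a Fatou component $D$ --- an open ball in the basin of $\cO$ with $\partial D\subset\cO$ --- containing both critical values; relabel so $\partial D=\{\xi_0\}$. By Remark~\ref{degree.one.julia}, $\deg T_\xi\vphi=1$ for every $\xi\in\juliavphi$, so each $T_{\xi_j}\vphi$ is M\"obius and each $\xi_j$ carries a unique bad direction (Lemma~\ref{degree-one-l}). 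Since $\cO\subset\juliavphi\cap\hl$, the map $\vphi$ has no attracting fixed point in $\ponel$ (otherwise $\juliavphi\subset\ponel$ by Proposition~\ref{attracting.fixed.point}), so by Benedetto's theorem it has an indifferent fixed point in $\ponel$; by Theorem~\ref{rivera.domain} this point lies in a fixed Rivera domain $U_0$, and Propositions~\ref{classification} and~\ref{star} give $\partial U_0=\cO$ with $U_0$ an open ball if $q=1$ and a starlike domain --- center a fixed point $\veta$ cyclically permuting the $q$ legs --- if $q\ge 2$. This is the first sentence of (4).

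Next I would construct the pieces. Since $D$ contains every critical value, $\pberl\setminus D$ contains none, so $\vphi^{-1}(\pberl\setminus D)=B'\sqcup B''$ is a disjoint union of two closed balls, each mapped homeomorphically onto $\pberl\setminus D$, and $\widehat D:=\vphi^{-1}(D)=\pberl\setminus(B'\sqcup B'')$ is an annulus carrying both critical points, with $\vphi\colon\widehat D\to D$ of degree $2$. A short analysis shows $\cA_{U_0}\subset B'$ (hence $\cO\subset B'$), that $\partial B'=\{\xi_{q-1}\}$ and $\partial B''=\{\xi_0'\}$ for a point $\xi_0'$ off $\cO$, and that the bad direction at $\xi_j$ points toward $U_0$ for $j<q-1$ and toward $\widehat D$ for $j=q-1$. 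Refining $B'$ into a piece $X_j$ around each $\xi_j$ and taking the complementary piece $Y_j$ at each phase yields a finite partition into balls and annuli, each carried homeomorphically onto another piece by an iterate of $\vphi$, whose incidence graph is exactly that of $\Sigma_q$: the cycle $X_0\to\cdots\to X_{q-1}\to X_0$ is $\cO$; the edges $X_{q-1}\to Y_j$ record that the bad direction at $\xi_{q-1}$ runs through $\widehat D$; and the edges $Y_j\to X_j$, $Y_0\to Y_j$ record the two-to-one self-similarity $\juliavphi=(\vphi|_{B'})^{-1}(\juliavphi)\sqcup(\vphi|_{B''})^{-1}(\juliavphi)$. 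Coding $x\in\juliavphi$ by its itinerary of pieces gives a continuous $\vphi$-equivariant map $h\colon\juliavphi\to\Sigma_q$.

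It then remains to prove $h$ is a homeomorphism, the argument splitting according to whether an admissible itinerary is eventually $(X_0\cdots X_{q-1})^\infty$. For all other itineraries the nested pieces are genuine balls whose boundary points repeatedly exit a fixed neighbourhood of $\cO$, so $\sum_\ell\dist_\hl(\partial X_\ell,\partial X_{\ell+1})=\infty$ and Lemma~\ref{fine.sequence} yields a single rigid point; conversely every rigid Julia point arises this way, so $h$ carries $\juliavphi\cap\ponel$ bijectively onto the set of sequences of $\Sigma_q$ visiting the $Y$'s infinitely often, and a direct symbolic computation identifies this with the renewal shift --- which is (1). For an itinerary eventually $(X_0\cdots X_{q-1})^\infty$ the hyperbolic lengths of the nested pieces are summable, but the preimages under $\vphi$ of the pieces are unique, so the intersection is still a single type-II point; these points exhaust $GO(\cO)$, giving $\juliavphi\cap\hl=GO(\cO)$, which is (2). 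Combining the cases, $h$ is a conjugacy $\vphi|_{\juliavphi}\cong\Sigma_q$ sending $\cO$ to the orbit of $(X_0\cdots X_{q-1})^\infty$, which is (3).

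Finally, since $\vphi^n(\widehat D)=\vphi^{n-1}(D)$ and $D$ lies in the basin of $\cO$, the critical orbits accumulate only on the indifferent cycle, so $\vphi$ has no attracting periodic orbit and, by Rivera's classification, every periodic Fatou component is a Rivera domain; its boundary is a periodic orbit inside $\cO$ (Proposition~\ref{classification}, Theorem~\ref{rivera.domain}), hence equals $\cO$, and it must be fixed --- for $q\ge 2$ because a component with boundary $\cO$ must contain $\veta$, and for $q=1$ because a non-fixed periodic direction at $\xi_0$ would form a finite non-fixed orbit of the M\"obius map $T_{\xi_0}\vphi$, impossible alongside the infinite orbit of its bad direction --- which completes (4). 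For (5), $\widehat D$ is the only Fatou component meeting a critical point, so every other Fatou component maps homeomorphically onto its image; hence a non-eventually-periodic component $U$ has all forward images balls or all annuli until the orbit meets $\widehat D$ or lands on some $\vphi^n(D)$, after which it is an open ball with boundary in $\cO$ shrinking onto $\cO$, so $U$ lies in the basin of $\cO$. The main obstacle I foresee is the second step --- setting up the piece decomposition correctly, locating the various bad directions, handling the off-cycle preimage $\xi_0'$, and verifying the incidence graph is precisely $\Sigma_q$ --- together with the clean split, via Lemma~\ref{fine.sequence}, between the rigid part of $\juliavphi$ (the renewal shift) and the non-rigid part ($GO(\cO)$); the rest is bookkeeping on top of Propositions~\ref{classification},~\ref{star},~\ref{attracting.fixed.point} and Rivera's structure theorems.
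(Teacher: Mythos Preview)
Your overall strategy mirrors the paper's: use Lemma~\ref{indifferent.critical} and Proposition~\ref{star} to set up the geometry around $\cO$, build a Markov partition into $2q$ pieces, code by itineraries, and invoke Lemma~\ref{fine.sequence}. The paper starts from the complement $L=\pberl\setminus U_0$, removes the forward orbit of the critical-value ball $V$ to form $L_0$, and pulls back once to obtain the $2q$ pieces of $L_1$; your route through $B'\sqcup B''=\vphi^{-1}(\pberl\setminus D)$ is a reindexed variant, though your description of the pieces (``refining $B'$ into a piece $X_j$ around each $\xi_j$'') is left vague where the paper is explicit. One small slip: for itineraries eventually $(X_0\cdots X_{q-1})^\infty$ the nested intersection is not a single type~II point but a closed ball with countably many directions removed, containing a single Julia point --- this is what the paper actually proves and uses.

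There is a genuine gap in your argument for~(4). From Theorem~\ref{rivera.domain} and Proposition~\ref{classification} you only obtain $\partial U\subset\cO$, not $\partial U=\cO$; for $q\ge 2$ you have not excluded a period-$q$ open ball $D$ at some $\xi_j$, i.e.\ a direction contained in $B_j$ fixed by $T_{\xi_j}\vphi^q$ other than the $U_0$-direction. Your M\"obius argument correctly rules out directions of period~$>1$, but not a \emph{second fixed} direction. What is missing is that $T_{\xi_j}\vphi^q$ is parabolic with the $U_0$-direction as its unique fixed point: apply Theorem~\ref{residue.formula} to $\vphi^q$ and $U_0$ to get $N_q=2+\sum_j(\eta_q(\xi_j)-2)$, where $N_q\ge 2$ (the two $\vphi$-fixed points lying in $U_0$ are also $\vphi^q$-fixed) and each $\eta_q(\xi_j)\le 2$ since $T_{\xi_j}\vphi^q$ is M\"obius; this forces $\eta_q(\xi_j)=2$ for every~$j$. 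With parabolicity in hand, every direction at $\xi_j$ inside $B_j$ has infinite forward orbit under $T_{\xi_j}\vphi^q$ and therefore lies in the basin of~$\cO$ --- which is exactly what both your direct argument for~(4) and the paper's assertion that ``every direction at $\xi_0$ contained in $X$ is a Fatou component contained in the basin of~$\cO$'' require.
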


\begin{remark}
{\em  The above subshift factors onto the full shift on the symbols $\{ X,
  Y \}$, where the factor map $h$ is the one obtained by replacing the
  symbols $X_j$ with $X$ and $Y_j$ with $Y$, for all $j$.  The factor
  map $h$ is one-to-one except over the backward orbit of $X^{\infty}$
  where is $q$-to-one.  In fact, note that the forward (periodic)
  orbit of $(X_0 X_1 \dots X_{q-1})^{\infty}$ is $h^{-1}(X^{\infty})$.}
\end{remark}

  \begin{proof}
Let $V$ be the Fatou component that contains the critical values (see
Lemma~\ref{indifferent.critical}).  Enumerate $\cO$ by $\xi_0, \xi_1,
\dots, \xi_{q-1}$ respecting dynamics and so that $\partial V =
\xi_1$. Subindices will be modulo $q$ throughout this proof.
Note that $V$ is contained in the basin of $\cO$.

Let $U$ be the union of all fixed Rivera domains.  Recall that
$U$ is an open ball, or a union of two
open balls with the same boundary point,  or a starlike
domain (Proposition~\ref{star}).  In the first pair of cases, $q =1$ and, in the last, $q
>1$.  In particular, $L=\pberl \setminus U$ is a ``sphere'' $B_0$ or
the disjoint union of $q$ closed balls $B_0, \dots, B_{q-1}$ labeled
such that $\{\xi_j\} = \partial B_j$.  Now let $W$ be the direction at $\xi_0$
containing the critical points and observe that $\vphi(W) = \pberl$.
In particular $\vphi(B_0) = \pberl$ and $\vphi: B_0 \setminus W
\rightarrow B_1 \setminus V$ is a bijection.  Moreover, if $q >1$,
then $\vphi: B_j \rightarrow B_{j+1}$ is a bijection for all $j \neq
0$.

It follows that $\xi_1$ has two preimages: $\xi_0 \in \cO$ and $\xip_0 \in W$.
The degree of $\vphi$ at each one of these preimages is $1$. The bad direction at $\xi_0$ is
$W$, and denote the bad direction at $\xi'_0$ by $W^\prime$. Both directions
contain the critical points, and their image direction is $V$. Thus, $A=W\cap W^\prime$ is an
open annulus which maps onto $V$. Therefore $A$ is a Fatou component contained in the basin of $\cO$. It follows that each one of the closed balls $B=\pberl \setminus W$ and $B^\prime=\pberl \setminus W^\prime$
map isomorphically onto $\pberl \setminus V$. 
Denote by $\xi'_j$ the preimage of $\xi_{j+1}$ contained in $B^\prime$.

Now consider the following level zero set
$$L_0 = L \setminus  \bigcup_{n \geq 0} \vphi^n (V).$$
The set $L_0$ also has exactly $q$ connected components $Z_0, X_1 \dots, X_{q-1}$
where $Z_0 \subset B_0,  X_1 \subset B_1 \dots,  X_{q-1} \subset B_{q-1}$.
Moreover, $\vphi$ maps $X_j$ isomorphically onto its image
$X_{j+1}$ for all $1 \leq j \leq q-2$. Also, $X_{q-1}$ maps isomorphically onto $Z_0$.
However, $Z_0$ is the disjoint union of $X_0 = Z_0 \setminus W$, $A$
and $B^\prime$. The set $X_0$  maps isomorphically onto $X_1$, $A$ maps in a two-to-one fashion
onto $V$ and $B^\prime$ maps isomorphically onto $\pberl \setminus V$.

Let $L_1 = \vphi^{-1} (L_0) \subset L_0$.
The set $L_1$ has $2q$ connected components.
The components $X_0, \dots, X_{q-1}$ described above and
the components $Y_0, Y_1, \dots, Y_{q-1}$ contained in  $B^\prime \subset Z_0$
such that $\vphi(Y_0) = Z_0$ and $\vphi(Y_j) = X_{j} $ for $j=1, \dots, q-1$.
Note that $\partial Y_j = \{ \xi'_{j-1} \}$.

A key remark is that 
$$\dist_\hl (\partial X_{q-1}, \partial (X_{q-1} \cap \vphi^{-1} (Y_j))) \geq \mdl A,$$
for all $0 \leq j \leq q-1$ and,
$$\dist_\hl (\partial Y_{0}, \partial (Y_{0} \cap \vphi^{-1} (Y_0))) = \mdl A,$$
 
Since the complement of $L_0$ is contained in the Fatou set, 
we have that $\juliavphi \subset L_1$, and we may introduce the itinerary function:
$$\juliavphi \ni \z \mapsto (i_k (\z))_{k \geq 0} \in \Sigma_q$$
if $\vphi^k (\z) \in i_k (\z)$. It follows that the itinerary map is well defined 
and continuous. 

To check that the itinerary function is a bijection, consider a
sequence $\mathbf{i} = (i_0, i_1, \dots ) \in \Sigma_q$.
For all $\ell \geq 0$, let
$$C_\ell (\mathbf{i}) = \{ \z \in \pberl \mid \vphi^k (\z) \in i_k
\mbox{ for all } 0 \leq k \leq \ell \}.$$ Note that $C_\ell (\mathbf{i})$ is
a non-empty and closed set.  Moreover, for all $\ell$, $C_\ell (\mathbf{i})
\supset C_{\ell+1} (\mathbf{i})$ and $C_\ell (\mathbf{i})$ has a unique
boundary point which we denote by $\veta_\ell (\mathbf{i}) \in GO(\cO)$.

The arc $[\veta_\ell (\mathbf{i}), \veta_{\ell+1} (\mathbf{i})]$ is mapped
isometricaly, in the hyperbolic distance,
onto $[\partial i_\ell , \partial (i_\ell \cap \vphi^{-1}(i_{\ell+1}))]$ by
$\vphi^\ell$. Therefore, if $\mathbf{i}$ contains infinitely many symbols
in $\{ Y_0, \dots, Y_{q-1} \}$, then there are infinitely many $\ell \geq 0$
such that $i_\ell = X_{q-1}$ and $i_{\ell+1} = Y_j$ for some $j$ or, $i_\ell = i_{\ell+1} = Y_0$.
From Lemma~\ref{fine.sequence}, $\cap C_\ell(\mathbf{i})$ is a singleton $\{ \xi \}$ contained in $\ponel$
and $\xi$ is the unique point with itinerary $\mathbf{i}$. Moreover,
$\xi \in \juliavphi$ since it is the limit of $\veta_\ell(\mathbf{i}) \in GO(\cO) \subset
\juliavphi$. 
If $\mathbf{i}$ contains only finitely many symbols in $\{ Y_0, \dots, Y_{q-1} \}$,
then there exists $k$ such that
$$(i_\ell)_{\ell \geq k} = ({X_0X_1 \cdots X_{q-1}})^{\infty},$$
(the period $q$ symbol sequence).
Hence $\vphi^k$ maps $ \cap C_\ell(\mathbf{i})$ homeomorphically onto $ X$,
where $X$ consists of $B_0$ with all the directions in the positive and negative orbit 
of $W$ removed. Therefore, the unique point of $X$ in the Julia set is $\xi_0$
and every direction at $\xi_0$ contained in $X$ is a Fatou component contained in the basin of $\cO$. 
It follows that $\cap C_\ell(\mathbf{i})$ contains exactly one point of the Julia set. 
Moreover, every Fatou component contained in  $\cap C_\ell(\mathbf{i})$ is a ball contained in the basin of $\cO$. 
Any Fatou component is either contained in some $\cap C_\ell(\mathbf{i})$ or,
eventually maps isomorphically onto a fixed Rivera domain or, eventually maps isomorphically onto $A$ and statements (4) and (5) of the proposition follow.
\end{proof}

\subsubsection{Examples} Let us illustrate examples of maps possesing an indifferent orbit in $\hl$. 
Below, let us denote by $x_\alpha \in \hl$ the point associated to the ball 
$B_{\val{t}^{\alpha}}(0)$. 

The Gauss point $x_0$ is an indifferent fixed point of the map $$\vphi_1 (\z) =  \z +1 + \dfrac{\tau}{\z}.$$ 
 Moreover, since the bad direction at $x_0$ is the one pointing towards $\z=0$, we have that $x_0 \in J(\vphi_1)$. The unique fixed Rivera domain is the direction of $\infty$.

Given $\theta \in \R \setminus \Q$, the Gauss point $x_0$ is a Julia (indifferent) fixed point of
 $$\vphi_2 (\z) = \exp( i \theta) \z \dfrac{\z+1 + \tau}{\z+1}.$$
Here, the directions of $\z=0$ and $\z=\infty$ at $x_0$ are the fixed Rivera domains.

Finally, consider $$\vphi_3(\z) = \tau^{1/2} - \dfrac{1+\tau^2}{\z} + \dfrac{\tau}{\z^2}.$$
It follows that $\vphi_3$ has a starlike Rivera domain $U$ with skeleton $[x_{1/2}, x_{-1/2}]$. The unique fixed point in the skeleton is $x_0$. The extreme points form a Julia indifferent period $2$ orbit.

\section{Maps with a repelling periodic orbit in $\hl$}
\label{sec:maps-with-repelling}
The aim of this section is to describe the dynamics of quadratic rational maps $\vphi$ having a non-classical repelling periodic orbit. 

Section~\ref{sec:basics} contains Lemma~\ref{basics.repelling} which establishes the basic facts about these maps. Throughout this section as well as
in Section~\ref{sec:TopModel} we will freely employ the content and notation of this Lemma. In Section~\ref{sec:filled-julia-set} we introduce the filled Julia set
$\filledvphi$. We state Proposition~\ref{repelling.filled} which describes its connected components and whose proof is completed, after some work, in 
Section~\ref{sec:proofs12}. In Section~\ref{sec:puzzle} we introduce the basic structure to study $\filledvphi$, namely ``dynamical pieces'' of integer levels. Section~\ref{sec:when-fill-cont} deals with the easier case of maps for which only one critical point lies in $\filledvphi$. 
In Section~\ref{sec:both-critical-points} we deal with the case in which both critical points lie in $\filledvphi$, but before we need to introduce dynamical pieces of ``intermediate levels'' in Section~\ref{sec:piec-interm-level}. Finally, we establish Proposition~\ref{repelling.filled} in Section~\ref{sec:proofs12},
which also contains the proof of theorems~\ref{ithr:periodic} and~\ref{ithr:fatou}.

\subsection{The basics}
\label{sec:basics}
We start by stating the aforementioned lemma.

\begin{lemma}
\label{basics.repelling}
  Let $\vphi \in \L(\z)$ be a quadratic rational map which is not simple.
  Suppose that $\vphi$ has a repelling periodic orbit in $\hl$. Then the following
 statements hold:
  \begin{enumerate}
    \item
      $\vphi$ has exactly one fixed Fatou component $U_0$ which is
      a starlike Rivera domain. The boundary of $U_0$ is a repelling periodic orbit 
$\cO=\{ \xi_0, \xi_1 = \vphi(\xi_0), \dots, \xi_{q-1} = \vphi^{q-1}(\xi_0) \}$
of period $q >1$. The skeleton $\cA_{U_0}$ contains a unique fixed point $\veta_0$ which we 
will call de {\sf center of $U_0$}. The M\"obius transformation $T_{\veta_0} \pberl$ is a rotation of order $q$.

\item
  $L_0 = \pberl \setminus U_0$ is the disjoint union of $q$ closed balls $B_0, \dots, B_{q-1}$
labeled such that $\partial B_j = \{ \xi_j \}$. One of these balls, say  $B_0$, contains
both critical points of $\vphi$. Moreover, the following holds:
\begin{enumerate}
  \item
$$\deg_{\xi_j} \vphi = \left\{ \begin{array}{ll}
    1 & \mbox{if $j \neq 0$,} \\
     2 & \mbox{if $j=0$.}
\end{array} \right. $$

\item
For $j=1, \dots, q-1$, the bad direction at $\xi_j$ is the direction that contains $U_0$; and
$$\vphi : B_j \rightarrow B_{j+1}$$ is a bijection (indices $\mod q$).

\item
There exists a direction  $D_0 \subset B_0$ at $\xi_0$ such that $\vphi(D_0)$ is the direction of $U_0$ at $\xi_1$.
Furthermore,  $$\vphi : B_0 \setminus D_0 \rightarrow B_1$$ 
is two--to--one (counting multiplicities) and 
$$\vphi: D_0 \rightarrow
\pberl \setminus B_1$$ is a bijection.
\end{enumerate}

    \item
      For all $\xi \in \partial U_0$, the rational map
$$T_\xi \vphi^q: T_\xi \pberl \cong \ponec  \rightarrow T_\xi \pberl \cong \ponec$$ 
has a multiple fixed point in the direction of $U_0$. All these maps are M\"obius conjugate and will be denoted
by $T_\cO \vphi$.

\item
There exists a Fatou component containing a critical point which is an open ball $D$, contained in the basin of $\cO$, with  $\partial D \subset \cO$.

    \item
      Assume that  $\vphi$ has another non-rigid repelling periodic orbit $\cO^\prime$ of  period $q^\prime$.
Then $q^\prime > q$ and, for all $\xi \in \cO^\prime$,
$$T_\xi \vphi^{q^\prime}: T_\xi \pberl \cong \ponec  \rightarrow T_\xi \pberl \cong \ponec$$ 
is (in the appropriate coordinate) a degree two polynomial. Moreover, there
exists a critical point of $\vphi$ that belongs to a Fatou component which 
is an open ball $D^\prime$ with $\partial D^\prime \subset \cO^\prime$.
  \end{enumerate}
\end{lemma}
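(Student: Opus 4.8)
The plan is to analyze the dynamics of $\vphi$ near the fixed Rivera domain $U_0$ by exploiting the combinatorial structure of the quadratic map and the classification of Julia periodic orbits in $\hl$ established in Proposition~\ref{classification}. I would begin with statement (1): by Proposition~\ref{classification}, since $\vphi$ has a repelling periodic orbit in $\juliavphi \cap \hl$, case (3) holds. One then has to exhibit a fixed Rivera domain. Since $\vphi$ has at least one non-repelling fixed point $z \in \ponel$ by Benedetto's theorem~\cite{benedettothesis}, and $\vphi$ is not simple (so $\juliavphi \cap \hl \neq \emptyset$), the fixed point $z$ cannot be attracting (otherwise Proposition~\ref{attracting.fixed.point} would give $\juliavphi \subset \ponel$, contradicting the existence of the repelling orbit). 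Hence $z$ is indifferent and, by Theorem~\ref{rivera.domain}, lies in a fixed Rivera domain $U_0$ whose boundary is a periodic orbit in $\juliavphi \cap \hl$; by the classification this orbit must be repelling. Proposition~\ref{star} then shows $U_0$ is an open ball or starlike, but the open-ball case forces $\partial U_0$ to be a single repelling type II fixed point $\xi$, which is impossible since $\deg_\xi \vphi = 1$ for a fixed Rivera domain. So $U_0$ is starlike, its boundary $\cO$ is a single periodic orbit of some period $q$, and Proposition~\ref{star} (via the analysis of $T_{\veta_0}\vphi$ as a finite-order rotation) gives $q > 1$ and the center $\veta_0$; uniqueness of $U_0$ among fixed Rivera domains also comes from Proposition~\ref{star}, and uniqueness among all fixed Fatou components follows since any fixed Fatou component is either an immediate basin of an attracting fixed point (none exist) or a Rivera domain.

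For statement (2), the key input is Lemma~\ref{degree-one-l}: the critical points determine where $\vphi$ fails to be locally injective. Writing $L_0 = \pberl \setminus U_0$ as the disjoint union of closed balls $B_0, \dots, B_{q-1}$ with $\partial B_j = \{\xi_j\}$ (these are exactly the directions at points of $\cO$ pointing away from the skeleton $\cA_{U_0}$), I would observe that $\vphi$ permutes $\cO$ cyclically and sends each $B_j$ onto $B_{j+1}$. Since $\vphi$ has degree $2$ and $\vphi: U_0 \to U_0$ is a bijection, the total local degree along $\cO$ must account for the ``extra'' preimages: exactly one $\xi_j$, say $\xi_0$ after relabeling, has $\deg_{\xi_0}\vphi = 2$, and by Lemma~\ref{degree-one-l} (applied at each $\xi_j$ with $j \neq 0$, where $\deg T_{\xi_j}\vphi = 1$) the two critical points lie in a common direction at $\xi_j$; chasing this around the cycle places both critical points in $B_0$. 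The bad-direction statements in (b) follow from Lemma~\ref{degree-one-l}: at $\xi_j$ ($j\neq 0$) the unique bad direction is the one not mapped injectively, which must be the one containing $U_0$ (since $B_j \to B_{j+1}$ is injective, as it contains no critical points). For (c), at $\xi_0$ we have $\deg_{\xi_0}\vphi = 2$, so $\vphi(\partial B_0) = \xi_1$ and $\vphi(B_0) = \pberl$; Lemma~\ref{riemann-hurwitz} and the surplus-multiplicity bookkeeping from Section~\ref{ends-s} then isolate the direction $D_0 \subset B_0$ that is mapped onto the $U_0$-direction at $\xi_1$, with $B_0 \setminus D_0 \to B_1$ two-to-one and $D_0 \to \pberl \setminus B_1$ a bijection.

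Statement (3) is then immediate: for $\xi \in \cO$ the map $T_\xi \vphi^q$ fixes the direction $D$ of $U_0$, and since $\vphi^q: U_0 \to U_0$ is a bijection while $\vphi^q: B_j \to B_j$ has degree $2$ (picking up the single critical value along the cycle), a Riemann–Hurwitz count on $T_\xi\vphi^q$ as a degree-$2$ rational map over $\C$ forces the fixed point in direction $D$ to be multiple (a parabolic or a critical fixed point); conjugacy of these maps across $\xi \in \cO$ comes from conjugating by iterates of the tangent maps $T_{\xi_j}\vphi$. For statement (4), I would run the argument in the proof of Lemma~\ref{indifferent.critical} almost verbatim: taking a bad direction $D_0$ at $\xi_0$ (for $\vphi^q$), Lemma~\ref{degree-one-l} shows there is a unique bad direction at each point of $\cO$, so after finitely many steps $k$ the direction $T_{\xi_0}\vphi^k(D_0)$ becomes good and contains both critical values; this direction is then a Fatou component (an open ball) in the basin of $\cO$ with boundary in $\cO$, and it contains a critical value, hence its preimage component containing a critical point also works — or more directly the critical point itself lies in a ball component of the basin with boundary on $\cO$. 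Finally, statement (5): if $\cO'$ is a second repelling orbit in $\hl$, Proposition~\ref{star} (applied to the fixed Rivera domain, now known to be $U_0$) already shows $\partial U_0$ consists of a single orbit, so $\cO' \not\subset \partial U_0$; then $\cO'$ must be ``deeper'' in some $B_j$, and since both critical points are in $B_0$, the component of $\pberl \setminus \cO'$ containing the critical points is a single ball, forcing $T_\xi\vphi^{q'}$ to be (in suitable coordinates, placing the $U_0$-side point at $\infty$) a degree-two polynomial; comparing the combinatorics gives $q' > q$, and the critical-point-in-a-ball-Fatou-component claim follows as in (4). The main obstacle I anticipate is the careful bookkeeping in (2)(c) and (3): correctly tracking surplus multiplicities and degrees of the tangent maps around the cycle $\cO$, and verifying that exactly one critical value appears per period so that $T_\xi\vphi^q$ genuinely has a \emph{multiple} fixed point rather than two simple ones — this is where Lemma~\ref{riemann-hurwitz} and the direction-counting of Section~\ref{ends-s} must be applied with precision.
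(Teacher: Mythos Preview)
Your treatment of parts (1), (2) and (5) follows the paper's line closely and is essentially correct. However, there are genuine gaps in (3) and (4).

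For (3), the claim that ``a Riemann--Hurwitz count on $T_\xi\vphi^q$ as a degree-$2$ rational map over $\C$ forces the fixed point in direction $D$ to be multiple'' is not valid. Riemann--Hurwitz constrains critical points, not fixed-point multiplicities: a generic degree-$2$ rational map has three simple fixed points, and nothing you have established about degrees in the direction of $U_0$ prevents this. The paper instead invokes Theorem~\ref{residue.formula} (Rivera's residue formula), which directly relates the number of rigid fixed points inside a Rivera domain to the multiplicities $\eta(\xi)$ of the $U_0$-direction as a fixed point of the tangent map at boundary points. That formula is what forces $\eta(\xi_0) \ge 2$.

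For (4), the adaptation of Lemma~\ref{indifferent.critical} breaks down at the first step: at $\xi_0$ one has $\deg_{\xi_0}\vphi = 2$, equal to the global degree, so every direction at $\xi_0$ is good and there is no ``bad direction'' to start from. At the other points $\xi_j$ the unique bad direction is the $U_0$-direction, but its forward orbit under the tangent maps is periodic (it cycles through the $U_0$-directions), not infinite. The paper's argument is quite different: it uses (3) to know that $T_{\xi_0}\vphi^q$ has a parabolic fixed point, then invokes the classical complex-dynamics fact that the immediate basin of a parabolic fixed point contains a critical point with infinite forward orbit. That critical direction $D'$ (which necessarily contains a rigid critical point of $\vphi$, since the tangent maps at $\xi_j$, $j\neq 0$, are M\"obius) then has all its tangent-map iterates lying among the good directions in the $B_j$'s, so $\vphi^n(D')=T_{\xi_0}\vphi^n(D')$ for all $n$, giving the desired Fatou ball. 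Note also that (4) feeds into (5), so this gap propagates.
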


\begin{proof}
  From Section~\ref{sec:periodic.points} and
  Proposition~\ref{attracting.fixed.point} we conclude that $\vphi$ has at
  least one classical indifferent periodic point. Hence, there exists
  a fixed Rivera domain $U_0$.  It follows from
  Proposition~\ref{classification} that $\partial U_0$ is a repelling
  periodic orbit, say of period $q$.  Since $\vphi$ is not simple, $q >1$.  Now by
  Proposition~\ref{star}, $U_0$ is the unique fixed Fatou component and
  it is a  starlike domain.
This proves part (1).

Observe that by Lemma~\ref{degree-one-l} both critical points are
contained in the same complementary ball of $U_0$. Thus, in (2) we may assume that both critical points are in $B_0$.

We now prove part (2a). Again, by Lemma~\ref{degree-one-l}, we
have that $\deg_{\xi_j} \vphi =1$ for all $j \neq 0$. But
$\cO$ is repelling, 
thus $\deg_{\xi_0} \vphi =2$ and (2a) follows.

In order to prove (2b), let $j \neq 0$. By Lemma~\ref{degree-one-l},
there is only one bad direction at $\xi_j$ and this direction must be
the one of $U_0$, since it contains the critical points.  Moreover,
using the fact that $\deg_{\xi_j} \vphi =1$, it follows that every
direction contained in $B_j$ is a good direction and cannot map to the
direction $\pberl \setminus B_{j+1}$.  It follows that $\vphi : B_j
\rightarrow B_{j+1}$ is a bijection.

For (2c), from Section~\ref{ends-s} we conclude
 that the degree of $T_{\xi_0}\vphi$ in the direction $\pberl \setminus B_0$ at $\xi_0$ 
is $1$. Hence, there must exist another direction $D_0$ at $\xi_0$ 
which maps under $T_{\xi_0}\vphi$ onto $\pberl \setminus B_1$.
Since all the directions at $\xi_0$ are good directions, (2c) follows.

Observe that  (3) follows from
Theorem~\ref{residue.formula}, which says that the
  direction of $U_0$ under $T_{\xi_0} \vphi^q$ is a multiple fixed point.
That is, 
$$T_{\xi_0} \vphi^q: T_{\xi_0} \pberl \cong \ponec \rightarrow
T_{\xi_0} \pberl \cong \ponec$$ has a multiple fixed point in the
direction of $U_0$.

For (4), we invoke a result from complex dynamics which guarantees
that a map, such as $T_{\xi_0} \vphi^q$, with a multiple fixed point has a critical point, say $D^\prime$, with
infinite forward orbit (e.g. see~\cite{MilnorComplexBook}).
In particular,  $T_{\xi_0} \vphi^n (D^\prime)$ is a good direction at 
$\vphi^n (\xi)$ for all $n \geq 0$.  Hence, $\vphi^n (D^\prime) =T_{\xi_0} \vphi^n (D^\prime)$ for all $n \geq 0$.
We conclude that  $D^\prime$ is a  Fatou component, it contains a critical point of $\vphi$ and
 the omega limit of every point in $D^\prime$ is $\partial U_0$, since $\vphi^n(D') \neq \vphi^m(D')$ for all $n \neq m$.

Now for (5), note that another repelling periodic orbit 
$$\cO^\prime = \{ \xi'_0, \dots, \xi'_{q^\prime-1} \}$$
(indices respecting dynamics and $\mod q^\prime$)
is contained in $\pberl \setminus \overline{U_0}$.
For each $k=0, \dots, q^\prime -1$, let $B^\prime_k$ be the closed ball 
with boundary $\{ \xi'_k \}$ contained in  $\pberl \setminus \overline{U_0}$.

We claim that $\vphi (B^\prime_k) = B^\prime_{k+1}$. 
In fact, if $B^\prime_k \subset B_j$ for some $j \neq 0$, then this trivially follows from (2b).
If   $B^\prime_k \subset B_0 \setminus D_0$, then this also trivially follows but  now from (2c).
Finally if $B^\prime_k \subset D_0$, then $\vphi(B^\prime_k)$ is disjoint from the direction
at $\xi'_{k+1}$ containing $B_1$. Hence,  $\vphi(B^\prime_k) \subset B_j$ for some $j \neq 1$.
Since $\vphi(B^\prime_k)$ is a closed ball,  it follows 
that $\vphi(B^\prime_k) = B^\prime_{k+1}$.

From the previous paragraph, every direction at $\xi'_k$ contained in
$B^\prime_k$ is a Fatou component.  Therefore, the balls $B^\prime_0,
\dots, B^\prime_{q^\prime-1}$ are pairwise disjoint.  In view of part
(4), at most one of these balls, say $B^\prime_0$, contains a critical point.  It follows that
$\deg_{\xi'_0} \vphi^{q^\prime} =2$.  Finally, $\vphi^{q^\prime}
(B^\prime_0) = B^\prime_0$ implies that 
$$(T_{\xi'_0} \vphi^{q^\prime} )^{-1} (\pberl \setminus B^\prime_0) =
\pberl \setminus B^\prime_0.$$ That is, $T_{\xi_0} \vphi^{q^\prime} $
has a completely invariant direction, after putting this direction at $\infty$, we have that $T_{\xi_0}
\vphi^{q^\prime} $ is a quadratic polynomial, which establishes part (5).
\end{proof}

From the previous lemma it follows that for every non-classical repelling periodic orbit there exists a critical point contained in a Fatou component
whose boundary is a point in the periodic orbit. Therefore:

\begin{corollary}
\label{cor:3}
  If $\vphi : \pberl \to \pberl$ is a quadratic rational map, then $\vphi$ has at most two non-classical repelling periodic orbits.
\end{corollary}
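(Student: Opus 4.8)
The plan is to show that each non-classical repelling periodic orbit forces a distinct critical point into an associated Fatou component, and since a quadratic rational map has only two critical points, this immediately bounds the number of such orbits. First I would observe that a quadratic rational map which is not simple and has \emph{no} repelling periodic orbit in $\hl$ has, by Proposition~\ref{classification}, at most one (indifferent) periodic orbit in $\juliavphi \cap \hl$; in that case there are certainly at most two non-classical repelling periodic orbits (there are none). So the content is the case where $\vphi$ is not simple and \emph{does} possess a repelling periodic orbit in $\hl$, and I would invoke Lemma~\ref{basics.repelling} directly.

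The key step is the following: by parts (4) and (5) of Lemma~\ref{basics.repelling}, to each non-classical repelling periodic orbit $\cO$ there is associated a Fatou component $D$ which is an open ball with $\partial D \subset \cO$, and $D$ contains a critical point of $\vphi$. I would then argue that the critical points associated to two \emph{distinct} non-classical repelling orbits $\cO, \cO'$ must themselves be distinct. Indeed, suppose $\vphi$ had two distinct non-classical repelling periodic orbits. By Lemma~\ref{basics.repelling}(1), one of them, of least period $q$, is the boundary of the unique fixed Rivera domain $U_0$, and its associated critical ball $D$ has $\partial D \subset \cO$; the other, $\cO'$ of period $q' > q$, has its associated critical ball $D'$ with $\partial D' \subset \cO'$. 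Since $\cO$ and $\cO'$ are disjoint orbits (distinct periodic orbits in $\hl$), the balls $D$ and $D'$ are distinct Fatou components—in particular a critical point in $D$ cannot lie in $D'$. Hence $\vphi$ has at least two distinct critical points lying in these two components, accounting for both of its critical points.

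It follows that there is no room for a third non-classical repelling periodic orbit: a third such orbit would, by the same reasoning, require a third critical point in yet another distinct associated Fatou ball, contradicting $\deg \vphi = 2$. This is essentially the remark that precedes the corollary in the excerpt, made precise. The only point requiring care—and the main obstacle—is verifying that the critical points extracted from Lemma~\ref{basics.repelling}(4) for $\cO$ and from (5) for $\cO'$ are genuinely distinct points of $\ponel$, not merely that they lie in distinct components; but this is immediate, since a single point cannot lie in two disjoint open balls $D$ and $D'$. I would conclude by noting that the case analysis (no repelling orbit in $\hl$ versus one present) is exhaustive by Proposition~\ref{classification}, which completes the proof.
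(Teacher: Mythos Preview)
Your proof is correct and follows essentially the same approach as the paper: each non-classical repelling periodic orbit captures a critical point in a Fatou ball whose boundary lies in that orbit (Lemma~\ref{basics.repelling}(4),(5)), distinct orbits give distinct (hence disjoint) Fatou components, and there are only two critical points. The paper compresses this into a single sentence, while you spell out the case analysis and the disjointness argument; the only omission is the simple case, where $J(\vphi)$ is a singleton and the bound is trivial.
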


\subsection{The filled Julia set} 
\label{sec:filled-julia-set}

\bigskip
{\sf
  The standing assumption for the rest of this section is that the quadratic rational map $\vphi$ 
has a fixed Rivera domain $U_0$ whose boundary is a period $q >1$ repelling
orbit $\cO$.}

We subdivide dynamical space according to whether a point eventually lands in the fixed Rivera domain or not.
More precisely, we say that
$$\filledvphi := \{ \z \in \pberl \mid \vphi^n (\z) \notin U_0 \,
\mbox{for all} \, n \geq 0 \}$$
is the {\sf filled Julia set of $\vphi$.}

\begin{proposition}
\label{repelling.filled}
  Let $C$ be a connected component of $\filledvphi$. Then one of the following holds:
  \begin{enumerate}
  \item $C = \{ \chi \}$ where $\chi$ is a rigid point.
  \item $C$ eventually maps onto a periodic closed ball $B$. Moreover, $\partial B \subset \cO'$ where $\cO' \neq \cO$ is a repelling periodic orbit.
  \item $C$ consists of a closed ball with infinitely many directions removed. Moreover, $C$ eventually maps onto a component $C'$ whose unique boundary point lies in $\cO$.   
  \end{enumerate}
\end{proposition}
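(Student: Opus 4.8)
The plan is to analyze a component $C$ of $\filledvphi$ by following its forward orbit and using the dictionary already established in Lemma~\ref{basics.repelling}, namely the decomposition $\pberl \setminus U_0 = B_0 \sqcup \cdots \sqcup B_{q-1}$ together with the mapping behavior (2a)--(2c). First I would record the elementary closure properties: $\filledvphi$ is closed and totally invariant in the sense that $\z \in \filledvphi$ iff $\vphi(\z) \in \filledvphi$; hence $\vphi$ permutes the collection of connected components and each component $C$ is a closed, connected, convex subset of some $B_j$. The key structural input is that a connected component of $\filledvphi$ is itself obtained by removing from a closed ball the (at most countably many) directions which are eventually mapped into $U_0$; so $C$ is either a rigid point, a closed ball, or ``a closed ball with some directions deleted.'' I would make this precise by noting that $\pberl \setminus \filledvphi$ is the increasing union over $n$ of the open sets $\vphi^{-n}(U_0)$, each a disjoint union of open balls, so the complement inside a given closed ball $B$ (with $\partial B \subset \hl$) is $B$ minus a family of open sub-balls, whose associated boundary points lie in $B$.

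Next I would split according to whether the forward orbit of $C$ meets $D_0$ (the special direction in $B_0$ from Lemma~\ref{basics.repelling}(2c)). \textbf{Case A: the forward orbit of $C$ avoids $D_0$.} Then by (2b) and the part of (2c) describing $B_0 \setminus D_0 \to B_1$, the orbit $C, \vphi(C), \vphi^2(C), \dots$ stays inside the $B_j$'s and each $\vphi^n(C)$ is again a component of $\filledvphi$; if some $\vphi^n(C)$ contains a point of $\hl$ that is periodic it must be a repelling periodic point distinct from those in $\cO$ (since $\cO = \partial U_0 \not\subset \filledvphi$), and Lemma~\ref{basics.repelling}(5) shows the only such possibility is the orbit $\cO'$, giving conclusion (2) via the invariant balls $B'_k$. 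If the orbit never stabilizes at such a ball, I would argue, using the expansion estimate of Lemma~\ref{basics.repelling}(1) (the map $T_{\veta_0}$-rotation picture) together with Lemma~\ref{fine.sequence}, that the forward images of $\partial C$ wander off to $\ponel$ in the hyperbolic metric, forcing $C = \{\chi\}$ with $\chi$ rigid — conclusion (1). \textbf{Case B: some forward iterate of $C$ enters $D_0$.} Replacing $C$ by that iterate, and using that $\vphi: D_0 \to \pberl \setminus B_1$ is a bijection (so $D_0 \cap \filledvphi = D_0 \setminus \vphi^{-1}(U_0)$ maps bijectively onto $(\pberl \setminus B_1) \cap \filledvphi$), I would chase the component one step further; the essential observation is that after passing through $D_0$ the component lands in a ball whose boundary point eventually lands on $\cO$, because every point not in any $B_j$ nor eventually escaping through $D_0$ is in $U_0$ and hence not in $\filledvphi$. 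Tracking this carefully yields that $C$ eventually maps onto a component $C'$ whose unique boundary point lies in $\cO$, and that $C'$ — being a complementary component at a point of $\partial U_0$ other than the $U_0$-direction, with all directions eventually hitting $U_0$ removed — is a closed ball with infinitely many directions removed; this is conclusion (3). (That \emph{infinitely} many directions are removed, not finitely many, follows because the repelling dynamics on $T_{\xi_0}\pberl$ produces infinitely many distinct directions in the backward orbit of the $U_0$-direction.)

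The main obstacle I anticipate is the bookkeeping in Case B: showing that the removed directions from the limiting ball $C'$ really are those eventually mapping into $U_0$ and that there are infinitely many of them, and showing that every component falls into exactly one of the three cases with no overlap — in particular that a component cannot both be a nondegenerate ``punctured ball'' and eventually collapse to a rigid point. I expect to handle this by a careful induction on the combinatorics of the itinerary $(j_0, j_1, \dots)$ recording which $B_j$ (and whether $D_0$) the orbit of $C$ visits, combined with the hyperbolic-distance estimate of Lemma~\ref{basics.repelling}(1) and Lemma~\ref{fine.sequence} to detect when the nested intersection defining $C$ degenerates to a single rigid point. The remaining cases (1) and (2) are comparatively routine once the orbit combinatorics and the expansion estimate are in hand.
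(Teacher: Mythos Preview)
Your Case~A/Case~B dichotomy based on whether the forward orbit of $C$ meets $D_0$ does not match the trichotomy of the proposition, and this is where the plan breaks down. A generic component of $\filledvphi$ is a rigid singleton (case~(1)) whose orbit passes through $D_0$ infinitely often; so Case~B is not at all the same as conclusion~(3). Conversely, the component of $\filledvphi$ whose unique boundary point is $\xi_0$ already meets $D_0$ (since $D_0\cap\filledvphi\neq\emptyset$ and $\xi_0=\partial D_0$), so even the prototypical case~(3) component sits in your Case~B together with almost everything else. In short, the $D_0$-itinerary does not separate the three outcomes.

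The paper's organizing device is different and is what you are missing: one sets up the \emph{puzzle} of nested dynamical pieces $P_\ell(\z)$ (Section~\ref{sec:puzzle}), shows every component of $\filledvphi$ is the intersection of a dynamical end, and proves (Corollary~\ref{cor:1}) that such an intersection is either (i) a nested intersection of closed balls, or (ii) a closed ball with countably many directions removed, the latter occurring exactly when the boundary points $\partial P_\ell$ eventually stabilize in $GO(\cO)$. Case~(ii) gives conclusion~(3). The substantial work is then to decide, in case~(i), whether the nested balls shrink to a rigid point (conclusion~(1)) or stabilize to a periodic closed ball (conclusion~(2)). When only one critical point lies in $\filledvphi$ this is indeed an easy moduli estimate (Lemma~\ref{lem:4}). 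But when \emph{both} critical points lie in $\filledvphi$, there is no naive expansion estimate; one needs the full Branner--Hubbard/Yoccoz marked-grid machinery (Sections~\ref{sec:piec-interm-level}--\ref{sec:both-critical-points}, culminating in Theorem~\ref{thr:yoccoz} and Corollary~\ref{lem:9}) to prove that the sum $S(\z)=\sum_\ell \dist_\hl(\partial P_{\ell-1/2}(\z),\partial P_\ell(\z))$ diverges unless the grid is eventually periodic, which is exactly the case producing a second repelling orbit $\cO'$. Your appeal to an ``expansion estimate of Lemma~\ref{basics.repelling}(1)'' is empty: that lemma records only the starlike structure of $U_0$ and contains no metric contraction statement. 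Without the tableau argument you cannot rule out that the nested balls have summable moduli yet non-periodic combinatorics, and you cannot produce case~(2).
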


To prove this proposition we consider two different cases. The easier
one is when there exists a critical point which is not in
$\filledvphi$. The proof in this easier case is contained in
Section~\ref{sec:when-fill-cont}. The other case, when both critical
points lie in $\filledvphi$ requires more control on the geometry of
$\filledvphi$ and the corresponding proof is at the end of
Section~\ref{sec:both-critical-points}.

Before introducing the basic combinatorial structure which will allow us to study $\filledvphi$ let us state and prove a basic fact.
 
\begin{lemma}
\label{lem:3}
        $\juliavphi = \partial \filledvphi$.
\end{lemma}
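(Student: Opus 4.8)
The plan is to show the two inclusions $\juliavphi \subseteq \partial \filledvphi$ and $\partial \filledvphi \subseteq \juliavphi$ separately. First I would record the basic structure: $\filledvphi$ is closed (being the intersection of the closed sets $\vphi^{-n}(\pberl \setminus U_0)$), it is totally invariant in the sense that $\vphi^{-1}(\filledvphi) = \filledvphi$, and its complement is the grand orbit of the fixed Rivera domain $U_0$, which is an open connected set contained in the Fatou set. In particular $\partial \filledvphi = \partial(GO(U_0))$.

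For the inclusion $\partial \filledvphi \subseteq \juliavphi$: take $x \in \partial \filledvphi$. Then every neighborhood of $x$ meets $GO(U_0)$, so there is an iterate $\vphi^n$ taking points arbitrarily close to $x$ into $U_0$; since $\vphi^n$ is an open map and $U_0$ is fixed, any neighborhood $N$ of $x$ has $\vphi^{n+kq}(N) \supseteq U_0$ for suitable $n$ and all large $k$... more to the point, $x$ also lies in the closure of $\filledvphi$, so $\bigcup_m \vphi^m(N)$ contains points outside $U_0$ as well. The cleanest route: if $x$ were in the Fatou set, it would lie in a Fatou component $W$; since $W \not\subseteq U_0$ and $W$ is connected, $W$ is either eventually mapped into $U_0$ (so $W \subseteq GO(U_0)$, contradicting $x \in \overline{\filledvphi}$, as $GO(U_0)$ is open hence every point of $W$ is interior to the complement of $\filledvphi$) or $W$ is disjoint from $GO(U_0)$ (so $W \subseteq \filledvphi$, contradicting $x \in \overline{GO(U_0)}$). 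Either way we contradict $x \in \partial \filledvphi$, so $x \in \juliavphi$.

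For the inclusion $\juliavphi \subseteq \partial \filledvphi$: it suffices to show $\juliavphi \cap \interior(\filledvphi) = \emptyset$, since we already know $\juliavphi$ is disjoint from $GO(U_0) = \pberl \setminus \filledvphi$ (that set lies in the Fatou set), so $\juliavphi \subseteq \filledvphi$ automatically, and then $\juliavphi \subseteq \filledvphi \setminus \interior(\filledvphi) = \partial \filledvphi$. So suppose $x \in \interior(\filledvphi)$; I want $x \in \fatouvphi$. Pick an open neighborhood $N \subseteq \filledvphi$ of $x$; then $\vphi^n(N) \subseteq \filledvphi$ for all $n$, so $\bigcup_n \vphi^n(N)$ misses the whole of $U_0$, hence omits more than two points. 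Therefore $x \in \fatouvphi$. This is the heart of the argument and it is genuinely short once one has the observation that $\filledvphi$ is forward invariant and $U_0 \subseteq \pberl \setminus \filledvphi$ has more than two points.

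The main obstacle, such as it is, is purely bookkeeping: making sure that the dichotomy for Fatou components used in the first inclusion is legitimate. One must verify that a Fatou component $W$ either lies entirely in $GO(U_0)$ or entirely in $\filledvphi$; this follows because $\vphi^n(W)$ is a Fatou component for each $n$, and two Fatou components are either equal or disjoint, so $W \cap U_0 \neq \emptyset$ forces $W$ to be a component of $GO(U_0)$ (using that $U_0$ is itself a Fatou component, being a Rivera domain), while $W \cap U_0 = \emptyset$ for $W$ and all its images means $W \subseteq \filledvphi$. No delicate estimate is needed; the proof is essentially the observation that the complement of $\filledvphi$ is an open, forward-and-backward invariant subset of the Fatou set whose interior is exactly itself, combined with the definition of the Julia set via non-normality.
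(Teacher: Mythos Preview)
Your proof is correct and follows essentially the same approach as the paper's. The only minor variation is in the direction $\partial\filledvphi \subseteq \juliavphi$: the paper observes that every neighborhood of a boundary point has an iterate meeting $\cO = \partial U_0 \subset \juliavphi$ and then invokes the closedness and total invariance of $\juliavphi$, whereas you reach the same conclusion by the Fatou-component dichotomy; both routes are standard and of comparable length.
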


\begin{proof}
  The interior $W$ of $\filledvphi$ is contained in the Fatou set, 
since $\vphi^n(W) \cap U_0 = \emptyset$ for all $ n \geq 0$.
  If $\z \in \partial \filledvphi$ and $U$ is a neighborhood of $\z$, 
then $\vphi^n (U) \cap \cO \neq \emptyset$ for some
$n \geq 0$. Since $\juliavphi$ is closed and totally invariant, we conclude that $\juliavphi = \partial \filledvphi$.
\end{proof}

\subsection{The puzzle}
\label{sec:puzzle}
By definition we say that the {\sf dynamical level $0$ set} 
$L_0$ is the complement of $U_0$
and each of its connected components is a {\sf level $0$ dynamical piece}.

Recursively, for all $\lv \geq 0$ we let
$$L_{\lv+1} = \vphi^{-1} (L_\lv)$$ be the {\sf dynamical level
$\lv +1$ set}. Note that $L_{\lv+1} \subset L_\lv$.  Each
connected component of $L_\lv$ is called a {\sf level $\lv$
dynamical piece}. 

Given a point $\z \in L_\lv$ and an integer $\lv \geq 0$, we denote by $\dpi_\lv (\z)$ the level $\lv$ 
piece which contains $\z$. 

\begin{lemma}
\label{basics.pieces}
The following statements hold:
  \begin{enumerate}
    \item
      For all $\lv \geq 1$, each level $\lv$ piece is contained
in a unique level $\lv -1$ piece.

    \item
      For all $\lv \geq 1$, the image under $\vphi$ of
      each level $\lv$ piece $\dpi$ is
      a level $\lv -1$ piece $\dpi^\prime$. Moreover, $\vphi(\partial \dpi) = \partial \dpi^\prime$.

\item
Every piece of level $\lv$ is an  affinoid, for all $\lv \geq 0$.

\item
If $\lv \geq 0$ and $\dpill$ is a level $\lv$ piece, then $\partial \dpill \subset GO(\cO)$.

    \item
      $\filledvphi = \cap_{\lv \geq 0} L_{\lv}$.
  \end{enumerate}
\end{lemma}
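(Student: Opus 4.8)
The statement collects five routine facts about dynamical pieces, all of which follow from the structure of $\vphi$ on $L_0$ established in Lemma~\ref{basics.repelling} together with the fact that $\vphi:\pberl\to\pberl$ is a proper open map sending affinoids to affinoids (Section~\ref{section|DynamicsL}). My plan is to handle them essentially in the order listed, bootstrapping each from the previous ones and from the base case $\lv=0$.

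First, for (1) and (2): since $L_{\lv+1}=\vphi^{-1}(L_\lv)$ and $\vphi$ is continuous, the image of a connected set is connected, so $\vphi$ maps any level $\lv+1$ piece $\dpi$ into a single connected component of $L_\lv$, i.e.\ into a level $\lv$ piece $\dpi'$. Conversely I would argue by induction that $\vphi$ restricted to a level $\lv$ piece is surjective onto the level $\lv-1$ piece it covers: at level $0$, $L_1=\vphi^{-1}(L_0)$, and because $\vphi:\ponel\to\ponel$ is surjective with $\vphi(U_0)=U_0$ (as $U_0$ is a Rivera domain), preimages of points in $L_0$ lie in $L_1$ and exhaust it; at higher levels one composes. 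The equality $\vphi(\partial\dpi)=\partial\dpi'$ comes from the fact that $\vphi$ is open and proper, so it maps the topological boundary of a piece onto the boundary of its image (the image of an interior point is interior, and surjectivity forces boundary onto boundary). Nesting in (1) is then immediate: $\vphi(\dpi)$ lies in one level $\lv-1$ piece $\dpi'$, so $\dpi\subset\vphi^{-1}(\dpi')$, and since $\vphi^{-1}(\dpi')$ is a union of level $\lv$ pieces the one containing $\dpi$ is forced and unique.

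For (3), I would induct on $\lv$. The base case: each level $0$ piece is one of the $B_j$ from Lemma~\ref{basics.repelling}(2), which are closed balls, hence affinoids. For the inductive step, recall from Section~\ref{section|DynamicsL} that the preimage of an affinoid under a rational map is a finite union of affinoids; so each connected component of $\vphi^{-1}(L_{\lv})=\vphi^{-1}(\bigcup\dpi')$ is an affinoid, giving (3). For (4), again induct: $\partial\dpi\subset GO(\cO)$ at level $0$ because $\partial B_j=\{\xi_j\}\subset\cO$. For the step, take a level $\lv$ piece $\dpi$ mapping onto the level $\lv-1$ piece $\dpi'$; by (2), $\vphi(\partial\dpi)=\partial\dpi'\subset GO(\cO)$ by the inductive hypothesis, so $\partial\dpi\subset\vphi^{-1}(GO(\cO))=GO(\cO)$, the last equality because $GO(\cO)$ is by definition the grand orbit, hence totally invariant. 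Finally (5) is a direct unwinding of definitions: $\z\in\filledvphi$ iff $\vphi^n(\z)\notin U_0$ for all $n\ge 0$, iff $\vphi^n(\z)\in L_0$ for all $n\ge 0$, iff $\z\in\vphi^{-n}(L_0)=L_n$ for all $n$, iff $\z\in\bigcap_{\lv\ge 0}L_\lv$; the inclusion $L_{\lv+1}\subset L_\lv$ (already noted) makes the intersection a decreasing one but is not needed for the equality.

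The only mild subtlety—the step I would be most careful about—is the surjectivity claims in (2) (that a piece maps \emph{onto}, not just into, a piece of one lower level) and the attendant $\vphi(\partial\dpi)=\partial\dpi'$; these rest on $\vphi$ being open and proper rather than on any delicate estimate, but they are what makes (4) go through. Everything else is bookkeeping with the inductive hypotheses and the basic properties of $\vphi$ acting on affinoids recorded in Section~\ref{section|DynamicsL}, so the proof should be short.
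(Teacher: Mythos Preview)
Your proposal is correct and follows essentially the same approach as the paper's proof, which is equally brief: (1) and (5) are immediate from the definitions, (2) follows from $\vphi$ being open (together with the fact, recorded in Section~\ref{section|DynamicsL}, that each component of the preimage of an affinoid maps onto it with a well-defined degree), (3) follows from the preimage of an affinoid being a finite union of affinoids, and (4) is induction via (2). Your treatment of (1) is slightly more roundabout than necessary---it follows directly from the inclusion $L_\ell\subset L_{\ell-1}$ without passing through (2)---but the argument is sound.
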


\begin{proof}
  Note that (1) and (5) are an immediate consequence of the definitions. Moreover, (2)
  follows from the fact that $\vphi$ is an open map.  
  Statement (3) is a consequence of the fact that each connected component of the preimage
of an affinoid is an affinoid. Finally (4) follows from induction, since
  $\vphi$ must map the boundary of a piece onto the boundary of a piece of lower level.
\end{proof}

A decreasing sequence of pieces is called a {\sf dynamical end}.
More precisely, a dynamical end $\dend$ is  a collection
$$\dend = \{ \dpi_{\lv} \}_{\lv \geq 0}$$ such that, for all
$\lv$, we have that $\dpi_{\lv}$ is a level $\lv$ piece and
$\dpi_{\lv +1} \subset \dpi_{\lv}$.
The {\sf intersection $\capdend$ of the end} $\dend$ is by definition
$$\capdend = \cap \dpi_\lv.$$
Note that every connected component of $\filledvphi$ is the intersection of an end and vice verse.

Let us now discuss the geometry of the puzzle pieces.
Recall from Lemma~\ref{basics.repelling} that the level $0$ set  $L_0=\pberl \setminus U_0$ 
is the disjoint union of $q$ closed balls labeled by $B_0, \dots, B_{q-1}$ where
\begin{itemize}
  \item
    $B_0$ contains both critical points.

  \item
    For $j=0, \dots, q-1$, we have that 
$\vphi (\partial B_j) = \partial B_{j+1}$, subindices $\mod q$.
\end{itemize}
Hence, there are exactly $q$ level $0$ pieces and each of them is a closed ball.

\medskip
Preserving the notation of  Lemma~\ref{basics.repelling} and applying parts (2b) and (2c) we conclude that
there are exactly $2q-1$ pieces of level $1$. In fact, let $$B^\prime_1, \dots,
B^\prime_{q-1}$$
be the preimage of $B_2, \dots, B_q$ (respectively) under the bijection $\vphi: D_0 \rightarrow \pberl \setminus B_1$.
Hence,
$$B_0 \setminus D_0, B_1, \dots, B_{q-1}, B^\prime_1, \dots,
B^\prime_{q-1},$$ 
is a complete list of the level $1$ pieces.
With the exception of the first one, all of them are closed balls.
Nevertheless, observe that the boundary of each of them is a singleton.


\medskip
Now we proceed to study the geometry of pieces of arbitrary level.

\begin{lemma}
\label{lem:15}
  Let $\dpi$ be a level $\lv \geq 0$ piece. Then $\dpi$ is an
affinoid and $\partial \dpi$ is a singleton contained in $GO(\cO)$.
\end{lemma}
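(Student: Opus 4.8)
The plan is to prove Lemma~\ref{lem:15} by induction on the level $\lv$. The base case $\lv = 0$ (and also $\lv = 1$) has already been treated explicitly: the level $0$ pieces are the $q$ closed balls $B_0, \dots, B_{q-1}$, each of which is an affinoid whose boundary is a singleton contained in $\cO \subset GO(\cO)$; and the level $1$ pieces listed right above the statement ($B_0 \setminus D_0$, the balls $B_j$, and the balls $B'_j$) are all affinoids with singleton boundary contained in $GO(\cO)$, by Lemma~\ref{basics.repelling}(2). So we may assume $\lv \geq 2$ and that the claim holds for all levels strictly less than $\lv$.

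Now I would take a level $\lv$ piece $\dpi$, and let $\dpi' = \vphi(\dpi)$, which by Lemma~\ref{basics.pieces}(2) is a level $\lv - 1$ piece with $\vphi(\partial \dpi) = \partial \dpi'$. By the induction hypothesis $\partial \dpi'$ is a singleton $\{\xi'\}$ contained in $GO(\cO)$, so $\dpi'$ is a connected component of $\pberl \setminus \{\xi'\}$, i.e.\ a direction (open ball) at $\xi'$. By Lemma~\ref{basics.pieces}(3) each level $\lv$ piece is already known to be an affinoid, so the content of the lemma is really that $\partial \dpi$ is a \emph{singleton}; knowing $\dpi \subset \dpi_1$ for a unique level $1$ piece $\dpi_1$ (Lemma~\ref{basics.pieces}(1)), I would case on which level $1$ piece $\dpi_1$ is. If $\dpi_1$ is one of the balls on which $\vphi$ acts as a bijection onto a level $0$ ball — namely $B_1, \dots, B_{q-1}$ or the balls $B'_1, \dots, B'_{q-1}$ — then $\vphi$ restricted to $\dpi_1$ is an isomorphism onto a ball, hence maps the subpiece $\dpi$ isomorphically onto the direction $\dpi'$ at $\xi'$; the preimage of a single-boundary-point set under an isomorphism of balls again has a single boundary point, namely the $\vphi$-preimage of $\xi'$ inside $\dpi_1$, which lies in $\vphi^{-1}(GO(\cO)) = GO(\cO)$. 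The remaining case is $\dpi_1 = B_0 \setminus D_0$, the unique non-ball level $1$ piece; here $\vphi : B_0 \setminus D_0 \to B_1$ is two-to-one (counting multiplicities) by Lemma~\ref{basics.repelling}(2c), with both critical points of $\vphi$ inside $B_0$.

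The crux of the argument — and the step I expect to be the main obstacle — is this last case: I must show that each connected component of $\vphi^{-1}(\dpi') \cap (B_0 \setminus D_0)$ still has a singleton boundary. The key input is Lemma~\ref{riemann-hurwitz}: since the residual characteristic is zero, a ball mapping onto a ball of local degree $d$ contains exactly $d - 1$ critical points counted with multiplicity. Running the induction, $\dpi'$ is a direction at a point $\xi' \in GO(\cO) \subset \hl$ of some level $\lv - 1$; I would consider $\vphi^{-1}(\pberl \setminus \dpi')$, which is a finite union of balls and affinoids, intersect with $B_0$, and use that the two critical values of $\vphi$ lie in a common Fatou direction (Lemma~\ref{indifferent.critical}/Remark~\ref{degree.one.julia} is the wrong regime; instead use that in the repelling case $\deg_\xi \vphi = 1$ for all $\xi \in \juliavphi$ outside $\cO$ by Lemma~\ref{degree-one-l}, established in Lemma~\ref{basics.repelling}(2a)). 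Concretely: every point $\xi''$ of $GO(\cO) \cap B_0$ other than $\xi_0$ itself has $\deg_{\xi''}\vphi = 1$ (it is not in $\cO$ and $\vphi$ is locally injective there), so the tangent map $T_{\xi''}\vphi$ is an isomorphism and the local picture at $\xi''$ is that of an isomorphism of balls; thus a component of $\vphi^{-1}(\dpi')$ near $\xi''$ is again a direction with a single boundary point. The only potential trouble is a component whose boundary point would be $\xi_0$ or would be a point where $\vphi$ has degree $2$ — but $\xi_0$ is on the boundary of $B_0$, not interior, and within $B_0 \setminus D_0$ the degree-$2$ behavior is "used up" at $\xi_0$; any interior branched point of a level $\lv$ piece would force $\deg \geq 2$ there, contradicting the fact (noted at the end of Section~\ref{ends-s}) that points of local degree $\geq 2$ lie on the segment between the two rigid critical points, which both lie in $B_0$ and map to a single direction. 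I would therefore argue that the segment of degree-$\geq 2$ points within $B_0$ does not separate any level-$\lv$ piece ($\lv \geq 2$) into more than one "end", so the boundary stays a singleton. Finally, that this singleton lies in $GO(\cO)$ is automatic: it maps under $\vphi$ into $\partial \dpi' \subset GO(\cO)$, and $GO(\cO)$ is $\vphi$-backward-invariant. Assembling the cases completes the induction.
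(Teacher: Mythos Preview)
Your inductive setup is sound, and the easy cases (where the ambient level-$1$ piece is a ball on which $\vphi$ is bijective) are handled correctly. But the crux case has a genuine gap, and you have inverted which subcase is difficult. If $\chi' = \partial \dpi'$ has a \emph{unique} preimage $\chi$ (so $\deg_\chi \vphi = 2$), then $\partial \dpi \subset \vphi^{-1}(\chi') = \{\chi\}$ and you are done at once---this is the trivial subcase, not the problematic one. The real difficulty is when $\chi'$ has two degree-$1$ preimages: you must rule out that both lie in $\partial \dpi$. Your sentence ``$T_{\xi''}\vphi$ is an isomorphism, so the component near $\xi''$ is a direction with a single boundary point'' is exactly the assertion to be proved, not a proof of it. Even at a degree-$1$ point $\chi$ there is exactly one bad direction $D$ (the one containing both critical points, by Lemma~\ref{degree-one-l}), and since $\vphi(D) = \pberl$ that direction certainly meets $\vphi^{-1}(\dpi')$; you must explain why nonetheless $D \cap \dpi = \emptyset$. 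Your side claim that every point of $GO(\cO) \cap B_0$ other than $\xi_0$ has local degree $1$ is likewise unjustified: such points may lie on the segment joining the two critical points.

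The missing idea is precisely the one the paper uses. Since $\deg_{\xi_0}\vphi = 2$, Lemma~\ref{degree-one-l} applied at $\xi_0$ shows that the two critical \emph{values} lie in distinct directions at $\xi_1 = \vphi(\xi_0)$. Hence any single direction $D'_v$ at $\chi'$ containing both critical values must contain $\xi_1$ and therefore $U_0$; by the inductive hypothesis every direction at $\chi'$ is either inside $\dpi'$ or disjoint from it, so $D'_v$ (which meets $U_0 \subset \pberl \setminus L_0$) is disjoint from $\dpi'$. At a degree-$1$ boundary point $\chi$ the bad direction $D$ satisfies $T_\chi\vphi(D) = D'_v$, so by Section~\ref{ends-s} an initial annulus of $D$ near $\chi$ maps into $D'_v$ and hence misses $\dpi'$; connectedness of $\dpi$ then forces $D \cap \dpi = \emptyset$. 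The remaining (good) directions at $\chi$ map bijectively onto directions at $\chi'$, each either inside or disjoint from $\dpi'$. Thus every direction at $\chi$ is inside $\dpi$ or disjoint from it, which forces $\partial \dpi = \{\chi\}$. Note that this argument works uniformly at any boundary point of any piece, so your case split on the ambient level-$1$ piece is in fact unnecessary.
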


\begin{proof}
  We proceed by induction on the level of the piece $\dpi$. 
  The claim is clearly true for  level $0$ pieces.
  Now suppose that the claim is true for level $\lv-1$ pieces. Let
$\dpi$ be a level $\lv$ piece which maps onto $\dpi^\prime$.
Denote by $\chi^\prime$ the unique boundary point of  $\dpi^\prime$.
By the induction hypothesis, every direction at $\chi^\prime$ is either contained in or disjoint from $\dpi^\prime$.
We may assume that $\chi^\prime$ has two preimages, for otherwise the claim is trivial.
Hence, there exists  a  direction $D^\prime_v$ at   $\chi^\prime$ containing both critical values.
Observe that since the critical values of $\vphi$ belong to distinct directions at $\xi_1 \in \cO$,
$D^\prime_v$ must contain $U_0$ and therefore is disjoint from $\dpi^\prime$. Consider  $\chi \in \partial \dpi$.
If a direction at $\chi$ contains the critical points, then it  is disjoint from $\dpi$, by Section~\ref{ends-s} combined with Lemma~\ref{degree-one-l}.
If a direction $D$ at $\chi$ is critical point free, then $D$ is a good direction and either $\vphi(D) \subset \dpi^\prime$
or $\vphi(D) \cap \dpi^\prime = \emptyset$. Hence, directions at $\chi$ are either contained in $\dpi$ or disjoint from
$\dpi$. It follows that $\partial \dpi = \{ \chi \}$.
\end{proof}

For any $\z \in L_\lv$
we will denote by  $\ddll (\z)$ 
the direction at $\partial \dpi_\lv (\z)$ that contains $\z$.
From the previous lemma it follows that for all  $\z \in L_\lv$, 
$\ddll (\z) \subset \dpi_\lv (\z)$.  Moreover, $\vphi (\ddll (\z)) = \dd_{\lv-1} (\vphi(\z))$
provided that $\lv \geq 1$.

\begin{lemma}
\label{closed.ball.l}
Consider an integer $\lv \geq 0$ and two nested pieces $\dpi_{\lv +1} \subset \dpi_\lv$  
of levels $\lv+1$ and $\lv$. If $\partial \dpi_{\lv +1} \neq \partial \dpi_\lv$, then
\begin{enumerate}
\item $\dpi_{\lv +1}$ is a closed ball and,   

\item
$\partial \vphi (\dpi_{\lv +1}) \neq \partial \vphi (\dpi_\lv)$.

\end{enumerate}
\end{lemma}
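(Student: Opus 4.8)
The statement concerns two nested pieces $\dpi_{\lv+1} \subset \dpi_\lv$ with $\partial\dpi_{\lv+1}\neq\partial\dpi_\lv$, and I want to show (1) $\dpi_{\lv+1}$ is a closed ball and (2) $\partial\vphi(\dpi_{\lv+1})\neq\partial\vphi(\dpi_\lv)$. The natural strategy is induction on the level $\lv$, using the base-level description already recorded (the level $1$ pieces are $B_0\setminus D_0, B_1,\dots,B_{q-1},B'_1,\dots,B'_{q-1}$, of which only the first is not a closed ball) together with Lemma~\ref{basics.repelling}(2b),(2c) and the behavior of $\vphi$ on directions from Section~\ref{ends-s}.

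\begin{proof}
We argue by induction on $\lv$. For $\lv=0$, the level $1$ pieces strictly inside a level $0$ piece are, by the explicit list following Lemma~\ref{basics.repelling}, precisely $B_1,\dots,B_{q-1},B'_1,\dots,B'_{q-1}$ (each a closed ball) together with $B_0\setminus D_0$; but $\partial(B_0\setminus D_0)=\{\xi_0\}=\partial B_0$, so in the case $\partial\dpi_1\neq\partial\dpi_0$ the piece $\dpi_1$ is one of the closed balls, proving (1). For (2) at level $0$: each $B_j$ with $j\neq 0$ maps bijectively onto $B_{j+1}$, so $\partial\vphi(B_j)=\partial B_{j+1}\neq\partial B_{j+2}\supseteq\partial\vphi(B_{j+1})$ is not an issue since $B_j$ strictly inside $B_0$ forces $j\neq 0$; concretely, if $\dpi_1\subset\dpi_0=B_j$ with $\partial\dpi_1\neq\partial\dpi_0$, then $\dpi_1$ is a closed ball $B_k$ with $\{\xi_k\}=\partial\dpi_1$, $\partial\dpi_0=\{\xi_j\}$, $\xi_k\neq\xi_j$, and $\vphi$ is injective on the relevant directions at $\xi_j$ (Lemma~\ref{basics.repelling}(2b),(2c), Section~\ref{ends-s}), whence $\vphi(\xi_k)\neq\vphi(\xi_j)$, i.e. $\partial\vphi(\dpi_1)\neq\partial\vphi(\dpi_0)$.

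Now assume the claim for level $\lv-1$ and let $\dpi_{\lv+1}\subset\dpi_\lv$ with $\partial\dpi_{\lv+1}\neq\partial\dpi_\lv$. Write $\dpi'_\lv=\vphi(\dpi_\lv)\supset\vphi(\dpi_{\lv+1})=\dpi'_{\lv-1}$, which are nested pieces of levels $\lv$ and $\lv-1$. Let $\chi=\partial\dpi_\lv$ (a singleton by Lemma~\ref{lem:15}) and $\chi'=\partial\dpi'_\lv$. Since $\partial\dpi_{\lv+1}\neq\chi$, the piece $\dpi_{\lv+1}$ lies in a single direction $D$ at $\chi$ with $\partial\dpi_{\lv+1}=\partial D$, and $\vphi(\dpi_{\lv+1})$ lies in the direction $\vphi(D)=T_\chi\vphi(D)$ at $\chi'$ (here $D$ is a good direction: it is critical-point-free, since a direction at $\chi$ containing the critical points is disjoint from $\dpi_\lv$ by Lemma~\ref{lem:15} combined with Lemma~\ref{degree-one-l}, while $D\subset\dpi_{\lv+1}\subset\dpi_\lv$). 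Because $D$ is critical-point-free and good, $\vphi:D\to\vphi(D)$ is a bijection onto an open ball, so $\dpi_{\lv+1}\subset D$ has the same topological type as $\vphi(\dpi_{\lv+1})=\dpi'_{\lv-1}$; by the induction hypothesis the latter is a closed ball as soon as $\partial\dpi'_{\lv-1}\neq\partial\dpi'_\lv$, which is exactly statement (2) that we must prove, and then statement (1) follows.

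It remains to establish (2), i.e. $\partial\dpi'_{\lv-1}\neq\chi'$. Suppose for contradiction that $\partial\dpi'_{\lv-1}=\chi'=\partial\dpi'_\lv$. Then $\dpi'_{\lv-1}$ is not contained in any single direction at $\chi'$; since $\vphi(D)$ is a single direction at $\chi'$ and $\vphi(\dpi_{\lv+1})\subset\vphi(D)$, while $\dpi'_{\lv-1}=\vphi(\dpi_{\lv+1})$, we get that $\dpi'_{\lv-1}$ meets at most one direction at $\chi'$, contradicting $\partial\dpi'_{\lv-1}=\chi'$ unless $\dpi'_{\lv-1}=\vphi(D)$ fills out everything except $\chi'$ — but $\dpi'_{\lv-1}$ is a piece of level $\lv-1$, a proper subset of $\dpi'_\lv\subsetneq L_0\subsetneq\pberl$, so it cannot be a full complement of a point in $\pberl$. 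This contradiction proves $\partial\dpi'_{\lv-1}\neq\partial\dpi'_\lv$, completing the induction.
\end{proof}

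The step I expect to be the main obstacle is part (2): extracting from ``$\partial\dpi_{\lv+1}$ jumps away from $\partial\dpi_\lv$'' the conclusion that the corresponding boundary jump survives under $\vphi$. The clean way to see it is that pushing $\dpi_{\lv+1}$ into a single direction $D$ at $\chi$ and applying the bijection $\vphi|_D$ forces $\vphi(\dpi_{\lv+1})$ to lie in a single direction at $\chi'$, so its boundary point cannot be $\chi'$; one must be a little careful that $D$ is genuinely a good critical-point-free direction (otherwise $\vphi|_D$ is not injective and the image geometry could change), which is precisely where Lemma~\ref{lem:15} and Lemma~\ref{degree-one-l} enter. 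Everything else is bookkeeping with the level $1$ list from Lemma~\ref{basics.repelling} and the direction calculus of Section~\ref{ends-s}.
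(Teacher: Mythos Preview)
Your argument for (2) is correct and matches the paper's: since $\dpi_{\lv+1}$ lies in a good direction $D$ at $\chi=\partial\dpi_\lv$ (good because $D\subset\dpi_\lv$ while the bad direction, if any, contains both critical points and hence $U_0$), its image lies in the single direction $\vphi(D)$ at $\chi'$, forcing $\partial\vphi(\dpi_{\lv+1})\neq\chi'$. Aside from the typo $D\subset\dpi_{\lv+1}\subset\dpi_\lv$ (you mean $\dpi_{\lv+1}\subset D\subset\dpi_\lv$) and the stray equality $\partial\dpi_{\lv+1}=\partial D$, this part is fine.

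There is, however, a genuine gap in your argument for (1). You assert that $D$ is \emph{critical-point-free}, hence $\vphi|_D$ is a bijection, and then pull back the closed-ball structure. But Lemma~\ref{lem:15} only tells you that a direction at $\chi$ containing \emph{both} critical points is disjoint from $\dpi_\lv$; it says nothing when $\deg_\chi\vphi=2$, i.e.\ when the two critical points lie in distinct directions at $\chi$. This case does occur: take $\chi=\xi_0$ (so $\dpi_\lv=\dpi_\lv(\xi_0)$) and a level $\lv+1$ piece lying in the direction $D_\omega$ of the active critical point $\omega$. Then $D=D_\omega$ contains $\omega$, and $\vphi|_D$ has degree~$2$, not~$1$. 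Your ``same topological type via bijection'' step therefore breaks down. (Concretely, for the example $\vphi_2$ of the paper with a second repelling orbit, $\partial\dpi_\lv(\omega)=\xi_0$ for small $\lv$ while $\partial\dpi_{\lv+1}(\omega)\neq\xi_0$ eventually, producing exactly this situation.)

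The fix is easy: even when $\vphi:D\to\vphi(D)$ has degree~$2$, each component of the preimage of a closed ball is a closed ball (since $D$ carries at most one critical point, one reduces to $z\mapsto z^2$ after a change of coordinates). So once you know $\vphi(\dpi_{\lv+1})$ is a closed ball by induction, $\dpi_{\lv+1}$ is too. The paper takes a slightly different route that sidesteps the degree issue entirely: after establishing (2) and invoking induction, it observes that if $\partial P\neq\partial B_0$ then every direction at $\partial P$ disjoint from $U_0$ maps to a direction disjoint from $U_0$; applying this with $P=\dpi_{\lv+1}$ (and checking $\partial\dpi_{\lv+1}\neq\xi_0$, which follows from $\partial\dpi_{\lv+1}\neq\partial\dpi_\lv$) shows every such direction lands in the closed ball $\vphi(\dpi_{\lv+1})$, hence lies in $\dpi_{\lv+1}$, so $\dpi_{\lv+1}$ is the complement of the single $U_0$-direction.
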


\begin{proof}
  Since every direction at $\partial \dpi_{\lv}$ contained in $\dpi_{\lv}$ is a good 
direction, (2) follows.
  For (1) we proceed by induction. In view of our discussion of the geometry of level $0$ and $1$
pieces, (1) holds when $\ell =0$.  By (2) we may assume that  $\vphi (\dpi_{\lv +1})$ is the ball whose complement is the direction at
$\partial \vphi (\dpi_{\lv +1})$ containing $U_0$. Then the inductive step is a consequence of the observation that 
if $\dpi$ is a piece such that $\partial \dpi \neq \partial B_0$, then the image of every direction at $\partial \dpi$ disjoint from 
$U_0$ is a direction disjoint from $U_0$. 
\end{proof}

\begin{corollary}
\label{cor:1}
  Let $\dend$ be a dynamical end. Then $\capdend$ is either a nested intersection of closed balls or a closed ball with 
infinitely (countable)  many directions removed. In the latter case,
$\partial \capdend$ is a point in $GO(\cO)$.
\end{corollary}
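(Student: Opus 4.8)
The plan is to prove Corollary~\ref{cor:1} by combining Lemma~\ref{closed.ball.l} with the fact (Lemma~\ref{lem:15}) that every dynamical piece is an affinoid with a single boundary point lying in $GO(\cO)$. First I would fix a dynamical end $\dend = \{ \dpi_\lv \}_{\lv \ge 0}$ and distinguish two cases according to the behaviour of the boundary points $\partial \dpi_\lv$.

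In the first case, suppose there exists $\lv_0$ such that $\partial \dpi_{\lv+1} \neq \partial \dpi_\lv$ for all $\lv \ge \lv_0$. Then by part (1) of Lemma~\ref{closed.ball.l}, each $\dpi_{\lv+1}$ with $\lv \ge \lv_0$ is a closed ball. Hence $\capdend = \bigcap_\lv \dpi_\lv = \bigcap_{\lv \ge \lv_0} \dpi_\lv$ is a nested intersection of closed balls (the finitely many earlier pieces do not affect the intersection, and in any case each $\dpi_\lv$ for $\lv < \lv_0$ is an affinoid containing $\dpi_{\lv_0}$). Actually I should be slightly more careful: the statement wants the dichotomy ``nested intersection of closed balls'' versus ``closed ball with infinitely many directions removed'', so I want to show that if infinitely many of the inclusions are equalities on the boundary, we land in the second alternative, and otherwise in the first. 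So the cleaner split is: either $\partial \dpi_\lv = \partial \dpi_{\lv+1}$ for infinitely many $\lv$, or only for finitely many.

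In the case where $\partial \dpi_\lv = \partial \dpi_{\lv+1}$ for only finitely many $\lv$, pick $\lv_0$ beyond all such indices; then for $\lv \ge \lv_0$ the boundary points are all distinct and Lemma~\ref{closed.ball.l}(1) gives that each $\dpi_{\lv}$, $\lv > \lv_0$, is a closed ball, so $\capdend$ is a nested intersection of closed balls. In the case where $\partial \dpi_\lv = \partial \dpi_{\lv+1}$ for infinitely many $\lv$, I claim the boundary points stabilize: once $\partial \dpi_{\lv+1} = \partial \dpi_\lv =: \chi$, apply Lemma~\ref{closed.ball.l}(2) in contrapositive form to see that if any later inclusion had distinct boundary points, its image would too, propagating back; more directly, since the pieces are nested affinoids each with a single boundary point, and $\dpi_{\lv+1} \subset \dpi_\lv$ with $\partial \dpi_{\lv+1} = \partial \dpi_\lv = \chi$, the point $\chi$ is the boundary point of every $\dpi_{\lv'}$ for $\lv' \ge \lv$ as well — because a nested sub-affinoid strictly inside would have to drop a direction and hence move its boundary point, and once moved it stays moved by Lemma~\ref{closed.ball.l}(2). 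Thus $\partial \capdend = \{\chi\} \in GO(\cO)$ by Lemma~\ref{lem:15}. Each $\dpi_\lv$ (for $\lv$ large) is then the ball $B = \pberl \setminus (\text{direction at } \chi \text{ containing } U_0)$ with some finite collection of directions at $\chi$ removed, and these removed directions increase with $\lv$; since infinitely many strict drops occur, infinitely many (hence countably many) directions are removed in the limit, so $\capdend$ is a closed ball with countably many directions at $\chi$ removed.

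The main obstacle is the bookkeeping that guarantees ``once the boundary point stops moving it never moves again,'' so that the two cases are genuinely exhaustive and mutually exclusive and the limiting object has exactly the claimed form; this is where Lemma~\ref{closed.ball.l}(2) does the real work, and I would phrase it as: if $\partial \dpi_{\lv+1} = \partial \dpi_\lv$ then $\partial \dpi_{\lv+2} = \partial \dpi_{\lv+1}$, because otherwise $\partial \vphi(\dpi_{\lv+2}) \ne \partial \vphi(\dpi_{\lv+1})$ and, iterating, eventually one reaches level $0$ or a level where the image lies in a single ball $B_j$, contradicting the assumed collapse at the previous step. A short induction on level, exactly as in the proof of Lemma~\ref{closed.ball.l}(1), settles this; the rest is routine identification of the limit using that pieces are affinoids with singleton boundary in $GO(\cO)$.
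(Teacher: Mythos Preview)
Your argument has two genuine gaps.

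First, the dichotomy is miscalibrated and the ``once stopped, never moves again'' claim is false. You split on whether $\partial \dpi_\lv = \partial \dpi_{\lv+1}$ holds finitely or infinitely often, and in the second case try to deduce stabilization from the assertion that $\partial \dpi_{\lv+1} = \partial \dpi_\lv$ forces $\partial \dpi_{\lv+2} = \partial \dpi_{\lv+1}$. But an end can linger at a point of $GO(\cO)$ for several levels and then branch into a closed sub-ball with a new boundary point: for $q=2$ take $P_0 = B_0$, $P_1 = P_2 = B_0 \setminus D_0$, and let $P_3$ be one of the closed-ball level-$3$ pieces inside $B_0\setminus D_0$ with boundary $\neq \xi_0$ (such pieces exist, coming from the preimage of $B'_1$). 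The contrapositive of Lemma~\ref{closed.ball.l}(2) points the wrong way for what you want. The correct split, which the paper uses, is on whether $\partial \dpi_{\lv+1} \neq \partial \dpi_\lv$ infinitely often: if so, Lemma~\ref{closed.ball.l}(1) yields infinitely many closed-ball pieces and $\capdend$ is a nested intersection of closed balls; if not, the boundary trivially stabilizes at some $\chi \in GO(\cO)$.

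Second, once the boundary stabilizes you assert that ``infinitely many strict drops occur'' without justification. This is the substantive content of the second alternative and it is not automatic: a priori nothing prevents the pieces from eventually coinciding. The paper supplies the missing dynamical input: since $\chi \in GO(\cO)$, the intersection $\capdend$ eventually maps onto $\capdend(\xi_0)$, and by Lemma~\ref{basics.repelling}(3) the direction of $U_0$ at $\xi_0$ is a \emph{multiple} fixed point of the degree-two complex rational map $T_{\xi_0}\vphi^q$, hence has infinite grand orbit. It is exactly this grand orbit that enumerates the directions removed from $B_0$ in forming $\capdend(\xi_0)$, which is what forces infinitely many directions to be removed.
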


\begin{proof}
  Let $\dend = \{ \dpill \}$. By the previous lemma, if $\capdend$ is not the nested intersection of closed balls, then there
exists $\ell_0$ such that $\partial \dpi_\lv = \partial \dpi_{\lv_0}$ for all $\lv \ge \lv_0$.
Since $\partial \dpi_{\lv_0}$ is a point in $GO(\cO)$, we have that $\capdend$ eventually maps
onto the intersection of $\dend(\xi_0)$ (recall that $\xi_0 \in \cO$). 
The grand orbit of the direction of $U_0$ under
$T_{\xi_0} \vphi^q$ is infinite, since this direction is a multiple fixed point of $T_{\xi_0} \vphi^q$ (see~Lemma~\ref{basics.repelling}). Therefore,
$\capdend(\xi_0)$ is a ball with a countably infinite set of directions removed and the same occurs
for $\capdend$.
\end{proof}

\subsection{When $\filledvphi$ contains only one critical point}
\label{sec:when-fill-cont}
In this case, we show that the intersections of ends which have infinitely many closed balls are singletons in the classical line.

\begin{lemma}
\label{lem:4}
  Assume that there exists one critical point $\omega$ of $\vphi$ such that $\omega \notin \filledvphi$.
  Let $\dend = \{ \dpi_\lv (E)\}$ be a dynamical end such that
$\dpi_{\lv} (E)$ is a closed ball for infinitely many $\lv$.  
Then $\capdend = \{ \z \} \subset \ponel$.
\end{lemma}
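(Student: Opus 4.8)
The plan is to show that the hyperbolic lengths of the annuli trapped between consecutive closed-ball pieces of the end $\dend$ add up to infinity, and then invoke Lemma~\ref{fine.sequence}. First I would record the basic mechanism: whenever $\dpi_{\lv+1}(E) \subsetneq \dpi_\lv(E)$ with $\partial \dpi_{\lv+1}(E) \neq \partial \dpi_\lv(E)$, Lemma~\ref{closed.ball.l} tells us $\dpi_{\lv+1}(E)$ is a closed ball and $\partial\vphi(\dpi_{\lv+1}(E)) \neq \partial\vphi(\dpi_\lv(E))$, so the strict-inclusion/distinct-boundary property propagates forward under $\vphi$. The key point is how the map acts on the annulus $\dpi_\lv(E)\setminus\dpi_{\lv+1}(E)$: since every direction at $\partial\dpi_\lv(E)$ lying inside $\dpi_\lv(E)$ is a good direction (by Lemma~\ref{degree-one-l} and Section~\ref{ends-s}, because the critical points lie in $U_0$'s complementary ball $B_0$ and, crucially here, $\omega \notin \filledvphi$ forces $\deg T_{\partial\dpi_\lv(E)}\vphi = 1$ along the relevant pieces — more precisely, once a piece avoids $B_0$ it maps by a bijection), $\vphi^{\lv}$ maps the annulus $\dpi_\lv(E)\setminus\dpi_{\lv+1}(E)$ hyperbolic-isometrically onto the annulus $B_0 \setminus D_0$ or onto one of the finitely many level-$1$ annuli $B_0\setminus B_0'$-type gaps. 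Actually the cleanest statement: the first time the nested pieces split (boundary points differ), the gap annulus maps onto the single annulus $A := D_0$'s complement gap, i.e.\ the level-$1$ annulus of positive modulus $\mdl(B_0\setminus D_0)$ wait — let me restate: the relevant annulus is $B_0 \setminus D_0$, which has a fixed positive modulus $m_0 := \mdl A > 0$ where $A = B_0\setminus D_0$.

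Second, I would argue there are infinitely many levels $\lv$ at which $\partial\dpi_{\lv+1}(E) \neq \partial\dpi_\lv(E)$. This is where the hypothesis that $\dpi_\lv(E)$ is a closed ball for infinitely many $\lv$ enters: by Corollary~\ref{cor:1}, if the boundary points were eventually constant the end would be a ball with infinitely many directions removed and in particular would fail to be a closed ball at infinitely many levels; so the infinitely-many-closed-balls hypothesis is exactly the negation of that, forcing infinitely many "splitting" levels. For each such splitting level $\lv_i$, the gap annulus $\dpi_{\lv_i}(E)\setminus\dpi_{\lv_i+1}(E)$ maps under some iterate onto a level-$1$ annulus of modulus bounded below by a fixed constant $m_0 > 0$ (the modulus of $B_0\setminus D_0$, or the minimum of the finitely many level-$1$ gap moduli), using property (5) of Section~\ref{ends-s} that good directions stretch the hyperbolic metric by an integer factor $\geq 1$, hence do not decrease modulus. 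Therefore $\dist_\hl(\partial\dpi_{\lv_i}(E), \partial\dpi_{\lv_i+1}(E)) \geq m_0$ for infinitely many $i$, so $\sum_\lv \dist_\hl(\partial\dpi_\lv(E), \partial\dpi_{\lv+1}(E)) = \infty$.

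Third, I would apply Lemma~\ref{fine.sequence} to the decreasing sequence $\{\dpi_\lv(E)\}$ of closed subsets of $\pberl$ (each with singleton boundary in $\hl$ by Lemma~\ref{lem:15}), concluding that $\capdend = \{\z\}$ is a singleton contained in $\ponel$, which is the claim.

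The main obstacle I anticipate is the careful bookkeeping in the second paragraph: verifying that the gap annulus at a splitting level really does map \emph{isometrically} (or at least modulus-non-decreasingly) onto a level-$1$ gap annulus, and in particular tracking which pieces do and do not meet $B_0$. One must check that along the backward orbit realizing $\dpi_\lv(E)$, the forward images of the gap annulus stay inside good directions until they hit level $0$; the presence of the critical point $\omega$ in $B_0$ but \emph{not} in $\filledvphi$ (so $\omega$ eventually lands in $U_0$) is what guarantees that the piece $\dpi_{\lv}(E)$, being disjoint from the orbit of $\omega$'s eventual exit, sits in a region where $\vphi$ is a local bijection, so no collapsing of moduli occurs. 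Making this last point precise — essentially that a closed-ball piece containing a critical point in its interior cannot occur below the level where the critical orbit exits, hence the relevant gap annuli are critical-point-free and map with the expected modulus behavior — is the delicate step.
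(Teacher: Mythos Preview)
Your overall strategy matches the paper's: show $\sum_\ell \dist_\hl(\partial \dpill(E), \partial \dpi_{\ell+1}(E)) = \infty$ and invoke Lemma~\ref{fine.sequence}. The reduction ``infinitely many closed-ball levels $\Rightarrow$ infinitely many splitting levels'' is fine (via Corollary~\ref{cor:1}), and so is the final step.

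The gap is in your modulus bound, and it is precisely a sign error. Property~(5) of Section~\ref{ends-s} says $\vphi$ \emph{expands} hyperbolic distance by an integer factor $k \geq 1$, so pushing a level-$\ell$ gap forward to lower levels yields
\[
\dist_\hl\bigl(\partial \dpill(E), \partial \dpi_{\ell+1}(E)\bigr)\;\le\;\dist_\hl\bigl(\partial\vphi^j(\dpill(E)), \partial\vphi^j(\dpi_{\ell+1}(E))\bigr),
\]
an \emph{upper} bound on the deep gap, not the lower bound you need. Your conclusion $\dist_\hl(\cdots) \ge m_0$ therefore does not follow from ``moduli do not decrease''; you need \emph{equality} at every step, i.e.\ the direction at $\partial \dpill$ containing $\dpi_{\ell+1}$ must be critical-point-free so that it maps isomorphically onto its image.

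This is exactly the obstacle you flagged, and the paper's resolution is cleaner than pushing all the way to level~$1$. Fix $\ell_0$ with $\omega \notin L_{\ell_0}$ (possible since $\omega \notin \filledvphi$), and set
\[
a \;=\; \min\bigl\{\dist_\hl(\partial \dpi_{\ell_0}(\z), \partial \dpi_{\ell_0+1}(\z)) : \partial \dpi_{\ell_0}(\z) \neq \partial \dpi_{\ell_0+1}(\z)\bigr\} \;>\; 0,
\]
a minimum over finitely many pieces. For $\ell \ge \ell_0+1$ with $\partial \dpill \neq \partial \dpi_{\ell+1}$, the direction at $\partial \dpill$ containing $\dpi_{\ell+1}$ lies inside $\dpill \subset L_{\ell_0}$, hence avoids $\omega$; and it avoids $\omega'$ because $\omega'$ sits in a Fatou-component direction at $\xi_0$, which cannot contain $\dpi_{\ell+1}$ (whose boundary point lies in $\juliavphi$). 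So the step is an isometry, and iterating down to level $\ell_0$ gives $\dist_\hl(\partial \dpill(E), \partial \dpi_{\ell+1}(E)) \ge a$. Your phrase ``once a piece avoids $B_0$'' does not help here: the forward images of any piece return to $B_0$ every $q$ steps, so at levels below $\ell_0$ the critical point $\omega$ may well lie in the relevant direction and isometry can genuinely fail; stopping at $\ell_0$ sidesteps this entirely.
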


\begin{proof}
Let $\lv_0 \geq 1$ be such that $\omega$ is not a level $\lv_0$ point.
Let $$a = \min \{ \dist_\hl (\partial \dpi_{\lv_0}(\z), \partial \dpi_{\lv_0 +1}(\z)) \mid \partial \dpi_{\lv_0}(\z) \neq \partial \dpi_{\lv_0 +1}(\z) \mbox{ and } \z \in L_{\lv_0+1} \}.$$

Consider  $\lv \geq \lv_0+1$ and suppose that $\dpi_\lv$
and $\dpi_{\lv +1}$ are pieces of levels $\lv$ and $\lv+1$ such that
$\partial \dpi_\lv \neq \partial \dpi_{\lv+1}$.  
We claim that $$\dist_\hl (\partial \dpi_\lv,\partial \dpi_{\lv+1})
= \dist_\hl (\partial \vphi(\dpi_\lv),\partial \vphi(\dpi_{\lv+1})).$$
In fact, the direction  
at  $\partial \dpi_{\lv}$ that contains $\dpi_{\lv+1}$
maps isomorphically onto its image.
Therefore, $\vphi$ maps   $]\partial \dpi_\lv,\partial \dpi_{\lv+1}[$ isometrically
onto $]\partial \vphi(\dpi_\lv),\partial \vphi (\dpi_{\lv+1})[$.
It follows, that $$\dist_\hl (\partial \dpi_\lv,\partial \dpi_{\lv+1}) \geq  a.$$

Since there are infinitely many $\lv \geq 0$ such that  
$\partial \dpi_\lv (E) \neq \partial \dpi_{\lv+1} (E)$, we conclude that
there are infinitely many levels such that 
 $\dist_\hl (\partial \dpi_\lv(E),\partial \dpi_{\lv+1}(E)) \geq  a$.
From Lemma~\ref{fine.sequence}, we conclude that $\capdend$ is a singleton contained in $\ponel$.
\end{proof}

\subsection{Pieces of intermediate level and marked grids}
\label{sec:piec-interm-level}
It will be convenient to simultaneously consider another collection of pieces.

Recall that $\veta_0$ is the center of the starlike fixed Rivera domain $U_0$. That is, the unique fixed point in the skeleton $\cA_{U_0}$ (see~Lemma~\ref{basics.repelling}).
Let $\veta_{1/2}$ be the unique preimage of $\veta_0$ in $D_0$.
Consider the closed ball $B_{1/2} \subset D_0$ defined by
 $B_{1/2}=\pberl \setminus \dd_{\veta_{1/2}} (\veta_0)$.
That is, the ball obtained after removing the direction of $U_0$ at $\veta_{1/2}$ from the Berkovich line.
We define the {\sf level $1/2$ set} as
$$L_{1/2} = B_{1/2} \cup B_1 \cup \cdots \cup B_{q-1}.$$
 Recursively, for $\Z \ni \ell \ge 0$ we let
$$L_{\ell+3/2} = \vphi^{-1}(L_{\ell+1/2}).$$
Each connected component of $L_{\ell+1/2}$ is called a {\sf level $\ell+1/2$ dynamical piece.}

To avoid confusion let us be precise about the notation related to natural numbers:
\begin{eqnarray*}
  \N    & = & \{ n \in \Z \mid n \ge 1 \}, \\
  \Nnot & = & \{ n \in \Z \mid n \ge 0 \}, \\
  \N/2 & = & \{ n/2 \in \Q \mid n \in \N \}, \\
  \Nnot/2 & = & \{ n/2 \in \Q \mid n \in \Nnot \}. 
\end{eqnarray*}

Note that for all $\ell \in \Nnot/2$, 
$$L_{\ell+1} \subset L_{\ell+1/2} \subset L_\ell.$$
Hence every piece of level $\ell+1/2$ is contained in a unique piece of level $\ell$.
Consider a  piece $\dpi_{\lv+1/2}$  of level $\ell +1/2$. It follows that
 $\partial \dpi_{\lv+1/2} \subset GO(\cO \cup \{ \veta_0 \})$. 
Moreover, if  $\lv \ge 1$, then $\vphi(\dpi_{\lv+1/2})$ is a level $\lv -1/2$ piece.

\begin{lemma}
\label{lem:7}
  Let $\ell \ge 0$ be an integer. Consider an intermediate piece $\dpi_{\lv+1/2}$ of level $\lv+1/2$. Let $\dpi_\lv$ be the level $\lv$ piece containing $\dpi_{\lv+1/2}$.
  \begin{enumerate}
  \item 
If $\partial \dpi_\lv = \partial \dpi_{\lv+1/2}$, then
$\dpi_\lv = \dpi_{\lv+1/2}$.
\item

If $\partial \dpi_\lv \neq \partial \dpi_{\lv+1/2}$, then $\dpi_{\lv+1/2}$ is a closed ball.
  \end{enumerate}
\end{lemma}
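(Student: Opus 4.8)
The plan is to argue by induction on the integer $\ell \ge 0$, exactly as in the proofs of Lemma~\ref{lem:15} and Lemma~\ref{closed.ball.l}, using the fact that the intermediate pieces are nested between ordinary pieces via $L_{\ell+1} \subset L_{\ell+1/2} \subset L_\ell$. For part (1), the statement is essentially formal: if $\dpi_{\lv+1/2} \subset \dpi_\lv$ and their boundary points coincide, then since both are affinoids with a single boundary point (the intermediate piece inherits this from the description $\partial \dpi_{\lv+1/2} \subset GO(\cO \cup \{\veta_0\})$ together with the same direction-sorting argument as in Lemma~\ref{lem:15}), and since every direction at the common boundary point is either inside or outside each of them, the containment forces equality: any direction in $\dpi_\lv$ is a direction at $\partial\dpi_\lv = \partial\dpi_{\lv+1/2}$ lying in $L_\ell$, and one checks it survives into $L_{\ell+1/2}$ because removing $U_0$-directions only happens at the boundary points. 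I would phrase this by noting that $\dpi_{\lv+1/2}$ and $\dpi_\lv$, having the same boundary point and both being unions of directions at that point, can differ only if some direction of $\dpi_\lv$ is thrown out in passing to level $\lv+1/2$; but such a direction would have to map into a piece that got truncated, contradicting $\partial\dpi_\lv = \partial\dpi_{\lv+1/2}$.

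For part (2), I would run the same induction as in Lemma~\ref{closed.ball.l}(1). The base case $\ell = 0$ is handled by the explicit description of level $1/2$ and level $1$ pieces given just before the lemma: $L_{1/2} = B_{1/2} \cup B_1 \cup \cdots \cup B_{q-1}$ consists of closed balls, and if such a ball is strictly contained in its level $0$ piece $B_0$ (the only possibility for $\partial\dpi_0 \ne \partial\dpi_{1/2}$ is $\dpi_0 = B_0$, $\dpi_{1/2} = B_{1/2}$), it is a closed ball by construction. For the inductive step, suppose $\partial\dpi_{\lv+1/2} \ne \partial\dpi_\lv$ with $\lv \ge 1$. Applying $\vphi$: by part (2) of Lemma~\ref{closed.ball.l} (or rather the analogue — the direction at $\partial\dpi_\lv$ containing $\dpi_{\lv+1/2}$ is a good direction since it is critical-point-free by the Lemma~\ref{basics.repelling} geometry), we get $\partial\vphi(\dpi_{\lv+1/2}) \ne \partial\vphi(\dpi_\lv)$, and $\vphi(\dpi_{\lv+1/2})$ is a level $\lv - 1/2$ piece inside the level $\lv - 1$ piece $\vphi(\dpi_\lv)$. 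By the inductive hypothesis $\vphi(\dpi_{\lv+1/2})$ is a closed ball, and in fact we may take it to be the ball whose complementary direction at $\partial\vphi(\dpi_{\lv+1/2})$ contains $U_0$. Then, just as in Lemma~\ref{closed.ball.l}, the key observation is that if $\dpi$ is a piece with $\partial\dpi \ne \partial B_0$, the image of every direction at $\partial\dpi$ disjoint from $U_0$ is again disjoint from $U_0$; pulling back the closed ball $\vphi(\dpi_{\lv+1/2})$ through the locally injective map $\vphi$ at $\partial\dpi_{\lv+1/2}$ yields a closed ball.

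The one genuinely new point compared with the integer-level lemmas is bookkeeping the base case and making sure the intermediate level $L_{\ell+1/2}$ is obtained by removing only a single $U_0$-direction at one boundary point (namely at $\veta_{1/2}$ and its preimages), so that the direction-sorting and the "images of non-$U_0$ directions stay away from $U_0$" argument go through verbatim. I expect the main obstacle — though it is minor — to be carefully matching the geometry of the "half-integer" truncation at $\veta_{1/2}$ with Lemma~\ref{basics.repelling}: one must check that $\vphi$ maps $\dd_{\veta_{1/2}}(\cdot)$-type directions correctly and that the inductive hypothesis applies to $\vphi(\dpi_{\lv+1/2})$ at level $\lv - 1/2$, which requires $\lv \ge 1$ (the case $\lv = 0$ being the base case already disposed of). No compactness or hyperbolic-distance estimate is needed here; the lemma is purely about the tree combinatorics of the pieces.
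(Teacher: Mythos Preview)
Your argument for part~(2) is essentially the paper's: induct on $\ell$, observe that the direction $\dd$ at $\partial\dpi_\lv$ containing $\dpi_{\lv+1/2}$ is good, so $\partial\vphi(\dpi_\lv)\neq\partial\vphi(\dpi_{\lv+1/2})$; by the inductive hypothesis $\vphi(\dpi_{\lv+1/2})$ is a closed ball, and every direction at a boundary point $\z$ of $\dpi_{\lv+1/2}$ contained in $\dd$ maps into $\vphi(\dd)$, hence into that ball. This forces $\dpi_{\lv+1/2}$ to contain every direction at $\z$ except the one not in $\dd$, so it is a closed ball.

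Your argument for part~(1), however, is not a proof as written. You call the statement ``essentially formal'' and claim that the two pieces, being unions of directions at the common boundary point $\xi$, can differ only if some direction is ``thrown out'', which you say would contradict $\partial\dpi_\lv=\partial\dpi_{\lv+1/2}$. But you never explain how that contradiction arises, and your assertion that ``removing $U_0$-directions only happens at the boundary points'' is neither precise nor correct: the set $L_\lv\setminus L_{\lv+1/2}=\vphi^{-\lv}(B_0\setminus B_{1/2})$ can sit deep inside a level~$\lv$ piece, not only adjacent to its boundary point. What actually makes the statement work is that $\xi\in L_{\lv+1/2}$ forces $\vphi^\lv(\xi)\neq\xi_0$ (since $\xi_0\notin L_{1/2}$), so $\vphi^\lv(\dpi_\lv)=B_j$ with $j\neq 0$, hence $\vphi^\lv(\dpi_\lv)\subset L_{1/2}$ and therefore $\dpi_\lv\subset L_{\lv+1/2}$; but you do not isolate this step.

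The paper avoids all of this by treating (1) with the \emph{same} one-step induction as (2): if $\partial\dpi_\lv=\partial\dpi_{\lv+1/2}$ then $\partial\vphi(\dpi_\lv)=\partial\vphi(\dpi_{\lv+1/2})$, the inductive hypothesis gives $\vphi(\dpi_\lv)=\vphi(\dpi_{\lv+1/2})$, and since $\dpi_\lv$ and $\dpi_{\lv+1/2}$ are each a connected component of the preimage of this common image, with one contained in the other, they coincide. This is two lines and needs no analysis of which directions survive.
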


\begin{proof}
By definition of level $1/2$ pieces, both assertions are clearly true if $\lv=0$.
We proceed by induction.

For (1), given $\ell \geq 1$,  if  $\partial \dpi_\lv = \partial \dpi_{\lv+1/2}$, then 
$\partial \vphi(\dpi_\lv) = \partial \vphi(\dpi_{\lv+1/2})$. By the inductive hypothesis,  $\vphi (\dpi_\lv) = \vphi (\dpi_{\lv+1/2})$.
Therefore, $\dpi_\lv = \dpi_{\lv+1/2}$.

For (2), consider $\lv \ge 1$, assume that  
$\partial \dpi_\lv \neq \partial \dpi_{\lv+1/2}$. 
Then $\dpi_{\lv+1/2}$ is contained in a direction $\dd$ at $\partial \dpi_\lv$ which must 
be a good direction. Hence, $\partial \vphi(\dpi_\lv) \neq \partial \vphi(\dpi_{\lv+1/2})$.
By the inductive hypothesis,  $\vphi(\dpi_{\lv+1/2})$ is a closed ball. 
Given $\z \in \partial \dpi_{\lv+1/2}$, every direction at $\z$, contained in $\dd$, must map into
$\vphi(\dd)$, and therefore into the ball $\vphi(\dpi_{\lv+1/2})$. Hence $\dpi_{\lv+1/2}$ contains
all the directions at $\z$, with the exception of the one that is not contained in $\dd$. 
\end{proof}

\begin{lemma}
\label{lem:16}  Assume that both critical points are level $\ell \in \N/2$ points.
Let $\dpi_{\lv-1/2}$ be a level $\lv-1/2$ piece and $\dd$ be a direction at
$\partial \dpi_{\lv-1/2}$ contained in $\dpi_{\lv-1/2}$ but not contained in $L_\lv$.
Then the following holds:

\begin{enumerate}
\item
$\dd$ contains a unique level $\lv$ piece $\dpi_\lv$,

\item
 $\vphi: \dd \setminus \dpi_\lv \to \vphi(\dd \setminus \dpi_\lv)$ 
is a degree one or two map between annuli. The degree is two if and only if
$\dpi_\lv$ contains a critical point. Equivalently, the degree of $\vphi$ at the boundary point
of $\dpi_\lv$ is two.
\end{enumerate}
\end{lemma}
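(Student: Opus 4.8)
The plan is to argue by induction on the level $\lv$, pushing the configuration forward by $\vphi$ and reading the two assertions off from the local structure of $\vphi$ near $\cO$ recorded in Lemma~\ref{basics.repelling}, together with the inductive control of the pieces one level down. Write $\chi=\partial\dpi_{\lv-1/2}$, so that $\dd$ is a direction at $\chi$ with $\dd\subset\dpi_{\lv-1/2}$ and $\dd\not\subset L_\lv$; recall from Lemmas~\ref{lem:7} and~\ref{lem:15} that $\chi\in GO(\cO\cup\{\veta_0\})$ and that every direction at $\chi$ is contained in or disjoint from $\dpi_{\lv-1/2}$. First, $\dd$ cannot be a bad direction: since $\dd\subset\dpi_{\lv-1/2}$, a bad direction would give $\pberl=\vphi(\dd)\subset\vphi(\dpi_{\lv-1/2})$, which is impossible because $\vphi(\dpi_{\lv-1/2})$ is a piece, hence a proper subset of $\pberl$. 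Thus $\dd^\prime:=\vphi(\dd)$ is a single direction at $\chi^\prime:=\vphi(\chi)$ contained in $\vphi(\dpi_{\lv-1/2})$, and $\vphi:\dd\to\dd^\prime$ is $k$-to-one; by Lemma~\ref{riemann-hurwitz} applied to closed balls exhausting $\dd$, one has $k=2$ exactly when $\dd$ contains a critical point of $\vphi$, and then $\dd$ contains exactly one such (a simple critical point).

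For the base of the induction, $\lv\in\{1/2,1\}$, the two assertions are checked directly against the explicit list of level $0$, $1/2$ and $1$ pieces and the maps between them given in Lemma~\ref{basics.repelling} and in the discussion of the level $1$ puzzle; in these cases the relevant direction $\dd$ carries no critical point, the level $\lv$ piece inside it is a closed ball, $\vphi$ restricts to an isomorphism of annuli on $\dd$ minus that piece, and the boundary degree is $1$.

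For the inductive step, let $\lv\ge3/2$. Then $\dd^\prime$ is a direction at $\partial\vphi(\dpi_{\lv-1/2})$ contained in the level $\lv-3/2$ piece $\vphi(\dpi_{\lv-1/2})$, with $\dd^\prime\not\subset L_{\lv-1}$ because $\dd\not\subset\vphi^{-1}(L_{\lv-1})=L_\lv$. Applying the inductive hypothesis to $\dd^\prime$ at level $\lv-1$, the direction $\dd^\prime$ contains a unique level $\lv-1$ piece $\dpi^\prime_{\lv-1}$, and $\vphi:\dd^\prime\setminus\dpi^\prime_{\lv-1}\to\vphi(\dd^\prime\setminus\dpi^\prime_{\lv-1})$ is a degree one or two map of annuli. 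Since $\vphi(\dd)=\dd^\prime$, the level $\lv$ pieces contained in $\dd$ are precisely the connected components of $\dd\cap\vphi^{-1}(\dpi^\prime_{\lv-1})$, each mapping onto $\dpi^\prime_{\lv-1}$. If $k=1$, then $\vphi|_\dd$ is an isomorphism onto $\dd^\prime$, so $\dd\cap\vphi^{-1}(\dpi^\prime_{\lv-1})$ is a single closed ball $\dpi_\lv$ with no critical point, $\vphi:\dd\setminus\dpi_\lv\to\dd^\prime\setminus\dpi^\prime_{\lv-1}$ is an isomorphism of annuli, and $\deg_{\partial\dpi_\lv}\vphi=1$. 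If $k=2$, with $\omega$ the unique critical point in $\dd$, the crucial point is that the critical value $\vphi(\omega)$ lies in $\dpi^\prime_{\lv-1}$: indeed $\vphi(\omega)\in\dd^\prime$, while $\omega$ is a level $\ell$ point with $\ell\ge\lv$, so $\vphi(\omega)\in L_{\ell-1}\subset L_{\lv-1}$; hence $\vphi(\omega)\in\dd^\prime\cap L_{\lv-1}=\dpi^\prime_{\lv-1}$ by uniqueness of $\dpi^\prime_{\lv-1}$ inside $\dd^\prime$. Therefore $\dd\cap\vphi^{-1}(\dpi^\prime_{\lv-1})$ is connected --- it is the degree-two preimage of a piece with a single boundary point that contains the unique critical value of $\vphi|_\dd$ --- so it is the unique level $\lv$ piece $\dpi_\lv$, with $\omega\in\dpi_\lv$. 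Then $\dd\setminus\dpi_\lv$ is an annulus with no critical point, $\vphi:\dd\setminus\dpi_\lv\to\dd^\prime\setminus\dpi^\prime_{\lv-1}$ has degree two, and $\deg_{\partial\dpi_\lv}\vphi=2$ since $\omega\in\dpi_\lv$. This completes the induction.

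The step I expect to be the main obstacle is the case $k=2$: one must rule out that the degree-two map $\vphi|_\dd$ pulls $\dpi^\prime_{\lv-1}$ back to two distinct level $\lv$ pieces, which amounts to showing that the unique critical value sits inside the unique level $\lv-1$ piece downstairs. This is precisely what the standing hypothesis --- that both critical points are level $\ell$ points --- provides, by keeping the critical points and their images inside the nested sets $L_0\supset L_{1/2}\supset\cdots$ down to the relevant level. The remaining ingredients --- that the sets $\dd\setminus\dpi_\lv$ and their images are genuine annuli, the identification of the boundary degrees, and the base cases --- are routine bookkeeping with Lemmas~\ref{basics.repelling} and~\ref{riemann-hurwitz} and the earlier description of the geometry of pieces.
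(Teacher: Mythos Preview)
Your proposal is correct and follows essentially the same approach as the paper: induction on $\lv$ with base cases $\lv=1/2,1$ checked against the explicit description of low-level pieces, and the inductive step obtained by pushing $\dd$ forward by $\vphi$, invoking the hypothesis at level $\lv-1$, and splitting into the degree-one and degree-two cases. Your treatment of the key degree-two step --- showing $\vphi(\omega)\in\dpi^\prime_{\lv-1}$ via the standing assumption that both critical points are level $\ell\ge\lv$ points --- is exactly the paper's argument (the paper phrases it as ``$\vphi(\dd)\setminus\dpi$ is critical value free''), and your write-up is in fact a bit more explicit about why $\dd$ is a good direction and how Lemma~\ref{riemann-hurwitz} enters.
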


\begin{proof}
   We assume that both critical points are level $k \ge \lv$ points
   and proceed by induction on $\lv$. For $\lv=1/2$ and $\lv=1$, the
   statement follows from the description of the pieces up to level
   $1$. Assume that (1) and (2) are true for $\lv -1$. Thus $\vphi(\dd)$ is a
   direction at $\partial \dpi_{\lv-3/2}$ contained in
   $\dpi_{\lv-3/2}$ but not contained in $L_{\lv-1}$.  Let $\dpi$ be
   the unique piece of level $L_{\lv-1}$ contained in $\vphi(\dd)$. By the previous lemma $\dpi$
is a closed ball.

   If $\vphi: \dd \to    \vphi(\dd)$ has degree one, then the preimage of $\dpi$ is a unique
   closed ball $\dpi'$. Moreover, $\dd$ is critical point free and so
   is $\dpi'$. 

 If $\vphi: \dd \to \vphi(\dd)$ has degree two, then
   $\partial \dpi$ has a unique preimage $\z$ in $\dd$, since $\dd
   \setminus \dpi$ is critical value free and therefore $\dpi$
   contains one critical value. It follows that the preimage of $\dpi$
   is the unique closed ball $\dpi' \subset \dd$ with boundary point $\z$.
Moreover, $\dpi'$ contains a critical point and $\dd \setminus \dpi'$ maps 
under $\vphi$ onto its image as a degree two map.
\end{proof}

\begin{definition}
  \label{def:5}
Denote by $\omegap$ a critical point that belongs to a Fatou component 
$U(\omegap)$ which is an open ball in the basin of $\cO$ such that $\partial U(\omegap) = \partial B_0$. 
Denote by $\omega$ the other critical point. If there are two choices for $\omegap$ just 
make an arbitrary one. We say that $\omega$ is the {\sf active critical point}.  
\end{definition}

Note that, 
$\partial U(\omegap) = \partial B_0$ and, therefore, 
$\partial \dpi_\lv(\omegap) = \partial B_0$ for all $\ell \ge 0$.
We emphasize that there might be a choice involved.

\begin{definition}
\label{def:4}
Let $\ell \in \N/2$.
Assume that the active critical point $\omega$ is a level $\ell$ point.
Let $\z$ be a point of level $\ell$. 
For   $m \in  \Nnot/2$, $n \in \Nnot$ such that $m +n \leq \lv$,
we let
$$M_{n,m}(\z) = 
 \begin{cases}
2 & \text{if  }  \dpi_m (\vphi^{n}(\z)) = \dpi_m(\omega), \\
1 & \text{otherwise}.
\end{cases}
$$
The {\sf marked grid of $\z$ of level $\lv$} is the array of $1$'s and $2$'s given by:
$$\marked_\lv (\z) = \left( M_{n,m} (\z) \right).$$
If $M_{n,m} (\z) =2$ we say that $(n,m)$ is a {\sf marked position}, otherwise we say that it is an {\sf unmarked position}. 
We say that $\marked_{\lv} (\omega)$ is the {\sf critical marked grid}.

If $\omega, \z \in \filledvphi$, then  we may define $M_{n,m} (\z)$ as above for all $n \in \Nnot$ and $m \in \Nnot/2$.
We say that the infinite array $\marked (\z) = \left( M_{n,m} (\z) \right)$
is the {\sf marked grid of $\z$}. 
\end{definition}

From the definition it follows that:

\begin{enumerate}
\item[(Ma)] If $M_{n,\lv}$ is marked, then $M_{n,j}$ is marked for all $j \leq \lv$.

\item[(Mb)] If  $M_{n,\lv}$ is marked, then $M_{n+j,\lv-j} = M_{j,\lv-j}(\omega)$
for $0 \leq j \leq \lv$.
\end{enumerate}

\smallskip
Marked grids are introduced to help us keep track of the hyperbolic distance
between the boundary of the pieces.

\begin{lemma}
\label{lem:8}
Assume that both critical points are level $\ell$ points. Let $\z$ be a level $\ell$ point and
$\z_n = \vphi^{n}(\z)$. If $m +n \leq \ell$ and $m-3/2 \geq 0$, then
$$M_{n,m} (\z) \cdot \dist_\hl (\partial \dpi_{m-1/2} (\z_n), \partial \dpi_{m} (\z_n))  
= \dist_\hl (\partial \dpi_{m-3/2} (\z_{n+1}), \partial \dpi_{m-1} (\z_{n+1})).$$
\end{lemma}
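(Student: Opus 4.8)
The plan is to unwind both sides using the behavior of $\vphi$ on directions and the fact that the relevant directions are good. First I would set up notation: write $\chi_m = \partial \dpi_m(\z_n)$ and $\chi_{m-1/2} = \partial \dpi_{m-1/2}(\z_n)$, and similarly $\chi'_{m-1} = \partial \dpi_{m-1}(\z_{n+1})$ and $\chi'_{m-3/2} = \partial \dpi_{m-3/2}(\z_{n+1})$. The inclusion $L_{m} \subset L_{m-1/2} \subset L_{m-1}$ gives $\dpi_m(\z_n) \subset \dpi_{m-1/2}(\z_n) \subset \dpi_{m-1}(\z_n)$, so $\chi_{m-1/2}$ and $\chi_m$ both lie in the segment from $\chi_{m-1}$ into the interior of $\dpi_{m-1}(\z_n)$, and $\dist_\hl(\chi_{m-1/2}, \chi_m) = \mdl(A)$ where $A$ is the open annulus $\dd_{m-1/2}(\z_n) \setminus \dpi_m(\z_n)$ (using Lemma~\ref{lem:7}: if $\chi_{m-1/2} \neq \chi_{m-1}$ the intermediate piece is a closed ball, and if it equals $\chi_{m-1}$ the pieces coincide and the distance is computed inside $\dpi_{m-1}(\z_n)$). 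Since $\vphi(\dpi_k(\z_n)) = \dpi_{k-1}(\z_{n+1})$ and $\vphi(\partial \dpi_k(\z_n)) = \partial \dpi_{k-1}(\z_{n+1})$ for $k = m-1/2$ and $k = m$ by Lemma~\ref{basics.pieces}(2) (valid since $m \ge 3/2$, so the image levels $m-3/2, m-1$ are $\ge 0$), the map $\vphi$ carries the segment $[\chi_{m-1/2}, \chi_m]$ onto $[\chi'_{m-3/2}, \chi'_{m-1}]$.

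Next I would invoke the key metric fact about good directions, namely item (5) of Section~\ref{ends-s}: if $D$ is the direction at $\chi_{m-1/2}$ containing $\dpi_m(\z_n)$, then for all $\xi$ in $D$ sufficiently close to the boundary, $\dist_\hl(\vphi(\chi_{m-1/2}), \vphi(\xi)) = k \cdot \dist_\hl(\chi_{m-1/2}, \xi)$, where $k = \deg_D T_{\chi_{m-1/2}}\vphi$. The direction $D$ is good (every direction at a boundary point of a piece contained in that piece is good, as established repeatedly in Lemmas~\ref{lem:15} and~\ref{closed.ball.l}), so $\vphi$ maps the open segment $]\chi_{m-1/2}, \chi_m[$ isometrically scaled by $k$ onto $]\chi'_{m-3/2}, \chi'_{m-1}[$. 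Hence
$$\dist_\hl(\chi'_{m-3/2}, \chi'_{m-1}) = k \cdot \dist_\hl(\chi_{m-1/2}, \chi_m).$$
It remains to identify $k$ with $M_{n,m}(\z)$. The degree $k$ equals $1$ unless $D$ — equivalently the ball $\dpi_m(\z_n)$, since $\dd_{m-1/2}(\z_n) \setminus \dpi_m(\z_n)$ is an annulus mapping onto an annulus — contains a critical point, by Lemma~\ref{lem:16}(2) applied to the piece $\dpi_{m-1/2}(\z_n)$ and the direction $D$ (using the hypothesis that both critical points are level $\ell$ points and $m \le \ell$, so the inductive description of intermediate pieces applies). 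A critical point in $\dpi_m(\z_n)$ means $\dpi_m(\z_n) = \dpi_m(\omega)$ or $\dpi_m(\z_n) = \dpi_m(\omegap)$; but $\dpi_m(\omegap)$ is the piece with boundary $\partial B_0$ lying outside the interior picture here, and in any case $\omegap$ lies in a Fatou component $U(\omegap) \subset$ basin of $\cO$, so $\omegap \notin \filledvphi$ cannot belong to a level-$m$ piece that is a ball strictly inside a direction unless it coincides with the active-critical-point piece — I would pin this down using Definition~\ref{def:5} and the remark that $\partial\dpi_\lv(\omegap) = \partial B_0$ for all $\lv$, which forces the critical point detected inside a good direction $D \ne $ (direction of $B_0$) to be the active critical point $\omega$. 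Thus $k = 2 \iff \dpi_m(\z_n) = \dpi_m(\omega) \iff M_{n,m}(\z) = 2$, giving $k = M_{n,m}(\z)$ and the claimed identity.

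The main obstacle I anticipate is the bookkeeping in the last step: cleanly arguing that the critical point whose presence in $\dpi_m(\z_n)$ can boost the local degree is exactly the active critical point $\omega$, and not $\omegap$, and handling the degenerate case $\chi_{m-1/2} = \chi_{m-1}$ (where the intermediate piece degenerates into the level-$(m-1)$ piece). For the latter, when $\partial\dpi_{m-1/2}(\z_n) = \partial\dpi_{m-1}(\z_n)$ Lemma~\ref{lem:7}(1) gives $\dpi_{m-1/2}(\z_n) = \dpi_{m-1}(\z_n)$, and the segment $[\chi_{m-1/2}, \chi_m] = [\chi_{m-1}, \chi_m]$ sits in a direction at $\chi_{m-1}$ which is good, so the same scaling argument applies verbatim; one just has to check $M_{n,m}(\z)$ still records the right degree there, which it does since the criterion $\dpi_m(\z_n) = \dpi_m(\omega)$ is level-$m$ data unaffected by whether the level-$(m-1/2)$ and level-$(m-1)$ boundaries coincide. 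Everything else is routine application of the direction-action formalism of Section~\ref{ends-s} and Lemmas~\ref{basics.pieces}, \ref{lem:7}, and~\ref{lem:16}.
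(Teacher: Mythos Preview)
Your approach is correct and is essentially the paper's own argument: dispose of the degenerate case $\partial \dpi_{m-1/2}(\z_n) = \partial \dpi_m(\z_n)$ (both sides vanish, and this is equivalent to $\partial \dpi_{m-3/2}(\z_{n+1}) = \partial \dpi_{m-1}(\z_{n+1})$), and in the non-degenerate case invoke Lemma~\ref{lem:16}(2) to see that the annulus $\dd \setminus \dpi_m(\z_n)$ maps with degree $1$ or $2$ onto its image annulus, the degree being $2$ exactly when $\dpi_m(\z_n)$ contains a critical point. The paper's proof is just this, stated in two sentences.

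One factual slip to correct: you write that $\omegap \notin \filledvphi$, but this is false. The orbit of $\omegap$ lies forever in directions at points of $\cO$ (hence in $L_0$), so $\omegap \in \filledvphi$. The right reason $\omegap$ cannot contribute in the non-degenerate case is exactly the other one you give: since $\partial \dpi_\lv(\omegap) = \partial B_0$ for all relevant $\lv$, if $\omegap \in \dpi_m(\z_n)$ then $\partial \dpi_m(\z_n) = \partial B_0 = \partial \dpi_{m-1/2}(\z_n)$, contradicting non-degeneracy. With that correction your identification $k = M_{n,m}(\z)$ is clean.
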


\begin{proof}
  First observe that $\partial \dpi_{m-1/2} (\z_n)= \partial \dpi_{m} (\z_n)$ if and only if
$\partial \dpi_{m-3/2} (\z_{n+1})= \partial \dpi_{m-1} (\z_{n+1})$. Now if 
$\partial \dpi_{m-1/2} (\z_n) \neq \partial \dpi_{m} (\z_n)$, the required identity follows
from (2) of the previous lemma.
\end{proof}

We must warn the reader that the degree $d$ of $\vphi : \dpill(\z_n) \to \dpi_{\ell-1} (\z_{n+1})$, in general, does not agree with 
$M_{n,\ell}(\z)$. However, always $M_{n,\ell} (\z) \le d$ but strict inequality might hold. That is, it might occur that $\omega \notin \dpi_\ell(\z_n)$ while
$d=2$. Nevertheless, for future reference we record, without proof, a situation in which the degree and the value of the marked grid coincide.

\begin{lemma}
  \label{lem:18}
  Assume that both critical points are level $\ell'$ points. Let $\z$ be a level $\ell'$ point.
 If there exist $\ell \le \ell'$ such that $\dpi_{\ell}(\z)$ is a critical point free  closed ball,
then $M_{\ell',0} (\z) = 1$ and the degree of $\vphi: \dpi_{\ell'}(\z) \to \dpi_{\ell'-1} (\vphi(\z))$ is also $1$.
\end{lemma}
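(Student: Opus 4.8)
The plan is to show that the hypothesis forces both rigid critical points to lie \emph{outside} the piece $\dpi_{\ell'}(\z)$ --- in fact inside a single direction at its unique boundary point --- and to read off both conclusions from this.

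First I would record the containment: since $\ell\le\ell'$, iterating Lemma~\ref{basics.pieces}(1) yields $\dpi_{\ell'}(\z)\subseteq\dpi_{\ell'-1}(\z)\subseteq\cdots\subseteq\dpi_{\ell}(\z)$. Writing $\chi=\partial\dpi_{\ell}(\z)$ and using that $\dpi_{\ell}(\z)$ is a closed ball, its complement $\pberl\setminus\dpi_{\ell}(\z)$ is an open ball, i.e.\ a single direction $D$ at $\chi$. Since $\dpi_{\ell}(\z)$ is critical point free, both rigid critical points $\omega$ and $\omegap$ lie in $D$, and as $D$ is convex the segment $[\omega,\omegap]$ is contained in $D$; hence $[\omega,\omegap]$ is disjoint from $\dpi_{\ell}(\z)$ and, a fortiori, from $\dpi_{\ell'}(\z)$.

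The first assertion would then be immediate: $\dpi_{\ell'}(\z)$ sits inside the critical point free piece $\dpi_{\ell}(\z)$, so the active critical point $\omega$ is not in $\dpi_{\ell'}(\z)$, i.e.\ $\dpi_{\ell'}(\z)\neq\dpi_{\ell'}(\omega)$; this is precisely the value $1$ of the marked grid at the position corresponding to the map $\vphi\colon\dpi_{\ell'}(\z)\to\dpi_{\ell'-1}(\vphi(\z))$ (namely $M_{0,\ell'}(\z)=1$). For the degree, I would set $\chi'=\partial\dpi_{\ell'}(\z)$, a type II point of $GO(\cO)$ by Lemma~\ref{lem:15}. Since the open ball $D$ is connected and disjoint from $\dpi_{\ell'}(\z)\ni\chi'$, it lies in a single direction at $\chi'$; so $\omega$ and $\omegap$ lie in a common direction at $\chi'$, and Lemma~\ref{degree-one-l} gives $\deg T_{\chi'}\vphi=1$ (whence $\deg_{\chi'}\vphi=1$ by Section~\ref{ends-s}) and identifies that direction as the unique bad direction at $\chi'$. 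By Lemma~\ref{lem:15}, $\dpi_{\ell'}(\z)$ is the union of $\chi'$ with a collection of directions at $\chi'$, none of which is the bad one (it would contain $\omega$); so each such direction is good and, $T_{\chi'}\vphi$ being a M\"obius transformation, is carried bijectively by $\vphi$ onto a direction at $\vphi(\chi')$, the images being pairwise distinct. I would conclude that $\vphi$ is injective on $\dpi_{\ell'}(\z)$, hence $\vphi\colon\dpi_{\ell'}(\z)\to\dpi_{\ell'-1}(\vphi(\z))$ has degree $1$. (Equivalently, one can invoke that the locus where $\vphi$ fails to be locally injective is the segment $[\omega,\omegap]$, which is disjoint from $\dpi_{\ell'}(\z)$, and that a locally injective restriction of $\vphi$ to a convex subset of the tree is injective.)

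I do not expect a genuine obstacle here: the only points needing care are the ``star'' description of a dynamical piece about its boundary point and the fact that a good direction of local degree one is mapped onto a full direction, both already contained in Lemma~\ref{lem:15} and the list in Section~\ref{ends-s} --- which is consistent with the statement being recorded here without proof.
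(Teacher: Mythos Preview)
Your proof is correct. The paper itself records this lemma \emph{without proof} (see the sentence immediately preceding Lemma~\ref{lem:18}), so there is no argument to compare against; your reasoning supplies exactly the routine verification the author omits, and your closing parenthetical anticipates correctly that this is why the paper leaves it unproved. One small remark: you rightly interpret the marked-grid conclusion as $M_{0,\ell'}(\z)=1$ (i.e.\ $\dpi_{\ell'}(\z)\neq\dpi_{\ell'}(\omega)$); the index order $M_{\ell',0}$ in the statement appears to be a typo in light of Definition~\ref{def:4} and the way the lemma is invoked in the proof of Lemma~\ref{lem:21}.
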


\subsection{Both critical points in $\filledvphi$}
\label{sec:both-critical-points}
Let us assume that both critical points belong to $\filledvphi$. 
It will be convenient to say that $\ell_0 \in \N_0/2$ is the depth of the $n_0$-th column of a marked grid $(M_{n,\ell})$
if $M_{n_0, \ell_0}$ is marked but $M_{n_0, \ell_0+1/2}$ is not marked.

We distinguish  marked grids $\marked$ as follows:
\begin{enumerate}
\item  If there exists $p$ such that $M_{pn,\lv}$ is marked for all $\lv \geq 0$ and all $n \in \N_0$,
then  we say that 
$\marked$ is {\sf periodic of period $p$}. 

\item
Assume that $\marked$ is not periodic.
If there exists $k \geq 1$ and $p \geq 1$ such that for all $n \geq 0$  we have that
$M_{np+k,\lv}$ is marked for all $\lv \geq 0$, then  we say that 
$\marked$ is {\sf strictly preperiodic with eventual period $p$}.

\item
If a marked grid is periodic or strictly preperiodic, then we say that it is {\sf eventually periodic}.

\item
If the critical marked grid $\marked (\omega)$ has columns of arbitrarily large depth, then we say
that $\marked (\omega)$ is {\sf critically recurrent}.
\end{enumerate}

The marked grids involved in our study of quadratic rational maps over $\L$ have levels in $\N_0 /2$ in contrast with
the marked grids that appear in the literature related to  complex quadratic and cubic polynomial dynamics where levels are indexed by $\N_0$ (see~\cite{BrannerHubbardCubicI,MilLC}). 
The introduction of mid-level positions allow us to avoid the so called ``semi-critical'' positions which arise in complex quadratic polynomial
dynamics. However, ``degenerate annuli'' do appear in our context, just as in the study of quadratic polynomials.

Given $\zeta \in \filledvphi$ we are interested on deciding whether
$$S(\z):=\sum_{\ell \in \N/2} \dist_\hl (\partial \dpi_{\lv-1/2}(\z) , \partial \dpi_{\lv}(\z)),$$
converges or diverges. 

If $ \partial \dpi_{\lv-1/2}(\z) = \partial \dpi_{\lv}(\z)$ for all $\ell \geq \ell_0$, then the above sum clearly converges.
Otherwise, $\partial \dpi_{\lv-1/2}(\z) \neq \partial \dpi_{\lv}(\z)$ for infinitely many values of $\lv \in \N/2$ and the following result establishes exactly when
the corresponding series is convergent.

\begin{theorem}
  \label{thr:yoccoz}
  Assume that both critical points belong to $\filledvphi$. Consider $\z \in \filledvphi$.
  If  $\partial \dpi_{\lv-1/2}(\z) \neq \partial \dpi_{\lv}(\z)$ for infinitely many values of $\lv \in \N_0/2$, then $S(\z)$
converges if and only if $\marked (\z)$ is eventually periodic.
\end{theorem}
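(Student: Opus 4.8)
The idea is to reduce $S(\z)$ to a series whose $\lv$-th term is comparable to $2^{-N_\lv(\z)}$, where $N_\lv(\z)$ is the number of marked positions of $\marked(\z)$ lying on the line $n+m=\lv$. Write
$$d_{n,m}(\z)=\dist_\hl\!\left(\partial\dpi_{m-1/2}(\vphi^n(\z)),\,\partial\dpi_{m}(\vphi^n(\z))\right),$$
so that $S(\z)=\sum_{\lv\in\N/2}d_{0,\lv}(\z)$. Since both critical points lie in $\filledvphi$ the grid $\marked(\z)$ is defined at every position, and iterating Lemma~\ref{lem:8} along the line $n+m=\lv$, starting at $(0,\lv)$ and descending to a position of level $1/2$ or $1$, yields
$$d_{0,\lv}(\z)=2^{-N_\lv(\z)}\,\delta_\lv(\z),$$
where $N_\lv(\z)=\#\{(n,m):n+m=\lv,\ M_{n,m}(\z)=2\}$ (the terminal position excluded) and $\delta_\lv(\z)$ is the hyperbolic distance between the boundaries of two nested pieces of level $1/2$ or $1$. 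As there are only finitely many pieces of level $0$, $1/2$ and $1$, the factor $\delta_\lv(\z)$ takes finitely many values, is bounded above, and is positive exactly when $\partial\dpi_{\lv-1/2}(\z)\neq\partial\dpi_{\lv}(\z)$; call such $\lv$ the \emph{admissible} levels. Hence $S(\z)$ converges if and only if $\sum 2^{-N_\lv(\z)}$, summed over the admissible levels, converges --- and by hypothesis there are infinitely many admissible levels.

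Suppose first that $\marked(\z)$ is eventually periodic. By definition there is an arithmetic progression of columns, with some common difference $p$, each of whose entries is marked; a column of index $c$ all of whose entries are marked contributes a marked position to the line $n+m=\lv$ whenever $\lv\ge c+2$, so $N_\lv(\z)\ge \lv/p-c_0$ for a constant $c_0$. Thus $\sum_\lv 2^{-N_\lv(\z)}$ is dominated by a convergent geometric series and $S(\z)<\infty$.

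For the converse assume $\marked(\z)$ is not eventually periodic; it suffices to show the relevant sum diverges. By property (Mb), if some column of $\marked(\z)$ has all entries marked then, from that column onward, $\marked(\z)$ coincides with a shift of the critical grid $\marked(\omega)$; consequently $\marked(\omega)$ is not eventually periodic either, and a second use of (Mb) shows that $\marked(\omega)$ --- hence $\marked(\z)$ --- has at most one column with all entries marked. Deleting that column multiplies each $2^{-N_\lv(\z)}$ by at most $2$, so we may assume every column of $\marked(\z)$ has finite depth. If these depths are bounded, a marked position $(n,\lv-n)$ forces $\lv-n$ to be at most the depth of column $n$, which leaves only boundedly many marked positions on each line $n+m=\lv$; hence $N_\lv(\z)$ is bounded and the sum diverges, there being infinitely many admissible levels. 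The remaining possibility is that the depths of $\marked(\omega)$ are unbounded, that is, $\marked(\omega)$ is critically recurrent; here I would adapt the tableau counting argument of Branner and Hubbard~\cite{BrannerHubbardCubicI} (compare~\cite{MilLC}): using (Ma), (Mb), and the impossibility of a second column of infinite depth, one bounds the number of marked positions that can occur on the lines $n+m=\lv$ between consecutive levels at which the critical grid ``returns'', and concludes that $N_\lv(\z)$ does not grow linearly along the admissible levels, so that $\sum 2^{-N_\lv(\z)}=\infty$.

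The main obstacle is this last step: carrying out the Branner--Hubbard counting in the present framework, and in particular reconciling the ``critical return'' levels with the admissible levels (those for which $\delta_\lv(\z)>0$). The half-integer levels, designed to eliminate the semi-critical positions of the classical quadratic tableau, make the bookkeeping of returns somewhat cleaner than in complex dynamics, but it is here that the substantive work of the proof lies.
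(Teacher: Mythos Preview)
Your reformulation via $N_\ell(\z)$ is correct: iterating Lemma~\ref{lem:8} along the diagonal gives $d_{0,\ell}(\z)=2^{-N_\ell(\z)}\delta_\ell(\z)$ with $\delta_\ell$ ranging over finitely many non-negative values, and the eventually-periodic direction then follows as you describe. The paper does not use this counting device --- it works directly with the distances --- but your approach is a legitimate alternative for that half of the argument.

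The converse direction, however, has two real gaps. First, the sentence ``the remaining possibility is that $\marked(\omega)$ is critically recurrent'' is unjustified: $\marked(\z)$ can have columns of unbounded finite depth while the columns of $\marked(\omega)$ beyond column~$0$ stay bounded, so this case needs separate treatment (the paper handles it by showing $S(\omega)=\infty$ and then transferring the divergence to $\z$ via the first deep column at each level). Second, and more seriously, even when $\marked(\omega)$ \emph{is} recurrent you must separate \emph{periodic} from \emph{recurrent but not periodic}. The Branner--Hubbard children argument you plan to adapt addresses only the latter, and even then it yields $S(\omega)=\infty$ rather than $S(\z)=\infty$ --- a further transfer step is still required. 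When $\marked(\omega)$ is periodic one has $S(\omega)<\infty$, so no amount of counting for $\omega$ helps; here the paper invokes a tableau constraint beyond (Ma) and (Mb), namely a \emph{weak third rule} (Lemma~\ref{lem:21}), to produce infinitely many levels $\ell$ along whose diagonal all the relevant positions of $\marked(\z)$ are unmarked, forcing $d_{0,\ell}(\z)$ to equal the fixed positive value $\dist_\hl(\partial\dpi_{N-1/2}(\omega),\partial\dpi_N(\omega))$. This third rule --- which you neither state nor use --- is the essential missing ingredient; without it the periodic case cannot be closed.
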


The proof of the theorem is outlined after the two lemmas below.
We will adapt the original techniques of Branner-Hubbard and Yoccoz (e.g. see~\cite{BrannerHubbardCubicI,MilLC}).
The only difference with the usual techniques is that, in our context, only a weaker version of the so called ``third tableau rule'' holds:   

\begin{lemma}[Weak third rule]
  \label{lem:21}
  Let $\z \in \filledvphi$.
Consider $\ell \in \N/2$ and assume that $\partial \dpi_{\ell-1/2}(\omega) \neq \partial \dpill(\omega)$.
For all  $k \in \N$ and $\ell_0 \in \N/2$ such that $\ell_0 \ge \ell+k$, the following holds:

If $M_{j,\ell_0-j}(\omega)$ is not marked for all $j=1, \dots, k-1$, $M_{k, \ell_0-k+1/2}(\omega)$ and $M_{n,\ell_0}(\z)$ are marked, then $M_{n+k,\ell_0-k+1/2}(\z)$ is not marked.
\end{lemma}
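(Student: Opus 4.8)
The plan is to prove this by translating the combinatorial hypotheses into statements about nested dynamical pieces and the geometry of their boundary points, exactly as in the Branner--Hubbard--Yoccoz ``tableau'' arguments, but using that here we only control images of pieces via Lemma~\ref{basics.pieces}(2) and the critical structure from Lemma~\ref{basics.repelling}. First I would unwind what the marked positions mean: $M_{n,\ell_0}(\z)$ marked says $\dpi_{\ell_0}(\vphi^n(\z)) = \dpi_{\ell_0}(\omega)$, and the condition $\partial\dpi_{\ell-1/2}(\omega)\neq\partial\dpill(\omega)$ with $\ell_0\ge\ell+k$ says that at depth $\ell$ the critical piece strictly shrinks, so (via Lemma~\ref{closed.ball.l} and Lemma~\ref{lem:7}) the pieces $\dpi_{\ell}(\omega)\supsetneq\dpi_{\ell+1/2}(\omega)\supsetneq\cdots$ near level $\ell_0$ are genuine closed balls nested with distinct boundary points. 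The hypothesis that $M_{j,\ell_0-j}(\omega)$ is unmarked for $j=1,\dots,k-1$ says that the orbit $\vphi(\omega),\dots,\vphi^{k-1}(\omega)$ avoids the critical piece at the appropriate descending levels, so that $\vphi^k$ restricted to a suitable ball around $\omega$ is injective up to that level; the hypothesis $M_{k,\ell_0-k+1/2}(\omega)$ marked says that at the ``$+1/2$'' refinement, $\vphi^k(\omega)$ does re-enter the critical piece.

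The key step is the following: consider the piece $\dpi := \dpi_{\ell_0}(\vphi^n(\z))$, which by the marking hypothesis equals $\dpi_{\ell_0}(\omega)$. Then I want to compare $\dpi_{\ell_0-k+1/2}(\vphi^{n+k}(\z))$ with $\dpi_{\ell_0-k+1/2}(\omega)$ and show they are \emph{distinct}. Apply $\vphi^k$: since $\vphi^n(\z)$ and $\omega$ lie in the same level-$\ell_0$ piece, and the orbit of $\omega$ avoids the critical piece at levels $\ell_0-1,\dots,\ell_0-k+1$ (the unmarked conditions), the map $\vphi^k$ sends $\dpi_{\ell_0}(\omega)$ onto $\dpi_{\ell_0-k}(\vphi^k(\omega))$ and restricted to the relevant sub-pieces is a bijection onto its image down to level $\ell_0-k$ — so $\vphi^{n+k}(\z)$ and $\vphi^k(\omega)$ land in the same level-$(\ell_0-k)$ piece. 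Now refine by one half-step: $M_{k,\ell_0-k+1/2}(\omega)$ marked means $\vphi^k(\omega)\in\dpi_{\ell_0-k+1/2}(\omega)$, i.e. $\vphi^k(\omega)$ has descended into the critical piece at the half-level. The point of the \emph{weak} (as opposed to full) third rule is that this forces $\vphi^{n+k}(\z)$ to \emph{not} be in $\dpi_{\ell_0-k+1/2}(\omega)$: because at the half-step the piece splits off the direction of $U_0$ (Lemma~\ref{lem:7}(2) together with the definition of $B_{1/2}$ in Section~\ref{sec:piec-interm-level}), and one uses that $\vphi^n(\z)\in\dpi_{\ell_0}(\omega)$ together with the degree-two branching at the active critical point to see that the second preimage branch, which is where $\vphi^{n+k}(\z)$ must sit relative to $\omega$, lands on the ``other side'' — hence $M_{n+k,\ell_0-k+1/2}(\z)$ is unmarked.

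The main obstacle I expect is keeping precise track of which of the two preimage branches at the critical point each orbit point occupies, i.e.\ the bookkeeping that distinguishes the half-integer levels from integer levels; this is exactly the subtlety the paper flags when it says only a ``weaker version'' of the third rule holds. Concretely, one must argue that the strict containments coming from $\partial\dpi_{\ell-1/2}(\omega)\neq\partial\dpill(\omega)$ propagate correctly under $\vphi^k$ using Lemma~\ref{closed.ball.l}(2) at each step (so that boundary points stay distinct and no degeneration collapses the comparison), and that the degree-two behaviour at $\omega$ described in Lemma~\ref{basics.repelling}(2c) and Lemma~\ref{lem:16}(2) is what produces the \emph{inequality} of pieces rather than equality. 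Once the branch-tracking is pinned down, the conclusion $M_{n+k,\ell_0-k+1/2}(\z)$ unmarked is immediate from the fact that $\dpi_{\ell_0-k+1/2}(\omega)$ sits in a single direction at $\partial\dpi_{\ell_0-k+1/2}(\omega)$ while $\vphi^{n+k}(\z)$ has been shown to lie in a different direction there.
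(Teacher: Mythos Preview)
Your outline misses the mechanism that actually drives the argument. You write that ``$\vphi^k$ sends $\dpi_{\ell_0}(\omega)$ onto $\dpi_{\ell_0-k}(\vphi^k(\omega))$ and restricted to the relevant sub-pieces is a bijection,'' but the crucial fact is the opposite: since $\omega\in\dpi_{\ell_0}(\omega)$, the first iterate already has local degree~$2$, and the unmarked hypotheses $M_{j,\ell_0-j}(\omega)$ for $1\le j\le k-1$ (together with the hypothesis $\partial\dpi_{\ell-1/2}(\omega)\neq\partial\dpi_{\ell}(\omega)$, which via Lemma~\ref{lem:18} controls the other critical point) ensure the remaining $k-1$ iterates have degree~$1$. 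So $\vphi^k:\dpi_{\ell_0}(\omega)\to\dpi_{\ell_0-k}(\omega)$ has degree exactly~$2$, not~$1$. The same reasoning shows $\vphi^k:\dpi_{\ell_0+1/2}(\omega)\to\dpi_{\ell_0-k+1/2}(\omega)$ also has degree~$2$. By the degree count, $\dpi_{\ell_0+1/2}(\omega)$ is therefore the \emph{unique} level-$(\ell_0{+}1/2)$ piece inside $\dpi_{\ell_0}(\omega)$ that $\vphi^k$ sends onto $\dpi_{\ell_0-k+1/2}(\omega)$. The paper's proof is then a one-line contradiction: if $M_{n+k,\ell_0-k+1/2}(\z)$ were marked, then $\dpi_{\ell_0+1/2}(\z_n)$ would be a level-$(\ell_0{+}1/2)$ piece in $\dpi_{\ell_0}(\omega)=\dpi_{\ell_0}(\z_n)$ mapping onto $\dpi_{\ell_0-k+1/2}(\omega)$, hence equal to $\dpi_{\ell_0+1/2}(\omega)$, i.e.\ $M_{n,\ell_0+1/2}(\z)$ is marked.

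Your ``second preimage branch lands on the other side'' heuristic does not amount to an argument: there is no direct forward-orbit information distinguishing $\z_n$ from $\omega$ inside $\dpi_{\ell_0}(\omega)$, so you cannot simply follow $\z_n$ forward and see it diverge from $\omega$ at the half-step. The separation comes only from the preimage-uniqueness count above, combined with the (implicit) hypothesis that the $n$-th column of $\marked(\z)$ has depth exactly $\ell_0$, i.e.\ $M_{n,\ell_0+1/2}(\z)$ is unmarked --- which is how the lemma is applied in Lemma~\ref{lem:22} and Theorem~\ref{thr:yoccoz}, and which your proposal does not identify either.
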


\begin{proof}
  We proceed by contradiction and assume that $M_{n+k,\ell_0-k+1/2}(\z)$ is  marked. That is, $$\dpi_{\ell_0-k+1/2}(\z_{n+k}) = \dpi_{\ell_0-k+1/2} (\omega).$$ 
By Lemma~\ref{lem:18}, $\vphi^k: \dpi_{\ell_0}(\omega) \to \dpi_{\ell_0-k}(\omega)$ has degree $2$, since $\dpi_{\ell+k-j} (\vphi^j(\omega))$ is a critical point free ball for $1 \le j <k$. Now $\dpi_{\ell_0+1/2}(\omega)$ maps with degree $2$ in $k$ iterates
onto $\dpi_{\ell_0-k+1/2} (\omega)$.
Therefore, the unique level ${\ell_0+1/2}$ piece contained in 
$\dpi_{\ell_0}(\omega) = \dpi_{\ell_0}(\z_n)$ which maps in $k$ iterates 
onto $\dpi_{\ell_0-k+1/2} (\omega)$ is $\dpi_{\ell_0+1/2}(\omega)$. 
We conclude that $\dpi_{\ell_0+1/2}(\omega) =
\dpi_{\ell_0+1/2}(\z_n)$ which contradicts 
the fact that $M_{n, \ell_0+1/2}(\z)$ is unmarked.
\end{proof}

The key to prove the theorem is the following result.

\begin{lemma}
  \label{lem:22}
  If $\marked (\omega)$ is recurrent but not periodic, then $S(\omega)$
diverges.
\end{lemma}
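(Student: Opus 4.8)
The plan is to adapt the Branner--Hubbard--Yoccoz argument for quadratic polynomials, using the weak third rule (Lemma~\ref{lem:21}) as the only non-standard ingredient. Write $d_\lv := \dist_\hl(\partial \dpi_{\lv-1/2}(\omega), \partial \dpi_\lv(\omega))$ for $\lv \in \N/2$, so that $S(\omega) = \sum_\lv d_\lv$. The marked grid governs how these quantities transform under $\vphi$: by Lemma~\ref{lem:8}, $d_{\lv} = M_{0,\lv}(\omega)\cdot d_{\lv}$ is not quite the right recursion, but iterating Lemma~\ref{lem:8} along the orbit of $\omega$ shows that $d_{\lv}$ at ``level $\lv$ of the column starting at time $n$'' gets multiplied by the product of the marked-grid entries $M_{n,\lv}(\omega), M_{n+1,\lv-1}(\omega), \dots$ read down a diagonal. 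First I would make this precise: for each $\lv$ define $N(\lv)$ to be the number of marked positions encountered reading down the diagonal of the critical marked grid starting from position $(0,\lv)$ until the level drops to the depth of that diagonal's column (or forever, if the column is infinitely deep); then $d_\lv$ is comparable, up to the bounded geometry of finitely many low-level pieces, to $2^{N(\lv)} \cdot (\text{a bounded quantity})$. Hence $S(\omega) \asymp \sum_\lv 2^{N(\lv)}$, and the problem reduces to a purely combinatorial statement about the critical marked grid.

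Next I would run the Yoccoz-style counting argument on the combinatorial side. Since $\marked(\omega)$ is recurrent, columns of arbitrarily large depth occur; since it is \emph{not} periodic, the first-return structure to the critical piece keeps changing. The key combinatorial lemma is that the weak third rule forces the diagonals emanating from the initial column to ``spread out'': if one follows the orbit, each time the critical piece is revisited at a deep level, the weak third rule (Lemma~\ref{lem:21}) prevents the column from being simultaneously marked all the way down at the next matching time, which means the number of marked positions along the relevant diagonals grows at least linearly in the number of levels. Concretely, I would show that among the levels $\lv \le \Lv$, the number of them for which $N(\lv) \le C$ is $O(2^C)$ for each fixed $C$ — equivalently, $N(\lv) \ge \log_2 \lv - O(1)$ for a positive-density set of $\lv$, or better, that $\sum_\lv 2^{-N(\lv)}$ being finite would force periodicity. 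The contrapositive: if $\sum 2^{N(\lv)}$ converged then $N(\lv) \to -\infty$, impossible since $N(\lv) \ge 0$; the real content is that $N(\lv)$ cannot even stay bounded along a density-one set without periodicity. This is exactly the Branner--Hubbard ``number of annuli of definite modulus'' estimate, and the weak third rule is strong enough to make the counting go through — the only place where the ordinary third rule is used in the classical proof is to rule out a too-efficient synchronization of marked columns, and Lemma~\ref{lem:21} blocks precisely that synchronization (one step later, at level $\ell_0 - k + 1/2$ rather than $\ell_0 - k$, which is why mid-levels were introduced).

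I would organize the proof as: (1) translate $S(\omega)$ into the weighted diagonal sum $\sum_\lv 2^{N(\lv)}$ using Lemma~\ref{lem:8} and the bounded geometry of pieces up to a fixed level; (2) assume for contradiction that this sum converges, so $N(\lv)$ is bounded on a cofinite set of levels by some $C$; (3) using recurrence, pick a deep column (depth $\ge \ell + k$ for suitable $k$) and apply the weak third rule repeatedly to conclude that each return of $\omega$ near the critical piece contributes a marked position to the diagonals through levels below, forcing $N(\lv)$ to grow; (4) derive that $\marked(\omega)$ must be periodic, contradicting the hypothesis. The main obstacle I anticipate is step (3): making the bookkeeping of which diagonals pass through which marked positions fully rigorous, in particular handling the half-integer levels and the ``degenerate annuli'' (where $\partial\dpi_{\lv-1/2}(\omega) = \partial\dpi_\lv(\omega)$, contributing a zero term) so that the counting of marked positions is not thrown off by long runs of collapsed levels. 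One must show these collapsed stretches are compatible with the growth estimate — essentially that a column of depth $\ell_0$ contributes genuine hyperbolic distance at the levels below its depth, which is where Corollary~\ref{cor:1} and the fact that the direction of $U_0$ is a multiple fixed point of $T_{\xi_0}\vphi^q$ re-enter. Once the combinatorial estimate $N(\lv) \to \infty$ (along a suitable set) is in hand, divergence of $S(\omega)$ is immediate.
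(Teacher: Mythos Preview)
Your reduction has the sign backwards, and this makes the rest of the plan point the wrong way. Iterating Lemma~\ref{lem:8} along the diagonal from $(0,\lv)$ gives
\[
d_\lv \;=\; \Bigl(\prod_{i} M_{i,\lv-i}(\omega)\Bigr)^{-1}\cdot(\text{low-level distance}),
\]
so $d_\lv \asymp 2^{-N(\lv)}$, not $2^{N(\lv)}$. With the correct sign, $S(\omega)\asymp \sum_\lv 2^{-N(\lv)}$, and showing divergence means showing $N(\lv)$ stays \emph{small} often enough --- exactly the opposite of your step~(3), which argues that recurrence forces $N(\lv)$ to grow. That growth would push toward convergence, not divergence, and step~(4) (``derive periodicity'') has no mechanism behind it once the direction is corrected. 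Degenerate annuli make the translation $d_\lv\asymp 2^{-N(\lv)}$ even more delicate, since the ``bounded quantity'' at the end of a diagonal can be zero.

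The paper's argument avoids all of this by running the Branner--Hubbard \emph{children} construction directly. Fix a level $d$ with $\partial\dpi_{d-1/2}(\omega)\neq\partial\dpi_d(\omega)$ (non-periodicity guarantees one exists). One finds two integers $m>k>0$ such that $\vphi^k:\dpi_{d+k}(\omega)\to\dpi_d(\omega)$ and $\vphi^m:\dpi_{d+m}(\omega)\to\dpi_d(\omega)$ are both degree~$2$; the first child $k$ comes from the first column of depth $\ge d$ (recurrence), and the second child $m$ is located using the weak third rule exactly as in Milnor's exposition. Degree~$2$ gives $d_{d+k}=d_{d+m}=d_d/2$, hence $d_{d+k}+d_{d+m}=d_d$. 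Iterating, generation $t$ consists of $2^t$ critical annuli at distinct levels whose moduli sum to $d_d$; since these are all terms of $S(\omega)$, the series diverges. The weak third rule enters only once, to separate the second child from the first --- there is no global counting of marked diagonals, and the degenerate-annulus issue never arises because one works only with the descendants of a single non-degenerate $d_d$.
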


\begin{proof}
Since  $\marked (\omega)$ is not periodic, there exists $\ell' \in \N/2$ such that 
$\partial \dpi_{\ell'} (\omega) \neq \partial B_0$. Hence, there exists $\ell_0 \in \N/2$ 
such that  $\partial \dpi_{\ell_0-1/2} (\omega)\neq \partial \dpi_{\ell_0} (\omega)$.

Given $d \in \N/2$ such that 
$\partial \dpi_{d-1/2} (\omega)\neq \partial \dpi_{d} (\omega)$, 
we claim that there exist integers $m > k > 0$ such that
$\dpi_{d+k} (\omega)$ maps under $\vphi^{k}$ onto $\dpill (\omega)$ with degree $2$
and $\dpi_{d+m} (\omega)$ maps under $\vphi^{m}$ onto $\dpill (\omega)$ with degree $2$.
Thus, we will obtain that:
$$\dist_\hl (\partial \dpi_{d-1/2}(\omega) , \partial \dpi_{d}(\omega)) = 
\dist_\hl (\partial \dpi_{d+k-1/2}(\omega) , \partial \dpi_{d+k}(\omega))
+ \dist_\hl (\partial \dpi_{d+m-1/2}(\omega) , \partial \dpi_{d+m}(\omega)).$$

Let $k \ge 1$ be such that the $k$-th column of $\marked(\omega)$ is the first one with 
depth at least $d$. 
For $0 \le j < k$, we have that 
$\partial \dpi_{d-1/2+k-j} (\omega_j)\neq \partial \dpi_{d+k-j} (\omega_j)$
since, under $\vphi^{k-j}$, these singletons map onto the singletons
$\partial \dpi_{d-1/2} (\omega)\neq \partial \dpi_{d} (\omega)$.
By Lemma~\ref{lem:18}, it follows that $k$ has the desired properties.

To obtain $m$ we follow the original proof, as presented in~\cite[Lemma~1.3 (b)]{MilLC}, and
we outline how the ``weak third rule'' maybe applied instead of the ``third rule''.  
Denote the depth of the $k$-th column  by $d'-1/2$. Consider the largest integer $n$ such that $d' - k n > d$. 
Taking into account
that $\partial \dpi_{d} (\omega) \neq \partial \dpi_{d-1/2} (\omega)$, we may apply $(n-1)$-times Lemma~\ref{lem:21}, 
just as in~\cite{MilLC}, to prove that $M_{d'-j,k+j}(\omega)$ is unmarked for all $j$ such that  $1< j \leq k n$. 
Now let $m$ be the smallest integer greater than $(n+1)k$ such that the $m$-th column has depth at least $d$.
It follows that $m$ has the desired properties.

The level $d+k$ and $d+m$ critical pieces are usually called children of $\dpi_d (\omega)$.
Repeating the argument one obtains $2^t$ descendants of the $t$-th generation such that
the total contribution of each generation to the sum is $\dist_\hl (\partial \dpi_{d-1/2}(\omega) , \partial \dpi_{d}(\omega))$.
Thus, the sum $S(\omega)$ diverges.   
 \end{proof}

\begin{proof}[Proof of Theorem~\ref{thr:yoccoz}]
Let $\z \in \filledvphi$ be as in the statement of the theorem.

First we consider the case in which there exists $\ell_0 \in \N/2$ such that
$\partial \dpi_{\ell_0}(\omega) = \partial \dpi_\ell (\omega)$ for all $\ell \ge \ell_0$.
If $\partial \dpi_{d-1/2} (\z) \neq \partial \dpi_{d} (\z)$, then $\partial \dpi_{d-1/2-j} (\z_j) \neq \partial \dpi_{d-j} (\z_j)$
for all $j \le d-1/2$. Thus, taking $k \in \N$ such that $d-k = \ell_0+1/2$ or $d-k = \ell_0 +1$, we obtain that $\dpi_{d-j} (\z_j) \neq \dpi_{d-j}(\omega)$
for all $j$ such that $j \leq k$. Therefore, $\dist_\hl (\partial \dpi_{d-1/2} (\z), \partial \dpi_{d} (\z))$ takes one of the finitely many 
positive values $\dist_\hl (\partial P, \partial P')$ where $P$ (resp. $P'$) is a level $d-k-1/2$ (resp. $d-k$) piece such that
$P \supset P'$ and $\partial P\neq \partial P'$. Hence the sum $S(\z)$ diverges.

Now we consider the case in which there exists $\ell_0$ such that all the positions of depth $\ell \ge \ell_0$ in $\marked (\z)$ are unmarked.
For all $d > \ell_0$,  taking  $k \in \N$ such that $d-k = \ell_0+1/2$ or $d-k = \ell_0 +1$, we obtain that $\dpi_{d-j} (\z_j) \neq \dpi_{d-j}(\omega)$
for all $j$ such that $j \leq k$. Again, if $\partial \dpi_{d-1/2} (\z) \neq \partial \dpi_{d} (\z)$, 
then $\dist_\hl (\partial \dpi_{d-1/2} (\z), \partial \dpi_{d} (\z))$ takes finitely many 
positive values. Thus, the sum $S(\z)$ diverges. 

Next we consider the case in which 
$S (\omega)$
diverges. 
By the above we may assume that $\marked(\z)$ has columns of arbitrarily large depth.
For each $\ell$, let $n(\ell)$ be the first column in $\marked(\z)$ with depth at least $\ell$. Since all the positions,
$M_{\ell+n(\ell)-j,j}(\z)$ are unmarked, for $j=0,\dots, n(\ell)$, we obtain that
$$\dist_\hl ( \dpi_{\ell-1/2}(\omega), \dpi_{\ell}(\omega))=\dist_\hl ( \dpi_{\ell+n(\ell)-1/2}(\z), \dpi_{\ell+n(\ell)}(\z)).$$
It follows that the sum diverges.

Note that if  $S(\omega)$ converges, then either $\marked(\omega)$ is periodic or
there exists $\ell_0 \in \N/2$ such that
$\partial \dpi_{\ell_0}(\omega) = \partial \dpi_\ell (\omega)$ for all $\ell \ge \ell_0$.
Since we have already taken care of the latter, to finish the proof we have to consider 
the case in which $\marked(\omega)$ is periodic and $\partial \dpi_{\ell -1/2}(\omega) \neq \partial \dpi_\ell (\omega)$ 
for infinitely many $\ell$. 
It is not difficult to prove that if $\marked(\omega)$ is periodic, then $S(\omega)$ converges. Thus, if 
$\marked(\z)$ is eventually periodic, then $S(\z)$ converges.
It only remains to show that if $\marked(\z)$ is not eventually periodic, then $S(\z)$ diverges.
This is the other instance in which the ``(weak) third rule'' is applied. We follow the exposition of Milnor
(\cite[Theorem~2.4, Case 2]{MilLC}).
Consider $N$ such that all columns of $\marked(\omega)$ with depth at least $N$ have infinite depth. That is,
these columns are multiples of $p$, where $p$ is the period of $\marked(\omega)$.
Without loss of generality we may assume that $\partial \dpi_{N-1/2} (\omega)\neq \partial \dpi_{N} (\omega)$.
There are infinitely many pairs $(m,d)$ with $d \in \N /2$, $d \ge N$, $m \in \N$ such that
the $m$-th column of 
$\marked(\z)$ has depth exactly $d-1/2$ and the $m$-th column is the first to have depth at least $d-1/2$.
Taking into account that  $\partial \dpi_{N-1/2} (\omega)\neq \partial \dpi_{N} (\omega)$ we may apply the ``weak third rule''
to conclude that the columns $m+jp$ have depth $d-1/2-jp$ as long as $d-1/2-jp \ge N$. Let $k$ be the largest interger such that
$d-1/2-kp \ge N$. Now consider the smallest integer $m' > m+kp$ such that $M_{m',N}(\z)$ is marked. It follows that
all the positions $M_{m'+N-j, j}(\z)$ are unmarked for all $j=0, \dots, m'-1$. Thus, 
$$\dist_\hl ( \partial \dpi_{m'+ N-1/2} (\z),  \partial \dpi_{m'+N} (\z))=\dist_\hl ( \partial \dpi_{N-1/2} (\omega),  \partial \dpi_{N} (\omega)).$$
Note that $m' +N > m+d -1/2 -p$. So we may recursively choose pairs $(m,d)$ to obtain infinitely many values of $\dist_\hl ( \partial \dpi_{\ell-1/2} (\z),  \partial \dpi_{\ell} (\z))$ which agree with $\dist_\hl ( \partial \dpi_{N-1/2} (\omega),  \partial \dpi_{N} (\omega)).$
Thus $S(\z)$ diverges.

\end{proof}

\begin{corollary}
\label{lem:9}
  Assume that both critical points are in $\filledvphi$.
  Let  $\z \in \filledvphi$ and suppose that $\partial \dpi_\lv(\z) \neq \partial \dpi_{\lv+1/2}(\z)$ for infinitely many values of $\lv$. Then one of the following holds:
  \begin{enumerate}
  \item $\cap \dpi_\lv(\z) = \{ \z \}$ and $\z$ is a rigid point.
  \item $\marked (\z)$ is eventually periodic and $\cap \dpi_\lv(\z)$ is an eventually periodic
closed ball $B$. Moreover, $\marked(\omega)$ is periodic and $\cap \dpi_\lv (\omega)$ is a periodic closed ball $B'$. The point associated to $B'$ lies in a non-rigid repelling periodic orbit $\cO'$ and $B$ eventually maps onto $B'$. 
  \end{enumerate}
\end{corollary}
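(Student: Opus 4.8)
The plan is to combine the convergence dichotomy of Theorem~\ref{thr:yoccoz} with a direct analysis of the geometry of $C:=\bigcap_\lv\dpi_\lv(\z)$, splitting into the two cases according to whether the series $S(\z)$ converges.

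First I would translate the hypothesis geometrically. Whenever $\partial\dpi_\lv(\z)\ne\partial\dpi_{\lv+1/2}(\z)$, Lemma~\ref{lem:7} gives that $\dpi_{\lv+1/2}(\z)$ is a closed ball strictly inside $\dpi_\lv(\z)$; since $\dpi_{\lv+1}(\z)\subseteq\dpi_{\lv+1/2}(\z)$ this also forces $\partial\dpi_{\lv+1}(\z)\ne\partial\dpi_\lv(\z)$, so by Lemma~\ref{closed.ball.l} the integer pieces $\dpi_\lv(\z)$ are closed balls for cofinally many $\lv$ and $C$ is a nested intersection of closed balls. Moreover the boundary points $\partial\dpi_\lv(\z)$ ($\lv\in\Nnot/2$) run monotonically along a single arc of $\hl$ of total hyperbolic length $S(\z)$.

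In the non-periodic case ($\marked(\z)$ not eventually periodic) Theorem~\ref{thr:yoccoz} gives $S(\z)=\infty$, so that arc has infinite length; I would then apply Lemma~\ref{fine.sequence} to the decreasing sequence $\dpi_0(\z)\supseteq\dpi_{1/2}(\z)\supseteq\dpi_1(\z)\supseteq\cdots$ to conclude $C$ is a singleton in $\ponel$, which, as $\z\in C$, is alternative~(1). In the periodic case ($\marked(\z)$ eventually periodic) $S(\z)<\infty$, the arc has finite length and $\chi:=\lim_\lv\partial\dpi_\lv(\z)\in\hl$. Fixing $k\ge0$, $p\ge1$ with $M_{np+k,\lv}(\z)$ marked for all $n,\lv$ and taking $n=0$ one gets $\dpi_\lv(\vphi^k(\z))=\dpi_\lv(\omega)$ for every $\lv$; a short check then shows $\marked(\vphi^k(\z))=\marked(\omega)$ has every position $(np,\lv)$ marked, so $\marked(\omega)$ is periodic of period $p$ and $\vphi^k(C)=\bigcap_\lv\dpi_\lv(\omega)=:B'\ni\omega$. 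Because $\vphi$ carries a closed-ball piece with strictly smaller boundary again to one of the same kind (Lemma~\ref{closed.ball.l}), $B'$ is again a nested intersection of closed balls; such an intersection is either a genuine closed ball or collapses to a single type~IV point, and the latter is impossible since the critical point $\omega$ is \emph{rigid}, so $B'=B_{\chi'}$ is a genuine closed ball with $\chi'\in\hl$, and the same reasoning makes $C$ a genuine closed ball $B$ with $\partial B=\{\chi\}$. Periodicity of $\marked(\omega)$ gives $\vphi^p(\dpi_{\lv+p}(\omega))=\dpi_\lv(\omega)$ for all $\lv$, hence $\vphi^p(B')=B'$ and $\vphi^p(\chi')=\chi'$; applying Riemann--Hurwitz (Lemma~\ref{riemann-hurwitz}) to $\vphi^p\colon B_{\chi'}\to B_{\chi'}$, which contains $\omega$, shows $\deg_{\chi'}\vphi^p\ge2$, so $\chi'$ is a non-rigid repelling periodic point, hence of type~II, lying in the interior of $B_0=\dpi_0(\omega)$; thus its orbit $\cO'$ is distinct from $\cO$, and $\vphi^k(B)=B'$ exhibits $B=C$ as eventually mapped onto the periodic closed ball $B'$, which is alternative~(2).

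I expect the delicate point to be the periodic case, specifically upgrading ``$C$ is a nested intersection of closed balls'' to ``$C$ is a genuine closed ball'' and recognizing its boundary as a point of a new repelling cycle $\cO'$: this is where the rigidity of critical points, the periodicity one must first force onto the critical marked grid, and the Riemann--Hurwitz count all enter. The non-periodic case is a direct application of Lemma~\ref{fine.sequence}, and the bookkeeping identifying arc length with $S(\z)$ is routine given Lemmas~\ref{lem:7} and~\ref{closed.ball.l}.
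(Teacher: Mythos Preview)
Your proposal is correct and follows essentially the same route as the paper: both arguments reduce to the dichotomy of Theorem~\ref{thr:yoccoz} together with Lemma~\ref{fine.sequence}, after first observing (via Corollary~\ref{cor:1}, or equivalently Lemmas~\ref{lem:7} and~\ref{closed.ball.l} as you do) that the hypothesis forces $C$ to be a nested intersection of closed balls. Your write-up is considerably more detailed than the paper's three-line proof---in particular you spell out why $B'$ is a genuine closed ball (ruling out a type~IV collapse via the rigidity of $\omega$), why $\marked(\omega)$ inherits periodicity, and why $\chi'$ is repelling via Lemma~\ref{riemann-hurwitz}---all of which the paper leaves implicit.
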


\begin{proof}
  In view of  Corollary~\ref{cor:1}, $\cap \dpi_\lv(\z)$ is either $ \{ \z \}$  or a closed ball $B$. From the previous theorem and Lemma~\ref{fine.sequence},
if the latter holds, then  $\marked (\z)$ is eventually periodic. In this case, $\z$ eventually maps to 
a periodic ball containing the critical point $\omega$.
\end{proof}


\subsection{Proofs of Proposition~\ref{repelling.filled},
theorems~\ref{ithr:periodic} and~\ref{ithr:fatou}}
\label{sec:proofs12}

\begin{proof}[Proof of Proposition~\ref{repelling.filled}]
From Corollory~\ref{cor:1} and Lemma~\ref{lem:4} the proposition follows when there exists a critical point $\omega \notin \filledvphi$. 
Now,  corollories~\ref{cor:1} and~\ref{lem:9} establish the proposition when both critical points are contained in $\filledvphi$.  
\end{proof}

\begin{proof}[Proof of theorems~\ref{ithr:periodic} and~\ref{ithr:fatou}]
  According to Proposition~\ref{classification} we have three possibililies:
  \begin{itemize}
  \item[(a)] $\juliavphi \cap \hl$ is periodic point free.
  \item[(b)] There exists an indifferent periodic orbit $\cO$ in $\juliavphi \cap \hl$.
  \item[(c)] There exists a repelling periodic orbit $\cO$ in $\juliavphi \cap \hl$.
  \end{itemize}
From Proposition~\ref{attracting.fixed.point}, (a) is equivalent to $\juliavphi \subset \ponel$. In this case, $F(\vphi)=\pberl \setminus \juliavphi$ is connected and coincides with the basin of an attracting fixed point. Thus, (1) of theorems~\ref{ithr:periodic} and~\ref{ithr:fatou} hold.

Assume that (b) holds. From Proposition~\ref{pro:14} (2),  we have that $\juliavphi \cap \hl = GO(\cO)$. That is, Theorem~\ref{ithr:periodic} (2) holds.
Moreover,  Proposition~\ref{pro:14} (4) implies that every periodic Fatou component is a fixed Rivera domain with boundary $\cO$. From Proposition~\ref{star},
Rivera domains are either  balls or starlike. In the former, the boundary must be a fixed point and Theorem~\ref{ithr:fatou} (2) holds. In the latter, this starlike domain is unique and Theorem~\ref{ithr:fatou} (3). Also observe that Proposition~\ref{pro:14} (5) implies that Fatou components are eventually periodic or
balls/annuli contained in the basin of $\cO$.

Now assume that (c) holds. From Proposition~\ref{repelling.filled} and Lemma~\ref{lem:3} we conclude that  every point in $\juliavphi \cap \hl$ is contained in the grand orbit of a non-rigid repelling orbit. According to Corollary~\ref{cor:3} there are at most two such orbits which, by Lemma~\ref{basics.repelling} (3) and (5) are such that Theorem~\ref{ithr:periodic} (3) or (4) hold. Hence, we have established Theorem~\ref{ithr:periodic}.
If $U$ is a Fatou component, then either $U$ eventually maps onto the fixed Rivera domain or $U$ is contained in a connected component $C$ of $\filledvphi$.
According to  Proposition~\ref{repelling.filled}, $U$ must be a direction at
the unique Julia set point $\z$ in $C$. Moreover, $\z$ belongs to the grand orbit of a non-rigid repelling periodic orbit. Thus, $U$ is either an eventually periodic open ball (of period at least $2$) or it is contained in the basin of a periodic orbit. That is, Theorem~\ref{ithr:fatou} (4) holds. 

From the above, we conclude that Fatou components are eventually periodic or balls/annuli contained in the basin of a periodic orbit. Thus, we have also established the statement of Theorem~\ref{ithr:fatou}.
\end{proof}

\subsection{Examples}
Let us give examples of maps having one or two repelling periodic orbits in $\hl$. For convenience, below we denote by $x_\alpha$ the point associated to the ball $B_{\val{t}^\alpha}(0)$. 

First consider $$\vphi_1 (\z) = -\tau - \dfrac{1+\tau^2}{\z} + \dfrac{\tau}{\z^2}.$$
Note that $\vphi_1$ has a fixed starlike Rivera domain with skeleton $[x_{1}, x_{-1}]$. The Gauss point is the unique fixed point in the skeleton. The extreme points form a period $2$ repelling orbit. In the coordinates for $T_{x_1} \pberl$
given by $z t + t \mathfrak{M}_\L \mapsto z$, we have  
$$R(z) = T_{x_1} \vphi^2_1 (z) = -1 + \dfrac{z^2}{z-1}.$$
Since $R$ has a triple fixed point at $\infty \in \ponec$, it follows that the  critical points $z=0,2$ of $R$ have infinite forward orbit converging to $\infty$ under iterations of $R$. Thus both critical points are in the basin of $\cO= \{ x_{-1}, x_1 \}$ and $\cO$ is the unique repelling periodic orbit.

Now consider $$\vphi_2 (\z) = \tau - \dfrac{1+\tau^2}{\z} + \dfrac{\tau}{\z^2} - a \tau^5$$ where $a \in \C$.
Also, $\vphi_2$ has a fixed starlike Rivera domain with skeleton $[x_{1}, x_{-1}]$ and the extreme points form a period repelling $2$ orbit. In the coordinates used above
$$T_{x_1} \vphi^2_2 (z) = 1 + \dfrac{z^2}{z-1}$$
which has a double fixed point at infinity. 
Moreover, $x_3$ is a period $3$ repelling periodic point, and 
$$T_{x_3} \vphi^3_2 (z) = z^2 +a,$$
in the appropriate coordinates.

\section{Quadratic Laminations and  trees}
\label{section|laminations}
Invariant laminations were introduced in complex polynomial dynamics by Thurston~\cite{thurstonlamination} and have been widely used to
describe  the dynamical space as well as the parameter space of complex polynomials. 
In~\cite[Section 8.5]{McMullenBook1}, McMullen defines the $\alpha$-lamination of a quadratic polynomial.
Here we introduce  a variation of these $\alpha$-laminations which will allow us to describe quadratic rational maps over $\L$.
More precisely,
we will introduce {\it abstract $\alpha$-laminations} and construct a tree
associated to each such abstract $\alpha$-lamination.
In a certain sense, 
this tree is related both to the Yoccoz puzzle used to study the dynamics of 
complex quadratic polynomials (e.g. see~\cite{Hubbard3Yoccoz,MilLC}), as well as to the 
obstructions which arise in the ``mating'' of two complex quadratic polynomials~\cite{LeiMating}.

In Section~\ref{sec:define.alpha} we introduce abstract $\alpha$-laminations
as well as illustrate the definition with those that arise from complex quadratic polynomial dynamics. Their geometric representation  as ``laminations'' in $\ponec$ is discussed in Section~\ref{sec:geo}. The trees of integer levels associated to an abstract $\alpha$-lamination and their basic properties are discussed in Section~\ref{sec:treel}. Then, in Section~\ref{sec:tree.full}, we introduce the associated infinite tree with its corresponding dynamics. 
In Section~\ref{section:symbolic} we construct all abstract $\alpha$-laminations via symbolic dynamics.
Finally, in Section~\ref{sec:branched-maps-trees}, we define  degree $2$ branched maps of trees and prove a lifting lemma.

\smallskip
{\sf Throughout, we  denote the multiplication by $2$ map acting on $\RZ$ by  $m_2$.}

\subsection{Abstract $\alpha$-laminations}
\label{sec:define.alpha}
Let $\{ t_0, \dots, t_{q-1}\} \subset \RZ$ be a period $q \geq 2$ periodic orbit under $m_2$ labeled $\mod q$ and respecting cyclic order in $\RZ$. 
Such a periodic orbit has {\sf rotation number $p/q \in \RZ$} if $m_2(t_j) = t_{j+p}$ for all $j$.
For example $\{ 1/3, 2/3 \}$ and $\{1/7,2/7,4/7\}$ have rotation numbers $1/2$
and $1/3$, respectively.
According to Bullet and Sentenac~\cite{BulSen}, for each pair of co-prime 
positive integers $p$ and $q$, there is exactly one periodic orbit of $m_2$ with
rotation number $p/q$.

\begin{definition}
\label{alphaLamination|definition}
An equivalence relation $ {\lambda}$  in $\RZ$  is called an {\sf abstract $\alpha$-lamination} if the following hold:

\begin{enumerate}
\item 
{\sf Invariant.} If $A$ is a class, then $m_2(A)$ is a class.

\item
{\sf Finite.} Every class contains finitely many elements.

\item
{\sf Unlinked.} If $A$ and $B$ are two distinct classes, then $A$ is contained in a connected component of $\RZ \setminus B$.

\item
{\sf Consecutive preserving.} If $A$ is a class and $]t,s[$ is a connected component of $\RZ \setminus A$, then $]m_2(t), m_2(s)[$ is a connected component
of $\RZ  \setminus m_2(A)$.

\item
{\sf $\alpha$-supported}. There exists a class $A_0$ with at least two elements such that $m_2(A_0)=A_0$ and, for all classes $A$ with at least two elements, there exists $\ell \geq 0$ such that $A_0= m^\ell_2 (A)$.
\end{enumerate}
We say that  $A_0$ is the {\sf fixed class of $ {\lambda}$}. It follows that $A_0$ as above has a well defined rotation number, say $p/q$, and we say that $ {\lambda}$ is an  {\sf abstract $\alpha$-lamination in the $p/q$-limb.}
\end{definition}

Throughout, we let  $1 \le p < q$ be relatively prime integers.

\begin{definition}
\label{def:9} 
Let $A_0 \subset \RZ$ be the unique set of $q$ arguments such that $m_2 (A_0) =A_0$  and $A_0$ has rotation number $p/q$.
According to  Milnor~\cite{MilnorPOP}, among  the connected components of $\RZ \setminus A_0$ there is one with smallest length, say 
$I_{p/q} = ]\theta_0, \theta_1[$. The interval $I_{p/q}$ is called the {\sf $p/q$-characteristic interval.} 
\end{definition}

\subsection{$\alpha$-laminations  that arise from complex dynamics}
\label{sec:alpha-lamin}
Examples of abstract $\alpha$-laminations arise in the context of iteration of quadratic
complex polynomials. 
 We refer the reader to~\cite{MilnorComplexBook}
for a  detailed exposition about  the basics of  iteration of
complex rational and polynomial maps.

Consider the quadratic family
$$Q_c(z) =z^2 +c$$
where $c \in \C$. The {\sf Mandelbrot set $\cM$} consists of all parameter $c$ for which 
the corresponding Julia set $J_c$ is connected. 
Let $A_0 \subset \R/\Z$ be the periodic orbit under $m_2$ with rotation number $p/q$.
The {\sf $p/q$-limb of $\cM$}, denoted $L_{p/q}$, consists of all parameters $c \in \cM$ such that
the external rays with arguments in $A_0$ land at a fixed point of $Q_c$.
This fixed point is called the {\sf $\alpha$-fixed point of $Q_c$.}

Recall that the  Douady-Hubbard map $\Phi: \CDC \to \C \setminus \cM$  is the unique conformal isomorphism tangent to the identity at infinity.
The image of $]1,+\infty[ \exp(2 \pi i t)$ under $\Phi$ is called {\sf the parameter ray $R^t_\cM$ at argument $t$}.
Parameter rays at arguments $t \in \QS$ have a well defined limit (i.e. land) as they approach the Mandelbrot set.
If $t$ is periodic under $m_2$, then the landing point $c$ is such that $Q_c$ has a multiple periodic point.
If $t$ is strictly preperiodic under $m_2$, then the landing point $c$ is such that the critical point is preperiodic under $Q_c$
(e.g. see~\cite{MilnorPOP}). Moreover, denote by  $I_{p/q} = ]\theta_0, \theta_1[$ the $p/q$-characteristic interval.
Then, the parameter rays $R_\cM (\theta_0)$ and $R_\cM (\theta_1)$ land at the same point $c_0$ which is the unique parameter in the boundary
of the main cardiod for which $Q_{c_0}$ has a fixed point with multiplier $\exp(2 \pi i p/q)$~\cite{OrsayNotes}.
The limb $L_{p/q}$ can be described as follows. 
These parameter rays together with $c_0$ cut the complex plane into two sectors (connected components). Denote by $S$ the sector 
not containing the main cardiod, see Figure~\ref{fig:1}.
It follows that $L_{p/q} = \overline{S} \cap \cM = (S \cap \cM) \cup \{ c_0 \}$ (see~\cite{AtelaWakes,MilnorPOP}). 
There is a unique parameter $c_{p/q}$ in $L_{p/q}$ with a period $q$ critical orbit~\cite{OrsayNotes}.
This parameter  $c_{p/q}$ is called the {\sf center of  $L_{p/q}$}.

\begin{figure}
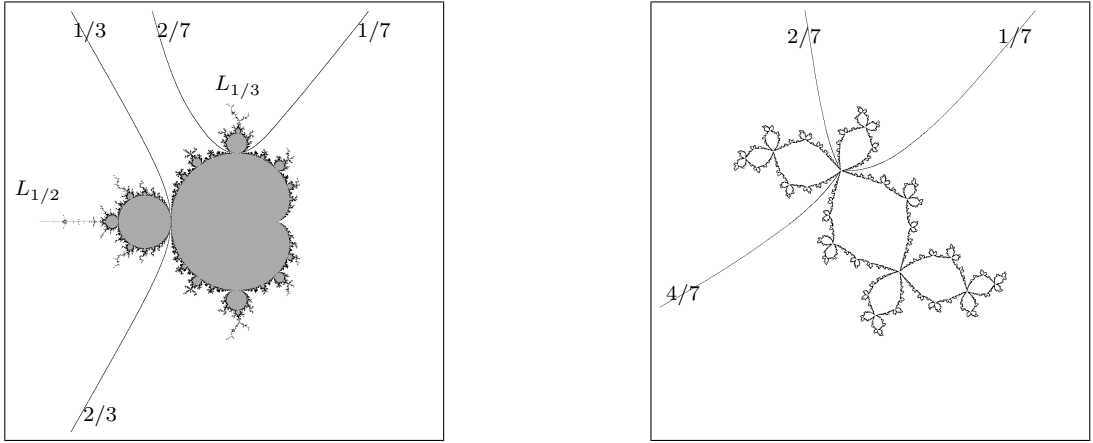

  \begin{center}
    \begin{minipage}[c]{7cm}
      \center{\input{limbs.pspdftex}}
    \end{minipage}\hfill
    \begin{minipage}[c]{7cm}
      \center{\input{rabbit.pspdftex}}
    \end{minipage}
  \end{center}
  \caption{Left: the parameter rays with arguments in $\partial I_{1/2}$ and $\partial I_{1/3}$, and the  limbs $L_{1/2}$ and $L_{1/3}$
 of the Mandelbrot set. Right: the Julia set of the center $c_{1/3}$ of $L_{1/3}$ together with the rays landing at the $\alpha$-fixed point.}
  \label{fig:1}
\end{figure}

\medskip
A large class of examples of abstract $\alpha$-laminations in the $p/q$-limb arise as the equivalence relations that encode
the landing pattern of external rays at the grand orbit of the $\alpha$-fixed point of $Q_c$ with $c \in L_{p/q}$. 

\begin{definition}
\label{def:6}
  For $c \in L_{p/q}$ we let {\sf the $\alpha$-lamination $ {\lambda}_{\alpha}(Q_c)$ 
of $Q_c$} be the equivalence relation in
$\RZ$ that identifies two distinct arguments $t,s$ if and only if the external rays of $Q_c$ with arguments $t,s$ land at a common point in the grand orbit of
the $\alpha$-fixed point of $Q_c$. 
\end{definition}

From~\cite{McMullenBook1}, it follows that  $ {\lambda}_{\alpha}(Q_c)$ is an 
abstract $\alpha$-lamination in the $p/q$-limb.
We will show that not all abstract $\alpha$-laminations arise in quadratic polynomial dynamics (see Remark~\ref{rem:1}). If $ {\lambda} =  {\lambda}_\alpha (Q_c)$ for some $c$, 
we simply say that $ {\lambda}$ is an {\sf $\alpha$-lamination}.

The $\alpha$-lamination of a quadratic polynomial $Q_c$ such that the critical point $z=0$ eventually maps onto its $\alpha$-fixed point is called a {\sf critically prefixed $\alpha$-lamination.}
For reasons that will be apparent after Proposition~\ref{pro:3} and Remark~\ref{rem:1}, 
  an abstract $\alpha$-lamination which is not the $\alpha$-lamination of a quadratic polynomial is called an {\sf almost critically prefixed lamination}.

For our purpose, it will be also convenient to construct $\alpha$-laminations via symbolic dynamics (see Section~\ref{section:symbolic} below).

\subsection{Geometric representation of an $\alpha$-lamination}
\label{sec:geo}
Following Thurston it is useful to have a geometric representation of laminations.
We identify $\RZ$ with the boundary of the unit disk $\D \subset \C$  via $t \mapsto \exp(2 \pi i t)$. 
Given a finite set $A \subset \RZ$ denote by $\operatorname{Convex} (A) \subset \overline{\D}$ the convex hull of $A \subset \partial \D$
 with respect to the hyperbolic metric.
From the unlinked property of $\alpha$-laminations, it follows that convex hulls of distinct classes of an abstract $\alpha$-lamination are disjoint.

Throughout, we identify $\ponec$ with $\C \cup \{\infty\}$ via $[z:1] \mapsto z$ and $[0:1] \mapsto \infty$.

\begin{definition}
  \label{def:8}
  Consider an equivalence relation $ {\lambda}$ in $\RZ$ such that equivalence classes are finite and pairwise unlinked (see Definition~\ref{alphaLamination|definition}). We say that the {\sf geometric lamination $\mathcal{L}_0 ( {\lambda}) \subset \ponec$ of $ {\lambda}$ centered at the origin} is the  set formed by the union of all  $\operatorname{Convex}(A)$ such that  $A$  is a non-trivial $ {\lambda} -\mbox{class}$.

  Consider the reflection $M(z)
=1/z$ around the unit circle. We say that {\sf $\mathcal{L}_\infty ( {\lambda}) = M(\mathcal{L}_0 ( {\lambda})) \subset \ponec$ is the
 geometric lamination  of
  $ {\lambda}$ centered at infinity}.
\end{definition}

For our construction  of  a tree associated to an abstract $\alpha$-lamination $ {\lambda}$ we will ``saturate'' the ``support'' of $ {\lambda}$ by finite sets
$A_\ell$. More precisely, let $A_0$ be the fixed class of $ {\lambda}$ 
and for all $\ell \geq 1$, let
$$A_\ell = m^{-\ell}_2 (A_0).$$
For all $\ell \ge 1$, note that $A_{\ell-1} \subset A_\ell$ and that each $ {\lambda}$-class is either contained or disjoint from $A_\ell$.
We say that the restriction $\lam\ell$ of $ {\lambda}$ to $A_\ell$ is the {\sf level $\ell$ restriction of $ {\lambda}$} and that  $A_\ell$ is the {\sf level $\ell$ support of $ {\lambda}$}.


If $ A_\ell$ is the level $\ell$ support of a lamination in $p/q$-limb, then 
$- A_\ell = M(A_\ell)$ is the level $\ell$ support of a lamination in the $-p/q$-limb.
We will work under the agreement that the $-p/q$-limb denotes the $(q-p)/q$-limb since  $-p/q=(q-p)/q \in \RZ$.

The following definition is closely related to the ``mating construction''~\cite{DouadyBourbaki82}. More precisely, to the ``Levy cycle'' 
of ``obstructed matings''~\cite{LeiMating}.

\begin{definition}
  Let $ {\lambda}$ be an abstract $\alpha$-lamination in the $p/q$-limb and
$ {\lambda}_*$  be the $\alpha$-lamination of the center of the $-p/q$-limb. Denote by $\cL_0( {\lambda}^{(\ell)})$ the level $\ell$ geometric lamination of $ {\lambda}$ centered at the origin and $\cL_\infty( {\lambda}^{(\ell)}_*)$  the level $\ell$ geometric lamination of  $ {\lambda}_*$  centered at infinity.
We say that the {\sf level $\ell$ geometric lamination} of $ {\lambda}$ is:
$$\mathcal{L} ( {\lambda}^{(\ell)}) = 
\cL_0( {\lambda}^{(\ell)}) \cup \cL_\infty( {\lambda}^{(\ell)}_*).$$
\end{definition}

See the lower left of Figure~\ref{fig:2} for the illustration of a geometric lamination.

\begin{figure}
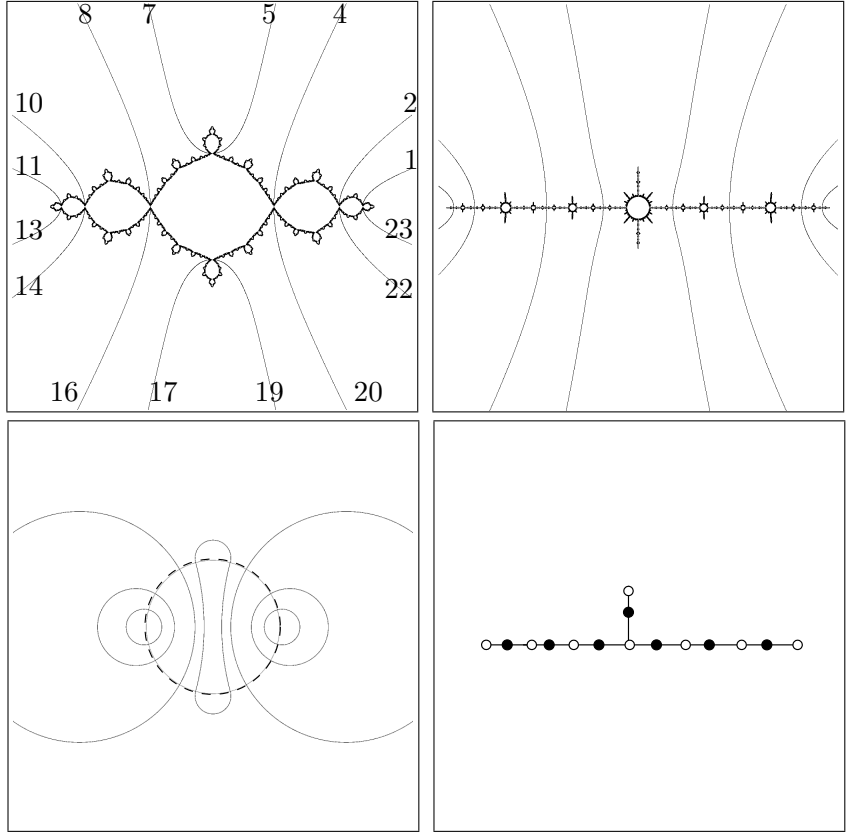

   {\input{airbasLTL.pspdftex}}
  {\input{airbasTR.pspdftex}}

  {\input{airbasBL.pspdftex}}
  {\input{airbasBR.pspdftex}}
  \caption{Top left: the Julia sets of the center of the $-1/2=1/2$ limb (the basilica). Top right: the Julia set of $Q_{c}$ for $c=-1.754877666\dots$ (the airplane: $z=0$ has period $3$). In these two figures, we illustrate external rays that land at  points which map onto the $\alpha$-fixed point in at most $3$ iterates
with angles labelled so that one unit corresponds to $1/24$ of a turn around $\RZ$.
Bottom left: the geometric lamination of level $\ell=3$ of $\lambda_\alpha (Q_c)$ (as a reference the unit circle is illustrated by a broken line). Bottom right: the associated tree of level $3$. Dots corresponding to $\Gamma$-vertices are filled while $Y$-vertices are illustrated with unfilled dots.
}
  \label{fig:2}
\end{figure}

\subsection{The tree of level $\ell$ of an abstract $\alpha$-lamination}
\label{sec:treel}
The dual tree to a level $\ell$ geometric lamination is the tree of level $\ell$ of the $\alpha$-lamination. (Compare with the discussion and definitions of~\cite[Section~8.5]{McMullenBook1}.)

\begin{definition}
  Let $ {\lambda}$ be an abstract $\alpha$-lamination. 
Each connected component $C$ of $\mathcal{L}( {\lambda}^{(\ell)})$ is called a {\sf $\Gamma$-vertex of level $\ell$}. The collection of all
$\Gamma$-vertices of level $\ell$ is denoted by $\Gamma( {\lambda}^{(\ell)})$.
Each connected component $U$ of $\ponec \setminus \mathcal{L}( {\lambda}^{(\ell)})$ is called a {\sf $Y$-vertex of level $\ell$}.  The collection of all
$Y$-vertices of level $\ell$ is denoted by $Y( {\lambda}^{(\ell)})$.
The {\sf level $\ell$ tree $\mathcal{T}( {\lambda}^{(\ell)})$} is the finite
graph with vertices $\Gamma( {\lambda}^{(\ell)}) \cup
Y( {\lambda}^{(\ell)})$ such that an edge of the tree joins the vertices $U \in
Y( {\lambda}^{(\ell)})$ and $C \in \Gamma( {\lambda}^{(\ell)})$ if and only
if
$$C \cap  \partial U \neq \emptyset.$$
No edges join vertices of the same type.
\end{definition}

Note that, by the Jordan curve theorem, $\mathcal{T}( {\lambda}^{(\ell)})$ is a tree.

It follows that $\cT( {\lambda}^{(0)})$ has exactly one $\Gamma$-vertex, namely $\operatorname{Convex}(A_0) \cup M(\operatorname{Convex}(-A_0))$, $q$ $Y$-vertices,
and $q$ edges joining the $Y$-vertices to the $\Gamma$-vertices. That is, $\cT( {\lambda}^{(0)})$ is a starlike tree.

For all $\ell \geq 0$, we will consider two maps $\pi_{\ell+1}$ and $m_2$ from $\mathcal{T}( {\lambda}^{(\ell+1)})$ onto
$\mathcal{T}( {\lambda}^{(\ell)})$. One is of topological nature while the
other is of dynamical origin. 
Both will be defined on vertices and extended to the whole tree.
For short we say that a vertex of $\mathcal{T}( {\lambda}^{(\ell)})$ is a {\sf vertex of level $\ell$}.

\medskip
To simplify notation, vertices of level $\ell$ will be regarded as elements of a tree and as subsets of $\ponec$, according to convenience.
Also when $ {\lambda}$ is clear from context, we let 
$$\lamin\ell= \cL ( {\lambda}^{(\ell)}), \tree\ell= \mathcal{T}( {\lambda}^{(\ell)}), \dots$$

\begin{lemma}
\label{lemma:piell}
Let $ {\lambda}$ be an abstract $\alpha$-lamination. Consider $\ell \geq 0$ and  let $v$ be a vertex of level $\ell+1$.
Then the following hold:

\begin{enumerate}
\item There exists a unique vertex $\pi_{\ell+1}(v)$ of level $\ell$ such that $v \subset \pi_{\ell+1}(v)$.
\item There exists a unique vertex $u$ of level $\ell$ such that $m_2(v \cap \RZ) = u \cap \RZ$.
We say that $u=m_2(v)$.

\noindent
\hspace{-1.5cm}
Moreover, with the above notation:

\item If $v$ and $w$ are endpoints of an edge of level $\ell+1$, then
  $\pi_{\ell+1}(v)= \pi_{\ell+1}(w)$ or  $\pi_{\ell+1}(v)$ and $ \pi_{\ell+1}(w)$ are endpoints of
  an edge of level $\ell$.
\item
  If $v$ and $w$ are endpoints of an edge of level $\ell+1$, then $m_2(v)$ and $m_2(w)$ are endpoints of an edge of level $\ell$.
\end{enumerate}

Thus, $\pi_{\ell+1}$ and $m_2$ extend to maps from $\mathcal{T}( {\lambda}^{(\ell+1)})$ onto
$\mathcal{T}( {\lambda}^{(\ell)})$. Furthermore, $\pi_{\ell+1}$ is monotone  and,
$$m_2 \circ \pi_{\ell+2} = \pi_\ell \circ m_2.$$  
\end{lemma}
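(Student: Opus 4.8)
The plan is to verify the four numbered assertions one at a time, since the "Thus, ..." sentence at the end is a purely formal consequence once these are established: items (1) and (3) say $\pi_{\ell+1}$ is a well-defined monotone map on the tree, items (2) and (4) say $m_2$ is a well-defined simplicial map, and the identity $m_2\circ\pi_{\ell+2}=\pi_\ell\circ m_2$ is then checked on vertices and propagated by monotone extension.

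First I would prove (1). A vertex $v$ of level $\ell+1$ is either a connected component of $\cL^{(\ell+1)}$ ($\Gamma$-vertex) or a connected component of $\ponec\setminus\cL^{(\ell+1)}$ ($Y$-vertex). Since $A_\ell\subset A_{\ell+1}$ implies $\lamin{\ell}\subset\lamin{\ell+1}$ (each $\lambda$-class contained in $A_\ell$ is also contained in $A_{\ell+1}$, and the convex hulls are nested in the obvious way, using also $-A_\ell\subset-A_{\ell+1}$ on the $M$-reflected side), we have two cases. If $v$ is a $\Gamma$-vertex, then $v$ is connected and contained in the closed set $\lamin{\ell+1}$; but I must show it lies in $\lamin{\ell}$ plus possibly open complementary regions — actually the cleaner statement is that a connected subset of $\ponec$ that is disjoint from $\lamin\ell$ lies in a single $Y$-vertex of level $\ell$, while any connected subset of $\ponec$ lies in the closure of a single vertex. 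The key point is that distinct vertices of level $\ell$ have disjoint interiors and the tree is obtained by a dual construction, so a connected set meeting the interior of two level-$\ell$ $Y$-vertices would have to cross $\lamin\ell$; and a $\Gamma$-vertex of level $\ell+1$, being a union of convex hulls of $\lambda$-classes, either is already a piece of $\lamin\ell$ or is a convex hull of a class in $A_{\ell+1}\setminus A_\ell$ lying inside one complementary region of $\lamin\ell$, hence inside one level-$\ell$ $Y$-vertex. Uniqueness is immediate from disjointness of the vertices. For a $Y$-vertex $v$ of level $\ell+1$: it is a connected component of $\ponec\setminus\lamin{\ell+1}\subset\ponec\setminus\lamin\ell$, hence contained in a unique connected component of $\ponec\setminus\lamin\ell$, i.e. in a unique level-$\ell$ $Y$-vertex. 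That settles (1), and the argument shows $\pi_{\ell+1}(v)$ is a $Y$-vertex whenever $v$ is, and a $\Gamma$-vertex whenever $v$ is a $\Gamma$-vertex that happens to be a subset of $\lamin\ell$, and a $Y$-vertex otherwise.

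Next, (2): for a $\Gamma$-vertex $v$ of level $\ell+1$, the finite set $v\cap\RZ$ is a union of $\lambda$-classes contained in $A_{\ell+1}$; by invariance and the consecutive-preserving property of abstract $\alpha$-laminations (Definition~\ref{alphaLamination|definition}), $m_2(v\cap\RZ)$ is again such a union, contained in $m_2(A_{\ell+1})\subset A_\ell$ (since $m_2(m_2^{-(\ell+1)}(A_0))=m_2^{-\ell}(A_0)$), and it is connected in the combinatorial sense of spanning a single component of $\lamin\ell$; for a $Y$-vertex, $v\cap\RZ$ is a single open arc with endpoints in $A_{\ell+1}$ belonging to the same $\lambda$-class (or $v$ is a complementary region of $\lamin{\ell+1}$ — in all cases $v\cap\RZ$ is a component of $\RZ\setminus A_{\ell+1}$), and again the consecutive-preserving property makes $m_2(v\cap\RZ)$ a component of $\RZ\setminus A_\ell$, hence of the form $u\cap\RZ$ for a unique $u$. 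Uniqueness of $u$ follows because a level-$\ell$ vertex is determined by its trace on $\RZ$ (the $\Gamma$-vertices by the classes they contain, the $Y$-vertices by their boundary arc). Then (3) and (4) are local incidence checks: for (3), if $v,w$ span an edge of level $\ell+1$ then one of them, say $v$, is a $\Gamma$-vertex and $w$ a $Y$-vertex with $v\cap\partial w\neq\emptyset$; applying $\pi_{\ell+1}$, either both land in the same level-$\ell$ vertex (this happens exactly when $v\subset$ some $Y$-vertex $\pi_{\ell+1}(w)$, because then $\pi_{\ell+1}(v)=\pi_{\ell+1}(w)$), or $\pi_{\ell+1}(v)$ is a $\Gamma$-vertex and $\pi_{\ell+1}(w)$ the $Y$-vertex containing $w$, and the incidence $\pi_{\ell+1}(v)\cap\partial\pi_{\ell+1}(w)\neq\emptyset$ is inherited since $v\subset\pi_{\ell+1}(v)$, $w\subset\pi_{\ell+1}(w)$, and boundaries can only shrink. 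For (4), $m_2$ restricted to boundary traces is the arc-to-arc map induced by doubling; an edge of $\tree{\ell+1}$ records that a class-arc endpoint lies on the closure of a complementary arc, and this incidence is preserved by $m_2$ precisely by the consecutive-preserving axiom. Having these four items, $\pi_{\ell+1}$ and $m_2$ send edges to edges (or collapse them, for $\pi_{\ell+1}$), hence extend affinely over edges to continuous maps of the trees; $\pi_{\ell+1}$ is monotone because it never "folds" an edge — collapsing is allowed, reversing orientation along a path is not, which follows from $v\subset\pi_{\ell+1}(v)$ and the nesting of the laminations. Finally, the commutation $m_2\circ\pi_{\ell+2}=\pi_\ell\circ m_2$ holds on vertices: for a level-$(\ell+2)$ vertex $v$, both sides equal the unique level-$\ell$ vertex whose trace on $\RZ$ is $m_2$ of the component/class-union determined by $v$ — indeed $\pi$ only passes to a coarser lamination (enlarging the vertex but not changing which $\RZ$-arcs/classes it is "built from" at the lower level) while $m_2$ doubles, and these two operations on the combinatorial data commute; it then holds on all of $\tree{\ell+2}$ by continuity and affineness on edges.

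I expect the main obstacle to be item (3) — more precisely, pinning down exactly when $\pi_{\ell+1}$ collapses an edge versus maps it to an edge, and checking this never produces a folding. The subtlety is that a $\Gamma$-vertex $v$ of level $\ell+1$ can fail to be part of $\lamin\ell$ (when it comes from a class in $A_{\ell+1}\setminus A_\ell$), so its image under $\pi_{\ell+1}$ is a $Y$-vertex, and then all edges of $\tree{\ell+1}$ incident to $v$ get collapsed into that single vertex; one has to confirm that the several $Y$-vertices adjacent to such a $v$ all have the same $\pi_{\ell+1}$-image, which is where the unlinked property and the precise dual-tree combinatorics (a class in $A_{\ell+1}\setminus A_\ell$ sits inside one complementary region of $\lamin\ell$, so all regions it separates are part of that one level-$\ell$ $Y$-vertex) must be invoked carefully. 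Everything else is a bookkeeping exercise in the definitions.
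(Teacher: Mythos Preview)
Your approach to (1), (3), (4), and the final commutation identity is essentially the same as the paper's, and those parts are fine. There is, however, a genuine gap in your treatment of (2) for $Y$-vertices.

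You assert that for a $Y$-vertex $v$ of level $\ell+1$, the trace $v\cap\RZ$ is ``a single open arc'' (equivalently, a single component of $\RZ\setminus A_{\ell+1}$). This is false in general. Recall that the level-$(\ell+1)$ geometric lamination is the union $\cL_0(\lambda^{(\ell+1)})\cup\cL_\infty(\lambda_*^{(\ell+1)})$: one piece lives in $\overline{\D}$ and the other in $\ponec\setminus\D$, and they meet $\RZ$ precisely along $A_{\ell+1}$. A $Y$-vertex $v$ is a connected component of the complement in $\ponec$, so it can (and typically does) meet $\RZ$ in several arcs of $\RZ\setminus A_{\ell+1}$, which are joined to each other alternately through $\D$ and through $\ponec\setminus\overline{\D}$. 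Consequently, showing that $m_2(v\cap\RZ)$ equals $u\cap\RZ$ for a single level-$\ell$ vertex $u$ is not a one-line application of the consecutive-preserving axiom to a single arc.

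The paper handles this as follows: take two components $]a_0,b_0[$ and $]a_1,b_1[$ of $v\cap\RZ$ that can be joined by a hyperbolic geodesic inside $\D$ (resp.\ outside $\overline{\D}$) avoiding $\lamin{\ell+1}$, with endpoints near $b_0$ and $a_1$. Then $b_0,a_1$ lie in a common $\lambda^{(\ell+1)}$-class (resp.\ a common $-\lambda_*^{(\ell+1)}$-class), and the consecutive-preserving property of $\lambda$ (resp.\ of $\lambda_*$) guarantees that $]2a_0,2b_0[$ and $]2a_1,2b_1[$ can again be joined by a geodesic avoiding $\lamin{\ell}$ on the appropriate side. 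Iterating over the chain of arcs shows $m_2(v\cap\RZ)$ lies in a single level-$\ell$ $Y$-vertex $V$, and a boundary argument gives equality $m_2(v\cap\RZ)=V\cap\RZ$. Note that the argument requires invoking the axioms of \emph{both} $\lambda$ and $\lambda_*$, a point your write-up also glosses over in the $\Gamma$-vertex case (a $\Gamma$-vertex is in general a chain of polygons from $\lambda$ and reflected polygons from $\lambda_*$, not a single $\lambda$-class hull). Once you incorporate this two-sided argument, the rest of your outline goes through.
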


\begin{proof}
  (1) Since vertices of
  level $\ell$ are pairwise disjoint,  uniqueness of $\pi_{\ell+1}(v)$ follows. 
For the existence, note that $\Gamma$-vertices of
  level $\ell$ are also $\Gamma$-vertices of level $\ell +1$. Hence,
  a $\Gamma$-vertex $v$ of level $\ell+1$ is already a vertex of
  level $\ell$ or is disjoint from the $\Gamma$-vertices of level
  $\ell$. Thus, $\pi_{\ell+1} (v) = v$ in the former case, and 
$v$ is contained in a connected component $v'=\pi_{\ell+1} (v)$ of $\ponec \setminus \lamin\ell$, in the latter.
Similarly, every $Y$-vertex of level $\ell+1$ is contained in a $Y$-vertex of level $\ell$.

Let us prove statement (2). From the invariance property of
$\alpha$-laminations, if $v \in \gam{\ell+1}$, then $m_2(v \cap \RZ)$
is the intersection of a vertex in $\gam\ell$ with the unit
circle. Now, if $U \in \why{\ell+1}$, then the connected components in $U \cap
\RZ$ can be joined by  a sequence of hyperbolic geodesics contained in $U$ (either
through $\D$ or through $\ponec \setminus \overline{\D}$) arbitrarily close to the
boundary of $U$.  Assume that two such components of $U \cap
\RZ$, say  $]a_0, b_0[$ and
$]a_{1},b_{1}[$, can be joined by a hyperbolic geodesic inside $\D$ or
outside $\overline{\D},$ without crossing $\lamin{\ell+1}$, and  with endpoints
arbitrarily close to $b_0$ and $a_{1}$.  By the consecutive preserving
property of $ {\lambda}$ and $ {\lambda}_*$,  the intervals $]2a_0, 2b_0[$ and $]2a_{1},2b_{1}[$ may be joined  by a
hyperbolic geodesic inside $\D$ or outside $\overline{\D}$ without crossing
$\lamin{\ell}$. It follows that $m_2(U \cap \RZ)$ is
contained in a $Y$-vertex of level $\ell$, say $V$.  Moreover, the
boundary of $V$ is formed by geodesics having as endpoints the image
of endpoints of geodesics in the boundary of $U$. Hence, $m_2(U \cap
\RZ)= V \cap \RZ$.

(3) and (4), as well as the functional relation between $m_2$ and $\pi_\ell$, 
are a straightforward consequence of (1) and (2).
 \end{proof}

To show that there is a natural inclusion of trees of lower levels into the trees
of higher levels we need to establish the following property of the $\alpha$-lamination of the center of a limb.

\begin{lemma}
  \label{lem:19}
  Let $ {\lambda_*}$ be the $\alpha$-lamination of the center of a limb.
  For all $\ell \geq 0$, if $A$ is a level $\ell+1$ class which is not a level $\ell$ class, then the connected component of $\RZ \setminus A$ 
of length greater than $1/2$ contains 
all level $\ell+1$ non-trivial classes $B$ such that $B \neq A$.
\end{lemma}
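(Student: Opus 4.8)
The plan is to use the structure of the $\alpha$-lamination $ {\lambda_*}$ of the center $c_{-p/q}$ of the limb, namely that the critical value $c_{-p/q}$ has period $q$ and its orbit passes through the critical point. First I would recall that the level $\ell$ support $A_\ell = m_2^{-\ell}(A_0)$ and that a level $\ell+1$ class $A$ which is not a level $\ell$ class must map, under some iterate of $m_2$, onto the fixed class $A_0$ and, in particular, $m_2^{\ell+1}(A) = A_0$ while $m_2^\ell(A)$ is a nontrivial class that is a level $1$ class but not a level $0$ class. Since $ {\lambda_*}$ is critically periodic (the Julia set $J_{c_{-p/q}}$ has the critical point in a period $q$ cycle of bounded Fatou components), the classes of $ {\lambda_*}$ have a rigidly controlled structure: every nontrivial class either is the $\alpha$-cycle class $A_0$ (with $q$ elements), or is iterated-preimage of $A_0$, and exactly one class other than $A_0$ maps \emph{two-to-one} onto $A_0$ — this is the class containing the two preimages under $m_2$ of the element $\theta_\star \in A_0$ which is the angle of the ray landing at the root of the critical Fatou component. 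Call this class $A_\star$; it contains $q+1$ elements and has the ``critical gap''.

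The key geometric input is that at the center, the complementary gaps of the geometric lamination are either ``peripheral'' (bounded by a single leaf, corresponding to a Fatou component in $J_{c_{-p/q}}$) or are eventually iterated onto $A_0$'s gaps. First I would establish the claim for $A = A_\star$ directly: the two new preimage arguments lie in a single short gap of $A_0$ (or rather in the preimages of the characteristic arc), and because $A_\star$ is obtained from $A_0$ by adding one point into one of the $q$ gaps and splitting it, the long gap of $\RZ \setminus A_\star$ (the one of length $> 1/2$ — it exists because $A_\star$ has all but one of its arguments confined to an arc of length $<1/2$, this being precisely the ``airplane''/rabbit-type picture at the center) contains all of $A_0 \setminus \{\theta_\star\}$ and hence, by the unlinked property, every nontrivial class is either inside this long gap or is disjoint from it — but every nontrivial level $\ell+1$ class $B \ne A_\star$ eventually maps to $A_0$, whose arguments sit in the long gap, so by unlinkedness $B$ itself must sit in the long gap. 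Then for a general level $\ell+1$ class $A$ not of level $\ell$, I would pull back: $A$ maps after $k$ steps onto $A_\star$ for some $k \le \ell$, and pulling back the long-gap statement through the degree-two map $m_2$ (using the consecutive-preserving property and keeping track of which of the two preimage components of an interval has length $>1/2$) propagates the containment.

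The main obstacle will be the bookkeeping in the inductive pullback step: when one pulls an arc of length $>1/2$ back under $m_2$, its preimage is two arcs each of length $>1/4$, and one must argue that \emph{one} of them still has length $>1/2$ — this uses that the complementary arc (the preimage of the complement of the long gap) splits into two short arcs, and that the point being added to form $A$ from its image lands in the ``correct'' one of the two preimage arcs. Here the critically-periodic hypothesis is essential: at the center the critical value never enters the short gaps of $A_0$ transversally in a way that would create a second long gap, so the ``correct'' preimage arc is forced. I expect to need Lemma~\ref{lemma:piell}(2) and the consecutive-preserving axiom to control exactly how $m_2$ acts on the complementary components, and Definition~\ref{def:9} (the characteristic interval $I_{p/q}$) to identify which gap of $A_0$ receives the critical value. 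The rest — verifying $A_\star$ exists and has the stated long gap — is a direct consequence of the explicit combinatorics of the $\alpha$-fixed point portrait of $Q_{c_{-p/q}}$ recalled in Section~\ref{sec:alpha-lamin}, together with the fact that at a center all nontrivial classes are preperiodic to $A_0$ so their arguments accumulate only on the Julia set, which here has empty interior confined away from the long gap.
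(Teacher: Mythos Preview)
Your proposal rests on a misconception about $\lambda_*$. At the center of the $p/q$-limb the critical point is \emph{periodic} of period $q$; it never lands on the $\alpha$-fixed point, so no class of $\lambda_*$ maps two-to-one onto another. Every nontrivial class has exactly $q$ elements and $m_2$ restricts to a bijection on each. Concretely, for the basilica ($q=2$, $A_0=\{1/3,2/3\}$) one has $m_2^{-1}(A_0)=A_0\cup(A_0+1/2)$ with $A_0+1/2=\{1/6,5/6\}$ a separate two-element class; there is no class with $q+1=3$ elements. Your class $A_\star$ therefore does not exist, and the base case of your induction collapses.

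Even after replacing $A_\star$ by the correct object $A_0+1/2$ (the unique nontrivial level~$1$ class that is not level~$0$), your pullback step is problematic. Pulling a long arc back under $m_2$ gives two arcs of length $>1/4$, and you note yourself that showing one of them still has length $>1/2$ is the ``main obstacle''; you do not explain how the critically-periodic hypothesis resolves it, and in fact the information you need is not about the long arc at all. The paper's proof avoids this by working forward on the \emph{short} arc: it first proves, directly from the landing pattern of rays in the dynamical plane of $Q_c$, that both $A_0$ and $A_0+1/2$ lie in the long gap $I_A$ of \emph{every} nontrivial class $A$ (since the sector cut out by the rays at $\partial I_A$ and containing the critical point must contain the Fatou component touching $\alpha$). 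The induction is then one line: if some level $\ell+2$ class $B\neq A$ lay in the short arc $\RZ\setminus I_A$, then since $A_0,A_0+1/2\subset I_A$ the map $m_2$ is injective on the short arc and sends it onto the short arc of $m_2(A)$, contradicting the inductive hypothesis applied to $m_2(B)$ and $m_2(A)$. The complex-dynamical input that $A_0\subset I_A$ for all $A$ is exactly what makes the induction close, and is what your proposal is missing.
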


\begin{remark}
  \label{rem:2}
 {\em  From Lemma A.6 in~\cite{KiwiRatLam}, given a non-trivial class $A$ of an abstract $\alpha$-lamination there exists a connected component of $\RZ \setminus A$ of length greater than $1/2$ or $m_2 :A \to m_2(A)$ is two-to-one.  Thus the existence of a connected component as in the lemma is automatically guaranteed.}
\end{remark}

\begin{proof}
  Let $Q_c$ be a quadratic polynomial which is the center of a limb.
 
 Note that the Fatou component $V$ of $Q_c$ containing the critical point
$z=0$ contains the $\alpha$ fixed point in its boundary.
  Given a non-trivial 
class $A$ of  $ {\lambda_*} = \lambda_\alpha (Q_c)$, denote by $I_A$
the connected component of $\RZ \setminus A$ of length greater than $1/2$. We claim that the fixed class $A_0$ is contained in $I_A$, provided that $A \neq A_0$. In fact, 
the extreme points of $I_A$ correspond to the arguments of external rays
which together with their common landing point $z_A$ cut the complex plane into
two regions. The one containing all the rays with arguments in $I_A$ also contains the critical point (e.g. see~\cite{MilnorPOP}), 
therefore it contains the Fatou component $V$ as well as the $\alpha$ fixed point and the rays landing at it. Thus, $A_0 \subset I_A$.
Similarly, if $A \neq A_0 +1/2$, then $A_0 +1/2 \subset I_A$.

 To prove the lemma we proceed by induction. Since the non-trivial level $1$ classes are the fixed class $A_0$
and the prefixed class $A_0+1/2$, the statement clearly holds for $\ell =0$.
  We assume that the statement is true for $\ell \geq 0$ and, by contradiction,
we prove that the statement is true for $\ell+1$. 
  That is, let $A$ be a non-trivial class of level $\ell+2$ which is not of level $\ell+1$. Assume
that there exists a level $\ell+2$ class $B \neq A$ such that
$B \subset \RZ \setminus I_A$.  It follows that $m_2$ is injective
on $ \RZ \setminus I_A$, hence $m_2(B)$ is a class of level $\ell+1$  contained in 
$ m_2(\RZ \setminus I_A)$. But since $A_0$ and $A_0 +1/2$ are contained in $I_A$ we must have that $A_0$ is contained in the interval
$\RZ \setminus m_2(\RZ \setminus I_A)$. Thus this interval is $I_{m_2(A)}$ which
is a contradiction with the inductive hypothesis since $m_2(B)$ is not contained in  $I_{m_2(A)}$.
\end{proof}

Now we show that the map $\pi_{\ell+1}$ is, in fact, a deformation retract. 
We will also prove that edges are not ``subdivided'' as we increase the level.

\begin{lemma}
\label{lemma:iell}
  Let $ {\lambda}$ be an abstract $\alpha$-lamination.  Let $\ell \geq 0$ and  $v$ be a $Y$-vertex of level $\ell$. Then there exists a unique $Y$-vertex $\iota_\ell (v)$ of level $\ell+1$ such that:
  \begin{eqnarray*}
    \pi_{\ell+1}(\iota_\ell (v)) & =& v,\\
    \partial v &\subset& \partial \iota_\ell (v).
  \end{eqnarray*}
  Given a $\Gamma$-vertex $v$ of level $\ell$ define $\iota_\ell (v) =v \in \Gamma^{(\ell+1)}$.
  Then $\iota_\ell$ extends to an injective map $\iota_\ell : \mathcal{T}( {\lambda}^{(\ell)})\to\mathcal{T}( {\lambda}^{(\ell+1)})  $ which maps edges onto edges. 
Moreover $ \pi_{\ell+1} \circ \iota_\ell = id $
and $    \iota_\ell \circ \pi_{\ell+1} $ 
is homotopic to the identity $ \operatorname{rel} \mathcal{T}( {\lambda}^{(\ell)}).$
\end{lemma}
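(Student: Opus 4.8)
The plan is to construct $\iota_\ell$ on $Y$-vertices first, then extend it over edges, and finally verify the retraction/homotopy statements. For a $Y$-vertex $v$ of level $\ell$, i.e.\ a connected component of $\ponec \setminus \lamin\ell$, I would observe that passing from level $\ell$ to level $\ell+1$ only \emph{adds} convex hulls of new classes: $\lamin\ell \subset \lamin{\ell+1}$, so $v$ is split by the new pieces $\operatorname{Convex}(A) \cup M(\operatorname{Convex}(-A))$ of $\lamin{\ell+1}$ lying inside $v$ into several level $\ell+1$ $Y$-vertices, together with those new $\Gamma$-pieces. I claim exactly one of these sub-$Y$-vertices, call it $\iota_\ell(v)$, contains $\partial v$ in its closure. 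The key input here is Lemma~\ref{lem:19} together with Remark~\ref{rem:2}: for the origin-centered part $\cL_0$, any new level $\ell+1$ class $A$ lying inside $v$ has, among the components of $\RZ\setminus A$, one of length $> 1/2$ that contains all the other new classes (applied to $ {\lambda}$), and similarly for the infinity-centered part using $ {\lambda}_*$; this ``nesting on one side'' forces the new pieces inside $v$ to all be pushed to one side, so that $\partial v$ (which is a union of geodesics coming from $\lamin\ell$) all lies on the boundary of a single one of the sub-regions. That single region is $\iota_\ell(v)$, and by construction $\pi_{\ell+1}(\iota_\ell(v)) = v$ and $\partial v \subset \partial \iota_\ell(v)$; uniqueness is immediate since the sub-$Y$-vertices are pairwise disjoint. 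I expect verifying this ``one-sidedness'' carefully — reconciling the origin-centered lamination $ {\lambda}$ and the infinity-centered lamination $ {\lambda}_*$ simultaneously inside the same complementary region — to be the main obstacle.

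Next I would define $\iota_\ell(v) = v$ for $\Gamma$-vertices (legitimate since $\Gamma$-vertices of level $\ell$ are $\Gamma$-vertices of level $\ell+1$, as noted in the proof of Lemma~\ref{lemma:piell}(1)) and check that $\iota_\ell$ sends each edge of $\tree\ell$ to an edge of $\tree{\ell+1}$. An edge of level $\ell$ joins a $Y$-vertex $v$ and a $\Gamma$-vertex $C$ with $C \cap \partial v \neq \emptyset$; then $C \subset \partial \iota_\ell(v)$-worth of geodesics means $C$ (now a level $\ell+1$ $\Gamma$-vertex, or contained in one — but a level-$\ell$ $\Gamma$-vertex is already a level-$\ell+1$ $\Gamma$-vertex) still meets $\partial \iota_\ell(v)$, so $\{\iota_\ell(v),\iota_\ell(C)\}$ is an edge of $\tree{\ell+1}$. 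Injectivity of $\iota_\ell$ on vertices follows from $\pi_{\ell+1}\circ\iota_\ell = \mathrm{id}$; injectivity as a map of graphs (no two edges collapsed) follows because a level-$\ell$ edge is determined by its endpoints and no edge is subdivided — this last point being exactly the content of ``$\partial v \subset \partial \iota_\ell(v)$'', which guarantees no new $\Gamma$-vertex of level $\ell+1$ is inserted strictly between $\iota_\ell(v)$ and $\iota_\ell(C)$.

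Finally, the identities. That $\pi_{\ell+1}\circ\iota_\ell = \mathrm{id}$ is built into the construction on $Y$-vertices and is trivial on $\Gamma$-vertices, and extends to edges since both maps are simplicial. For $\iota_\ell\circ\pi_{\ell+1} \simeq \mathrm{id}$ rel $\tree\ell$ (viewing $\tree\ell$ inside $\tree{\ell+1}$ via $\iota_\ell$): I would argue that $\pi_{\ell+1}$ is a deformation retraction by showing that for each level-$\ell$ $Y$-vertex $v$, the preimage $\pi_{\ell+1}^{-1}(v)$ inside $\tree{\ell+1}$ is a subtree containing $\iota_\ell(v)$, and that the level-$\ell+1$ vertices mapping into a level-$\ell$ edge form a tree as well; contracting each such fiber onto the corresponding image vertex/edge gives the homotopy, and since $\iota_\ell(\tree\ell)$ is a subtree on which $\pi_{\ell+1}$ restricts to $\iota_\ell^{-1}$, the homotopy can be taken rel $\iota_\ell(\tree\ell)$. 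A clean way to package this: $\pi_{\ell+1}\colon \tree{\ell+1}\to\tree\ell$ is monotone (Lemma~\ref{lemma:piell}) with contractible (tree) point-preimages, hence a homotopy equivalence with homotopy inverse any section, and $\iota_\ell$ is precisely such a section, giving the stated homotopy.
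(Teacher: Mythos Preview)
Your overall strategy matches the paper's: build $\iota_\ell$ on $Y$-vertices by showing that the new level-$(\ell+1)$ pieces inside a level-$\ell$ $Y$-vertex $v$ do not separate the boundary components of $v$, with Lemma~\ref{lem:19} as the key ingredient; then extend to $\Gamma$-vertices and edges and verify the retraction/homotopy properties. The edge, injectivity and homotopy arguments are fine and in fact more detailed than the paper's, which dispatches them with ``it is not difficult to see that $\iota_\ell$ has the desired properties''.

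There is one genuine slip. You propose to apply Lemma~\ref{lem:19} to both the origin-centered lamination $\lambda$ and the infinity-centered lamination $\lambda_*$. But Lemma~\ref{lem:19} is stated and proven only for $\lambda_*$, the $\alpha$-lamination of the \emph{center} of a limb; its proof uses specifically that the critical point lies in the Fatou component touching the $\alpha$-fixed point. For a general abstract $\alpha$-lamination $\lambda$ the conclusion is false: distinct new $\lambda$-classes may nest, with one lying in a short arc of another. Remark~\ref{rem:2} only supplies the existence of a long arc, not that it contains all other classes. So the ``one-sidedness'' you want cannot be extracted from the $\cL_0$ side.

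The paper's (terse) argument runs entirely through the $\cL_\infty$ side. Every $\Gamma$-vertex, old or new, contains at least one exterior piece $M(\operatorname{Convex}(B))$ for some $\lambda_*$-class $B$, because every circle point in the level-$\ell$ support lies simultaneously in a $\lambda$-class and in a $\lambda_*$-class. For a new $\Gamma$-vertex $C' \subset v$, Lemma~\ref{lem:19} says each of its exterior pieces has \emph{all} other $\lambda_*^{(\ell+1)}$-classes --- in particular all the old ones --- on its big-arc side; since the short arcs of distinct new $\lambda_*$-classes are pairwise disjoint, the region of $\ponec \setminus \overline{\D}$ lying on the big-arc side of every new exterior piece is connected and contains every old exterior piece. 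This region sits in a single component of $\ponec \setminus C'$ (the interior pieces of $C'$ lie in $\overline{\D}$ and cannot disconnect it), so all the old $\Gamma$-vertices making up $\partial v$ lie in that same component. Hence no new $C'$ separates two components of $\partial v$, and $\iota_\ell(v)$ is well-defined. You should replace your appeal to Lemma~\ref{lem:19} for $\lambda$ by this exterior-only argument.
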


\begin{proof}
 Consider a $Y$-vertex $v$ of level $\ell$.
From the previous lemma, it follows that two connected components of $\partial v$ cannot be separated by a connected component
of $\lamin{\ell+1}$. Hence, there exists a $Y$-vertex $\iota_\ell (v)$ of level $\ell+1$ which is obtained from $v$ after removing
finitely many topological disks completely contained in $v$. This vertex $\iota_\ell (v)$ has the desired properties.

  Observe that given any two endpoints $v \in \why\ell$ and $w\in \Gamma^{(\ell)}$ of an edge in $\tree\ell$, by definition we have
that $\partial v \cap w \neq \emptyset$. From the previous paragraph,   $\partial \iota_\ell (v) \cap \iota_\ell (w) = 
\partial \iota_\ell (v) \cap w \neq \emptyset$. Therefore, $\iota_\ell (v)$ and  $\iota_\ell (w)$ are endpoints of an edge in 
$\tree{\ell+1}$. That is, $\iota_\ell$ extends to a map between the corresponding trees. It is not difficult to see
that $\iota_\ell$ has the desired properties.
\end{proof}

Note that  $\iota_\ell$ is an inclusion of trees. So one may regard the trees $\mathcal{T}^{(\ell)}$ as an increasing sequence of trees.
It is important to stress that edges of level $\ell$ are not subdivided when included in the tree of level $\ell+1$ via the map $\iota_\ell$.

\subsection{The full tree of an $\alpha$-lamination}
\label{sec:tree.full}
From Lemma~\ref{lemma:piell}, we conclude that
 $$(\mathcal{T}( {\lambda}^{(\ell+1)}), \pi_{\ell+1})$$ is an inverse system of (contractible) topological spaces.

 \begin{definition}
   Let $ {\lambda}$ be an abstract $\alpha$-lamination.
   The {\sf full tree of  $ {\lambda}$} is
$$\mathcal{T}^{\infty} ( {\lambda}) = \lim_{\longleftarrow} (\mathcal{T}^{(\ell+1)}( {\lambda}), \pi_{\ell+1}).$$
 \end{definition}

From Lemma~\ref{lemma:piell}, the action of $m_2$ extends to a continuous map 
$$m_2 : \mathcal{T}^{\infty} ( {\lambda}) \to \mathcal{T}^{\infty} ( {\lambda}).$$

We will show that in a great variety of cases the action of a quadratic rational map $\varphi : \pberl \to \pberl$
on the (Berkovich) convex hull of its Julia set is topologically conjugate to 
$m_2 : \mathcal{T}^{\infty} ( {\lambda}) \to \mathcal{T}^{\infty} ( {\lambda})$ for an appropriate $ {\lambda}$. 
The Julia set of $\varphi$ will correspond to the inverse limit of $Y$-vertices.

\subsection{Construction of $\alpha$-laminations via symbolic dynamics}
\label{section:symbolic}
Our aim here is to show how via symbolic dynamics of $m_2: \RZ \to \RZ$ 
we may construct abstract $\alpha$-laminations in the $p/q$-limb. 
Recall that  $I_{p/q}$ denotes the characteristic interval  of the $p/q$-limb
(see Definition~\ref{def:9}).
For each argument $\theta \in \overline{I_{p/q}} $  we will produce at least one and at most three abstract
$\alpha$-laminations.

For any $\theta \in \overline{I_{p/q}}$ we consider two partitions $\{ I_0^-(\theta), I_1^-(\theta)\}$ and $\{ I_0^+(\theta), I_1^+(\theta)\}$ 
of $\RZ$ into semicircles where
$$I_0^+ (\theta)=  [\theta/2 +1/2, \theta/2[, \quad\quad  I_0^- (\theta) = ]\theta/2+1/2, \theta/2],$$
and
$$I_1^+ (\theta) = [\theta/2, \theta/2 + 1/2[, \quad\quad  I_1^- (\theta) = ]\theta/2, \theta/2 + 1/2].$$
It follows that $A_0$ is contained in exactly one element of each partition, for all $\theta \in I_{p/q}$.

We will construct abstract $\alpha$-laminations using the itinerary of $m_2$-orbits according to each one of the two partitions of $\RZ$.
Let $\Sigma = \{ 0, 1 \}^{\N \cup \{0\}}$ and, for $\epsilon = +$ or $-$,
define
$$\itin^\epsilon_\theta (t) = (i_0, i_1, \dots) \in \Sigma$$
if $m_2^k(t) \in  I_{i_k}^\epsilon (\theta)$.
The equivalence relation $ {\lambda}^\epsilon (\theta)$ is the relation that identifies two distinct arguments
$s$ and $t$  if and only if $s, t$ belong to the $m_2$-grand orbit of $A_0$ and $\itin^\epsilon_\theta (t) = \itin^\epsilon_\theta (s)$.

One may produce a new equivalence relation from $ {\lambda}^{\pm}(\theta)$. Namely, we let $ {\lambda} (\theta)$ be the smallest equivalence relation
that contains both $ {\lambda}^+(\theta)$ and $ {\lambda}^-(\theta)$. 
If $m_2^n (\theta) \notin A_0$ for all $n \geq 0$, then it is fairly easy to check that $ {\lambda}(\theta) =  {\lambda}^+(\theta)=  {\lambda}^-(\theta)$.

\begin{lemma}
  If $\theta \in I_{p/q}=]\theta_0,\theta_1[$, then  $ {\lambda}(\theta)$, $ {\lambda}^+(\theta)$ and $ {\lambda}^-(\theta)$ 
are abstract $\alpha$-laminations in the $p/q$-limb.
Moreover, denote by $ {\lambda}_*$ the $\alpha$-lamination of the center of the $p/q$-limb. Then $ {\lambda}_* =  {\lambda}^+(\theta_1) =  {\lambda}^-(\theta_0)$.
\end{lemma}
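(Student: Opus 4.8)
The plan is to verify Definition~\ref{alphaLamination|definition}'s five axioms for each of the relations $ {\lambda}^+(\theta)$, $ {\lambda}^-(\theta)$ and $ {\lambda}(\theta)$, with the $\alpha$-supported axiom forcing the fixed class to have rotation number $p/q$. First I would record the basic combinatorial facts about the partitions: each semicircle $I_i^\pm(\theta)$ has length exactly $1/2$, so $m_2$ maps each $I_i^\pm(\theta)$ bijectively onto $\RZ$; moreover, since $\theta \in I_{p/q}=]\theta_0,\theta_1[$ and $A_0 \subset I_{p/q}$ (the characteristic interval is the shortest complementary component of $\RZ\setminus A_0$, so $A_0$ avoids it), the point $\theta/2$ and $\theta/2+1/2$ do not lie in $A_0$, which means $A_0$ lies entirely inside one element of each partition and also inside one element of every pulled-back partition $m_2^{-n}$ of these semicircles. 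This is exactly what makes the itinerary $\itin^\epsilon_\theta$ constant on $A_0$ and, more generally, constant on each grand-orbit class of $A_0$; hence the classes of $ {\lambda}^\epsilon(\theta)$ are well defined subsets of the grand orbit $GO(A_0)$.

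Next I would check the axioms. \textbf{Invariance}: if $s,t$ lie in the grand orbit of $A_0$ with equal itineraries, then $m_2(s), m_2(t)$ still lie in $GO(A_0)$ and their itineraries are the shifts of the originals, hence equal; one must also note $m_2$ maps a class onto a class rather than a proper subset, which follows because every point of $GO(A_0)$ has a preimage in $GO(A_0)$ with prescribed first symbol (surjectivity of $m_2$ on each semicircle). \textbf{Finiteness}: each class is a subset of the finite set $m_2^{-\ell}(A_0)$ for the appropriate $\ell$, because the itinerary of a point eventually periodic/preperiodic under $m_2$ (which every point of $GO(A_0)$ is) is eventually periodic and the coding map restricted to the finitely many points mapping into $A_0$ at a fixed stage is finite-to-one. \textbf{Unlinkedness} and \textbf{consecutive-preserving}: these are the heart of the symbolic construction; I would argue that two points with the same itinerary can be ``followed'' simultaneously forward — at each stage they lie in a common monotone branch of $m_2^{-n}$ restricted to a semicircle — so the cyclic order of a class and of its complementary gaps is preserved under $m_2$, and two distinct classes, having distinct itineraries, are separated at some finite stage by an endpoint of a partition semicircle's preimage, hence are unlinked. \textbf{$\alpha$-supported}: the class $A_0$ itself satisfies $m_2(A_0)=A_0$, it has $q\ge 2$ elements with rotation number $p/q$, and any class with at least two elements is by construction in $GO(A_0)$ and maps onto $A_0$ after finitely many steps; since its elements all lie in $\RZ$ and have a common itinerary that is eventually the itinerary of $A_0$, it maps precisely onto $A_0$.

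For $ {\lambda}(\theta)$ — the smallest equivalence relation containing both $ {\lambda}^+(\theta)$ and $ {\lambda}^-(\theta)$ — I would observe that the two relations differ only in how boundary points $\theta/2, \theta/2+1/2$ and their preimages are coded (closed versus open endpoints), so $ {\lambda}(\theta)$ merges at most two $ {\lambda}^\pm$-classes at a time and the merging is compatible with $m_2$; invariance and finiteness pass immediately, and unlinkedness/consecutive-preserving for the join require a short check that the merged classes interleave correctly, which again reduces to the observation that the two extra identifications occur along the single $m_2$-orbit of $\theta$. The last sentence, $ {\lambda}_* =  {\lambda}^+(\theta_1)= {\lambda}^-(\theta_0)$, I would prove by identifying the symbolic coding with the dynamical-ray coding: for the center $c_{p/q}$ of the $p/q$-limb the critical value is the landing point of the ray at angle $\theta_1$ (equivalently $\theta_0$ on the other side), so the partition of $\RZ$ by the two rays landing at the $\alpha$-fixed point's preimage through $z=0$ is exactly $\{I_0^+(\theta_1), I_1^+(\theta_1)\}$, and the itinerary of an argument $t$ with respect to this partition records precisely which side of the dynamical ray tree the ray $R_{c_{p/q}}(t)$ falls on at each iterate; two rays land together at a point of $GO(\alpha)$ iff these itineraries agree, which is the definition of $ {\lambda}_\alpha(Q_{c_{p/q}})$. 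I expect the main obstacle to be the unlinked/consecutive-preserving verification done purely symbolically — keeping track of open versus half-open semicircles so that cyclic order is genuinely preserved and no spurious linking is introduced when two classes share a long initial itinerary; this is where one must be careful, and the cleanest route is probably to prove it first for $ {\lambda}^+(\theta_1)$ via the dynamical identification above and then transport it to general $\theta$ by a monotonicity/semiconjugacy argument.
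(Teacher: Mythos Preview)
Your approach is essentially the paper's: verify the axioms of Definition~\ref{alphaLamination|definition} directly from the fact that $m_2$ restricted to each semicircle $I_j^\pm(\theta)$ is an order-preserving bijection onto $\RZ$, then handle $ {\lambda}(\theta)$ by noting it differs from $ {\lambda}^\pm(\theta)$ only along the grand orbit of $\theta/2$, and finally identify $ {\lambda}_*$ with $ {\lambda}^+(\theta_1)= {\lambda}^-(\theta_0)$ by invoking the dynamical-ray description of the center (the paper cites Poirier~\cite{PoirierCP} here rather than arguing it out).

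Two remarks. First, you write ``$A_0 \subset I_{p/q}$'' and then in the same parenthesis say ``$A_0$ avoids it''; the latter is correct ($I_{p/q}$ is a component of $\RZ\setminus A_0$), the former is false, so clean this up. The actual fact you need --- that $A_0$ lies in a single semicircle of each partition --- is stated without proof in the paper just before the lemma, and your justification via $\theta/2,\theta/2+1/2\notin A_0$ is not quite enough on its own (two points not in $A_0$ does not immediately force $A_0$ into one side); the standard argument uses that $I_{p/q}$ is the shortest complementary arc. Second, for unlinkedness the paper's argument is slightly cleaner than yours: two distinct classes either lie in the same semicircle (in which case $m_2$ is a cyclic-order-preserving bijection there, so they are unlinked iff their images are) or they lie in different semicircles (hence automatically unlinked); induction on the level then finishes. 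Your ``separated at some finite stage by an endpoint of a pulled-back partition'' is the same idea but less crisp.
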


\begin{proof}
We start proving that $ {\lambda}^\pm(\theta)$ are abstract $\alpha$-laminations, for all $\theta \in I_{p/q}$.
For $j=0,1$, the map $m_2: I_j^{\pm}(\theta)\to \RZ$ preserves cyclic order and
is onto. Hence, $ {\lambda}^\pm(\theta)$ satisfies the  invariant and consecutive preserving property.
Thus, if $m_2^{\ell}(t)$ is periodic, for some $\ell \ge 0$, then the same holds for all arguments $s$ which are
$ {\lambda}^\pm(\theta)$-equivalent to $t$. It  follows that $ {\lambda}^\pm(\theta)$-classes are finite.
A pair of distinct classes $B_1$ and $B_2$ 
are either contained in the same half circle or unlinked.
In the former case, $B_1$ and $B_2$ are unlinked
if and only if $m_2(B_1)$ and $m_2(B_2)$ are unlinked.
The unlinked property for $ {\lambda}^\pm(\theta)$ follows.

Now we show that $ {\lambda}(\theta)$ is an abstract $\alpha$-laminations, for all $\theta \in I_{p/q}$.
The only relevant case is when $\theta \notin A_0$ and $m_2^{\ell_0}(\theta) \in A_0$ for some
$\ell_0 \geq 1$.  The $ {\lambda}(\theta)$-class of
$\theta/2$ is the union of its $ {\lambda}^+(\theta)$-class with the
$ {\lambda}^+(\theta)$-class of $\theta/2+1/2$. It follows that a
$ {\lambda}(\theta)$-class consists of points with the same itinerary or
it eventually maps onto the class of $\theta/2$ through a cyclic order
preserving map. Therefore, we may apply a similar reasoning to
conclude that $ {\lambda}(\theta)$ is an abstract $\alpha$-lamination. (According
to~\cite{bfh,OrsayNotes} the lamination $ {\lambda}(\theta)$ is the
$\alpha$-lamination of a quadratic polynomial where the critical
point eventually maps to the $\alpha$ fixed point. That is, a critically prefixed lamination.)

Finally, the $\alpha$-lamination $ {\lambda}_*$ of the quadratic polynomial in the $p/q$-limb with a period $q$ critical orbit
is
$ {\lambda}_* =  {\lambda}^{+}(\theta_1)= {\lambda}^{-}(\theta_0)$, according to 
Poirier's description~\cite{PoirierCP}.
\end{proof}

With little more work, given an abstract $\alpha$-lamination $ {\lambda}$, it is possible to find $\theta$ such that
one of the laminations $ {\lambda}(\theta),   {\lambda}^\pm(\theta)$ coincides with $ {\lambda}$:

\begin{proposition}
  \label{pro:3}
   Let $ {\lambda}$ be an abstract $\alpha$-lamination in the $p/q$-limb. Denote by $I_{p/q}$ the corresponding characteristic 
interval. 
   Then there exists $\theta \in \overline{I_{p/q}}$ such that $ {\lambda} =  {\lambda}(\theta)$ or,  $ {\lambda} =  {\lambda}^+(\theta)$ or,   $ {\lambda} =  {\lambda}^-(\theta)$ 
\end{proposition}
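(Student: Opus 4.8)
The plan is to reconstruct $\theta$ from $\lambda$ as the common image under $m_2$ of a nested sequence of closed arcs that encode the combinatorics of $\lambda$ level by level, and then to read off which of $\lambda(\theta)$, $\lambda^{+}(\theta)$, $\lambda^{-}(\theta)$ is obtained according to the position of $\theta$ in this intersection.

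First I would analyse level $1$. Since $m_2\colon A_0\to A_0$ is a bijection, Remark~\ref{rem:2} gives a unique complementary arc $G=]a,b[$ of $A_0$ of length larger than $1/2$; a short counting argument (a $\lambda$-class mapping onto the single class $A_0$ must wrap onto all of $A_0$, and $A_0$ is disjoint from $A_0+1/2$) shows that $A_0+1/2$ is a single $\lambda$-class, distinct from $A_0$ and contained in $G$. Inside $G$ the span of $A_0+1/2$ separates the two ``transition arcs'' $K=]a,b+1/2[$ and $K+1/2=]a+1/2,b[$ from the span $[b,a]$ of $A_0$. Because $A_0$ and $A_0+1/2$ are distinct classes, their elements must receive different first symbols, so the two preimages $\{\theta/2,\theta/2+1/2\}$ of $\theta$ are forced to straddle $K$ and $K+1/2$; as $K$ has length $<1/2$, $m_2$ is injective on it, and the combinatorial description of $p/q$-orbits (Bullet--Sentenac, Milnor; cf. Definition~\ref{def:9}) identifies $m_2(K)=]2a,2b[$ with the shortest complementary arc of $A_0$, i.e. with $I_{p/q}$. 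Hence $\theta\in\overline{I_{p/q}}$, and any such $\theta$ makes the level $1$ restrictions of $\lambda^{\pm}(\theta)$ agree with $\lambda$. Put $K_1=K$.

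Next comes the induction. A useful preliminary remark is that $A_\ell=m_2^{-\ell}(A_0)$ is invariant under $t\mapsto t+1/2$ for every $\ell\ge1$ (being a full preimage), hence the family of level $\ell$ $\lambda$-classes is permuted by $t\mapsto t+1/2$; so it suffices to track one arc $K_\ell$ of admissible positions for $\theta/2$, with $\theta/2+1/2$ then automatically admissible in $K_\ell+1/2$. Given $K_\ell\subseteq K_{\ell-1}$ such that $\theta/2$ in its interior forces the correct level $\ell$ restriction, I would let $K_{\ell+1}\subseteq K_\ell$ be the admissible positions that, in addition, assign to the new arguments $A_{\ell+1}\setminus A_\ell$ lying in $K_\ell$ the itineraries dictated by the level $\ell+1$ classes of $\lambda$. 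Each such requirement reads ``$\theta/2$ lies on a prescribed side of a prescribed point'' along a finite orbit, so $K_{\ell+1}$ is a finite intersection of arcs; the substantive point is that this intersection is a nonempty single arc, and this is where the \emph{unlinked} and \emph{consecutive preserving} axioms of Definition~\ref{alphaLamination|definition} are used: unlinkedness makes the planar picture of the convex hulls of level $\ell+1$ classes tree-like, so the components of its complement inside $K_\ell$ are arcs, and consecutive preservation together with the agreement already secured at level $\ell$ singles out one such component as the one that can contain $\theta/2$ and shows its forward image is again an arc inside $m_2(K_\ell)$. I expect this non-fragmentation step — checking at every level that the locus of admissible $\theta$ remains a single nonempty arc, and in particular that the new classes inside $K_\ell$ do not over-constrain $\theta$ — to be the main obstacle; it is exactly where one transcribes McMullen's analysis of $\alpha$-laminations and the bookkeeping of the ``critical gap'' to the present abstract setting.

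Finally, with $\theta^{*}\in\bigcap_{\ell\ge1}m_2(K_\ell)\subseteq\overline{I_{p/q}}$ (nonempty by compactness) I would finish by cases. If the arcs $m_2(K_\ell)$ stabilise to a nondegenerate arc, or if $\theta^{*}$ lies in the interior of all of them and no forward iterate of $\theta^{*}$ meets $A_0$, then no argument of the grand orbit of $A_0$ ever lands on a cut point, so $\lambda^{+}(\theta^{*})=\lambda^{-}(\theta^{*})=\lambda(\theta^{*})$; since $\lambda$ is $\alpha$-supported, every $\lambda$-class lies in some $A_\ell$, so the construction makes this common relation agree with $\lambda$ throughout, whence $\lambda=\lambda(\theta^{*})$. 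If instead $\theta^{*}$ is an endpoint of the eventually stabilised arc, or some iterate of $\theta^{*}$ falls in $A_0$, then the two half-open conventions defining $\itin^{+}_{\theta^{*}}$ and $\itin^{-}_{\theta^{*}}$ realise exactly the two ways of either splitting or not splitting the class of $\theta^{*}/2$, producing $\lambda^{+}(\theta^{*})$, $\lambda^{-}(\theta^{*})$ and their common refinement $\lambda(\theta^{*})$; as $\lambda$ is a genuine equivalence relation with finite classes compatible with all the finite-level data, it must be one of these three, and in the periodic sub-case $\theta^{*}\in\{\theta_0,\theta_1\}$ it is the $\alpha$-lamination $\lambda_{*}$ of the center, as in the last assertion of the lemma preceding this proposition. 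This exhausts the cases and yields $\lambda\in\{\lambda(\theta^{*}),\lambda^{+}(\theta^{*}),\lambda^{-}(\theta^{*})\}$ with $\theta^{*}\in\overline{I_{p/q}}$.
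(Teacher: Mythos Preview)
Your strategy is the same as the paper's: find a nested family of admissible parameters at each level and pass to a limit. The paper, however, executes it much more cleanly by isolating the key step as Lemma~\ref{lem:1}. There the admissible set at level $\ell$ is defined geometrically as
\[
I^{(\ell)}=\{\theta:\operatorname{Convex}(\{\theta/2,\theta/2+1/2\})\cap\cL_0(\lam\ell)=\emptyset\},
\]
and a clean dichotomy is proved (using Lemma~A.6 of~\cite{KiwiRatLam}, already cited in Remark~\ref{rem:2}): either $I^{(\ell)}\neq\emptyset$, in which case $\lambda^\pm(\theta)^{(\ell)}=\lam\ell$ for every $\theta\in I^{(\ell)}$ by a one-line class-cardinality count (all classes have exactly $q$ elements), or $I^{(\ell)}=\emptyset$ and there is a class $B$ on which $m_2$ is two-to-one, in which case $\lambda(\theta)^{(\ell)}=\lam\ell$ for $\theta\in m_2(B)$. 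This dichotomy \emph{is} your ``non-fragmentation step,'' and the geometric formulation (diameter versus lamination) together with the cardinality argument replaces the inductive bookkeeping you sketch with unlinkedness and consecutive-preservation.

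Your proposal has a genuine gap in the second alternative of this dichotomy. You assert that $K_{\ell+1}$ is always a nonempty single arc, but this is false precisely when $\lambda$ has a two-to-one class $B$: then every diameter meets $\operatorname{Convex}(B)$, so the open locus of admissible diameters is empty, and one must instead take $\theta\in m_2(B)$ and work with $\lambda(\theta)$ rather than $\lambda^\pm(\theta)$. Your final case analysis gestures at this (``splitting or not splitting the class of $\theta^*/2$'') but never isolates it, and the conditions you list (arcs stabilise, $\theta^*$ is an endpoint, an iterate of $\theta^*$ meets $A_0$) are neither mutually exclusive nor aligned with the actual dichotomy. The paper's endgame is correspondingly shorter: if $I^{(\ell)}\neq\emptyset$ for all $\ell$, pick $\theta_\ell\in I^{(\ell)}$ monotone, pass to the limit $\theta$, and use one-sided continuity of $\itin^\pm_\theta$ to get $\lambda=\lambda^+(\theta)$ or $\lambda^-(\theta)$; if some $I^{(\ell_0)}=\emptyset$, the class $B$ is the same for all $\ell\ge\ell_0$ and any $\theta\in m_2(B)$ gives $\lambda=\lambda(\theta)$ directly.
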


First we show that given an abstract $\alpha$-lamination $ {\lambda}$ and a level $\ell$, we may  find
$\theta \in \RZ$ such that the level $\ell$ restriction of $ {\lambda}(\theta)$ and $ {\lambda}$ coincide:

\begin{lemma}
\label{lem:1}
  Let $ {\lambda}$ be an abstract $\alpha$-lamination. Denote by $\lam\ell$ its
  level $\ell$ restriction.  Consider $$I^{(\ell)} = \{ \theta \in \RZ
  \mid \operatorname{Convex}(\{\theta/2, \theta/2+1/2\}) \cap
  \cL^{(\ell)}= \emptyset \}.$$
Then exactly one of the following holds:

\begin{enumerate}
\item
 $I^{(\ell)} \neq \emptyset$. In this case,  $$ {\lambda}^+ (\theta)^{(\ell)} =  {\lambda}^- (\theta)^{(\ell)} = \lam\ell$$
for all $\theta \in  I^{(\ell)}$.
\item
$I^{(\ell)} = \emptyset$ and there exists a $\lam\ell$-class $B$ such that $m_2: B \to m_2(B)$ is two-to-one. In this case,
for all $\theta \in m_2(B)$, $$ {\lambda} (\theta)^{(\ell)} = \lam\ell.$$
\end{enumerate}
\end{lemma}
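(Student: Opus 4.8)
The plan is to fix an abstract $\alpha$-lamination $ {\lambda}$ and a level $\ell$, and to analyze the position of the ``critical leaf'' $\operatorname{Convex}(\{\theta/2,\theta/2+1/2\})$ relative to the geometric picture of $\lam\ell$. First I would recall that $\cL_0(\lam\ell)$ is the union of the convex hulls of the non-trivial classes of $\lam\ell$, and that by the unlinked property these convex hulls are pairwise disjoint; in particular their complement in $\overline{\D}$ is connected (it is the dual object to the tree $\tree\ell$, which is connected). The key dichotomy is whether the two antipodal points $\theta/2$ and $\theta/2+1/2$ can be joined by a geodesic in $\D$ avoiding $\cL^{(\ell)}$ for some $\theta$: this is exactly the condition defining $I^{(\ell)} \neq \emptyset$. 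Since antipodal pairs sweep out as $\theta$ runs over $\RZ$, either some antipodal geodesic avoids all the finitely many convex hulls (case (1)), or every antipodal geodesic meets $\cL^{(\ell)}$, which forces (by a counting/degree argument) the existence of a class $B$ of $\lam\ell$ whose convex hull separates $\theta/2$ from $\theta/2+1/2$ for the relevant $\theta$; since $m_2$ is locally injective away from the critical leaf, such a $B$ must be exactly $2$-to-$1$ under $m_2$, i.e. $\operatorname{Convex}(B)$ is ``diameter-like'' in the appropriate sense. This is case (2). The two cases are mutually exclusive by construction.

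Next I would prove case (1). Suppose $\theta \in I^{(\ell)}$. Then $\{I_0^\pm(\theta), I_1^\pm(\theta)\}$ is a partition of $\RZ$ by a diameter that does not cut across any $\operatorname{Convex}(B)$ with $B$ a non-trivial $\lam\ell$-class; in particular every such $B$, and indeed every point of the level $\ell$ support $A_\ell = m_2^{-\ell}(A_0)$, lies entirely in one of the two half-circles, and the same is true for each of its $m_2^j$-images for $0\le j\le \ell$ (since $m_2^j(A_\ell)\subseteq A_{\ell-j}\subseteq A_\ell$). So for $t \in A_\ell$ the first $\ell+1$ entries of $\itin^+_\theta(t)$ and $\itin^-_\theta(t)$ are well-defined and agree (the endpoints $\theta/2,\theta/2+1/2$ of the partition are not in $A_\ell$, as $\theta\notin A_{\ell-1}$ whenever the leaf avoids the lamination, using the $\alpha$-supported hypothesis). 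Now I claim the map $t\mapsto (i_0,\dots,i_\ell)$ separates $\lam\ell$-classes in exactly the same way $ {\lambda}$ does on $A_\ell$: this follows by induction on $\ell$ using the consecutive-preserving property of $ {\lambda}$ (two classes lying in the same half-circle are identified in $\lam\ell$ iff their $m_2$-images are, and $m_2$ on each half-circle is the order-preserving bijection onto $\RZ$), exactly as in the proof of the previous lemma that $ {\lambda}^\pm(\theta)$ are abstract $\alpha$-laminations. Hence $ {\lambda}^+(\theta)^{(\ell)} =  {\lambda}^-(\theta)^{(\ell)} = \lam\ell$.

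Then I would handle case (2): $I^{(\ell)} = \emptyset$. As indicated above, this means that for generic $\theta$ the antipodal geodesic is blocked, and since only finitely many convex hulls are present, there is a single non-trivial $\lam\ell$-class $B$ whose convex hull $\operatorname{Convex}(B)$ contains a diameter of $\D$ in its interior — equivalently $B$ has no complementary gap of length $>1/2$, so by Remark~\ref{rem:2} (Lemma A.6 of~\cite{KiwiRatLam}), $m_2:B\to m_2(B)$ is two-to-one; moreover $B$ is unique because any two such classes would be linked, contradicting the unlinked property. For $\theta \in m_2(B)$, one checks that the critical leaf $\operatorname{Convex}(\{\theta/2,\theta/2+1/2\})$ has its two endpoints, one in each half $B\cap I_0$, $B\cap I_1$ (this is where the $2$-to-$1$ hypothesis is used: $m_2^{-1}(\theta)\cap B$ consists of two points, one per component of $B$ cut by the diameter). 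Consequently $ {\lambda}^+(\theta)$ and $ {\lambda}^-(\theta)$ differ precisely on how the two halves of $B$ are separated, and their common refinement $ {\lambda}(\theta)$ glues them back together; running the same inductive argument as in case (1), but now tracking the single ``critical class'' $B$ through the dynamics (exactly the step already carried out in the proof that $ {\lambda}(\theta)$ is an abstract $\alpha$-lamination), gives $ {\lambda}(\theta)^{(\ell)} = \lam\ell$.

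The main obstacle I anticipate is the bookkeeping in the inductive step showing that itineraries of length $\ell+1$ recover $\lam\ell$ exactly — in particular making sure no spurious identifications are introduced and that the endpoints of the partitioning diameter never land in $A_\ell$ under the first $\ell$ iterates (so that itineraries are genuinely well-defined on $A_\ell$), and in case (2) carefully pinning down that $B$ is the unique critical class and that it persists as the unique $2$-to-$1$ class at level $\ell$. Both of these amount to invoking the consecutive-preserving and unlinked axioms together with the $\alpha$-supported hypothesis in the right order, much as in the preceding lemma, so the argument is a variation on machinery already in place rather than something genuinely new.
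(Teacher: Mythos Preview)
Your proposal is essentially correct and follows the same overall architecture as the paper's proof: cite the dichotomy from Lemma~A.6 of~\cite{KiwiRatLam} (Remark~\ref{rem:2}), then in case~(1) show each $\lam\ell$-class sits in one half-circle and deduce $\lam\ell = \lambda^\pm(\theta)^{(\ell)}$, and in case~(2) handle the exceptional class $B$ separately before propagating the identification to its preimages.

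The one substantive difference is in how you close the equality $\lam\ell = \lambda^\pm(\theta)^{(\ell)}$. You argue by an induction on itineraries (``the map $t\mapsto(i_0,\dots,i_\ell)$ separates $\lam\ell$-classes in exactly the same way $\lambda$ does''), which is correct but requires you to check the reverse inclusion carefully---namely, that two distinct $\lam\ell$-classes lying in the same half-circle cannot have the same $m_2$-image---and you leave this step implicit. The paper short-circuits this with a cardinality count: once one knows $\lam\ell \subset \lambda^\pm(\theta)^{(\ell)}$, the observation that \emph{every} class of both relations has exactly $q$ elements (since $m_2$ is injective on each half-circle and all classes eventually hit $A_0$) forces equality immediately. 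In case~(2) the paper similarly uses that $\lambda(\theta)^{(\ell)}$-classes have at most $2q$ elements to pin down $B$. This counting device is worth adopting: it replaces your inductive bookkeeping with a one-line pigeonhole, and it also cleanly disposes of the worry you flag in your last paragraph about ``spurious identifications''. Your attempt to rederive the dichotomy itself (the ``blocking'' argument in the first paragraph) is unnecessary---both you and the paper ultimately just cite Lemma~A.6---so you can drop that and go straight to the two cases.
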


\begin{proof}
From Lemma A.6 in~\cite{KiwiRatLam}, either $I^{(\ell)} \neq \emptyset$ or there exists a $\lam\ell$-class $B$ such that $m_2: B \to m_2(B)$ is two-to-one.

In case (1), since   $\operatorname{Convex}(\{\theta/2, \theta/2+1/2\})$ is disjoint from $\lamin\ell$, we have that each $\lam\ell$-class is 
contained in $]\theta/2,\theta/2+1/2[$ or in $]\theta/2+1/2,\theta/2[$. Therefore, $\lam\ell \subset  {\lambda}^\pm (\theta)^{(\ell)}$.
Taking into account that every class of $\lam\ell$ as well as every class of $ {\lambda}^\pm (\theta)^{(\ell)}$ has exactly $q$ elements, 
we conclude that $\lam\ell =  {\lambda}^\pm (\theta)^{(\ell)}$.

\medskip
In case (2), since $\{ \theta/2, \theta/2+1/2 \} \subset B$, each  $\lam\ell$-class different from $B$ is contained  in $]\theta/2,\theta/2+1/2[$ or in $]\theta/2+1/2,\theta/2[$. Hence, as in the previous case, 
every $\lam\ell$-class that is not eventually mapped onto $B$ is a $ {\lambda} (\theta)^{(\ell)}$-class.

Note that $\itin_\theta^+(\theta/2) = \itin_\theta^-(\theta/2+1/2)$. 
Since  $\itin_\theta^+(\theta/2)$ coincides with
$\itin_\theta^+(t)$ for all $t \in B \cap I^+_1(\theta)$ and $\itin_\theta^+(\theta/2+1/2)$ coincides with
$\itin_\theta^+(t)$ for all $t \in B \cap I^+_0(\theta)$, it follows that $B$ is contained in a $ {\lambda} (\theta)^{(\ell)}$-class.
Taking into account that $ {\lambda} (\theta)^{(\ell)}$-classes have at most $2q$ elements, it follows that $B$ is a $ {\lambda}(\theta)^{(\ell)}$-class.

Now we assume that $A \neq B$ is a $ {\lambda}^{(\ell)}$-class such that $m_2(A)$ is a $ {\lambda} (\theta)^{(\ell)}$-class. We claim that $A$ is a $ {\lambda} (\theta)^{(\ell)}$-class.
By the unlinked property of $ {\lambda}$, the class $A$ is contained in the interior of $I^+_0(\theta)$ or of $I^+_1(\theta)$. 
Again by the unlinked property, if $A \ni t$ and  $t \in I^+_0(\theta)$ (resp.  
$t \in I^+_1(\theta)$) then the $ {\lambda} (\theta)^{(\ell)}$-class $A'$ of $t$ is contained in $I^+_0(\theta)$ (resp. $I^+_1(\theta)$).
By the invariance property of both laminations, $m_2(A') = m_2(A)$. Hence, $A' = A$. 

Since every $\lam\ell$-class eventually maps to $B$ or it is $ {\lambda} (\theta)^{(\ell)}$-class, it follows that $\lam\ell =  {\lambda} (\theta)^{(\ell)}$.
\end{proof}

\begin{proof}[Proof of Proposition~\ref{pro:3}]
Given an abstract $\alpha$-lamination $ {\lambda}$ in $p/q$-limb, for all $\ell \ge 0$, consider the decreasing collection $\{I^{(\ell)}\}$ of subset of $\RZ$ as in the previous lemma.
If $I^{(\ell)} \neq \emptyset$ for all $\ell$, choose  $\theta_\ell \in I^{(\ell)}$ so that the sequence $\{ \theta_\ell \}$ is monotone.
Without loss of generality we assume that this sequence converges to $\theta$ and, to fix ideas, suppose that it is increasing (with respect to the order in the interval $]\theta-1,\theta[$). It follows that, for all $t \in \RZ$,
 $$\lim_{\ell \to \infty} it^+_{\theta_\ell} (t) = it^+_\theta (t).$$
In particular, $ {\lambda}^+ (\theta)^{(\ell)} = \lam\ell$ for all $\ell \ge 0$. Therefore, $ {\lambda}^+(\theta) =  {\lambda}$.

Now if  $I^{(\ell_0)} = \emptyset$ for some $\ell_0$, then $B$ as in part (2) of the previous lemma coincides for all $\ell \ge \ell_0$. 
Choosing $\theta \in B$  it follows that $ {\lambda}(\theta)=  {\lambda}$.
\end{proof}

\begin{remark}
  \label{rem:1}
  {\em According~\cite{OrsayNotes}, given $\theta \in I_{p/q}$, if $c \in \cM$ is the accumulation point of the parameter ray $R^\theta_{\cM}$, then 
$\lambda_\alpha(Q_c) =  {\lambda}(\theta)$. Moreover, every $\alpha$-lamination is the $\alpha$-lamination of a quadratic polynomial which is
the accumulation point of some parameter ray. 
  It follows that an abstract $\alpha$-lamination $ {\lambda}$   is not 
an $\alpha$-lamination if and only if $ {\lambda} =  {\lambda}^+(\theta)$
or $ {\lambda} =  {\lambda}^-(\theta)$ for some $\theta \in I_{p/q}$ which eventually maps into the fixed class.}
\end{remark}

We will also need to establish the following properties of the 
$\alpha$-lamination of the center of the $p/q$-limb.

\begin{lemma}
  \label{lem:2}
  Let $ {\lambda}_*$ be the lamination of the center of the $p/q$-limb.
  For $\ell \ge 0$, denote by $\tree\ell_*$ its tree of level $\ell$ and by $\cvpar\ell_*(x)$ the vertex containing $x \in \C \cup \{ \infty \}$. 
  Then, for all $\ell \ge 0$, 
$$\cvpar{\ell+1}_* (0) = \cvpar{\ell+1}_* (\infty).$$
  Moreover, for  all $\theta \in m_2(\cvpar{\ell+1}_* (0))$, we have that
$$\tree{\ell+1}_* = \tree{\ell+1}(\theta).$$
\end{lemma}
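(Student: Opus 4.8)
The plan is to prove Lemma~\ref{lem:2} in two stages, mirroring the characterization of the center of a limb given in Section~\ref{sec:alpha-lamin}. Throughout, write $ {\lambda}_* = \lambda_\alpha(Q_c)$ where $Q_c$ is the quadratic polynomial which is the center $c_{p/q}$ of the $p/q$-limb, so the critical point $z=0$ is periodic of exact period $q$ and $z=0$ lies in a bounded Fatou component $V$ whose boundary contains the $\alpha$-fixed point.

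First I would establish the identity $\cvpar{\ell+1}_*(0) = \cvpar{\ell+1}_*(\infty)$. Recall that $\cvpar{\ell+1}_*(0)$ is the $Y$-vertex of $\tree{\ell+1}_*$ containing $0$; as a subset of $\ponec$ it is a connected component of $\ponec \setminus \cL( {\lambda}_*^{(\ell+1)})$, where $\cL( {\lambda}_*^{(\ell+1)}) = \cL_0( {\lambda}_*^{(\ell+1)}) \cup \cL_\infty( {\lambda}_{**}^{(\ell+1)})$ and $ {\lambda}_{**}$ is the $\alpha$-lamination of the center of the $-p/q$-limb. The point is that both $0$ and $\infty$ lie in the ``large gap'' of each level-$\ell+1$ geometric lamination. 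Concretely, $\cL_0( {\lambda}_*^{(\ell+1)})$ is the union of hyperbolic convex hulls of the level-$\ell+1$ classes of $ {\lambda}_*$ inside $\overline{\D}$; since $0$ is the common center of all these hulls it is separated from $\partial \D$ only by hulls, and the relevant connected component of $\D \setminus \cL_0( {\lambda}_*^{(\ell+1)})$ containing $0$ has boundary meeting $\partial \D$ along the union of all the connected components of $\RZ$ complementary to the level-$\ell+1$ support. By Lemma~\ref{lem:19} applied inductively, every non-trivial level-$\ell+1$ class $A \neq A_0$ is ``small'' in the sense that its large complementary gap contains all the other classes; hence the classes of level $\ell+1$ are nested in a controlled way and the component of $0$ in $\D \setminus \cL_0( {\lambda}_*^{(\ell+1)})$ is bounded by geodesics all lying on one side. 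The same holds at $\infty$ for $\cL_\infty( {\lambda}_{**}^{(\ell+1)})$ because $ {\lambda}_{**}$ is obtained from $ {\lambda}_*$ by the reflection $M(z)=1/z$ and hence $M$ carries the level-$\ell+1$ geometric lamination of $ {\lambda}_*$ centered at $0$ to that of $ {\lambda}_{**}$ centered at $\infty$, while fixing neither $0$ nor $\infty$'s role: $M(0)=\infty$. The cleanest route is to argue directly on the dynamical side: the center $Q_c$ has its critical Fatou component $V \ni 0$ whose closure meets the $\alpha$-fixed point, and in the Julia-set picture $V$ is exactly the region cut out by the rays landing at $\alpha$ and its period-$q$ orbit; the $Y$-vertex of level $\ell+1$ containing $0$ is the dual of $V$ together with the finitely many Fatou components separating $0$ from $\partial D$ at that level, and symmetrically for $\infty$. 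Since the combinatorial model is symmetric under $z \mapsto 1/z$ and $0$, $\infty$ both map into the single ``central'' $Y$-vertex at every level $\ell+1 \ge 1$, the equality follows. I expect the main obstacle here to be setting up cleanly why $0$ and $\infty$ land in the \emph{same} component rather than merely in symmetric ones, which is precisely where Lemma~\ref{lem:19} (the ``large gap contains everything'' property of a center) is essential: it guarantees that no level-$\ell+1$ leaf of either $\cL_0$ or $\cL_\infty$ separates $0$ from $\infty$.

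Second, for the identity $\tree{\ell+1}_* = \tree{\ell+1}(\theta)$ for $\theta \in m_2(\cvpar{\ell+1}_*(0))$, I would invoke Lemma~\ref{lem:1}, specifically its case (2). Write $B := \cvpar{\ell+1}_*(0) \cap \RZ$, which by the first part is a union of complementary gaps of the level-$\ell+1$ support; the hypothesis that $Q_c$ is the \emph{center}, i.e. that the critical point is periodic, forces $B$ to be (the $\RZ$-trace of) the $Y$-vertex adjacent to the critical $\Gamma$-data in such a way that $m_2 : B \to m_2(B)$ collapses the two preimages of $\theta/2$ and $\theta/2+1/2$ — equivalently, $\{\theta/2, \theta/2+1/2\}$ lies in a common level-$\ell+1$ class precisely when $\theta \in m_2(B)$, which is the defining feature of case (2) of Lemma~\ref{lem:1} with $\lam\ell$ there replaced by $ {\lambda}_*^{(\ell+1)}$. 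Applying that lemma, $ {\lambda}(\theta)^{(\ell+1)} = ( {\lambda}_*)^{(\ell+1)}$, i.e. the level-$\ell+1$ restrictions agree; since the level-$\ell+1$ tree depends only on the level-$\ell+1$ restriction of the lamination (together with that of the reflected lamination for the center of the opposite limb, which is common to both because both $ {\lambda}(\theta)$ and $ {\lambda}_*$ lie in the $p/q$-limb and the ``centered at infinity'' part is always the center's lamination by the definition preceding the statement), we get $\cL( {\lambda}(\theta)^{(\ell+1)}) = \cL( {\lambda}_*^{(\ell+1)})$ and hence $\tree{\ell+1}(\theta) = \tree{\ell+1}_*$. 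The only delicate point is checking that $\theta$ genuinely ranges over all of $m_2(B)$ and not a proper subset — but this is exactly the content of Lemma~\ref{lem:1}(2), whose conclusion ``for all $\theta \in m_2(B)$'' is stated with that quantifier; one must merely verify the hypothesis $I^{(\ell+1)} = \emptyset$ and the existence of the two-to-one class, which follows from the first part of this proof since $B$ supplies such a class (it is the class $A_0$ of $ {\lambda}_*$ at level $\ell+1$ when $\ell+1$ is a multiple of the relevant period, or a class eventually mapping to it, and $m_2$ restricted to the critical-orbit class of a center is two-to-one). Assembling both parts gives the lemma.

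Thus the proof reduces to: (i) the symmetric-center argument plus Lemma~\ref{lem:19} to see $0$ and $\infty$ fall in the same level-$\ell+1$ $Y$-vertex; (ii) identifying $B = \cvpar{\ell+1}_*(0)\cap\RZ$ as the two-to-one class of Lemma~\ref{lem:1}(2) and quoting that lemma. I expect step (i) to be the real obstacle, since step (ii) is essentially a bookkeeping application of an already-proved result; the subtlety in (i) is purely that one must rule out a level-$\ell+1$ leaf separating $0$ from $\infty$, which is what distinguishes a \emph{center} lamination from a general abstract $\alpha$-lamination and is the reason Lemma~\ref{lem:19} was proved beforehand.
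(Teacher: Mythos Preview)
Your plan for step (ii) rests on a misreading of Lemma~\ref{lem:1}. In that lemma the object $B$ is a \emph{finite $\lam\ell$-class}, not the circle trace of a $Y$-vertex; and case (2) is triggered precisely when some class is mapped two-to-one by $m_2$, i.e.\ when the lamination is critically prefixed. For the center $\lambda_*$ of a limb the critical orbit is periodic inside a bounded Fatou component and never hits the $\alpha$-fixed point, so no $\lambda_*^{(\ell+1)}$-class is mapped two-to-one and $I^{(\ell+1)}$ is \emph{non-empty}. You are therefore in case (1), not case (2), and the verification you propose (``$I^{(\ell+1)}=\emptyset$ and existence of the two-to-one class'') would simply fail. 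The paper's route here is to first produce an explicit diameter disjoint from $\cL(\lambda_*^{(\ell+1)})$ and then invoke case (1).

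For step (i), your instinct that a symmetry is doing the work is right, but the relevant symmetry is $z\mapsto 1/\bar z$, not $M(z)=1/z$: since classes of the center of the $-p/q$-limb are exactly $-A$ for $A$ a class of $\lambda_*$, one has $\cL_\infty(\lambda_{**}^{(\ell+1)})=1/\overline{\cL_0(\lambda_*^{(\ell+1)})}$, so the whole geometric lamination $\cL(\lambda_*^{(\ell+1)})$ is $1/\bar z$-invariant. The paper exploits this as follows: because $\cvpar{\ell+1}_*(0)$ is $(-1)$-invariant (it contains $0$), both $\theta/2$ and $\theta/2+1/2$ lie in it for $\theta\in m_2(\cvpar{\ell+1}_*(0))$; connect them by geodesic arcs inside this $Y$-vertex, reflect the arcs lying outside $\overline\D$ back into $\D$ using $1/\bar z$-invariance, and conclude by hyperbolic convexity that the diameter through $\theta/2$ and $\theta/2+1/2$ misses $\cL(\lambda_*^{(\ell+1)})$. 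Since $1/\bar z$ fixes the circle pointwise and swaps $0\leftrightarrow\infty$, the $Y$-vertex containing that diameter is $1/\bar z$-invariant, giving $\cvpar{\ell+1}_*(0)=\cvpar{\ell+1}_*(\infty)$. Lemma~\ref{lem:19} is not used here; your attempt to derive the same conclusion from it is left at the level of a heuristic and, as you yourself note, does not by itself rule out $0$ and $\infty$ landing in merely symmetric components.
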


\begin{proof}
First we observe that 
$A$ is a class of the  $\alpha$-lamination of the center of the $-p/q$
if and only if $-A$ is a class of $\bla_*$. It follows that
$\cL(\bla^{(\ell+1)}_*)$ is invariant under $1/\bar{z}$.

Consider $\theta \in m_2(\cvpar{\ell+1}_* (0))$. 
Since $\theta/2$ and $\theta/2 +1/2$ lie in $\cvpar{\ell+1}_* (0)$, 
there is a curve $\gamma \subset \cvpar{\ell+1}_* (0)$ connecting $\theta/2$ and $\theta/2 +1/2$ formed by concatenating geodesics 
in $\D$ or outside $\overline{\D}$. 
By the invariance of $\cL(\bla^{(\ell+1)}_*)$ under $1/\bar{z}$, we may replace
the geodesics outside $\overline{\D}$ in $\gamma$ by geodesics contained in
$\D$. Thus, we can connect $\theta/2$ and $\theta/2 +1/2$ through a sequence
of geodesic paths contained in $\cvpar{\ell+1}_* (0) \cap \D$. By convexity, it follows that 
the diameter
joining $\theta/2$ and $\theta/2 +1/2$ is disjoint
from $\cL(\bla^{(\ell+1)}_*)$. By $1/\bar{z}$-invariance, $\cvpar{\ell+1}_* (0) = \cvpar{\ell+1}_* (\infty)$. 
From Lemma~\ref{lem:1} we obtain that $\bla^{(\ell+1)}_* =\bla^{(\ell+1)} (\theta)$.  
\end{proof}

\subsection{Branched maps of trees}
\label{sec:branched-maps-trees}
Multiplication by $2$ on $\RZ$ acts on level $\ell$ trees as a two-to-one map ramified over an interval. 

\begin{definition}
\label{def:3}
  Consider finite simplicial trees $\cT, \cT'$ and let $g: \cT \to \cT'$ be a simplicial map.
Let $I \subset \cT'$ be a simplicial subcomplex homeomorphic to a closed  interval or a singleton.
We say that $g$ is a {\sf degree two map branched over $I$} if the following holds: there exists a
tree automorphism $\gamma: \cT \to \cT$ of order $2$ (i.e. a simplicial involution) 
such that $I=g(\operatorname{Fix}(\gamma))$, and $g(x) = g(y)$ if and only if  $x=y$ or $y=\gamma(x)$. We say that $J=g^{-1}(I)$ is the {\sf critical interval of $g$} and $I$ is the {\sf critical value interval of $g$}.
\end{definition}

As mentioned above our definition is tailored to include the action of multiplication
by $2$ on trees associated to $\alpha$-laminations.

\begin{lemma}
\label{lem:6}
  Let $ {\lambda}$ be an abstract $\alpha$-lamination and for $\ell \geq 0$, let 
$\tree\ell$ be the associated level $\ell$ tree. 
Denote by $v_{\ell}(0)$ and $v_{\ell}(\infty)$ the vertices of $\tree\ell$ containing 
$0$ and $\infty$, respectively.
Then $m_2: \tree{\ell+1} \to \tree\ell$ is a degree $2$ map branched map over the interval 
$I=[m_2(v_{\ell+1}(0)),m_2(v_{\ell+1}(\infty))]$.
\end{lemma}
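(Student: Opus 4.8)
The plan is to construct the order-two involution $\gamma$ on $\tree{\ell+1}$ directly from the symmetry $t \mapsto t + 1/2$ of $\RZ$ and then check that $m_2$ satisfies the defining properties of a degree two branched map over $I=[m_2(v_{\ell+1}(0)),m_2(v_{\ell+1}(\infty))]$. First I would recall that $m_2(t)=m_2(s)$ for distinct $t,s \in \RZ$ if and only if $s = t+1/2$, and that by Lemma~\ref{lem:19} (applied to $ {\lambda}$ on the ``$0$'' side and to $ {\lambda}_*$ on the ``$\infty$'' side) the translation $\sigma: z \mapsto -z$ on $\ponec$ maps $\lamin{\ell+1}$ to itself: indeed $\operatorname{Convex}(A+1/2) = -\operatorname{Convex}(A)$, and a class is symmetric under $t\mapsto t+1/2$ precisely when its $m_2$-image is two-to-one, while all other classes come in pairs $\{A, A+1/2\}$. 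Since the geometric lamination $\cL^{(\ell+1)}$ is $\sigma$-invariant, $\sigma$ permutes the connected components of $\cL^{(\ell+1)}$ and the complementary $Y$-vertices, hence induces a simplicial involution $\gamma:\tree{\ell+1}\to\tree{\ell+1}$. Because $m_2(\sigma(v)\cap\RZ) = m_2(v\cap\RZ)$ for every vertex $v$, Lemma~\ref{lemma:piell}(2) gives $m_2\circ\gamma = m_2$ on vertices, and since both maps are simplicial this holds on all of $\tree{\ell+1}$.

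Next I would verify the fiber condition: $m_2(x)=m_2(y)$ iff $x=y$ or $y=\gamma(x)$. The ``if'' direction is the previous paragraph. For the converse, I would first check it on vertices using the explicit description of fibers of $m_2$ on classes and on $Y$-vertices from the proof of Lemma~\ref{lemma:piell}: a $\Gamma$-vertex $u$ of level $\ell$ has preimages among level $\ell+1$ $\Gamma$-vertices $v$ with $m_2(v\cap\RZ)=u\cap\RZ$, and there are exactly two such (the pair $\{A,A+1/2\}$) unless the unique preimage is $\sigma$-symmetric, in which case there is one; similarly for $Y$-vertices. In either case the preimage set is a single $\gamma$-orbit. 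To pass from vertices to the whole tree I would argue edge by edge: an edge $e$ of $\tree{\ell+1}$ joins a $Y$-vertex to a $\Gamma$-vertex, $m_2$ maps it to an edge $e'$ by Lemma~\ref{lemma:piell}(4), and the edges over $e'$ are exactly $e$ and $\gamma(e)$, which are either equal or disjoint; a point interior to an edge has the same two preimages as the edge carries, so the fiber condition propagates. This also shows $m_2$ is exactly two-to-one away from its fixed-point set of $\gamma$.

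Finally I would identify the critical value interval. The fixed-point set $\operatorname{Fix}(\gamma)$ consists of those vertices and edges of $\tree{\ell+1}$ that are $\sigma$-invariant; geometrically $\sigma$ fixes the two $Y$-vertices $v_{\ell+1}(0)$ and $v_{\ell+1}(\infty)$ (by the ``diameter'' argument as in Lemma~\ref{lem:2}, the diameter through $\theta/2, \theta/2+1/2$ lies in the appropriate vertex) together with the path between them, and $\operatorname{Fix}(\gamma)$ is precisely the arc $[v_{\ell+1}(0), v_{\ell+1}(\infty)]$ in $\tree{\ell+1}$ — a connected subcomplex homeomorphic to an interval (or, degenerately, a single vertex when $v_{\ell+1}(0)=v_{\ell+1}(\infty)$, which by Lemma~\ref{lem:2} happens for the lamination of the center). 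Its image under $m_2$ is $[m_2(v_{\ell+1}(0)), m_2(v_{\ell+1}(\infty))] = I$, giving $g = m_2$ a degree two branched map over $I$ with critical interval $J = m_2^{-1}(I) = \operatorname{Fix}(\gamma) \cup \gamma(\operatorname{Fix}(\gamma)) = \operatorname{Fix}(\gamma)$.

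The main obstacle I expect is the bookkeeping needed to show $\operatorname{Fix}(\gamma)$ is exactly a single arc rather than something larger or disconnected; this is where Lemma~\ref{lem:19} is essential, since it guarantees that all nontrivial level $\ell+1$ classes other than a possible $\sigma$-symmetric one lie on one side, so $\sigma$ cannot fix any $\Gamma$-vertex except possibly one on the arc joining $0$ to $\infty$, and the $Y$-vertices it fixes are forced to be $v_{\ell+1}(0)$ and $v_{\ell+1}(\infty)$ together with the vertices between them. Everything else is a routine translation between the combinatorics of $m_2$ on $\RZ$ and the simplicial structure of the dual trees, using Lemma~\ref{lemma:piell}.
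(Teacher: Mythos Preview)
Your overall strategy coincides with the paper's: build the involution $\gamma$ on $\tree{\ell+1}$ from the antipodal map $\sigma(z)=-z$ on $\ponec$, check $m_2\circ\gamma=m_2$, and then identify $\operatorname{Fix}(\gamma)$. Your treatment of the fiber condition $m_2(x)=m_2(y)\Leftrightarrow y\in\{x,\gamma(x)\}$ is in fact more explicit than the paper's, which simply records $m_2(v)=m_2(-v)$ and moves on.

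The gap is in your plan for showing $\operatorname{Fix}(\gamma)$ is exactly the arc $[v_{\ell+1}(0),v_{\ell+1}(\infty)]$. You invoke Lemma~\ref{lem:19} ``applied to $\lambda$ on the $0$ side'', but that lemma is stated and proved only for $\lambda_*$, the $\alpha$-lamination of the \emph{center} of a limb; its conclusion need not hold for an arbitrary abstract $\alpha$-lamination $\lambda$. (Your $\sigma$-invariance of $\lamin{\ell+1}$ does not need it anyway: the self-contained argument you give, pairing classes $A\leftrightarrow A+\tfrac12$, already suffices.) But in your last paragraph you lean on Lemma~\ref{lem:19} essentially, and there it is simply unavailable on the $\lambda$ side.

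The paper bypasses this entirely with a short topological argument using only that $\sigma$ fixes $0$ and $\infty$. For a vertex $v\subset\ponec$, one has $v=-v$ if and only if $0\in v$, or $\infty\in v$, or $\partial v$ separates $0$ from $\infty$: assuming $0,\infty\notin v$, let $W$ be the component of $\ponec\setminus v$ containing $\infty$; then $\partial W$ and $\partial(-W)$ are boundary components of vertices $v$ and $-v$, hence equal or disjoint, and one checks they meet precisely when $\partial W$ separates $0$ from $\infty$. Vertices satisfying this trichotomy are exactly those on the tree-path from $v_{\ell+1}(0)$ to $v_{\ell+1}(\infty)$, so $\operatorname{Fix}(\gamma)$ is that interval. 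Replace your appeal to Lemma~\ref{lem:19} with this characterization and the rest of your outline goes through.
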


\begin{proof}
  Recall that the vertices of $\tree\ell$ are subsets of $\ponec=\C
  \cup \{\infty\}$.  For all $\ell \ge 0$, we have that $v$ is a
  vertex of $\tree{\ell+1}$ if and only $-v$ is a vertex of
  $\tree{\ell+1}$. Moreover, $m_2(v) = m_2(-v)$.

Now let $\gamma: \tree{\ell+1} \to \tree{\ell+1}$ be the involution induced by 
$v \mapsto -v$. We claim that given a level $\ell+1$ vertex $v$, we have that $v=-v$ if and only if one of the following occurs:
\begin{enumerate}
\item $\partial v$ separates $0$ and $\infty$.
\item $0 \in v$.
\item $\infty \in v$.
\end{enumerate}
The claim is rather immediate when $\{ 0, \infty \} \cap v \neq \emptyset$ so we assume that  $\{ 0, \infty \} \cap v = \emptyset$.
We let $W$ denote the connected component of $\ponec \setminus v$ that contains $\infty$. 
Now $v=-v$ if and only if $\partial W = \partial (-W)$ which implies that $0$ and $\infty$ are separated by  $\partial W$.
Conversely, if $\partial W$ separates $0$ and $\infty$, then $\partial W \cap \partial (-W) \neq \emptyset$. Since $\partial W$ and $\partial (-W)$ are
equal or disjoint the claim follows. 

The vertices that separate $0$ from $\infty$ together with the ones that contain $0$ or $\infty$ form a (possibly degenerate)
interval, and the lemma follows.
\end{proof}

We will need the following ``lifting'' property in the process of
establishing a topological conjugacy between maps acting on trees
associated to abstract $\alpha$-laminations and the dynamics of some degree two
rational maps acting on the convex hull of their Julia sets in
$\pberl$.

\begin{lemma}
  \label{lifting.lemma}
  Consider simplicial trees $\cA'$ and $ \cT'$ with subtrees $\cA \subset \cA'$ and $\cT \subset \cT'$.
  Suppose that $\vphi: \cA' \to \cA$ and $m: \cT' \to \cT$ are degree two branched maps over $\cI_\cA$ and $\cI_\cT$, respectively,
 such that  $\vphi(\cA) \subset \cA$ and $m(\cT) \subset \cT$.
  Also, suppose that $\vphi: \cA \to \vphi(\cA)$ and $m: \cT \to m(\cT)$ are degree two branched maps.

  Assume that $h: \cA \to \cT$  is a tree isomorphism such that the following hold:
  \begin{enumerate}
  \item The following diagram conmutes:
  $$
\begin{CD}
\cA @>\vphi>> \cA\\
@VhVV   @VhVV     \\
\cT @>m>> \cT.
\end{CD}
$$
  \item $h (\cI_\cA) = \cI_\cT$.
  \end{enumerate}
Then there exists a tree isomorphism $h': \cA'\to \cT'$ such that the following hold:
\begin{enumerate}
\item The following diagram conmutes:  
$$
\begin{CD}
\cA' @>\vphi>> \cA\\
@Vh'VV   @VhVV     \\
\cT' @>m>> \cT.
\end{CD}
$$
\item $h' \big|_\cA = h.$
\end{enumerate}
\end{lemma}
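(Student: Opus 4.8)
The plan is to build $h'$ vertex by vertex using the branched-cover structure, exactly mimicking the way one lifts a homeomorphism through a branched covering of surfaces. First I would analyze the combinatorics of $\vphi : \cA' \to \cA$. Since it is a degree two branched map over $\cI_\cA$, there is an involution $\gamma_\cA : \cA' \to \cA'$ with $\cI_\cA = \vphi(\operatorname{Fix}(\gamma_\cA))$, and $\cA' \setminus J_\cA$ (where $J_\cA = \vphi^{-1}(\cI_\cA)$) falls into two ``sheets'' swapped by $\gamma_\cA$, each mapped isomorphically onto $\cA \setminus \cI_\cA$. Similarly for $m : \cT' \to \cT$ with involution $\gamma_\cT$ and sheets over $\cT \setminus \cI_\cT$. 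The hypothesis that $h(\cI_\cA) = \cI_\cT$ and that the square with $\vphi, m$ on the subtrees commutes means that $h$ already identifies $\cI_\cA$ with $\cI_\cT$ and $\vphi(\cA) \cap J_\cA$ corresponds under $h$ to $m(\cT) \cap J_\cT$; the point is that $\cA \subset \cA'$ and $\cT \subset \cT'$, and on the subtrees everything is already pinned down.

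Next I would define $h'$ on $J_\cA = \vphi^{-1}(\cI_\cA)$. Because $\vphi : J_\cA \to \cI_\cA$ and $m : J_\cT \to \cI_\cT$ are the restrictions of degree two branched maps to their critical intervals, each of $J_\cA$, $J_\cT$ is a tree mapping onto an interval, folded by the respective involution; one checks (this is the combinatorial heart and the step I expect to be the main obstacle) that $J_\cA$ and $J_\cT$ are abstractly isomorphic as trees-over-an-interval-with-involution once we know $h$ matches $\cI_\cA$ with $\cI_\cT$ and matches the part of $J_\cA$ lying in $\cA$ (namely $J_\cA \cap \cA = \vphi^{-1}(\cI_\cA) \cap \cA$, which $\vphi$ maps to $\cI_\cA$, hence whose $h$-image $m$ maps to $\cI_\cT$, i.e. lies in $J_\cT \cap \cT$) with the corresponding part of $J_\cT$. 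The subtlety is that $J_\cA$ need not be contained in $\cA$, so I must produce the isomorphism $J_\cA \to J_\cT$ by hand, respecting the involutions and extending the already-defined $h$ on $J_\cA \cap \cA$; I would do this by noting that the quotient $J_\cA / \gamma_\cA \cong \cI_\cA \cong \cI_\cT \cong J_\cT / \gamma_\cT$, lift this identification through the two branched quotient maps, and fix the residual ambiguity (the choice at each branch point of $J_\cA$) so as to agree with $h$ where $h$ is already defined.

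Then I would extend $h'$ over the two sheets: for each connected component $v$ of $\cA' \setminus J_\cA$, the map $\vphi$ carries $v$ isomorphically onto a component of $\cA \setminus \cI_\cA$, and $h$ carries that onto a component of $\cT \setminus \cI_\cT$, which has exactly two preimage components in $\cT' \setminus J_\cT$ under $m$; I choose the one consistent with the sheet decomposition (i.e. if $v$ lies in a component of $\cA'$ already meeting $\cA$, pick the preimage meeting $\cT$, which is forced by $h'|_\cA = h$; otherwise use $\gamma_\cT$ to transport the choice made on $\gamma_\cA(v)$). Concretely $h'|_v := (m|_{w})^{-1} \circ h \circ \vphi|_v$ for the appropriate preimage component $w$. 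This defines a simplicial isomorphism on each piece, and the pieces glue along $J_\cA$ because at boundary points of the sheets the formula reduces to the already-defined $h'$ on $J_\cA$.

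Finally I would verify the three required properties. Commutativity $m \circ h' = h \circ \vphi$ holds on each sheet component by the very definition $h'|_v = (m|_w)^{-1}\circ h \circ \vphi|_v$, and on $J_\cA$ it holds because there $\vphi$ lands in $\cI_\cA$, $h'$ restricted to $J_\cA$ was built to intertwine the folding maps with $h$, and $m(h'(J_\cA)) = m(J_\cT) = h(\cI_\cA) = h(\vphi(J_\cA))$ on the nose — checking this compatibility at the branch points of $J_\cA$ is the one genuinely delicate bookkeeping task. The relation $h'|_\cA = h$ is built in by the choice of sheets. That $h'$ is a tree isomorphism follows since it is a bijective simplicial map sending edges to edges (each local piece is an isomorphism and they agree on overlaps), and its inverse is constructed symmetrically by swapping the roles of $(\cA',\vphi,\gamma_\cA)$ and $(\cT',m,\gamma_\cT)$. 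Uniqueness is not claimed, which is consistent with the sheet-choice freedom that survives only on components of $\cA'$ disjoint from $\cA$.
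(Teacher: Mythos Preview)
Your approach is correct and follows the same core idea as the paper's: define $h'$ on each piece by the formula $h' = m_j \circ h \circ \vphi$, where $m_j$ is the appropriate inverse branch of $m$. However, you overcomplicate the argument in two ways. First, you worry that $J_\cA = \vphi^{-1}(\cI_\cA)$ might be a tree ``folded by the involution,'' but in fact $J_\cA = \operatorname{Fix}(\gamma_\cA)$ exactly (if $x \in J_\cA$ then $\vphi(x) \in \cI_\cA$, so some $y \in \operatorname{Fix}(\gamma_\cA)$ has $\vphi(y)=\vphi(x)$, forcing $x=y$ or $x=\gamma_\cA(y)=y$); hence $J_\cA$ is an interval and $\vphi : J_\cA \to \cI_\cA$ is a \emph{bijection}. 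There is no folding and no ``residual ambiguity'' to resolve: the extension of $h$ to $J_\cA$ is simply $m^{-1} \circ h \circ \vphi$, uniquely.

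Second, and consequently, the paper exploits this observation to make a clean reduction: since $\vphi$ and $m$ are bijections on $J_\cA$ and $J_\cT$, one may replace $\cA$ by $\cA \cup J_\cA$ (and $\cT$ by $\cT \cup J_\cT$), extending $h$ accordingly, and assume from the start that $J_\cA \subset \cA$. After this, the paper decomposes $\cA' \setminus \cA$ (rather than $\cA' \setminus J_\cA$) into components, which now automatically come in $\gamma_\cA$-pairs $\cA_0, \cA_1$, and defines $h'$ on each $\cA_j$ by choosing the inverse branch $m_j$ that agrees with $h$ at the single attaching point $\overline{\cA_j} \cap \cA$. Your decomposition by sheets of $\cA' \setminus J_\cA$ and the paper's decomposition by components of $\cA' \setminus \cA$ are equivalent once the reduction is made; the paper's route is just shorter and sidesteps what you flagged as ``the combinatorial heart.''
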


\begin{proof}
  Without loss of generality we may assume that $\cA$ and $\cT$ contain $\vphi^{-1}(\cI_\cA)$ and $m^{-1}(\cI_\cT)$, since both
$\vphi$ and $m$ are bijections over these sets.
  Now consider two connected components $\cA_0, \cA_1$ of $\cA' \setminus \cA$ such that $\vphi(\cA_0) = \vphi(\cA_1)$. Let $\{ x_j \} = \overline{\cA_j} \cap \cA$
for $j=0,1$. 
Note that $m$ has two inverse branches $m_0$ and $m_1$ defined on $h(\vphi(\overline{\cA_0}))$ and $h(\vphi(\overline{\cA_0}))$ which are continous
bijections into $\cT'$ such that $h(x_j) = m_j(h(\vphi(x_j)))$. Define $h'$ on $\cA_j$ as $h' = m_j \circ h \circ \vphi$. It is not difficult to check that
$h'$ has the desired properties.
\end{proof}

\section{Topological model}
\label{sec:TopModel}
For any subset $X \subset \pberl$ we denote by $\widehat{X}$ its convex hull in $\pberl$. Namely $\widehat{X}$ is the union of all arcs $[x_0,x_1]$ where
$x_0, x_1 \in X$.

Here we give a complete description of $\hatjuliavphi$ for quadratic rational maps possessing a non-classical repelling periodic orbit.
We will show that $\hatjuliavphi$ is completely invariant under $\vphi$ (see~Lemma~\ref{lem:10} below).
Moreover, we will describe the structure and dynamics over $\hatjuliavphi$ with the aid of the tree associated to an appropriate  abstract 
$\alpha$-lamination (see Section~\ref{section|laminations}).

The (simultaneous) proof of the following two propositions is given in Section~\ref{sec:TopConj}

\begin{proposition}
\label{bothin}
Let $\vphi$ be a quadratic rational map over $\L$ which is not simple.
Assume that $\vphi$ has a non-classical repelling periodic orbit and
that both critical points belong to the filled Julia set.
  Then there exists an abstract
  $\alpha$-lamination $ {\lambda}$ which is not critically prefixed such that $\vphi : \hatjuliavphi \to \hatjuliavphi$ is topologically
  conjugate to $m_2 : \tree\infty ( {\lambda}) \to
  \tree\infty( {\lambda})$.
\end{proposition}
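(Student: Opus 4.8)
\textbf{Proof strategy for Proposition~\ref{bothin}.}
The plan is to build the abstract $\alpha$-lamination $ {\lambda}$ directly from the combinatorics of the puzzle pieces, and then to establish the conjugacy level by level using the lifting machinery of Section~\ref{sec:branched-maps-trees}. Recall from Lemma~\ref{basics.repelling} that the fixed Rivera domain $U_0$ is starlike with boundary a period $q>1$ repelling orbit $\cO$ having a well-defined rotation number $p/q$ (this is the rotation number of the order-$q$ M\"obius rotation $T_{\veta_0}\vphi$). I would first argue that this $p/q$ is exactly the rotation number of the periodic orbit $A_0 \subset \RZ$ under $m_2$ that will serve as the fixed class of $ {\lambda}$, so that $ {\lambda}$ lands in the $p/q$-limb. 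The idea is that the $q$ complementary balls $B_0,\dots,B_{q-1}$ of $U_0$ are cyclically permuted by $\vphi$ with combinatorial rotation $p/q$, matching the action of $m_2$ on the $q$ components of $\RZ\setminus A_0$; and that $B_0$ — the ball containing both critical points — corresponds to the characteristic interval $I_{p/q}$, since $\vphi|_{B_0}$ is the two-to-one part of $\vphi$.

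Next I would set up the correspondence between dynamical pieces and tree vertices. Using Lemma~\ref{lem:15}, each level $\ell$ piece $\dpill$ is an affinoid with a single boundary point in $GO(\cO)$; I would match the level $\ell$ pieces of $\vphi$ with the $Y$-vertices of $\tree{\ell}( {\lambda})$ and the points of $GO(\cO)$ appearing as boundary points of level $\ell$ pieces with the $\Gamma$-vertices, checking that the incidence relation (a piece meets a boundary point) corresponds to the tree edge relation. Here I would invoke Proposition~\ref{pro:3} to realize $ {\lambda}$ symbolically as $ {\lambda}(\theta)$ or $ {\lambda}^{\pm}(\theta)$: the itinerary of a point of $\widehat{\juliavphi}$ relative to the partition $\{B_0\setminus D_0, D_0\}$ of $B_0$ and the cyclic blocks $B_1,\dots,B_{q-1}$ should match the $m_2$-itinerary defining $ {\lambda}^{\pm}(\theta)$ for an appropriate $\theta$. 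Crucially, since both critical points lie in $\filledvphi$ — in particular the active critical point $\omega$ never enters $U_0$ — the parameter $\theta$ (the ``external argument'' of the critical value side) never maps into the fixed class $A_0$, so by Remark~\ref{rem:1} the lamination $ {\lambda}$ is \emph{not} critically prefixed, as required. One must also confirm $ {\lambda}$ is genuinely an abstract $\alpha$-lamination with fixed class $A_0$, which is immediate from the previous lemmas in Section~\ref{section:symbolic}.

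To get the conjugacy, I would proceed inductively on the level $\ell$. The base case identifies $\cT( {\lambda}^{(0)})$ — a starlike tree with one $\Gamma$-vertex and $q$ $Y$-vertices — with the dual tree of the level $0$ puzzle, i.e. the skeleton picture of $L_0=\pberl\setminus U_0$; the identification is forced by the rotation number. For the inductive step, the tangent/direction analysis of Section~\ref{ends-s} shows that $\vphi$ acts on the dual tree of level $\ell+1$ pieces as a degree two branched map onto the dual tree of level $\ell$ pieces, branched over the interval swept out by the image of the critical-containing ball, which is precisely the combinatorial analogue of $m_2:\tree{\ell+1}\to\tree{\ell}$ (Lemma~\ref{lem:6}); then Lemma~\ref{lifting.lemma} promotes the level $\ell$ tree isomorphism to a level $\ell+1$ one compatible with the dynamics and with the inclusions $\iota_\ell$. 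Passing to the inverse limit, and using Corollary~\ref{cor:1} together with Theorem~\ref{thr:yoccoz} / Corollary~\ref{lem:9} to see that the fibers of the retractions $\pi_\ell$ collapse correctly (so that the inverse limit of $Y$-vertices is homeomorphic to $\juliavphi$ and the inverse limit of the trees is $\widehat{\juliavphi}$), yields a homeomorphism $h:\widehat{\juliavphi}\to\tree{\infty}( {\lambda})$ conjugating $\vphi$ to $m_2$.

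\textbf{Main obstacle.} The hard part will be the consistency of the combinatorial bookkeeping at the boundary between levels $\ell$ and $\ell+1$: specifically, controlling exactly when a level $\ell$ piece is \emph{not} subdivided versus when it splits (Lemmas~\ref{closed.ball.l}, \ref{lem:7}), and matching this with the behavior of $\iota_\ell$ on edges (edges of level $\ell$ are not subdivided at level $\ell+1$). In the rational — as opposed to polynomial — setting the two critical points can be distributed in a way that makes the branched-map structure on the dual trees only an ``intermediate-level'' phenomenon, so I expect the genuine work to be in showing that the pieces of intermediate level $\ell+1/2$ (Section~\ref{sec:piec-interm-level}) correspond to the level $\ell+1$ support $A_{\ell+1}=m_2^{-1}(A_\ell)$ of $ {\lambda}$ in a way that is stable under both $\vphi$ and $m_2$ and compatible with Lemma~\ref{lem:19}. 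Verifying that the resulting $h$ is well-defined and continuous on the inverse limit (not merely on each finite stage) — i.e. that no two distinct ends of puzzle pieces get identified — is where Theorem~\ref{thr:yoccoz} does the essential work and where I would concentrate the most care.
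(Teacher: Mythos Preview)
Your outline has the right architecture --- inductive lifting via Lemma~\ref{lifting.lemma}, then inverse limit --- but there is a genuine gap in how you determine $\lambda$. You propose to fix $\theta$ (hence $\lambda=\lambda^{\pm}(\theta)$) \emph{first}, via itineraries relative to a partition of $B_0$, and only then lift. This is circular: to apply Lemma~\ref{lifting.lemma} at stage $\ell$ you must already know that $h_\ell$ carries the critical-value interval $\intervala\ell$ onto $\intervalt\ell\theta$, and whether that holds depends on $\theta$. Since $\intervala\ell$ grows with $\ell$ in general (Lemma~\ref{lem:36}(3)), the constraint on admissible $\theta$ tightens as the induction proceeds; there is no way to name the correct $\theta$ before the construction is complete, and your itinerary recipe for $\theta$ presupposes the very conjugacy you are building.

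The paper resolves this by not fixing $\theta$ until the end. Lemma~\ref{lem:34} produces a nested sequence $\cvpar{0}_\cM \supset \cvpar{1}_\cM \supset \cdots$ of subsets of $I_{p/q}$ together with, for every $\theta\in\cvpar{\ell-1}_\cM$, a conjugacy $h_{\ell,\theta}:\barbol\ell\to\tree\ell(\theta)$; the next set $\cvpar{\ell}_\cM$ is then \emph{defined} as $h_{\ell,\theta}(\cvpar\ell)$, and the crucial technical point (via Lemma~\ref{lem:32} and the ``postcritical'' subtrees $\barbolc\ell$, $\treec\ell(\theta)$ of Section~\ref{sec:postcritical}) is that this image is independent of which $\theta\in\cvpar{\ell-1}_\cM$ was used. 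Only afterward does one pick a monotone sequence $\theta_n\in\cvpar{n}_\cM$, set $\lambda=\lambda^{\pm}(\lim\theta_n)$, and extract a compatible family $\{h_\ell\}$ by a diagonal argument. The half-integer pieces you flag as the main obstacle play no role in this step.

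A secondary correction: your vertex dictionary is inverted. Under the paper's conjugacy $GO(\cO)$ goes to $Y$-vertices and $GO(\veta_0)$ to $\Gamma$-vertices (see the proof of Lemma~\ref{lem:34}). The conclusion ``not critically prefixed'' then follows from Lemma~\ref{lem:36}(4): with both critical points in $\filledvphi$ one has $\cvpar\ell\in GO(\cO)$ for all $\ell$, so its image under $h_{\ell,\theta}$ is always a $Y$-vertex and $\theta_\infty$ never lands in the fixed $\Gamma$-class.
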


\begin{definition}
  \label{def:11}
Let $\vphi$ be a quadratic rational map over $\L$ which is not simple.
Assume that $\vphi$ has a non-classical repelling periodic orbit
and that one critical point $\omega$ is not in $\filledvphi$.
  Denote by $\pi_{U_0}: U_0 \to \cA_0$ the projection of the fixed Rivera domain $U_0$ into its skeleton (see Section~\ref{sec:PointsBallsAffinoids}).
  Then,
    $$\cO_0 = \pi_{U_0} (\cO(\omega) \cap U_0)$$ is a periodic orbit 
(see Section~\ref{sec:PeriodicFatou}).
We define $\sim$ to be the equivalence relation in $\hatjuliavphi$ that identifies two distinct elements
$\z_1, \z_2$ if and only if $\z_1, \z_2$ lie in the same connected component of $\vphi^{-n}(\widehat{\cO_0})$ for some 
$n \ge 0$. 
\end{definition}

\begin{proposition}
\label{pro:1}
Let $\vphi$ be a quadratic rational map over $\L$ which is not simple.
Assume that $\vphi$ has a non-classical repelling periodic orbit
and that one critical point $\omega$ is not in $\filledvphi$.
Then the following map is well defined:
$$\begin{array}{cccc}
\psi : & \hatjuliavphi/\sim & \to & \hatjuliavphi/\sim \\
       &  [\z]              & \mapsto & [{\vphi(\z)}]
\end{array}
$$
where $[\z]$ denotes the $\sim$-class of $\z$.

Moreover, there exists a critically prefixed $\alpha$-lamination $\lambda$  and 
 a homeomorphism $h: \hatjuliavphi/\sim \to \tree\infty (\lambda)$ 
which conjugates the action of $\psi$ with that of $m_2$ (i.e. $m_2 \circ h = h \circ \psi$).
\end{proposition}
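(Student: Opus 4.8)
The goal is to realize the action of $\vphi$ on the convex hull $\hatjuliavphi$ of the Julia set, after collapsing the pieces that eventually map into the skeleton of $U_0$, as the action of $m_2$ on the full tree $\tree\infty(\lambda)$ of a critically prefixed $\alpha$-lamination. The natural strategy is to build, level by level, an identification between the combinatorial structure recording the forward dynamics of $\vphi$ on the dynamical pieces $P_\ell$ of integer and half-integer levels and the structure recording the trees $\tree\ell$ of an $\alpha$-lamination, and then pass to inverse limits. First I would fix notation from Lemma~\ref{basics.repelling}: the boundary $\cO = \{\xi_0,\dots,\xi_{q-1}\}$ of $U_0$ is a repelling period $q$ orbit, and the M\"obius map $T_{\veta_0}\pberl$ is a rotation of order $q$, so $T_{\xi_0}\vphi^q$ has a multiple fixed point in the direction of $U_0$; this rotation number $p/q$ determines the limb in which $\lambda$ will live. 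The hypothesis that one critical point $\omega\notin\filledvphi$ means $\omega$ eventually lands in $U_0$, so the non-active critical point is ``prefixed'' relative to the boundary orbit, which is precisely the condition characterizing critically prefixed laminations (cf.\ the discussion before Proposition~\ref{pro:3} and Remark~\ref{rem:1}).

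\textbf{Key steps.} (1) First I would check that $\sim$ is indeed an equivalence relation on $\hatjuliavphi$ and that $\hatjuliavphi$ is totally invariant under $\vphi$ (this is Lemma~\ref{lem:10}, which I may invoke); then $\psi([\z])=[\vphi(\z)]$ is well defined because $\vphi$ maps a connected component of $\vphi^{-n}(\widehat{\cO_0})$ onto one of $\vphi^{-(n-1)}(\widehat{\cO_0})$, so $\vphi$ descends to the quotient. (2) Next I would show the quotient $\hatjuliavphi/\!\sim$ carries a natural structure of an inverse limit of finite simplicial trees: the level $\ell$ pieces $P_\ell$ together with the intermediate level $\ell+1/2$ pieces from Section~\ref{sec:piec-interm-level} define, after intersecting with $\hatjuliavphi$ and collapsing, a finite tree $\cA^{(\ell)}$, with bonding maps given by inclusion of pieces $P_{\ell+1}\subset P_\ell$; by Lemma~\ref{lem:4} the ends that are infinite nested intersections of closed balls collapse to rigid points, exactly matching the $Y$-vertices of $\tree\ell$, while the finitely many pieces with a distinguished boundary point in $GO(\cO)$ match the $\Gamma$-vertices. (3) Then I would construct, inductively in $\ell$, a tree isomorphism $h_\ell:\cA^{(\ell)}\to\tree\ell(\lambda)$ intertwining the map induced by $\vphi$ on $\cA^{(\ell)}$ with $m_2:\tree{\ell-1}\to\tree\ell$ — here the base case is the starlike tree of level $0$ ($q$ edges around the center $\veta_0$, matching $\cT(\lambda^{(0)})$), and the inductive step is precisely an application of the Lifting Lemma~\ref{lifting.lemma}, using that both $\vphi:P_{\ell+1}\to P_\ell$ and $m_2:\tree{\ell+1}\to\tree\ell$ are degree two branched maps over matching critical value intervals (Lemma~\ref{lem:6} and the analysis of Lemma~\ref{lem:16} on degrees of pieces). (4) To identify which $\lambda$ arises I would use the symbolic construction of Section~\ref{section:symbolic}: the itinerary of the active critical point $\omega$ under $\vphi$ with respect to the partition of $L_0$ into $B_0,\dots,B_{q-1}$ coincides with an itinerary $\itin^\pm_\theta$ for an appropriate $\theta\in\overline{I_{p/q}}$ that eventually maps into $A_0$ (because $\omega$ eventually enters $U_0$), so by Proposition~\ref{pro:3} and the parenthetical remark in the lemma after it, $\lambda=\lambda(\theta)$ is critically prefixed. (5) Finally, passing to inverse limits, the $h_\ell$ assemble to a homeomorphism $h:\hatjuliavphi/\!\sim\;\to\tree\infty(\lambda)$ with $m_2\circ h=h\circ\psi$; continuity and bijectivity follow from compactness of $\pberl$ and the fact that the $h_\ell$ commute with both the topological bonding maps $\pi_{\ell+1}$ and the dynamical maps, exactly as in Lemma~\ref{lemma:piell}.

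\textbf{Main obstacle.} The delicate point is step (3): ensuring that the inductively constructed isomorphisms $h_\ell$ are \emph{compatible with both} the topological retractions $\pi_{\ell+1}$ (so they pass to the inverse limit) \emph{and} the dynamical maps $m_2$, while simultaneously keeping the critical interval $J$ of $\vphi$ on the side of the tree aligned with the critical interval of $m_2$. Concretely, one must verify that the branching of pieces as the level increases — a new piece of level $\ell+1$ appears inside a direction at $\partial P_\ell$ exactly when the corresponding direction maps over the critical value side — mirrors exactly the subdivision rule for $\tree{\ell}\hookrightarrow\tree{\ell+1}$ governed by $m_2^{-1}$ of the fixed class; this is where the ``consecutive preserving'' property of $\alpha$-laminations and the geometry of pieces from Lemma~\ref{closed.ball.l} and Lemma~\ref{lem:16} must be matched carefully. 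I also expect some care is needed at half-integer levels: the intermediate pieces $P_{\ell+1/2}$ are what let one avoid the degenerate-annulus/semicritical subtleties, and one must confirm they correspond under $h$ to the $Y$-vertex subdivisions that keep edges unsubdivided (Lemma~\ref{lemma:iell}), so that the inverse limit tree is genuinely $\tree\infty(\lambda)$ and not a barycentric refinement of it.
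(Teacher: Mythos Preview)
Your outline has the right architecture (inductive conjugacies via the Lifting Lemma, then inverse limit), and you correctly locate the real difficulty in step~(3). But there are two substantive gaps between your plan and a working proof, and they are exactly the points the paper's argument is built to handle.

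\medskip
\textbf{Wrong carrier trees.} On the dynamical side you propose to build the level-$\ell$ tree out of the puzzle pieces $P_\ell$ (and the half-integer pieces of Section~\ref{sec:piec-interm-level}). The paper does not do this. The level-$\ell$ tree is $\arbol\ell=\vphi^{-\ell}(\cA_{U_0})$, the preimage of the skeleton of $U_0$ (Lemma~\ref{lem:10}); after collapsing by $\sim$ one gets simplicial trees $\barbol\ell$ whose vertices lie in $GO(\cO)\cup GO(\veta_0)$ (Lemma~\ref{lem:36}, Lemma~\ref{lem:31}). Under the conjugacy the $\Gamma$-vertices of $\tree\ell(\lambda)$ correspond to $GO(\veta_0)$ and the $Y$-vertices to $GO(\cO)$, which is the opposite of what you wrote. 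The half-integer pieces play no role in the model construction; they are used only for the Yoccoz-type estimates in Section~\ref{sec:piec-interm-level}. Your ``main obstacle'' paragraph about matching half-integer levels to unsubdivided edges is therefore addressing a non-issue while missing the actual one.

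\medskip
\textbf{The obstacle you name is not resolved by your plan.} You propose to fix $\lambda=\lambda(\theta)$ in advance from the itinerary of $\omega$ and then lift. But to apply Lemma~\ref{lifting.lemma} at level $\ell$ you must already know that $h_\ell$ carries the critical value interval $\intervalb\ell=[\xi_1,\cvpar\ell]$ onto the critical value interval $\cIpar\ell_\theta$ of $m_2$; nothing in an itinerary argument guarantees this for all $\ell$ for a single pre-chosen $\theta$. The paper's resolution is precisely \emph{not} to fix $\theta$. Instead one builds a nested sequence $\cvpar0_\cM\supset\cvpar1_\cM\supset\cdots$ of vertices of the $\tree\ell$'s together with a family $\{h_{\ell,\theta}\}_{\theta\in\cvpar{\ell-1}_\cM}$ of isomorphisms (Lemma~\ref{lem:34}), defining $\cvpar\ell_\cM:=h_{\ell,\theta}(\cvpar\ell)$ and proving this is independent of $\theta$ in the admissible range. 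That independence is the technical heart of the argument: it requires the ``postcritical'' subtrees $\barbolc\ell\subset\barbol\ell$ and $\treec\ell(\theta)\subset\tree\ell(\theta)$ of Lemmas~\ref{lem:11} and~\ref{lem:26}, together with Lemma~\ref{lem:32}, which says that the $\Gamma$-vertices lying in $\treec\ell(\theta)$ do not depend on $\theta\in\cvpar{\ell-1}_\theta$. Only after this does one choose a monotone sequence $\theta_n\in\cvpar{n}_\cM$, set $\lambda=\lambda^{\pm}(\theta_\infty)$ or $\lambda(\theta_\infty)$, and extract by a diagonal argument a compatible system $\{h_\ell\}$ passing to the inverse limit. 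The critically prefixed conclusion then drops out: since $\omega\notin\filledvphi$, Lemma~\ref{lem:36}(5) gives $\cvpar{\ell_0}\in GO(\veta_0)$ for some $\ell_0$, hence $h_{\ell_0}(\cvpar{\ell_0})$ is a $\Gamma$-vertex containing $\theta_n$ for large $n$, so $\theta_\infty$ eventually hits the fixed class and $\lambda$ is critically prefixed.

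\medskip
In short: replace the puzzle-piece trees by $\arbol\ell/\!\sim$, drop the half-integer levels, and replace ``fix $\theta$ by an itinerary, then lift'' by ``let $\theta$ vary in the shrinking sets $\cvpar\ell_\cM$ controlled via the postcritical subtrees, then take a limit''. Without that mechanism your step~(3) does not close.
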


\begin{remark}
  {\em Note that $\sim$ classes are either trivial or contained in a Fatou component which eventually maps onto $U_0$. Thus $h$ is a conjugacy over the Julia set. Sometimes $\widehat{\cO_0}$ will be simply the singleton $\{\veta_0\}$ (see Lemma~\ref{basics.repelling}). In that case, $\sim$ is the trivial relation and the above proposition gives a topological model for the dynamics over $\hatjuliavphi$.} 
\end{remark}

In Section~\ref{sec:Julial} we show that $\hatjuliavphi$ is completely invariant and simultaneously introduce an increasing sequence of trees $\arbol\ell \subset \hatjuliavphi$ whose union is dense and such that $\vphi: \arbol{\ell+1} \to \arbol{\ell}$ is a degree two branched map. The construction of the (semi)conjugacies will rely on choosing a decreasing sequence of subsets $\cvpar\ell_\cM$ of $\RZ$ so that for all $\theta \in \cvpar\ell_\cM$ we are able to construct (semi)conjugacies with domain $\arbol\ell$ and range $\tree (\lambda(\theta)^{(\ell)}$. In Section~\ref{sec:collapsed} we prepare to treat at once the cases in which $\crit(\vphi) \subset \filledvphi$ and $\crit(\vphi) \not\subset \filledvphi$. To spread conjugacies defined on $\arbol\ell$ to $\arbol{\ell+1}$ we employ Lemma~\ref{lifting.lemma} but before we need some control over the branched intervals. This control is achieved in Section~\ref{sec:postcritical} by introducing appropriate subsets of the trees involved which contain the ``post-branched'' points. Finally, Section~\ref{sec:TopConj} contains the proof of propositions~\ref{bothin} and~\ref{pro:1} as well as the proof of Theorem~\ref{ithr:dynamics}.

\subsection{Trees of level $\ell$ and $\hatjuliavphi$}
\label{sec:Julial}
The proofs of propositions~\ref{bothin} and~\ref{pro:1} rely on the following basic facts about $\hatjuliavphi$. 

{\sf We will continue to work under the standing assumption that the quadratic rational map $\vphi$ 
has a fixed Rivera domain $U_0$ whose boundary is a period $q >1$ repelling
orbit $\cO$.
We will use the notation of Lemma~\ref{basics.repelling}. Also we let $\omega$ and $\omega'$ be the critical points of $\vphi$ where
$\omega$ is the active critical point (see Definition~\ref{def:5}). Furthermore, let $\crit (\vphi) = \{ \omega, \omegap \}$.}

\begin{lemma}
\label{lem:10} 
Let $\arbol0$ be the skeleton of $U_0$.
For $\ell \ge 0$ recursively define,
$$\arbol{\ell+1} = \vphi^{-1}(\arbol\ell).$$
The following statements hold for all $\ell \ge 0$: 
\begin{enumerate}
\item $\arbol\ell \subset \arbol{\ell+1}$.
\item $\arbol\ell$ is the convex hull of $\vphi^{-\ell}(\cO)$, in particular, $\arbol\ell$ is connected.
\item If $x \in \vphi^{-(\ell+1)}(\cO) \setminus \vphi^{-\ell}(\cO)$, then $x$ is an endpoint of $\arbol{\ell+1}$.
\item For all $\z \in \arbol\ell \setminus GO(\cO)$, if $\dd$ is a direction at $\z$ disjoint from $\arbol\ell$,
then $\vphi^\ell(\dd) \subset U_0$.
\item $$\widehat{\juliavphi} = \overline{\bigcup_{\ell \geq 0}  \arbol\ell} \supset \overline{GO(\cO)}= \juliavphi.$$
\item $\hatjuliavphi$ is completely invariant.
\end{enumerate}
\end{lemma}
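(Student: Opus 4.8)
The plan is to prove the six items of Lemma~\ref{lem:10} together by induction on $\ell$, exploiting the structure of $\vphi$ described in Lemma~\ref{basics.repelling}. For the base case, $\arbol0 = \cA_{U_0}$ is the skeleton of the starlike fixed Rivera domain, and by Lemma~\ref{basics.repelling}(1) its extreme points are exactly $\cO$, which is a connected convex tree. First I would establish (1): since $\vphi(\cA_{U_0}) \subset \cA_{U_0}$ (the retraction $\pi_{U_0}$ commutes with $\vphi$, as noted after Theorem~\ref{rivera.domain}), applying $\vphi^{-1}$ repeatedly gives the nested sequence $\arbol\ell \subset \arbol{\ell+1}$. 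For (2) I would argue that $\arbol{\ell}$, being a preimage of the connected set $\arbol{\ell-1}$ under the open, finite-to-one map $\vphi$, has finitely many components each mapping onto $\arbol{\ell-1}$; since $\arbol{\ell-1} \supset \arbol0$ contains $\cO$ and $\xi_0 \in \cO$ has $\deg_{\xi_0}\vphi = 2$ while $\vphi(B_0) = \pberl$, one sees $\vphi^{-\ell}(\cO)$ meets every component, and in fact the convex hull of $\vphi^{-\ell}(\cO)$ coincides with $\arbol\ell$ because $\vphi$ maps each edge of $\arbol\ell$ onto an edge of $\arbol{\ell-1}$ (using that $\vphi$ restricted to a direction not containing critical points is an isometry in the hyperbolic metric, cf. Section~\ref{ends-s}).

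Next I would address (3) and (4). For (3): a point $x \in \vphi^{-(\ell+1)}(\cO) \setminus \vphi^{-\ell}(\cO)$ maps under $\vphi$ into $\vphi^{-\ell}(\cO) \subset \arbol\ell$; were $x$ not an endpoint of $\arbol{\ell+1}$, it would lie on an edge joining two other preimages in $\vphi^{-(\ell+1)}(\cO)$, but the image edge would then connect two points of $\vphi^{-\ell}(\cO)$ through a point of $\vphi^{-\ell}(\cO)$, forcing $x \in \vphi^{-\ell}(\cO)$ — a contradiction, provided one checks that no edge of $\arbol\ell$ passes through a point of $\vphi^{-(\ell-1)}(\cO)$ in its interior, which follows inductively. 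For (4), the key inductive observation: if $\dd$ is a direction at $\z \in \arbol\ell \setminus GO(\cO)$ disjoint from $\arbol\ell$, then $\vphi(\dd)$ is a direction at $\vphi(\z) \in \arbol{\ell-1}$ disjoint from $\arbol{\ell-1}$ (since $\vphi$ is open and maps $\arbol\ell$ into $\arbol{\ell-1}$), and $\vphi(\z) \notin GO(\cO)$ as well; by the inductive hypothesis $\vphi^{\ell-1}(\vphi(\dd)) \subset U_0$, i.e. $\vphi^\ell(\dd) \subset U_0$. The base case $\ell = 0$ says: a direction at a point of $\cA_{U_0} \setminus \cO$ disjoint from $\cA_{U_0}$ lies in $U_0$, which is precisely the definition of the skeleton of $U_0$ together with starlikeness.

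For (5) I would argue as follows. Each $\arbol\ell$ is contained in $\hatjuliavphi$: its endpoints lie in $\vphi^{-\ell}(\cO) \subset GO(\cO) \subset \juliavphi$ (since $\cO$ is repelling, hence in the Julia set, and the Julia set is totally invariant), and $\hatjuliavphi$ is convex by definition, so it contains the convex hull $\arbol\ell$ of these endpoints. Thus $\overline{\bigcup_\ell \arbol\ell} \subset \hatjuliavphi$. Conversely, $\overline{\bigcup_\ell \arbol\ell} \supset \overline{GO(\cO)}$; and $\overline{GO(\cO)} = \juliavphi$ follows because $\juliavphi$ has no isolated points, is totally invariant, and contains the repelling cycle $\cO$, so by minimality of the Julia set (or the fact that the Julia set equals the closure of repelling periodic points, cf. Section~\ref{sec:periodic.points}) it is the closure of $GO(\cO)$. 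Since $\hatjuliavphi$ is the convex hull of $\juliavphi = \overline{GO(\cO)}$ and $\bigcup_\ell \arbol\ell$ already forms a connected set (by (1) and (2)) whose closure contains $\overline{GO(\cO)}$, convexity forces $\hatjuliavphi \subset \overline{\bigcup_\ell \arbol\ell}$, giving equality. Finally, (6): complete invariance of $\hatjuliavphi$. Since $\juliavphi$ is completely invariant and $\vphi$ is continuous and open, one checks $\vphi(\hatjuliavphi) \subset \hatjuliavphi$ (the image of an arc between Julia points is a connected subset of the convex hull of Julia points) and $\vphi^{-1}(\hatjuliavphi) \subset \hatjuliavphi$ (using $\vphi^{-1}(\arbol\ell) = \arbol{\ell+1} \subset \hatjuliavphi$ and taking closures, together with the fact that $\vphi^{-1}$ of a convex set is a finite union of convex sets whose endpoints are Julia points).

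The main obstacle I anticipate is item (2) — specifically, verifying that the convex hull of $\vphi^{-\ell}(\cO)$ is exactly $\arbol\ell = \vphi^{-\ell}(\cA_{U_0})$ rather than something larger or disconnected. This requires a careful analysis of how $\vphi$ acts on edges: one must rule out that $\vphi^{-1}$ of an edge of $\arbol{\ell-1}$ could fail to be a union of edges (because of a bad direction sitting on it) or could be disconnected in a way that introduces new endpoints not in $\vphi^{-\ell}(\cO)$. The tool here is Lemma~\ref{basics.repelling}(2): the only point where $\vphi$ has local degree $2$ is $\xi_0$ (and its preimages at higher levels), and the bad directions are precisely the ones pointing toward $U_0$; so an edge of $\arbol{\ell-1}$ not passing through such a critical configuration lifts homeomorphically, and an edge through $\xi_0$ lifts to a controlled configuration. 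Getting this bookkeeping right — in particular handling the preimage structure of $\cA_{U_0}$ under the degree-two behavior at $\xi_0$ — is the technical heart of the argument; everything else is a fairly direct induction once (2) is in place.
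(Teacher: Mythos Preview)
Your overall strategy matches the paper's, and items (1), (5), (6) are essentially as written there. Two places need correction.

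For (2), which you flag as the main obstacle, the paper's argument is much simpler than the edge-tracking you propose. Since $\vphi$ is open, every connected component of $\arbol{\ell+1}=\vphi^{-1}(\arbol\ell)$ maps \emph{onto} $\arbol\ell$, hence contains a preimage of $\xi_1$; but $\xi_1$ has the unique preimage $\xi_0$ (because $\deg_{\xi_0}\vphi=2$), so there is only one component. Once $\arbol\ell$ is closed and connected it is automatically convex in $\pberl$, and its endpoints map to endpoints of $\arbol0$, hence lie in $\vphi^{-\ell}(\cO)$; this gives the convex-hull statement with no edge bookkeeping at all.

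For (3), your argument has a genuine gap. You claim that if $x$ were not an endpoint, the image configuration ``connect[s] two points of $\vphi^{-\ell}(\cO)$ through a point of $\vphi^{-\ell}(\cO)$, forcing $x\in\vphi^{-\ell}(\cO)$''. But $\vphi(x)\in\vphi^{-\ell}(\cO)$ is the hypothesis, and nothing about the image forces $x$ itself into $\vphi^{-\ell}(\cO)$; the auxiliary inductive check you mention does not close this. The paper argues differently: since $\arbol\ell$ is connected, some component $C$ of $\arbol{\ell+1}\setminus\{x\}$ avoids $\arbol\ell$; then $\overline{C}$ contains a component $\cA_U$ of $\vphi^{-(\ell+1)}(\arbol0)$ through $x$ whose endpoints all lie in $\arbol{\ell+1}\setminus\arbol\ell$. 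Iterating $\ell$ times sends $\cA_U$ into $\arbol1\setminus\arbol0$, so $\arbol0=\vphi^{\ell+1}(\cA_U)\subset\vphi(\arbol1\setminus\arbol0)=\arbol0\setminus\{\xi_1\}$, a contradiction.

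For (4) there is a minor omission: to conclude that $\vphi(\dd)$ is again a direction (rather than all of $\pberl$) you need $\dd$ to be a good direction. The paper observes that $\dd$ is disjoint from $\arbol\ell\ni\xi_0$, hence does not contain both critical points, hence is good by Lemma~\ref{degree-one-l}.
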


\begin{proof}
  Statement (1) is a consequence of the forward invariance of the intial skeleton (i.e. $\vphi(\arbol0) = \arbol0$).

For (2) we claim that $\arbol\ell$ is connected for all $\ell$ and prove this claim by induction.
Since $\vphi$ is an open map, every connected component of $\arbol{\ell+1} =\vphi^{-1}(\arbol\ell)$ maps
onto $\arbol\ell$, but the preimage of the singleton $\partial B_1 \subset \arbol0$ is the singleton $\partial B_0$. Hence, $\arbol{\ell+1}$ is connected
and the claim follows. Now the endpoints of $\arbol0$ are contained in $\cO$, hence the endpoints of
$ \arbol\ell$ are contained in $\vphi^{-\ell}(\cO)$. Since every closed and  connected subset of $\pberl$ is convex, we have established (2).

For (3) we proceed by contradiction.
Suppose that  $x \in \vphi^{-(\ell+1)}(\cO) \setminus \vphi^{-\ell}(\cO)$
is not an endpoint of $\arbol{\ell+1}$. Let $C$ be a connected component
of $\arbol{\ell+1} \setminus \{ x \}$ not containing $\arbol\ell$.
Then $\overline{C}$ contains a component $\cA_U$ of $\vphi^{-(\ell+1)}(\arbol0)$ 
such that $x \in  \cA_U$. All the endpoints of $\cA_U$ lie
in $\arbol{\ell+1} \setminus \arbol\ell$. Hence,
$\vphi^{\ell}(\cA_U) \subset \arbol{1} \setminus \arbol{0}$ but then
$\arbol{0} = \vphi^{\ell+1}(\cA_U)$ would be a subset of $\vphi(\arbol{1} \setminus \arbol{0}) = \arbol{0} \setminus \{ \xi_1 \}$, which is a contradiction.

We prove (4) by induction. In fact, for $\ell =0$ the statement trivially holds. For $\ell \ge 1$,  given a direction $\dd$, as in (4),
 we have that $\dd$ does not contain $\xi_0$, and therefore is a good direction at $\z$. It follows that $\vphi(\dd)$ is a direction disjoint from $\arbol{\ell-1}$ at $\vphi(\z) \in \arbol{\ell-1} \setminus GO(\cO)$. By the inductive hypothesis, $\vphi^{\ell-1}(\vphi(\dd))$ is contained in $U_0$.

For (5), note that by (2) we have $$\widehat{\juliavphi} \supset \overline{\bigcup_{\ell \geq 0}  \arbol\ell} \supset \overline{GO(\cO)}= \juliavphi,$$
since the closure of a convex set is convex. Taking the convex hulls of these three sets,
the first $\supset$ above may be replaced by equality.

In view of (1), $\overline{\cup  \arbol\ell}$ is completely invariant. Hence part (6) follows from (5).
\end{proof}

\begin{definition}
  \label{def:7}
  For all $\ell \ge 0$, let $\cwpar\ell \in \arbol\ell$ be such that
$$[\xi_0, \cwpar\ell] = [\omega',\omega] \cap \arbol\ell.$$
\end{definition}

 By Lemma~\ref{degree-one-l}, given  $x \in \arbol\ell$ we have  that $\deg_x \vphi =2$ if and only if $x \in [\xi_0, \cwpar\ell]$. Throughout, we let $\cvpar\ell = \vphi(\cwpar{\ell+1})$.

\begin{lemma}
  \label{lem:36}
If $\vphi$ has a critical point $\omega \notin \filledvphi$, let $\cO_0 = \pi_{U_0} (\cO(\omega)\cap U_0)$, otherwise let $\cO_0$ be the emptyset.

  \begin{enumerate}
  \item $\arbol\ell$ is a topological tree such that its topological ramification points as well as endpoints are contained in 
$$\cVpar{\ell}= \left(GO(\cO) \cup GO(\veta_0) \cup GO(\cO_0)\right) \cap  \arbol\ell.$$
  \item For all $\ell \ge 0$, regarding $\arbol\ell$ as simplicial trees with vertices $\cVpar{\ell}$, the map $$\vphi: \arbol{\ell+1} \to \arbol\ell$$
is a degree two branched map over  
$$\cIpar\ell_\cA = [\xi_1,\cvpar\ell]=[\vphi(\omegap),\vphi(\omega)]\cap\arbol\ell,$$
where $v^{(\ell)} = \vphi(\cwpar{\ell+1}) \in \cVpar{\ell}$.
  \item $\intervala\ell \subset \intervala{\ell+1}$. Moreover, if $\intervala\ell \subsetneq \intervala{\ell+1}$, then $\cvpar{\ell+1} \in GO(\cO_0)\cup GO(\veta_0)$ or $\cvpar{\ell+1}$ is an endpoint of $\arbol{\ell+1}$.
  \item If $\crit(\vphi) \subset \filledvphi$, then $\cvpar\ell \in GO(\cO)$ for all $\ell$. In this case, there exists a minimal $\ell_0 \ge 0$ such that $\cvpar\ell = \cvpar{\ell_0}$ for all $\ell \ge \ell_0$ if and only if both critical points belong to a Fatou component $U$ such that $\partial U = \{ \cvpar{\ell_0} \}$. 
  \item If $\crit(\vphi) \not\subset \filledvphi$, then there exists $\ell$ such that $\cvpar\ell \in GO(\cO_0)$. In this case, if $\ell_0$ is the minimal $\ell$ such that  $\cvpar\ell \in GO(\cO_0)$,
then $\cvpar\ell = \cvpar{\ell_0}$ for all $\ell \ge \ell_0$.
  \end{enumerate}
\end{lemma}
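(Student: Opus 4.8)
The plan is to prove Lemma~\ref{lem:36} by induction on $\ell$, treating the five assertions together since they are intertwined. Statement (1) follows from Lemma~\ref{lem:10}: the endpoints of $\arbol{\ell}$ are contained in $\vphi^{-\ell}(\cO) \subset GO(\cO)$ by parts (2) and (3) of that lemma, and the topological ramification points of $\arbol{\ell+1}$ that are not already ramification points of $\arbol{\ell}$ must be preimages of ramification points of $\arbol{\ell}$ or points where new branches of $\vphi^{-1}(\arbol{\ell})$ attach; the latter occur exactly at preimages of $\xi_1$ (the point where the two critical values are separated) and at preimages of $\veta_0$ — and, in the non-filled case, at preimages of the orbit $\cO_0$ coming from the projection of $\cO(\omega)$ into $\cA_{U_0}$. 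Since the collapsing/attaching behaviour is governed by Lemmas~\ref{degree-one-l} and~\ref{basics.repelling}, a routine induction gives that all such points lie in $GO(\cO)\cup GO(\veta_0)\cup GO(\cO_0)$.

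For statement (2), the involution $\gamma$ exchanging the two preimage branches of $\vphi$ over $\arbol{\ell}$ must be produced: by Lemma~\ref{degree-one-l} the set of points of $\arbol{\ell+1}$ where $\deg_x\vphi = 2$ is precisely the segment $[\xi_0,\cwpar{\ell+1}] = [\omega',\omega]\cap\arbol{\ell+1}$ joining the two rigid critical points, so $\vphi$ restricted to $\arbol{\ell+1}$ is a degree two simplicial map folding along this segment, and its critical value interval is $\vphi([\xi_0,\cwpar{\ell+1}]) = [\xi_1,\vphi(\cwpar{\ell+1})] = [\vphi(\omega'),\vphi(\omega)]\cap\arbol{\ell}$, which is $\cIpar{\ell}_{\cA}$. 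That $\vphi(\omega')$ lands at $\xi_1$ (not at $\partial B_0$) uses $\omega'\in U(\omega')$ with $\partial U(\omega') = \partial B_0$ and $\vphi(B_0\setminus D_0) = B_1$ from Lemma~\ref{basics.repelling}(2c); one also needs to check $\cwpar{\ell+1}\in\cVpar{\ell+1}$, which is where the casework on whether $\omega$ is in $\filledvphi$ enters. I expect statement (2) to be the technical heart: one must verify both that the folding locus is exactly the critical segment and that the resulting quotient is again $\arbol{\ell}$ (surjectivity and injectivity off the critical segment), invoking the open-mapping property and the local structure of $\vphi$ on directions from Section~\ref{ends-s}.

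Statements (3)--(5) then follow by analysing how $\cvpar{\ell} = \vphi(\cwpar{\ell+1})$ moves with $\ell$. Since $\arbol{\ell}\subset\arbol{\ell+1}$ and $[\omega',\omega]$ is fixed, the segment $[\omega',\omega]\cap\arbol{\ell}$ can only grow, giving $\cwpar{\ell}\in[\xi_0,\cwpar{\ell+1}]$ and hence $\intervala{\ell}\subset\intervala{\ell+1}$; when it grows strictly, the new endpoint $\cwpar{\ell+1}$ must be either an endpoint of $\arbol{\ell+1}$ (a fresh preimage of $\cO$) or a ramification point, which by (1) forces $\cvpar{\ell+1}\in GO(\cO_0)\cup GO(\veta_0)$ or an endpoint. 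For (4), if $\crit(\vphi)\subset\filledvphi$ then no iterate of $\omega$ enters $U_0$, so $\cO_0=\emptyset$ and the active critical point stays in the filled Julia set; the segment $[\omega',\omega]$ reaches into the combinatorics of $\filledvphi$ and $\cvpar{\ell}$ stabilizes precisely when both critical points share a Fatou component $U$ with $\partial U$ a single point $\cvpar{\ell_0}$ — here one uses Corollary~\ref{lem:9} and the fact that a stabilized $\cwpar{\ell}$ means the critical segment has a fixed endpoint, i.e. the critical points are eventually ``together.'' For (5), if $\omega\notin\filledvphi$ then some iterate of $\omega$ lands in $U_0$, so the projection $\pi_{U_0}(\cO(\omega)\cap U_0)$ is a genuine periodic orbit $\cO_0\subset\arbol{\ell}$ for $\ell$ large, and once $\cwpar{\ell+1}$ reaches the preimage segment ending at a point of $GO(\cO_0)$ it cannot move further because $\cO(\omega)$ beyond that point lies in $U_0$ and thus outside every $\arbol{k}$; hence $\cvpar{\ell}$ stabilizes at $\ell_0$. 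I will assemble the induction so that (1)--(5) are verified simultaneously at each level, with the case division "$\omega\in\filledvphi$ versus $\omega\notin\filledvphi$" handled uniformly by allowing $\cO_0=\emptyset$.
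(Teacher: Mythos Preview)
Your plan follows essentially the same route as the paper: induction on $\ell$, the involution $\gamma$ swapping fibers of $\vphi$ for (2), and tracking the endpoint $\cwpar{\ell+1}$ of the critical segment for (3)--(5). Two points deserve tightening.

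For (1), your description of where new ramification points appear is not correct as written. The claim that they occur ``at preimages of $\xi_1$ \dots\ and at preimages of $\veta_0$'' is not the mechanism. The paper's argument is cleaner and you should use it instead: if $x\in\arbol{\ell+1}$ is a ramification point but $\vphi(x)$ is \emph{not} a ramification point of $\arbol{\ell}$, then $\vphi$ must collapse directions at $x$, so $\deg_x\vphi=2$; by Lemma~\ref{degree-one-l} this forces the two critical points into distinct directions at $x$, and since at least one of those directions is disjoint from $\arbol{\ell+1}$ (otherwise $\vphi(x)$ would see $\ge 3$ directions in $\arbol{\ell}$), $x$ is an endpoint of $[\omega',\omega]\cap\arbol{\ell+1}$, i.e.\ $x\in\{\xi_0,\cwpar{\ell+1}\}$. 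You then need the preliminary observation (which the paper proves first, and which you correctly flag) that $\cwpar{\ell}\in\cVpar{\ell}$: if $\cwpar{\ell}\notin GO(\cO)$, apply Lemma~\ref{lem:10}(4) to the direction at $\cwpar{\ell}$ containing $\omega$ to get $\vphi^{\ell}(\cwpar{\ell})\in\cO_0\cup\{\veta_0\}$.

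For (4), you do not need Corollary~\ref{lem:9}. Once you know $\cvpar{\ell}\in GO(\cO)$ for all $\ell$ (which follows because $\cwpar{\ell+1}\in\cVpar{\ell+1}$ and Lemma~\ref{lem:10}(4) rules out $GO(\veta_0)\cup GO(\cO_0)$ when $\crit(\vphi)\subset\filledvphi$), the stabilization argument is elementary: if $\cvpar{\ell}=\cvpar{\ell_0}$ for all $\ell\ge\ell_0$, the direction at $\cwpar{\ell_0+1}$ containing $\omega$ meets no $\arbol{\ell}$, hence no point of $GO(\cO)$, hence (since $\overline{GO(\cO)}=\juliavphi$) is a Fatou component; the converse is immediate.
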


\begin{proof}

  Observe that $\omega$ lies in a direction $\dd$ at $\cwpar\ell$ which is disjoint from $\arbol\ell$.
 First we prove that $\cwpar\ell \in \cVpar\ell$. Indeed, if $\cwpar\ell \notin GO(\cO)$, then $\vphi^\ell(\dd) \subset U_0$, by Lemma~\ref{lem:10} (4).
Therefore, $\vphi^{\ell}(\cwpar\ell) \in \cO_0$ and $\cwpar\ell \in \cVpar\ell$.

  Assertion (1) is true for $\ell=0$ and we proceed by induction to establish the assertion for arbitrary $\ell$.  
Suppose that $x$ is a topological ramification point of
  $\arbol{\ell+1}$ and $\vphi(x)$ is not a topological ramification point of
  $\arbol\ell$. Then the degree of $\vphi$ at $x$
  must be two. Thus, the critical points belong to different directions at $x$.
  At least one of these directions is disjoint from $\arbol{\ell+1}$, otherwise
  there would be at least three directions at $\vphi(x)$ containing points of $\arbol\ell$.
  It follows that $x$ is an endpoint of $[\omega, \omegap] \cap \arbol{\ell+1}$.
  Therefore, $x= \xi_0$ or $x = \cwpar{\ell+1}$ which belong to $\cV^{(\ell+1)}$.

  For (2), let $\gamma: \arbol{\ell+1} \to \arbol{\ell+1}$ be the involution defined by 
$\gamma(x) = x'$ if $\{x,x'\} = \vphi^{-1}(\vphi(x))$. It is not difficult to check that $\gamma$ respects the simplicial structure of $\arbol{\ell+1}$.
Moreover, $\gamma(x) = x$ if and only if the degree of $\vphi$ at $x$ is $2$, which is equivalent to $x \in [\omegap,\omega] \cap \arbol{\ell+1} = [\xi_0, \cwpar{\ell+1}]$. Since $\vphi$ restricted to $[\omegap,\omega]$ is a bijection and 
$\cwpar\ell \in \cVpar\ell$,
assertion (2) follows.

  From (2) we have that $\intervala\ell \subset \intervala{\ell+1}$. To prove (3), assume that $\cvpar{\ell+1} \neq \cvpar\ell$ and that
$\cvpar{\ell+1} \in GO(\cO)$. Hence, $\cvpar{\ell+1} \notin \arbol\ell$. Thus $\cvpar{\ell+1} \in \vphi^{-(\ell+1)}(\cO) \setminus \vphi^{-\ell}(\cO)$. Therefore, $\cvpar{\ell+1}$ is an endpoint of $\arbol{\ell+1}$, by Lemma~\ref{lem:10} (3).

  For (4), if $\crit (\vphi) \subset \filledvphi$, then $\cwpar{\ell+1} \notin GO(\cO_0) \cup GO(\veta_0)$, by Lemma~\ref{lem:10}(4). Thus, $\cwpar{\ell+1} \in GO(\cO)$ and $\cvpar\ell = \vphi(\cwpar{\ell+1}) \in GO(\cO)$. In this case, assume that
 $\cvpar\ell = \cvpar{\ell_0}$ for all $\ell \ge \ell_0$. Let  $\dd$ be the direction at at $\cvpar{\ell_0}$ containing a critical value. 
It follows that there are no points of $GO(\cO)$ in $\dd$, for otherwise, $\cvpar\ell \neq \cvpar{\ell_0}$ for some $\ell$.
Since the closure of $GO(\cO)$ is the Julia set, it follows that $\dd$ is a Fatou component. The converse is also straightforward.

  For (5), assume that there exists a  critical point $\omega \notin \filledvphi$. 
Let $\ell_0$ be the minimal $\ell$ such that $\vphi^\ell(\omega) \in U_0$. 
Denote by $\dd$ the direction at $\cwpar{\ell_0}$ containing $\omega$.
Since $\pi_{\arbol\ell} \circ \vphi = \vphi \circ \pi_{\arbol{\ell+1}}$, for all $\ell$, we have that 
the direction  $\dd$ is contained in a Fatou component which eventually maps onto $U_0$.
Therefore $\dd$ is free of $GO(\cO)$ elements and $\cwpar\ell= \cwpar{\ell_0}$ for all $\ell \ge \ell_0$.  
\end{proof}

\subsection{The collapsed trees}
\label{sec:collapsed}
To deal with the case $\crit(\vphi) \subset \filledvphi$ and  $\crit(\vphi) \not\subset \filledvphi$ simultaneously it is convenient to make the following agreement.

\begin{definition}
  \label{def:10}
  If $\crit(\vphi) \not\subset \filledvphi$, then let $\sim$ be the  relation in $\hatjuliavphi$ given by Proposition~\ref{pro:1}. Otherwise, let $\sim$ be the trivial equivalence relation (no distinct points are identified). We denote the $\sim$-class of $x$ by $[ x ]$.
\end{definition}

\begin{lemma}
  \label{lem:31}
  For all $\ell \ge 0$, if $x \in \arbol\ell$, then $[x] = \{ x \}$ or $[x]$ is a subtree of $\arbol\ell$.
Moreover, $\vphi([x]) = [\vphi(x)]$. 
\end{lemma}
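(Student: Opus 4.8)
The plan is to distinguish the two cases built into Definition~\ref{def:10} and reduce the statement to facts already available. \textbf{Case 1: $\crit(\vphi)\subset\filledvphi$.} Here $\sim$ is the trivial relation, so $[x]=\{x\}$ for every $x$, and the assertion $\vphi([x])=[\vphi(x)]$ is simply $\vphi(\{x\})=\{\vphi(x)\}$, which is trivially true. Nothing else needs to be checked in this case.

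\textbf{Case 2: $\crit(\vphi)\not\subset\filledvphi$.} Now $\sim$ is the relation of Proposition~\ref{pro:1}/Definition~\ref{def:11}: two distinct points $\z_1,\z_2\in\hatjuliavphi$ satisfy $\z_1\sim\z_2$ if and only if they lie in the same connected component of $\vphi^{-n}(\widehat{\cO_0})$ for some $n\ge 0$, where $\cO_0=\pi_{U_0}(\cO(\omega)\cap U_0)$ is a periodic orbit in the skeleton $\cA_0=\arbol0$. The first thing to record is that the nested sets $Z_n:=\vphi^{-n}(\widehat{\cO_0})$ are convex: $\widehat{\cO_0}\subset\arbol0$ is a finite subtree (the convex hull of a finite set in the tree $\arbol0$), and $Z_n=\vphi^{-n}(\widehat{\cO_0})\subset\arbol{n}$, being a preimage of a subtree under the open map $\vphi$, is a finite union of subtrees of $\arbol{n}$; hence each connected component of $Z_n$ is a subtree of $\arbol{n}\subset\arbol\ell$ whenever $n\le\ell$. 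Since $Z_0\subset Z_1\subset\cdots$, for $x\in\arbol\ell$ the class $[x]$ is the increasing union $\bigcup_{n\ge 0}\big(\text{component of }Z_n\text{ containing }x\big)$ (taking this union only over $n$ for which $x\in Z_n$, and interpreting the empty union as $\{x\}$). An increasing union of subtrees each containing $x$ is again a subtree; thus $[x]=\{x\}$ (if $x\notin Z_n$ for all $n$) or $[x]$ is a subtree of $\arbol\ell$.

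For the equivariance $\vphi([x])=[\vphi(x)]$ in Case~2, the key point is that $\vphi$ maps a connected component $C$ of $Z_{n+1}=\vphi^{-(n+1)}(\widehat{\cO_0})$ \emph{onto} a connected component of $Z_n=\vphi^{-n}(\widehat{\cO_0})$, because $\vphi$ is an open (and continuous) map and therefore sends connected components of a preimage onto connected components of the image set; and conversely every component of $Z_n$ is hit. So if $\z_1,\z_2$ lie in a common component $C$ of $Z_{n+1}$, then $\vphi(\z_1),\vphi(\z_2)$ lie in the common component $\vphi(C)$ of $Z_n$, giving $\vphi(\z_1)\sim\vphi(\z_2)$; this shows $\vphi([x])\subseteq[\vphi(x)]$. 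For the reverse inclusion, suppose $y\sim\vphi(x)$, say $y$ and $\vphi(x)$ lie in a component $C'$ of $Z_n$. Choose $n$ large enough that $x\in\arbol\ell$ is a point of $Z_{n+1}$ whenever it is a point of any $Z_m$ — more precisely, if $x\in Z_{m}$ for some $m$ then $x\in Z_{n+1}$ for all $n+1\ge m$, and $\vphi(x)\in Z_{m-1}\subseteq Z_n$; pick the component $C$ of $Z_{n+1}$ containing $x$, so $\vphi(C)=C'$ and $C'\ni y$, whence $y=\vphi(x')$ for some $x'\in C$, i.e. $x'\sim x$ and $\vphi(x')=y$. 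If instead $\vphi(x)$ lies in no $Z_n$ (so $[\vphi(x)]=\{\vphi(x)\}$) then $x$ lies in no $Z_m$ either (as $Z_m\subset\vphi^{-1}(Z_{m-1})$ forces $\vphi(x)\in Z_{m-1}$), so $[x]=\{x\}$ and $\vphi([x])=\{\vphi(x)\}=[\vphi(x)]$. This completes the argument.

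The only mild subtlety — the step I would flag as the main thing to get right rather than the main obstacle — is the surjectivity of $\vphi$ on components of the nested preimages and the careful handling of the ``empty union'' boundary case (a point belonging to no $Z_n$, whose class is a singleton); everything else is a direct consequence of openness of $\vphi$ (Section~\ref{section|DynamicsL}), the convexity of connected subsets of $\pberl$, and the fact, from Lemma~\ref{lem:10}, that $\arbol{n}\subset\arbol\ell$ for $n\le\ell$ so that all the relevant components already sit inside $\arbol\ell$. No new ideas beyond these are required.
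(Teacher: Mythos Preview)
Your overall strategy is sound and the equivariance argument is fine, but there is a real gap in showing that $[x]$ is a subtree \emph{of $\arbol\ell$} rather than merely of $\hatjuliavphi$. You note that components of $Z_n := \vphi^{-n}(\widehat{\cO_0})$ lie in $\arbol{n}$, and hence in $\arbol\ell$ when $n \le \ell$; but you then express $[x]$ as the union over \emph{all} $n \ge 0$ of the $Z_n$-component through $x$ and conclude ``subtree of $\arbol\ell$'' without explaining why the terms with $n > \ell$ do not stick out of $\arbol\ell$.

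What is missing is the observation that any component $C$ of $Z_n$ (for $n \ge \ell$) meeting $\arbol\ell$ actually stays in $\arbol\ell$. Since $\widehat{\cO_0} \subset U_0$ and $\vphi(U_0) = U_0$, the sets $Z_n$ avoid $GO(\cO)$; in particular $C$ contains no point of $GO(\cO)$. If $C$ left $\arbol\ell$ at some $q \in C \cap \arbol\ell$ along a direction $\dd$ disjoint from $\arbol\ell$ but meeting $\arbol{n}$, then $q \notin GO(\cO)$ and Lemma~\ref{lem:10}(4) gives $\vphi^\ell(\dd) \subset U_0$, hence $\vphi^m(\dd) \subset U_0$ for all $m \ge \ell$. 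But $\dd \cap \arbol{n} \neq \emptyset$ forces $\dd$ to contain an endpoint of $\arbol{n}$, i.e.\ a point of $\vphi^{-n}(\cO)$, contradicting $\vphi^n(\dd) \subset U_0$. With this in hand your increasing union lies in $\arbol\ell$ and in fact stabilizes at $n = \ell$, since $Z_n \cap \arbol\ell = Z_\ell$ for $n \ge \ell$.

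The paper's route is organized differently: it first deduces $\vphi([x]) = [\vphi(x)]$ from the invariance of $\widehat{\cO_0}$ under the bijection $\vphi|_{\arbol{0}}$ together with openness of $\vphi$, and then argues edge-by-edge that each edge of $\arbol\ell$ is either in $Z_\ell$ or in no $Z_n$ (because $\vphi^\ell$ carries it into $\arbol{0}$, where $\vphi$ preserves $\widehat{\cO_0}$). It combines this with the fact that endpoints of $\arbol\ell$ lie in $GO(\cO)$ (hence have trivial classes) and that classes are connected and closed. Your component-wise approach is a legitimate alternative once the containment step above is supplied.
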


\begin{proof}
Since $\vphi:\arbol{0} \to \arbol{0}$ is a 
bijection leaving $\widehat{\cO_0}$ invariant and $\vphi$ is an open map, it follows 
that $\vphi([x]) = [\vphi(x)]$ for all $x \in \hatjuliavphi$.

Now let  $]v,v'[$ be an edge of $\arbol\ell$.
Since $\vphi^{\ell}: \arbol\ell \to \arbol{0}$ is a simplicial map  and $\vphi:\arbol{0} \to \arbol{0}$ is a 
bijection leaving $\widehat{\cO_0}$ invariant, we have  that $\vphi^\ell(]v,v'[) \subset \widehat{\cO_0}$ 
or $\vphi^n (]v,v'[)$ is disjoint from $\widehat{\cO_0}$ for all $n \ge 0$. 
By Lemma~\ref{lem:10}, the endpoints of $\arbol\ell$ lie in $GO(\cO)$. Thus, endpoints have trivial $\sim$-classes.
Moreover, by definition, classes are connected and, using the fact that $\vphi$ is an open map, classes are also closed.
Therefore, for all $x \in \arbol\ell$, the class $[x]$ is a closed, connected and simplicial subset of $\arbol\ell$. That is, $[x]$ is
a subtree of $\arbol\ell$.
\end{proof}

It follows that $$\barbol\ell = \arbol\ell/ \sim$$
is a naturally endowed with a simplicial tree structure 
with vertices $\cWpar\ell = \cVpar\ell/\sim$. Moreover, the induced map
$$\begin{array}{cccc}
\psi : & \barbol{\ell+1} & \to & \barbol\ell \\
       &  [{\z}]              & \mapsto & [{\psi(\z)}]
\end{array}
$$
is a well defined degree two branched map over the interval $\intervalb\ell=\intervala\ell/\sim$.

\subsection{The ``postcritical'' trees}
\label{sec:postcritical}

{\sf In what follows we abuse of notation and drop the brackets to write the $\sim$-equivalence classes. Thus, we simply write $\veta_0$ for $[\veta_0]$,
$\xi_j$ for $[\xi_j]$, $\cwpar\ell$ for $[\cwpar\ell]$, etc. }
With this notation,
 $$\cWpar\ell = GO(\cO) \cup GO(\veta_0).$$

\begin{lemma}
  \label{lem:11}
  For $\ell \ge 0$, let
$$\barbolc\ell = \{ x \in \barbol\ell \mid \cwpar\ell \notin [ \veta_0 , \psi^k(x) [  \mbox{ for all } k \ge 0 \}. $$
Then, for all $\ell \ge 0$,  the following statements hold:
\begin{enumerate}
\item For all $x \in \barbol\ell$, we have that $\psi ([\veta_0, x]) \subset [\veta_0,\xi_1] \cup [\veta_0,\psi(x)]$.
\item $\cwpar\ell \in \barbolc\ell \subset \barbolc{\ell+1}$.
\item $\psi (\barbolc\ell) \subset \barbolc\ell.$
\item $\barbolc\ell$ is a (connected) subtree of $\barbol\ell$.
\item If $\cwpar\ell \neq w^{(0)}$, then $\intervalb\ell \subset \barbolc\ell$. Moreover, if $\cvpar\ell \in GO(\cO)$ and
$v$ is a vertex of $\barbol\ell$ such that $[\cvpar\ell,v]$ is an edge, then $v \in \barbolc\ell$.
\item If $\cwpar{\ell+1} = \cwpar\ell$, then $\psi(\barbolc{\ell+1}) \subset \barbolc\ell$.
\item If $\cwpar{\ell+1} \neq \cwpar\ell$ and $\cwpar{\ell+1} \in GO(\cO)$, then $\barbolc{\ell+1} = \barbol{\ell+1}$.
\end{enumerate}
\end{lemma}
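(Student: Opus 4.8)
Here is my plan for proving Lemma~\ref{lem:11}.

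\medskip

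\noindent\textbf{Overall strategy.} The seven assertions are intertwined, so the plan is to prove them by a simultaneous induction on $\ell$, and within each level, in the order $(1)\Rightarrow(2)\Rightarrow\cdots\Rightarrow(7)$, using the lower-level instances freely. The base case $\ell=0$ should be essentially immediate: $\barbol0$ is the star with center $\veta_0$ and endpoints $\xi_0,\dots,\xi_{q-1}$, the map $\psi$ is a rotation of order $q$ fixing $\veta_0$, $\cwpar0=w^{(0)}=\xi_0$, so $\barbolc0=\barbol0$ and every statement holds trivially (the hypothesis of (5) is void, the hypotheses of (6)–(7) reduce to checking compatibility with level $0$, etc.). First I would dispose of (1): since $\psi^\ell:\barbol\ell\to\barbol0$ is simplicial and $\psi:\barbol0\to\barbol0$ is the rotation, the only vertex of $\barbol\ell$ where $\psi$ can fail to be locally injective is on $[\xi_0,\cwpar\ell]=\intervala\ell/\sim$, and its image is $\intervalb\ell=[\xi_1,\cvpar\ell]$; the arc $[\veta_0,x]$ either avoids the critical interval, in which case it maps homeomorphically into $[\veta_0,\psi(x)]$, or it crosses it, in which case the portion up to the fold maps into $[\veta_0,\xi_1]$ and the rest into $[\veta_0,\psi(x)]$. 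This is the geometric heart and everything else is bookkeeping with the definition of $\barbolc\ell$.

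\medskip

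\noindent\textbf{Intermediate steps.} For (2): $\cwpar\ell\in\barbolc\ell$ because, by Lemma~\ref{lem:36}(2), the critical point $\cwpar\ell$ lies in $[\xi_0,\cwpar\ell]$ while $[\veta_0,\psi^k(\cwpar\ell)]$ for $k\ge1$ lives in $\psi^{k-1}(\intervalb\ell)$, and the first-entry condition defining $\barbolc\ell$ is a forward-orbit condition that one checks is not violated; the inclusion $\barbolc\ell\subset\barbolc{\ell+1}$ uses that $\barbol\ell\subset\barbol{\ell+1}$ and $\psi$ on $\barbol{\ell+1}$ extends $\psi$ on $\barbol\ell$, together with Lemma~\ref{lem:36}(3) controlling how $\cwpar{\ell+1}$ relates to $\cwpar\ell$. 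For (3): forward invariance of $\barbolc\ell$ follows from (1) — if $\cwpar\ell\notin[\veta_0,\psi^k(x)[$ for all $k$, then applying (1) shows the same for $\psi(x)$, since the extra piece $[\veta_0,\xi_1]$ that can appear is itself checked to avoid $\cwpar\ell$ in its forward orbit (this is where $\cwpar\ell\in\barbolc\ell$, i.e. (2), gets used). For (4): connectedness — if $x\in\barbolc\ell$ and $y\in[\veta_0,x]$, then $[\veta_0,\psi^k(y)]\subset[\veta_0,\psi^k(x)]$ (or into $[\veta_0,\xi_1]$) by (1), so $y\in\barbolc\ell$; thus $\barbolc\ell$ is a subtree containing $\veta_0$. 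Statement (5) is about locating $\intervalb\ell$ and the edge beyond $\cvpar\ell$ inside $\barbolc\ell$; I would argue that $[\xi_1,\cvpar\ell]$ and one further edge map, under iteration, into lower-level pieces that by the inductive form of (5) already lie in $\barbolc{\ell-1}$, and pull back; the hypothesis $\cwpar\ell\neq w^{(0)}$ ensures the critical interval is nondegenerate so that these arcs are genuinely ``ahead of'' the critical point.

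\medskip

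\noindent\textbf{The main obstacle.} The delicate items are (6) and (7), which bifurcate according to whether the active critical point's image-end $\cvpar\ell$ has moved from level $\ell$ to level $\ell+1$. In case (7), $\cwpar{\ell+1}\neq\cwpar\ell$ with $\cwpar{\ell+1}\in GO(\cO)$: by Lemma~\ref{lem:36}(3), $\cwpar{\ell+1}$ (equivalently $\cvpar{\ell+1}$) is then an \emph{endpoint} of $\barbol{\ell+1}$, so $[\veta_0,\psi^k(x)[$ can never contain $\cwpar{\ell+1}$ for any $x$ and any $k$ (an endpoint cannot be interior to an arc starting at $\veta_0$ unless it is the far endpoint, which the half-open bracket excludes), forcing $\barbolc{\ell+1}=\barbol{\ell+1}$. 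Case (6), where the critical position is stable ($\cwpar{\ell+1}=\cwpar\ell$), is the subtle one: I expect the proof to require showing that the new edges of $\barbol{\ell+1}\setminus\barbol\ell$ attached along $\barbolc{\ell+1}$ still map under $\psi$ into $\barbolc\ell$, which means chasing the first-entry-to-$\cwpar\ell$ condition through one application of $\psi$ and invoking (1), (2), (3) at level $\ell+1$ and (5) at level $\ell$ to handle the branch over the critical interval. The bookkeeping of which bracket is open versus closed in the definition of $\barbolc\ell$ — and correspondingly whether $\cwpar\ell$ or $\cvpar\ell$ is allowed to be an endpoint of the relevant arc — is where I anticipate the argument is most error-prone, and I would be careful to treat the fold point of $\psi$ (the image of $\cwpar{\ell+1}$, namely $\cvpar\ell$) separately from ordinary points throughout.
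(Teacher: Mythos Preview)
Your proposal misjudges where the difficulty lies, and this leads to a genuine gap.

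Parts (3) and (6) are \emph{immediate from the definition}: membership in $\barbolc\ell$ is a condition on the forward orbit $\{\psi^k(x)\}_{k\ge0}$, so replacing $x$ by $\psi(x)$ just shifts the index $k\mapsto k+1$. For (6), when $\cwpar{\ell+1}=\cwpar\ell$ the excluded point is the same at both levels and the same shift argument gives $\psi(\barbolc{\ell+1})\subset\barbolc\ell$. No appeal to (1), no chasing of folds, no separate treatment of $\cvpar\ell$ is needed. You have inverted the difficulty: you call (6) ``the subtle one'' and plan an elaborate argument for it, while it is one line.

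The real work is in (5), and your sketch there (``map under iteration into lower-level pieces that by the inductive form of (5) already lie in $\barbolc{\ell-1}$, and pull back'') does not go through as stated: $\barbolc{\ell-1}$ is defined with the excluded point $\cwpar{\ell-1}$, not $\cwpar\ell$, so an inductive pull-back does not directly control the level-$\ell$ condition. The paper's argument is different and direct. One shows $\cvpar\ell\in\barbolc\ell$ by contradiction: suppose $\cwpar\ell\in[\veta_0,\psi^k(\cwpar{\ell+1})[$ for some $k\ge1$. Since $\psi$ is simplicial, the number of edges in $[\veta_0,\psi^k(\cwpar\ell)]$ is at most that in $[\veta_0,\cwpar\ell]$, which forces $\cwpar\ell\in[\psi^k(\cwpar\ell),\psi^k(\cwpar{\ell+1})[$. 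But the open segment $]\cwpar\ell,\cwpar{\ell+1}[$ contains only vertices in $GO(\veta_0)$ (this uses Lemma~\ref{lem:36}(5), ruling out $GO(\cO_0)$), and $\cwpar\ell\notin GO(\veta_0)$, so one is forced to $\cwpar\ell=\psi^k(\cwpar\ell)$, i.e.\ $\cwpar\ell$ is periodic, hence lies in $\barbol0$, hence equals $\xi_0=\cwpar0$, contradicting the hypothesis. The same edge-counting device handles the ``one further edge'' clause. This edge-count idea is also what cleanly gives $\cwpar\ell\in\barbolc\ell$ in (2); your justification there (``$[\veta_0,\psi^k(\cwpar\ell)]$ lives in $\psi^{k-1}(\intervalb\ell)$'') does not parse, since that arc always contains $\veta_0$.

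Finally, no induction on $\ell$ is needed anywhere; each item is proved directly for all $\ell$ at once. Your argument for (7) via the endpoint property is correct in spirit (strictly speaking it is Lemma~\ref{lem:10}(3) rather than Lemma~\ref{lem:36}(3) that gives $\cwpar{\ell+1}$ is an endpoint of $\barbol{\ell+1}$, though the latter's proof contains the same step).
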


\begin{proof}
  Statement (1) is trivial for $\ell=0$ so consider $\ell \ge 1$ and let $\veta'_0$ be
the unique point, different from $\veta_0$, such that  $\psi(\veta'_0)=\veta_0$. 
Note that there are two intervals in $\barbol\ell$ mapping onto $[\veta_0,\psi(x)]$.
Namely, $[\veta_0, z]$ and $[\veta'_0,z']$ where $\psi(z)=\psi(z')=\psi(x)$.
Thus, $x=z$ or $x=z'$. In the former case,  $\psi([\veta_0, z=x])=[\veta_0,\psi(x)])$. In the latter, 
$$\psi([\veta_0, z=x']) \subset \psi([\veta_0, \veta_0']) \cup \psi([\veta_0, z])
= [\veta_0,\xi_1] \cup [\veta_0,\psi(x)]).$$

To establish (2), recall that $\psi$ is a simplicial map. Thus, 
for all $k \ge 0$, the number of edges contained
in $[\veta_0, \psi^{k}(\cwpar{\ell})[$ is bounded above by the number of edges in
$[\veta_0, \cwpar{\ell}[$. It follows that $\cwpar{\ell} \notin [\veta_0, \psi^{k}(\cwpar{\ell})[$. Therefore, $\cwpar{\ell} \in \barbolc\ell$. The rest of statement
(2) as well as statement (3) follows directly from the definition.

For (4), observe that $\barbol{0} \subset \barbolc\ell$ for all $\ell$. 
Hence, if $x  \in \barbolc\ell$ and $y \in [\veta_0,x[$, then 
$$[\veta_0,\psi^k(y)[ \subset \psi^k([\veta_0, x[) \subset \barbol{0} \cup [\veta_0,\psi^k(x)[$$ for all $k \ge 0$, by statement (1). 
Therefore,  $\cwpar\ell \notin [\veta_0,\psi^k(y)[$, for otherwise
$x \notin \barbolc\ell$ or $\cwpar\ell \in [\veta_0,\psi^k(y)[ \subset \barbol{0}\setminus \cO$, which is impossible. It follows that $\barbolc\ell$ is a connected subtree.

For (5), assume that $\cwpar\ell \neq \cwpar0$. It is sufficient to show that $\cvpar\ell \in \barbolc\ell$.
Thus,   we may assume that $\cwpar{\ell+1} \neq \cwpar\ell$. If $\cwpar\ell \in [\veta_0, \vphi^k (\cwpar{\ell+1})[$ for some $k \ge 1$, then $\cwpar\ell \in [\vphi^k(\cwpar\ell), \vphi^k (\cwpar{\ell+1})[$, since the number of edges in $[\veta_0, \cwpar\ell]$ is an upper bound for the
number of edges in $[\veta_0,\vphi^k(\cwpar\ell)] \subset \vphi^k([\veta_0,\cwpar\ell])$. 
However, $]\cwpar\ell, \cwpar{\ell+1}[$ contains only vertices in $GO(\veta_0)$. 
By Lemma~\ref{lem:36} (5), we have that $\cwpar\ell \notin GO(\veta_0)$, thus the same holds for its orbit,
hence $\cwpar\ell =\vphi^k(\cwpar\ell)$ is periodic. The periodic points of $\barbol\ell$ belong to $\barbol{0}$ 
and $\cwpar\ell \notin U_0$.
Therefore $\cwpar\ell = \xi_0=\cwpar0$.

Now assume that $\cvpar\ell \in GO(\cO)$ and
$v$ is a vertex of $\barbol\ell$ such that $[\cvpar\ell,v]$ is an edge. 
If $\cvpar\ell \neq \cvpar{\ell-1}$, then there is a unique such a vertex $v$ by Lemma~\ref{lem:36} (3) and $v$ must lie in $[\veta_0, \cvpar\ell] \subset \barbolc\ell$.
If $\cvpar\ell = \cvpar{\ell-1}$ we proceed by contradiction. Suppose that $\cwpar\ell \in [\veta_0, \vphi^{k-1} (v)[$ for some $k \ge 1$.
As before, we conclude that $\cwpar\ell$ lies in  the interval $ [\vphi^{k-1}(\cvpar{\ell-1}), \vphi^{k-1} (v)[$ whose interior
is an edge since $\cvpar\ell = \cvpar{\ell-1}$. Therefore, $\cwpar\ell = \vphi^k(\cwpar\ell)$ is periodic and we obtain a contradiction.

Statement (6) follows from the definition and statement (7) is a consequence of Lemma~\ref{lem:36} (3). 
\end{proof}

Now we consider $\theta \in I_{p/q}$. Given $x \in \ponec \cong \C \cup \{ \infty \}$ denote by $\cvpar\ell_\theta (x)$ the vertex of $\tree\ell(\theta) = \cT(\boldsymbol{\lambda}(\theta)^{(\ell)})$ which contains $x$.
For $t \in \RZ$ we also denote by $\cvpar\ell_\theta (t)$ the vertex containing $\exp(2 \pi i t)$ when no confusion arises.
For $0 \le m \le \ell$, we denote by $\iota : \tree{m}(\theta) \to \tree\ell(\theta)$ the inclusion given by Lemma~\ref{lemma:iell}.

Recall from Lemma~\ref{lem:6}, that $m_2: \tree{\ell+1}(\theta) \to \tree\ell(\theta)$ is a degree two branched map over the interval
$\intervalt\ell\theta =[m_2(\cvpar{\ell+1}_\theta (\infty)), m_2(\cvpar{\ell+1}_\theta (0))]$. Again to simplify notation, let
\begin{eqnarray*}
  \cwpar{\ell+1}_\theta & = & \cvpar{\ell+1}_\theta (0), \\
 \cvpar\ell_\theta  & = & m_2(\cwpar{\ell+1}_\theta), \\
\beta_j & = & \iota \circ m_2^j (\cvpar\ell_\theta(\infty)) \in \tree\ell(\theta)
\end{eqnarray*}
for  $j \ge 0$ (subscripts $\mod q$). Finally let $\alpha_0 \in \tree\ell (\theta)$ be the (inclusion of the) unique $\Gamma$-vertex of level $0$.

\begin{lemma}
  \label{lem:26}
Given $\theta \in I_{p/q}$ let 
$$\treec\ell(\theta) = \{ x \in \tree\ell(\theta) \mid \cwpar\ell_\theta \notin [\alpha_0 , \iota \circ m_2^k (x) [ \mbox{ for all } k \ge 0 \}.$$
For all $\ell \ge 0$, the following statements hold:
\begin{enumerate}
\item For all $x \in \tree{\ell}(\theta)$,
$$\iota \circ m_2 ([\alpha_0,x]) = [\alpha_0, \beta_1] \cup [\alpha_0 , \iota \circ m_2 (x)].$$
\item $\cwpar\ell_\theta \in \treec\ell(\theta)$ and $\iota ( \treec\ell (\theta) ) \subset \treec{\ell+1}(\theta)$.
\item $\iota \circ m_2(\treec\ell) \subset \treec\ell$.
\item $\treec\ell(\theta)$ is a (connected) subtree of $\tree\ell(\theta)$.
\end{enumerate}
\end{lemma}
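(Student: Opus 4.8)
The plan is to prove Lemma~\ref{lem:26} by induction on $\ell$, mirroring the structure of the proof of Lemma~\ref{lem:11}, since $\treec\ell(\theta)$ is the exact analogue of $\barbolc\ell$ for the symbolic-side tree $\tree\ell(\theta)$ and the maps $\iota \circ m_2$ play the role of $\psi$. First I would record the base case $\ell = 0$: here $\tree{0}(\theta)$ is the starlike tree with the single $\Gamma$-vertex $\alpha_0$, and all four assertions are immediate from the description of level-$0$ trees given after the definition of $\tree( {\lambda}^{(\ell)})$ together with Lemma~\ref{lemma:iell} and Lemma~\ref{lem:6}. Then for the inductive step I would treat the assertions in the order (1), then (2), (3), (4), since (1) is the combinatorial engine that drives the rest, and (4) follows from (1)--(3) just as in Lemma~\ref{lem:11}.

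For assertion (1), the key point is that $m_2: \tree{\ell}(\theta) \to \tree{\ell-1}(\theta)$ is a degree two branched map over $\intervalt{\ell-1}\theta$ (Lemma~\ref{lem:6}), so there are exactly two arcs emanating from $\alpha_0$-side preimages mapping onto $[\alpha_0, m_2(x)]$: one through $\alpha_0$ itself and one through $\cwpar\ell_\theta$-type branch point. Concretely, writing $\veta'$ for the non-$\alpha_0$ preimage of $\alpha_0$ under the relevant branched map, the arc $[\alpha_0, x]$ either maps directly onto $[\alpha_0, m_2(x)]$ or is forced to pass through the ``other'' preimage of $\alpha_0$ and hence picks up the segment $[\alpha_0,\beta_1]$. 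This is the same case analysis used in Lemma~\ref{lem:11}(1); I would use the inclusion relation $m_2\circ\pi_{\ell+1}=\pi_{\ell-1}\circ m_2$ from Lemma~\ref{lemma:piell} to commute $\iota$ past $m_2$ cleanly. For assertion (2), I would argue exactly as in Lemma~\ref{lem:11}(2): since $\iota\circ m_2$ is simplicial, the number of edges in $[\alpha_0, (\iota\circ m_2)^k(\cwpar\ell_\theta)[$ is bounded by the number of edges in $[\alpha_0,\cwpar\ell_\theta[$, so $\cwpar\ell_\theta$ cannot belong to the former, giving $\cwpar\ell_\theta\in\treec\ell(\theta)$; the inclusion $\iota(\treec\ell)\subset\treec{\ell+1}$ is then direct from the definition together with the compatibility of $\iota$ with $m_2$ across levels. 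Assertion (3) is immediate from the definition once (1) is in hand, and assertion (4) is the connectedness argument: for $x\in\treec\ell(\theta)$ and $y\in[\alpha_0,x[$, apply (1) iteratively to get $[\alpha_0,(\iota\circ m_2)^k(y)[\subset \tree{0}(\theta)\cup[\alpha_0,(\iota\circ m_2)^k(x)[$, hence $\cwpar\ell_\theta$ avoids it, since $\cwpar\ell_\theta\notin\tree0(\theta)\setminus\{\alpha_0\text{-edges}\}$.

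The main obstacle I anticipate is bookkeeping rather than conceptual: one must be careful that the inclusion maps $\iota$ between trees of different levels genuinely commute with $m_2$ in the way needed — this is exactly the content of the functional equation $m_2\circ\pi_{\ell+2}=\pi_\ell\circ m_2$ in Lemma~\ref{lemma:piell} and the edge-non-subdivision property stressed after Lemma~\ref{lemma:iell}, but one has to invoke these at each step to make sure that ``$\cwpar\ell_\theta\in[\alpha_0,\iota\circ m_2^k(x)[$'' is a level-coherent condition. A secondary subtlety is that, unlike the geometric side where $\sim$-classes were collapsed, here one works directly in $\tree\ell(\theta)$, so one should double-check that the vertex set is the one making $\iota\circ m_2$ simplicial; this is guaranteed by the definition of $\cvpar\ell_\theta$ and Lemma~\ref{lem:6}. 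Beyond that, the proof is a routine transcription of the argument for Lemma~\ref{lem:11}, so I would keep it brief and explicitly point to the parallel statements there.
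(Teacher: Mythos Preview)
Your proposal is correct and matches the paper's approach exactly: the paper omits the proof of Lemma~\ref{lem:26} entirely, stating that it is ``after changing notation, identical to that of Lemma~\ref{lem:11} (1)--(4)'' (the paper's cross-reference has a typo pointing to \ref{lem:26} itself, but the intent is clearly Lemma~\ref{lem:11}). Your plan to transcribe the arguments of Lemma~\ref{lem:11} with $\psi$ replaced by $\iota\circ m_2$, $\veta_0$ by $\alpha_0$, $\xi_1$ by $\beta_1$, and $\cwpar\ell$ by $\cwpar\ell_\theta$ is precisely what is required, and the bookkeeping concerns you flag about commuting $\iota$ with $m_2$ are indeed handled by Lemma~\ref{lemma:piell} and Lemma~\ref{lemma:iell}.
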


We omit the proof of the previous lemma since it is, after changing notation, identical  to that of Lemma~\ref{lem:26} (1)--(4).

Denote by $\gamc{\ell}(\theta)$ the set of $\Gamma$-vertices of level $\ell$ 
which belong to  $\treec\ell(\theta)$.

\begin{lemma}
  \label{lem:32}
  For all $\ell \ge 1$ and for all $\theta' \in \cvpar{\ell-1}_\theta$, 
 $$\gamc{\ell}(\theta) =  \gamc{\ell}(\theta').$$
Moreover, if $v$ is a vertex of $\treec{\ell}(\theta)$ such that 
$v \subset \cvpar{\ell-1}_\theta$, then $\partial v$ 
is contained in the union of the vertices of $\gamc{\ell}(\theta)$.
In particular, $v$ is a vertex of $\tree\ell (\theta')$ for all
$\theta' \in \cvpar{\ell-1}_\theta$.
\end{lemma}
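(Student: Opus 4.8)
The plan is to prove all three assertions of Lemma~\ref{lem:32} simultaneously by induction on $\ell$, exploiting the combinatorial description of the trees $\tree\ell(\theta)$ coming from the partitions $\{I_0^\pm(\theta),I_1^\pm(\theta)\}$ and the itineraries $\itin^\pm_\theta$ used to build $ {\lambda}(\theta)$. The key point, and the source of all three statements, is that the level $\ell$ lamination $ {\lambda}(\theta)^{(\ell)}$ depends on $\theta$ only through which element of each partition contains the relevant arguments; and by Lemma~\ref{lem:1} (together with the remark that $ {\lambda}(\theta)= {\lambda}^+(\theta)= {\lambda}^-(\theta)$ when $\theta$ is not eventually in $A_0$), for $\theta'$ in the vertex $\cvpar{\ell-1}_\theta$ these partition-memberships are unchanged up to level $\ell$ for all arguments whose $ {\lambda}(\theta)$-class lives in the support $A_{\ell}$. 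So the first step is to record precisely the statement: for $\theta'\in\cvpar{\ell-1}_\theta$, the level $\ell$ restrictions $ {\lambda}(\theta)^{(\ell)}$ and $ {\lambda}(\theta')^{(\ell)}$ agree on every class except possibly on the single class containing $\theta/2$ and $\theta/2+1/2$ (which only differs by a relabelling of which points map two-to-one). Concretely, $\theta'\in\cvpar{\ell-1}_\theta$ means $\theta/2,\theta'/2$ (and the antipodes) lie in a common $ {\lambda}(\theta)^{(\ell-1)}$-vertex, which forces $\itin^+_{\theta'}(t)=\itin^+_\theta(t)$ for all $t$ up to level $\ell$, so $\tree\ell(\theta)$ and $\tree\ell(\theta')$ have the same $\Gamma$-vertices and $Y$-vertices once one passes to the subtree $\treec\ell$ that avoids the ``critical'' direction at $\cwpar\ell_\theta$. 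This is really an unwinding of the proof of Lemma~\ref{lem:1}, case (2), combined with the definition of $\treec\ell(\theta)$ in Lemma~\ref{lem:26}.

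Next I would run the induction. For $\ell=1$ the statement is immediate from the explicit description of level $1$ supports (the fixed class $A_0$ and the prefixed class $A_0+1/2$) and the fact that $\cvpar0_\theta$ is the unique $\Gamma$-vertex of level $0$, which is the same for all $\theta$. For the inductive step, one uses the functional relation $m_2\circ\pi_{\ell+1}=\pi_{\ell-1}\circ m_2$ from Lemma~\ref{lemma:piell} and the branched-map structure of $m_2:\tree\ell(\theta)\to\tree{\ell-1}(\theta)$ from Lemma~\ref{lem:6}: a $\Gamma$-vertex of level $\ell$ inside a given level $\ell-1$ vertex is determined by the lamination classes of level $\ell$ whose convex hulls it contains, and those classes in turn are preimages under $m_2$ of level $\ell-1$ classes, all of which by the inductive hypothesis are $\theta$-independent for $\theta$ in the relevant vertex. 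The only class that could be sensitive to the exact value of $\theta$ is the one through $\{\theta/2,\theta/2+1/2\}$, and that class lives in the direction at $\cwpar\ell_\theta$ that is, by construction, removed when one restricts to $\treec\ell(\theta)$; hence $\gamc{\ell}(\theta)=\gamc{\ell}(\theta')$. The ``moreover'' then follows: if $v$ is a vertex of $\treec\ell(\theta)$ with $v\subset\cvpar{\ell-1}_\theta$, then $\partial v$ consists of geodesics whose endpoints are in level $\ell$ classes that (being in $\treec\ell$, hence avoiding the critical class) coincide for $\theta$ and $\theta'$; so $\partial v$ is contained in the union of the $\gamc\ell(\theta)=\gamc\ell(\theta')$ vertices, and therefore $v$ is cut out of $\ponec$ by $\cL( {\lambda}(\theta')^{(\ell)})$ exactly as by $\cL( {\lambda}(\theta)^{(\ell)})$, i.e. $v$ is a vertex of $\tree\ell(\theta')$.

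The main obstacle is bookkeeping the ``critical'' class carefully: one must be sure that the only place where $ {\lambda}(\theta)^{(\ell)}$ and $ {\lambda}(\theta')^{(\ell)}$ can genuinely differ is inside the single vertex $\cvpar{\ell-1}_\theta$ in the direction of $\cwpar\ell_\theta$, and that this is precisely what the definition of $\treec\ell$ excises — so that on $\treec\ell$ the two trees are literally equal, not merely isomorphic. This requires combining the unlinked and consecutive-preserving properties (to see that a class not meeting the critical direction is ``rigid'' in $\theta$) with Lemma~\ref{lem:19}/Remark~\ref{rem:2} applied to the center lamination $ {\lambda}_*$ (to control the $\cL_\infty$-part of the geometric lamination, which does not move with $\theta$ at all since it is built from $ {\lambda}_*$). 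Once this localization is in place, the conclusion is formal. I would also note that the statement ``$\gamc\ell(\theta)=\gamc\ell(\theta')$'' should be read as equality of sets of subsets of $\ponec$, which is exactly what the localization gives; the final clause ``$v$ is a vertex of $\tree\ell(\theta')$'' then needs only that the connected components of $\ponec\setminus\cL( {\lambda}(\theta')^{(\ell)})$ agree with those of $\ponec\setminus\cL( {\lambda}(\theta)^{(\ell)})$ in the region under $\cvpar{\ell-1}_\theta$, which follows from the agreement of the $\Gamma$-vertices there.
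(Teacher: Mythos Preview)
Your overall intuition --- that $\lambda(\theta)^{(\ell)}$ and $\lambda(\theta')^{(\ell)}$ can only differ near the critical diameter, and that this region is precisely what restricting to $\treec\ell$ excises --- is correct and is the heart of the lemma. However, your execution diverges from the paper's and contains a genuine gap.

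The paper does \emph{not} argue by induction on $\ell$. It treats two cases directly. If $\cvpar{\ell-1}_\theta$ is a $\Gamma$-vertex, Lemma~\ref{lem:1} gives $\lambda(\theta)=\lambda(\theta')$ outright. If $\cvpar{\ell-1}_\theta$ is a $Y$-vertex, one lets $V$ be the component of $\ponec\setminus\cwpar\ell_\theta$ containing $\alpha_0$ and $W=\ponec\setminus V$; by Lemma~\ref{lem:19} (applied to $\lambda_*$, which builds the $\cL_\infty$-part of every $\cL(\lambda(\theta)^{(\ell)})$ and does not move with $\theta$), the set $W\cap\D$ is hyperbolically convex, so the $\theta'$-diameter lies entirely in $W$. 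Hence any $\lambda(\theta)$-class $A$ whose convex hull sits inside a vertex of $\gamc\ell(\theta)$ --- which by definition lies in $V$ together with all its $m_2$-iterates --- is unlinked with $\{\theta'/2,\theta'/2+1/2\}$, and so is every $m_2^k(A)$; this forces $A$ to be a $\lambda(\theta')$-class. The ``moreover'' follows by the same localization: if $v\subset\cvpar{\ell-1}_\theta$ lies in $\treec\ell(\theta)$, every $\Gamma$-vertex $w$ meeting $\partial v$ has all its $m_2$-iterates in $V$ (inherited from $v$), so $w\in\gamc\ell(\theta)$.

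Your inductive step, by contrast, has a real problem. You assert that $\itin^+_{\theta'}(t)=\itin^+_\theta(t)$ ``for all $t$ up to level $\ell$'', but this fails for any $t$ (or any $m_2^k(t)$) lying between $\theta/2$ and $\theta'/2$. Itineraries genuinely change; what does not change is the \emph{class} of $t$, provided the whole forward orbit of that class avoids the region where the two diameters differ. Establishing that $\gamc\ell$-classes have this property is exactly the content of the lemma, and your induction does not supply it: knowing $\gamc{\ell-1}(\theta)=\gamc{\ell-1}(\theta')$ tells you the $m_2$-\emph{images} agree, but level-$\ell$ classes are \emph{preimages} of level-$(\ell-1)$ classes, and how a preimage splits into classes depends on $\theta$ versus $\theta'$. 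To control this you would still need the unlinking argument (i.e., the convexity coming from Lemma~\ref{lem:19}), at which point the induction is superfluous. You do correctly name Lemma~\ref{lem:19} as the tool for the ``main obstacle'' --- but that lemma does all the work in one shot, not as an ingredient in an inductive step.
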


\begin{proof}
Let $\theta' \in \cvpar{\ell-1}_\theta$. We first consider the case in which
  $\cvpar{\ell-1}_\theta$ is a $\Gamma$-vertex and then deal with the  case in which is a 
$Y$-vertex.

Assume that  $\cvpar{\ell-1}_\theta$ is a $\Gamma$-vertex. Then 
${\lambda}(\theta) = {\lambda}(\theta')$, by Lemma~\ref{lem:1}.
In particular, $\gamc{\ell}(\theta) =  \gamc{\ell}(\theta').$
Moreover, if $v \subset \cvpar{\ell-1}_\theta$, then $v = \cvpar{\ell-1}_\theta$ and
the lemma follows in this case. 

Assume that $\cvpar{\ell-1}_\theta$ is a $Y$-vertex.
Let $V$ be the connected component of $\ponec \setminus \cwpar\ell_\theta$ that contains
$\alpha_0$. By Lemma~\ref{lem:19}, $W = \ponec \setminus V$ is such that $W \cap \D$ is 
a topological disk convex with respect to the hyperbolic metric. Hence, the diameter connecting
$\theta'/2$ and $\theta'/2+1/2$ is completely contained in $W$.
The vertices  of $\treec\ell(\theta)$ are exactly those that belong to
$V$ and whose iterates also belong to $V$. 
Therefore, if $A$ is a $ {\lambda}(\theta)$-class whose convex hull is contained in
an element of $\Gamma_c(\theta)$, then $A$  is unlinked with $\{ \theta'/2, \theta'/2 +1/2 \}$.
Since $m_2(\Gamma_c(\theta)) \subset \Gamma_c(\theta)$, the same holds for the $m_2^{k}(A)$.
Hence, $A$ is a $ {\lambda}(\theta')$-class. It follows that 
$\Gamma_c(\theta) = \Gamma_c(\theta')$.

Now assume that $v$ is a $Y^{(\ell)}$-vertex which belongs to $\treec\ell(\theta)$
and that is contained in $ \cvpar{\ell-1}_\theta$.
Let $w$ be a $\Gamma^{(\ell)}(\theta)$-vertex such that $\partial v \subset w$.
Since all the iterates $m_2^k(v)$ are contained in the closed set $V$, the same occurs 
with $m_2^{k}(w)$. Therefore, $w \in  \treec\ell(\theta)$ and the lemma follows.
\end{proof}

\subsection{Topological Conjugacy}
\label{sec:TopConj}
The simultaneous proof of Proposition~\ref{bothin} and Proposition~\ref{pro:1} will rely on an inductive
construction of conjugacies defined on the level $\ell$ trees $\barbol\ell$.

The initial step consists of fixing a conjugacy $h_0 : \barbol0 \to \tree0 (\theta)$
between the dynamics of $\psi: \barbol0 \to \barbol0$ and that of $m_2: \tree0 (\theta) \to \tree0 (\theta)$ for all $\theta$ in the characteristic interval $I_{p/q}$. In fact, the vertices of $\barbol0$ are $\{\xi_0, \dots, \xi_{q-1},  \veta_0 \}$ and $\barbol0$ is the starlike tree obtained as the union of
the intervals $[\veta_0, \xi_j]$. Note that $\cwpar0 = \xi_0$. As described before Lemma~\ref{lem:26}, for all $\theta \in I_{p/q}$ there is a unique $\Gamma$-vertex $\alpha_0$ and the $Y$-vertices are $\beta_0, \dots, \beta_{q-1}$ where $\beta_0 = \cwpar0_\theta$. 
It follows that the unique tree isomorphism $h_0: \barbol0 \to \tree0 (\theta)$ such that
$h_0 (\veta_0) = \alpha_0$ and $h_0 (\xi_j) = \beta_j$ is a conjugacy.

\smallskip
Before proving the propositions we establish two necessary lemmas.

\begin{lemma}
  \label{lem:33}
Let $\lambda_*$ be the $\alpha$ lamination of the center of the $p/q$-limb.
Assume that $\cwpar0 = \cdots = \cwpar{\ell-1}$, for some $\ell \ge 1$.
 Then, for all $m=1,\dots,\ell$, there exists tree isomorphism $h_m : \barbol{m} \to \cT (\lam{m}_*)$ such that
$$ h_m \circ \psi = \iota \circ m_2 \circ h_m$$
and the restriction of $h_m$ to $\barbol0$ is $h_0$. 
\end{lemma}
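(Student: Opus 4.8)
The argument is an induction on $m$, lifting the conjugacy $h_{m-1}$ to $h_m$ via Lemma~\ref{lifting.lemma}. The hypothesis $\cwpar0 = \cdots = \cwpar{\ell-1}$ means that for $1 \le m \le \ell$ the active critical point stays in the same direction at $\xi_0 = \cwpar{m-1}$, so by Lemma~\ref{lem:36}(2) the branched map $\psi : \barbol{m} \to \barbol{m-1}$ has critical value interval $\intervalb{m-1} = [\xi_1, \cvpar{m-1}]$ with $\cvpar{m-1} \in GO(\cO)$ unchanged (for $m \le \ell$); in particular $\intervalb{m-1} = \intervalb0 = [\xi_1, \xi_0]$ is independent of $m$ in this range, and this edge sits inside $\barbolc{m-1}$. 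On the model side, I first invoke Lemma~\ref{lem:2}: for $\lambda_*$ the $\alpha$-lamination of the center of the $p/q$-limb one has $\cvpar{m}_*(0) = \cvpar{m}_*(\infty)$ for all $m \ge 0$, so the critical interval of $m_2 : \cT(\lam{m}_*) \to \cT(\lam{m-1}_*)$ is a single vertex, and the critical value interval $\intervalt{m-1}{*}$ is likewise a single vertex (namely $\iota(\beta_0)$, the image of that collapsed vertex). Thus both $\psi$ and $m_2$ are degree two branched maps in the sense of Definition~\ref{def:3}, with critical value data that is constant in $m$ throughout the relevant range.

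\textbf{The inductive step.} Assume $h_{m-1} : \barbol{m-1} \to \cT(\lam{m-1}_*)$ is a tree isomorphism conjugating $\psi$ to $\iota \circ m_2$ and restricting to $h_0$ on $\barbol0$. To apply Lemma~\ref{lifting.lemma} I set $\cA' = \barbol{m}$, $\cA = \barbol{m-1}$ (viewed inside $\barbol{m}$ via Lemma~\ref{lem:10}(1), which gives $\arbol{m-1}\subset\arbol{m}$ and hence the quotients nest), $\cT' = \cT(\lam{m}_*)$, $\cT = \cT(\lam{m-1}_*)$ (included via $\iota$ of Lemma~\ref{lemma:iell}), $\vphi = \psi$, $m = \iota\circ m_2$, and $h = h_{m-1}$. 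The hypotheses of Lemma~\ref{lifting.lemma} to check are: that $\psi(\barbol{m-1}) \subset \barbol{m-1}$ and $\iota m_2(\cT(\lam{m-1}_*)) \subset \cT(\lam{m-1}_*)$, which are forward-invariance statements built into the definitions; that both maps restrict to degree two branched maps on $\cA$ and on $\cT$, which follows from the structure just described together with Lemma~\ref{lem:2} (note the crossings with $\cA$, resp.\ $\cT$, are again branched of degree two since $\xi_0 \in \barbol{m-1}$, resp.\ the collapsed vertex $\in \cT(\lam{m-1}_*)$); that the conjugacy square for $h_{m-1}$ commutes, which is the inductive hypothesis; and condition (2), $h_{m-1}(\intervalb{m-1}) = \intervalt{m-1}{*}$. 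For this last point I use that $h_{m-1}$ extends $h_0$, that $\intervalb{m-1} = [\xi_1,\xi_0]$ collapses under $\psi^{\,?}$... more precisely, since $\cwpar{m-1} = \xi_0$ the critical value interval on the $\barbol{}$ side is $[\xi_1, \cvpar{m-1}]$ and on the model side the critical value "interval" is the single collapsed vertex; compatibility of these under $h_{m-1}$ needs a short verification, carrying along the fact (Lemma~\ref{lem:31}, Lemma~\ref{lem:10}) that the $\sim$-collapse of $\arbol{m-1}$ makes $\cvpar{m-1}$ and $\xi_0$ agree in $\barbol{m-1}$ exactly when the critical point has entered $U_0$, which in the present hypothesis it has not, so $\cvpar{m-1}$ is a genuine vertex and its image matches $m_2(\cwpar{m}_{\theta})$-type data on the model.

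\textbf{Conclusion and main obstacle.} Lemma~\ref{lifting.lemma} then produces a tree isomorphism $h_m : \barbol{m} \to \cT(\lam{m}_*)$ with $h_m \circ \psi = \iota \circ m_2 \circ h_m$ and $h_m|_{\barbol{m-1}} = h_{m-1}$; composing the restrictions down to level $0$ gives $h_m|_{\barbol0} = h_0$, completing the induction. Iterating from $m=1$ up to $m=\ell$ yields the family claimed. The step I expect to be the genuine obstacle is verifying hypothesis (2) of the lifting lemma, $h_{m-1}(\intervalb{m-1}) = \intervalt{m-1}{*}$: one must track precisely which vertex of $\cT(\lam{m-1}_*)$ the critical value lands in and match it with $\cvpar{m-1}$ under $h_{m-1}$, using Lemma~\ref{lem:2} to know the model's critical data is a collapsed single vertex and Lemma~\ref{lem:32} to know the $\Gamma$-vertices of the relevant subtree are independent of the choice of $\theta \in \cvpar{m-1}_\theta$ — this is where the hypothesis $\cwpar0 = \cdots = \cwpar{\ell-1}$ is essential, since it guarantees the critical value interval does not grow (Lemma~\ref{lem:36}(3)) and hence that the already-constructed $h_0$ pins it down correctly at every level $\le \ell$.
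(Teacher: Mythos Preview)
Your overall strategy—induction on $m$ together with Lemma~\ref{lifting.lemma}—is exactly the paper's approach, but you have miscomputed the critical value interval, and this error propagates into a much more complicated verification than is actually needed.

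The key point you miss is that under the hypothesis $\cwpar{m}=\cwpar0=\xi_0$, the \emph{critical interval} $[\xi_0,\cwpar{m}]$ of $\psi:\barbol{m}\to\barbol{m-1}$ degenerates to the single point $\{\xi_0\}$, so the critical value interval is $\intervalb{m-1}=\psi(\{\xi_0\})=\{\xi_1\}$, not the nondegenerate segment $[\xi_1,\xi_0]$ that you write. (Recall from Lemma~\ref{lem:36}(2) that $\cvpar{m-1}=\psi(\cwpar{m})$; here $\cwpar{m}=\xi_0$ gives $\cvpar{m-1}=\xi_1$.) On the model side, Lemma~\ref{lem:2} gives $\cvpar{m}_*(0)=\cvpar{m}_*(\infty)$ for every level, so the critical value interval of $m_2:\cT(\lam{m}_*)\to\cT(\lam{m-1}_*)$ is likewise a single vertex, namely $\beta_1$. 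Hypothesis~(2) of the lifting lemma thus collapses to the single equation $h_{m-1}(\xi_1)=\beta_1$, which is immediate from $h_{m-1}|_{\barbol0}=h_0$ and the definition of $h_0$. That is the entire content of the paper's three-line proof.

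Because you carry the spurious interval $[\xi_1,\xi_0]$, you are led to invoke Lemma~\ref{lem:32}, to worry about $\sim$-collapses, and to speak of matching nondegenerate intervals under $h_{m-1}$—none of which is needed here. Your closing paragraph correctly senses that the issue is pinning down the critical value vertex, but once you see both intervals are points this is automatic. You should also be careful with your remark that ``$\cvpar{m-1}$ is unchanged for $m\le\ell$'': the hypothesis controls $\cwpar{m}$ only for $m\le\ell-1$, so your bookkeeping of which $\cvpar{m-1}$ are forced to equal $\xi_1$ needs one more look.
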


\begin{proof}
  Assume that $h_m$ has been already constructed and that $0 \le m < \ell$. 
Since $\psi : \barbol{m+1} \to \barbol{m+1}$ (resp. $m_2 : \cT (\lam{m+1}_*) \to \cT (\lam{m+1}_*)$)
are degree two branched maps over the point $\xi_1$ (resp. $\beta_1$) and $h_m(\xi_1)=h_0(\xi_1)=\beta_1$,
from Lemma~\ref{lifting.lemma}, we may lift $h_m$ to a conjugacy $h_{m+1}$ with the desired properties.
\end{proof}

\begin{lemma}
  \label{lem:34}
  There exists a nested sequence $$\cvpar0_\cM \supset \cvpar1_\cM \supset \cdots$$
of non-empty subsets of $I_{p/q}$ such that for all $\ell \ge 1$ 
there exists a collection of tree isomorphism $$\{ h_{\ell,\theta}: \barbol\ell \to \tree\ell(\theta) \}_{\theta \in \cvpar{\ell-1}_\cM}$$
with the property that
 $$ \cvpar{\ell}_\cM = h_{\ell, \theta} (\cvpar{\ell})$$
for all  $\theta \in \cvpar{\ell-1}_\cM$.

Moreover, the following statements hold:
\begin{enumerate}
\item The restriction of $h_{\ell,\theta}$ to $\barbol0$ is $h_0$, for all $\theta \in \cvpar{\ell-1}_\cM$.
\item For all $\theta \in \cvpar{\ell-1}_\cM$,
the following diagram conmutes:
$$
\begin{CD}
  \barbol{\ell} @>>\psi> \barbol{\ell} \\
  @VVh_{\ell,\theta}V @VVh_{\ell,\theta}V\\
  \tree{\ell}(\theta) @>\iota \circ m_2>>
  \tree{\ell}(\theta).
\end{CD} 
$$
\item For all  $\theta, \theta' \in \cvpar{\ell-1}_\cM$
$$h_{\ell,\theta} (v) = h_{\ell,\theta'} (v)$$
if $v \in \barbolc\ell \cap GO(\veta_0)$. 
\end{enumerate}
\end{lemma}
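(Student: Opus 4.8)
The plan is to construct the nested sequence $\{\cvpar\ell_\cM\}$ and the isomorphisms $\{h_{\ell,\theta}\}$ simultaneously, by induction on $\ell$, using the lifting lemma (Lemma~\ref{lifting.lemma}) as the engine at each step. The base case is already in hand: $\cvpar0_\cM=I_{p/q}$, and $h_{0}:\barbol0\to\tree0(\theta)$ is the explicit tree isomorphism fixed just before this lemma; since it conjugates $\psi$ to $m_2$ for \emph{every} $\theta\in I_{p/q}$, it trivially satisfies (1)--(3) with $h_{0,\theta}=h_0$ for all $\theta$. For the inductive step, suppose $\cvpar{\ell-1}_\cM$ and $\{h_{\ell,\theta}\}_{\theta\in\cvpar{\ell-1}_\cM}$ have been built (abusing the indexing so that level-$\ell$ maps are indexed by the level-$(\ell-1)$ parameter set, exactly as in the statement). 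First I would form $\cvpar\ell_\cM:=h_{\ell,\theta}(\cvpar\ell)$ and verify this set is independent of the choice of $\theta\in\cvpar{\ell-1}_\cM$: by Lemma~\ref{lem:36}, $\cvpar\ell$ is either an endpoint of $\barbol\ell$, a point of $GO(\veta_0)\cup GO(\cO_0)$, or a point of $GO(\cO)$, and in the collapsed tree $\barbol\ell$ it lies in $\cWpar\ell=GO(\cO)\cup GO(\veta_0)$; if $\cvpar\ell\in GO(\veta_0)\cap\barbolc\ell$ then part (3) of the inductive hypothesis forces the image to be the same for all $\theta$, and the remaining cases ($\cvpar\ell\in GO(\cO)$, or $\cvpar\ell=\cwpar0$) are handled by the rigidity of a tree isomorphism once its restriction to $\barbol0$ is pinned down to $h_0$. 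Then $\cvpar\ell_\cM$ is nonempty (it contains, e.g., $h_{\ell,\theta}(\cwpar\ell)$'s forward $m_2$-orbit worth of choices) and, being a subset of $\tree\ell(\theta)\cap\RZ$ identified via $h_{\ell,\theta}$ with a vertex, it is a subset of $I_{p/q}$ — here I would invoke Lemma~\ref{lem:32}, which says the relevant $\Gamma$-vertices do not change as $\theta$ varies inside $\cvpar{\ell-1}_\theta$, to see that $\cvpar\ell_\cM$ really does lie inside the characteristic interval and that the level-$\ell$ trees $\tree\ell(\theta')$ agree for $\theta'\in\cvpar\ell_\cM$ on the relevant subtree.

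Next, for each $\theta\in\cvpar\ell_\cM\subset\cvpar{\ell-1}_\cM$ I would produce $h_{\ell+1,\theta}:\barbol{\ell+1}\to\tree{\ell+1}(\theta)$ by lifting $h_{\ell,\theta}$. The hypotheses of Lemma~\ref{lifting.lemma} must be checked: $\psi:\barbol{\ell+1}\to\barbol\ell$ is a degree-two branched map over $\intervalb\ell=\intervala\ell/\!\sim$ (Lemma~\ref{lem:31} and the discussion of the collapsed trees), $\psi:\barbol\ell\to\psi(\barbol\ell)$ is again degree-two branched, and likewise $\iota\circ m_2:\tree{\ell+1}(\theta)\to\tree\ell(\theta)$ is degree-two branched over $\intervalt\ell\theta$ by Lemma~\ref{lem:6}; I must verify that $h_{\ell,\theta}$ carries the branched-value interval $\intervalb\ell$ onto $\intervalt\ell\theta$. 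This is the compatibility condition $h(\cI_\cA)=\cI_\cT$ of the lifting lemma, and it follows because $\intervalb\ell=[\xi_1,\cvpar\ell]$ (collapsed) maps under $h_{\ell,\theta}$ to $[\beta_1,h_{\ell,\theta}(\cvpar\ell)]$, while by the defining property $\cvpar\ell_\cM=h_{\ell,\theta}(\cvpar\ell)$ and the identification $\cvpar\ell_\theta=m_2(\cwpar{\ell+1}_\theta)$ one has $h_{\ell,\theta}(\cvpar\ell)=\cvpar\ell_\theta$ whenever $\theta\in\cvpar\ell_\cM$; together with $h_{\ell,\theta}(\xi_1)=\beta_1$ this gives $h_{\ell,\theta}(\intervalb\ell)=\intervalt\ell\theta$. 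With the hypotheses met, Lemma~\ref{lifting.lemma} yields a tree isomorphism $h_{\ell+1,\theta}$ extending $h_{\ell,\theta}$ and making the square with $\psi$ and $\iota\circ m_2$ commute; extending $h_{\ell,\theta}$ it in particular still restricts to $h_0$ on $\barbol0$, so properties (1) and (2) are inherited.

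It remains to arrange property (3) at level $\ell+1$, namely that $h_{\ell+1,\theta}(v)=h_{\ell+1,\theta'}(v)$ for $v\in\barbolc{\ell+1}\cap GO(\veta_0)$. The point is that the lifting lemma leaves a choice — which of the two $m_2$-inverse branches to use on each component of $\barbol{\ell+1}\setminus\barbol\ell$ — but on the ``postcritical'' subtree $\barbolc{\ell+1}$ this choice is forced. Using Lemma~\ref{lem:11} (the $\barbolc\ell$ are nested, $\psi$-invariant, connected subtrees containing $\cwpar\ell$ and $\intervalb\ell$) together with Lemma~\ref{lem:32} and Lemma~\ref{lem:26} (the analogous statements for $\treec\ell(\theta)$, with $\Gamma$-vertices independent of $\theta$ in the relevant range), I would argue that the restriction of $h_{\ell+1,\theta}$ to $\barbolc{\ell+1}$ is uniquely determined by its restriction to $\barbolc\ell\cup\{\cwpar{\ell+1}\}$ and hence, inductively and after pinning $h_0$, is the same for all $\theta\in\cvpar\ell_\cM$; this gives (3). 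I would also need to treat the degenerate sub-case $\cwpar0=\cdots=\cwpar{\ell}$ separately: there $\cvpar\ell\in GO(\veta_0)$ need not appear and one instead uses Lemma~\ref{lem:33} to identify $\tree\ell(\theta)$ with $\cT(\lambda^{(\ell)}_*)$ for the center parameter and thereby keep the $h_{\ell,\theta}$'s coherent. \textbf{The main obstacle} I anticipate is precisely the well-definedness/$\theta$-independence bookkeeping: showing both that $\cvpar\ell_\cM=h_{\ell,\theta}(\cvpar\ell)$ does not depend on $\theta$ and that the lifts can be chosen coherently across $\theta$ so that (3) persists. Everything else — checking the branched-map hypotheses, applying Lemma~\ref{lifting.lemma}, inheriting (1) and (2) — is routine; the delicate part is synchronizing the parameter $\theta$ with the combinatorics of $\barbol\ell$ via the subtrees $\barbolc\ell$, $\treec\ell(\theta)$ and Lemmas~\ref{lem:32} and~\ref{lem:26}.
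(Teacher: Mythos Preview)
Your overall strategy matches the paper's: induction on $\ell$, lifting via Lemma~\ref{lifting.lemma}, with the $\theta$-independence bookkeeping as the crux, and the degenerate initial segment $\cwpar0=\cdots=\cwpar{\ell_0-1}$ handled via Lemma~\ref{lem:33}. However, there is a genuine gap in your argument for the $\theta$-independence of $\cvpar\ell_\cM = h_{\ell,\theta}(\cvpar\ell)$ when $\cvpar\ell\in GO(\cO)$. Your claim that this ``is handled by the rigidity of a tree isomorphism once its restriction to $\barbol0$ is pinned down to $h_0$'' is false: there are in general many tree isomorphisms $\barbol\ell\to\tree\ell(\theta)$ extending $h_0$ and conjugating $\psi$ to $\iota\circ m_2$ (the lifting lemma produces them precisely by \emph{choosing} inverse branches). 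The paper's argument is different and essential: by Lemma~\ref{lem:11}(5), every vertex $v$ adjacent to $\cvpar\ell$ in $\barbol\ell$ lies in $\barbolc\ell$; since such $v$ belong to $GO(\veta_0)$, property~(3) of the inductive hypothesis forces $h_{\ell,\theta}(v)$ to be $\theta$-independent, and these images are exactly the $\Gamma$-vertices meeting $\partial h_{\ell,\theta}(\cvpar\ell)$, which therefore determine the $Y$-vertex.

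Your treatment of property~(3) at level $\ell+1$ is also too schematic. The assertion that the restriction of $h_{\ell+1,\theta}$ to $\barbolc{\ell+1}$ is ``uniquely determined by its restriction to $\barbolc\ell\cup\{\cwpar{\ell+1}\}$'' does not follow directly, and the paper instead performs a case analysis you omit. In particular, when $\cwpar{\ell+1}=\cwpar\ell$ (so neither Lemma~\ref{lem:11}(7) nor the $GO(\veta_0)$ shortcut applies), the paper first lifts only the restriction $h_{\ell,\theta}|_{\barbolc\ell}$ to a subtree $\barbols{\ell+1}=\psi^{-1}(\psi(\barbolc{\ell+1}))$, synchronizes these lifts across $\theta$ via the $\Gamma$-vertex invariance of Lemma~\ref{lem:32}, and only then extends to all of $\barbol{\ell+1}$ by successive lifts. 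You should also verify explicitly that $\cvpar\ell_\cM\subset\cvpar{\ell-1}_\cM$; the paper does this using the retraction $\pi_\ell$ together with the observation that $]\cvpar{\ell-1},\cvpar\ell]\cap\barbol{\ell-1}=\emptyset$, a step your outline does not address.
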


\begin{proof}
If $\cwpar\ell = \cwpar0$ for all $\ell$, then the assertion follows from 
Lemmas~\ref{lem:2} and~\ref{lem:33} after declaring $\cvpar{\ell-1}_\cM=\cvpar{\ell-1}_*$ (with the notation of Lemma~\ref{lem:2}) and $h_{\ell,\theta}=h_\ell$ (with the notation of Lemma~\ref{lem:33}) for all $\theta \in \cvpar{\ell-1}_\cM$.

If, for some $\ell_0 \ge 1$, we have that $\cwpar{\ell_0} 
\neq \cwpar0 = \cwpar1 = \cdots =\cwpar{\ell_0-1}$, then for all $m=1,\dots, \ell_0$, 
we let $\cvpar{m-1}_\cM = \cvpar{m-1}_*$ (with the notation of Lemma~\ref{lem:2}) 
and $h_{m,\theta} = h_m$ as in Lemma~\ref{lem:33}. It is not difficult to check that $\cvpar\ell_\cM = h_{\ell,\theta} (\cvpar\ell_\theta)$ and that  properties (1)--(3) hold for all $\ell < \ell_0$.

For $\ell \ge \ell_0$, we proceed by induction. That is, for
all $m \le \ell$, we suppose that 
$\cvpar{m-1}_\cM$ and $h_{m,\theta}$ have already been defined  and that properties (1)--(3) are satisfied.

Note that by (1) and (2), the elements of $GO(\veta_0)$ map onto $\Gamma$-pieces  and the elements of
$GO(\cO)$ map onto $Y$-pieces under $h_{\ell,\theta}$. Also, the conjugacy implies that $h_{\ell,\theta} (\cwpar\ell) = \cwpar\ell_\theta$
for all $\theta \in \cvpar{\ell-1}_\cM$

First we claim that $ h_{\ell, \theta} (\cvpar\ell)$ is independent of  $\theta \in \cvpar{\ell-1}_\cM$.
In fact, under our assumption,  $\cwpar\ell \neq \cwpar0$ for all $\ell \ge \ell_0$.
By Lemma~\ref{lem:11} (5), $\intervalb\ell \subset \barbolc\ell$. In particular $\cvpar\ell \in \barbolc\ell$.
If $\cvpar\ell \in GO(\veta_0)$, then the claim follows from property (3).
If $\cvpar\ell \in GO(\cO)$, then for every $v \in \barbolc\ell$ such that $[\cvpar\ell,v]$ is an edge,
we have that $v \in \barbolc\ell$, by  Lemma~\ref{lem:11} (5). From property (3), every $\Gamma$-vertex which intersects non-trivially $\partial  h_{\ell, \theta} (\cvpar\ell)$
is independent of $\theta \in \cvpar{\ell-1}_\cM$. Hence, $h_{\ell, \theta} (\cvpar\ell)$ is independent of $\theta \in \cvpar{\ell-1}_\cM$.

Pick any $\theta \in \cvpar{\ell-1}_\cM$ and define $\cvpar\ell_\cM = h_{\ell, \theta} (\cvpar\ell)$.
We must show that $\cvpar\ell_\cM \subset \cvpar{\ell-1}_\cM$. In fact, 
by the inductive hypothesis (2), branched values correspond under
$h_{\ell, \theta}$ so we have that $\cvpar{\ell-1}_\theta = 
\pi_\ell \circ h_{\ell, \theta}(\cvpar{\ell-1})$
where $\pi_\ell: \tree\ell(\theta) \to \tree{\ell-1}(\theta)$ denotes 
the retraction of Lemma~\ref{lemma:piell}.
Moreover, $\pi_\ell \circ h_{\ell, \theta}(\cvpar{\ell-1}) =\pi_\ell \circ h_{\ell, \theta}(\cvpar{\ell})$ since the interval $]\cvpar{\ell-1}, \cvpar\ell]$ is 
not contained in $\barbol{\ell-1}$. 
From the definition of $\pi_\ell$ we conclude that 
 $\cvpar\ell_\cM = h_{\ell, \theta} (\cvpar\ell)$ is contained in 
$\cvpar{\ell-1}_\theta = h_{\ell-1,\theta}(\cvpar{\ell-1})=\cvpar{\ell-1}_\cM$.

Now we continue with the construction of 
the isomorphisms $h_{\ell+1,\theta}$ for all $\theta \in \cvpar{\ell}_\cM$.

To define $h_{\ell+1,\theta}$ for $\theta \in \cvpar\ell_\cM$ first we consider the case in which $\cvpar\ell \in GO(\veta_0)$.
It follows that $\cvpar\ell_\cM =h_{\ell, \theta} (\cvpar\ell)$ is a $\Gamma$-piece of level $\ell$. 
By Lemma~\ref{lem:1}, for all $\theta,\theta' \in \cvpar\ell_\cM$, we have 
$ {\lambda}(\theta)= {\lambda}(\theta')$.
Thus, pick an element $\theta_\ell$ of $\cvpar\ell_\cM$, and let $h_{\ell+1,\theta_\ell}: \barbol{\ell+1} \to \tree{\ell+1} (\theta_\ell)$  be the lift
obtained from $h_{\ell,\theta_\ell}$ after applying Lemma~\ref{lifting.lemma}. Declare $h_{\ell+1,\theta}=h_{\ell+1,\theta_\ell}$ for all $\theta \in \cvpar\ell_\cM$.
It is not difficult to check that properties (1)--(3) hold.

Now we assume that $\cwpar{\ell+1} \in GO(\cO)$ and define $h_{\ell+1,\theta}$ for $\theta \in \cvpar\ell_\cM$ 
by studying two cases according to whether
$\cwpar{\ell+1} \neq \cwpar\ell$ or $\cwpar{\ell+1} = \cwpar\ell$.

{\sf Case 1.} Suppose that $\cwpar{\ell+1} \neq \cwpar\ell$. By Lemmas~\ref{lem:11} (7) we have that $\barbolc{\ell+1} = \barbol{\ell+1}$.
Pick $\theta_\ell \in \cvpar\ell_\cM$ and  apply Lemma~\ref{lifting.lemma} to $\psi: \barbol{\ell+1} \to \barbol\ell$ and
$m_2: \tree{\ell+1}(\theta_\ell) \to \tree{\ell}(\theta_\ell)$ to lift $h_{\ell,\theta_\ell}$ to a tree isomorphism $h_{\ell+1,\theta_\ell}: \barbol{\ell+1} \to
\tree{\ell+1}(\theta_\ell)$, since by the choice of $\cvpar\ell_\cM$ we have that critical value intervals correspond under $h_{\ell,\theta_\ell}$.
It follows that $h_{\ell+1,\theta_\ell} (\barbolc{\ell+1}) = \treec{\ell+1}(\theta_\ell)$. Thus, we have that $\tree{\ell+1}(\theta_\ell)
= \treec{\ell+1}(\theta_\ell)$. 
By Lemma~\ref{lem:32}, we conclude that $\tree{\ell+1}(\theta) = \tree{\ell+1}(\theta')$ for all $\theta, \theta' \in \cvpar\ell_\cM$. 
Thus, for all $\theta \in \cvpar\ell_\cM$, we may simply define $h_{\ell+1, \theta}=h_{\ell+1, \theta_\ell}$ and (1)--(3) clearly hold in this case.

{\sf Case 2.} Suppose that  $\cwpar{\ell+1} = \cwpar\ell$. 
From Lemma~\ref{lem:11} we have that $\psi(\barbolc{\ell+1}) \subset \barbolc{\ell}$.
Let 
$$\barbols{\ell+1} = \psi^{-1} (\psi(\barbolc{\ell+1}))$$
and
$$\trees{\ell+1}(\theta) = m_2^{-1} (m_2(\treec{\ell+1}(\theta)))$$
Pick $\theta_\ell \in \cvpar\ell_\cM$. 
We may lift $h_{\ell,\theta_\ell} : \barbolc\ell \to \treec\ell(\theta_\ell)$ to
a tree isomorphism   $h_{\ell+1,\theta_\ell}:\barbols{\ell+1} \to \trees{\ell+1}(\theta_\ell)$.
Since the $\Gamma$-vertices of $\treec{\ell+1}(\theta)$ are independent of $\theta \in \cvpar\ell_\cM$ (Lemma~\ref{lem:32}),
we have that they coincide with the  $\Gamma$-vertices $\treec{\ell+1}(\theta_\ell)$.
We identify  $\treec{\ell+1}(\theta)$ with $\treec{\ell+1}(\theta_\ell)$ via the unique tree isomorphism which preserves
$\Gamma$-vertices. This tree isomorphism clearly extends to one from $\trees{\ell+1}(\theta)$ onto $\trees{\ell+1}(\theta_\ell)$.
Thus, we may define $h_{\ell+1,\theta}:\barbols{\ell+1} \to \trees{\ell+1}(\theta)$
as equal to $h_{\ell+1,\theta_\ell}:\barbols{\ell+1} \to \trees{\ell+1}(\theta_\ell)$, for all $\theta \in \cvpar\ell_\cM$.
Now we may apply Lemma~\ref{lifting.lemma} to  extend $h_{\ell+1,\theta}$ to $\barbol{\ell+1}$ by successive lifts 
to $\left(\psi_{|\barbol{\ell+1}}\right)^{-k}(\barbols{\ell+1})$. Since every element of $\barbol{\ell+1}$, eventually
maps into $\barbols{\ell+1}$, we obtain the desired isomorphism $h_{\ell+1,\theta}:\barbol{\ell+1} \to \tree{\ell+1}(\theta)$.
\end{proof}

\begin{proof}[Proof of Proposition~\ref{bothin} and Proposition~\ref{pro:1}]
  Let $\cvpar\ell_\cM$ and $h_{\ell,\theta}$ be as in the previous lemma. Take a monotone convergent sequence $\{ \theta_n \} \subset \RZ$ 
such that $\theta_n \in \cvpar{n}_\cM$ for all $n$. Let $\theta_\infty$ denote the limit of $\theta_n$. 
If the sequence is eventually constant, then let $\bla = \bla(\theta_\infty)$.
Otherwise, let $\bla = \bla(\theta_\infty)^+$ when the sequence is increasing, and let $\bla = \bla(\theta_\infty)^-$
when the sequence is decreasing. 
It follows that for all $\ell \ge 0$, there exists $n(\ell)$ such that $\lam\ell = \lambda(\theta_n)^{(\ell)}$ for all $n \ge n(\ell)$.
Since the number of isomorphisms between $\barbol\ell$ and $\cT(\lam\ell)$ is finite, we may recursively extract subsequences
$\{ n_k(\ell) \}$ such that
\begin{enumerate}
\item $n_k(0) = k$.
\item $\{ n_k(\ell+1) \}$ is a subsequence of $\{n_k(\ell)\}$.
\item $n_0(\ell) \ge n(\ell)$. 
\item The restriction of $h_{n_k(\ell), \theta_{n_k(\ell)}}$ to  $\barbol\ell$ is constant, say equal to $h_\ell$.
\end{enumerate}
Passing to the inverse limit of $\{h_\ell\}$ we obtain a conjugacy $h: \hatjuliavphi/\sim \to \tree\infty(\bla)$.  
Since $h$ maps $GO(\veta_0)$ onto the inverse limit of the $\Gamma^{(\ell)}(\bla)$, it follows that
$\cwpar{\ell_0} \in GO(\veta_0)$ if and only if $h_{\ell_0}(\cvpar{\ell_0})$ is a $\Gamma$-vertex containing $\theta_n$
for sufficiently large $n$. In this case, the sequence $\theta_\infty$ eventually maps onto the fixed class of $\bla$. 
That is, $\bla$ is critically prefixed. 
\end{proof}

\begin{proof}[Proof of Theorem~\ref{ithr:dynamics}]
  According to Proposition~\ref{classification} we have three possibililies:
  \begin{itemize}
  \item[(a)] $\juliavphi \cap \hl$ is periodic point free.
  \item[(b)] There exists an indifferent periodic orbit $\cO$ in $\juliavphi \cap \hl$.
  \item[(c)] There exists a repelling periodic orbit $\cO$ in $\juliavphi \cap \hl$.
  \end{itemize}
From Proposition~\ref{attracting.fixed.point}, we have that (a) implies that
Theorem~\ref{ithr:dynamics} (1) holds. From Proposition~\ref{pro:14}, we conclude
that (b) implies Theorem~\ref{ithr:dynamics} (2).  From  Proposition~\ref{bothin} and Proposition~\ref{pro:1} we conclude that (c) implies Theorem~\ref{ithr:dynamics} (3) (a) or (b).
\end{proof}


\bibliographystyle{amsalpha}

\providecommand{\bysame}{\leavevmode\hbox to3em{\hrulefill}\thinspace}


\providecommand{\bysame}{\leavevmode\hbox to3em{\hrulefill}\thinspace}
\providecommand{\MR}{\relax\ifhmode\unskip\space\fi MR }
\providecommand{\MRhref}[2]{%
  \href{http://www.ams.org/mathscinet-getitem?mr=#1}{#2}
}
\providecommand{\href}[2]{#2}
\begin{thebibliography}{{Fab}11}

\bibitem[Ate92]{AtelaWakes}
Pau Atela, \emph{Bifurcations of dynamic rays in complex polynomials of degree
  two}, Ergodic Theory Dynam. Systems \textbf{12} (1992), no.~3, 401--423.
  \MR{1182654 (94d:58128)}

\bibitem[Ben98]{benedettothesis}
Robert~L. Benedetto, \emph{Fatou components in $p$-adic dynamics}, Ph.D.
  thesis, 1998.

\bibitem[Ben02]{benedetto02}
\bysame, \emph{Examples of wandering domains in {$p$}-adic polynomial
  dynamics}, C. R. Math. Acad. Sci. Paris \textbf{335} (2002), no.~7, 615--620.
  \MR{MR1941304 (2003i:37040)}

\bibitem[Ber90]{MonographBerkovich}
Vladimir~G. Berkovich, \emph{Spectral theory and analytic geometry over
  non-archimedean fields}, Mathematical Surveys and Monographs, vol.~33,
  American Mathematical Society, Providence, RI, 1990. \MR{MR1070709
  (91k:32038)}

\bibitem[BFH92]{bfh}
Ben Bielefeld, Yuval Fisher, and John~Hamal Hubbard, \emph{Classification of
  critically preperiodic polynomials as dynamical systems, the}, J. Amer. Math.
  Soc. \textbf{5} (1992), no.~4, 721--762. \MR{MR1149891 (93h:58128)}

\bibitem[BH88]{BrannerHubbardCubicI}
Bodil Branner and John~H. Hubbard, \emph{Iteration of cubic polynomials. i. the
  global topology of parameter space, the}, Acta Math. \textbf{160} (1988),
  no.~3-4, 143--206. \MR{MR945011 (90d:30073)}

\bibitem[BR04]{MonographBakerRumely}
Matthew Baker and Robert Rumely, \emph{Analysis and dynamics on the berkovich
  projective line}, 2004.

\bibitem[BR10]{BakerRumelyBook}
\bysame, \emph{Potential theory and dynamics on the {B}erkovich projective
  line}, Mathematical Surveys and Monographs, vol. 159, American Mathematical
  Society, Providence, RI, 2010. \MR{MR2599526}

\bibitem[BS94]{BulSen}
Shaun Bullett and Pierrette Sentenac, \emph{Ordered orbits of the shift, square
  roots, and the devil's staircase}, Math. Proc. Cambridge Philos. Soc.
  \textbf{115} (1994), no.~3, 451--481. \MR{MR1269932 (95j:58043)}

\bibitem[CA00]{LibroCasas}
Eduardo Casas-Alvero, \emph{Singularities of plane curves}, London Mathematical
  Society Lecture Note Series, vol. 276, Cambridge University Press, Cambridge,
  2000. \MR{MR1782072 (2003b:14035)}

\bibitem[Cas86]{LibroCassels}
J.~W.~S. Cassels, \emph{Local fields}, London Mathematical Society Student
  Texts, vol.~3, Cambridge University Press, Cambridge, 1986. \MR{MR861410
  (87i:11172)}

\bibitem[DH85]{OrsayNotes}
A.~Douady and J.~H. Hubbard, \emph{\'{E}tude dynamique des polyn\^omes
  complexes. {I,II}}, Publications Math\'ematiques d'Orsay [Mathematical
  Publications of Orsay], vol.~85, Universit\'e de Paris-Sud, D\'epartement de
  Math\'ematiques, Orsay, 1985, With the collaboration of P. Lavaurs, Tan Lei
  and P. Sentenac. \MR{MR812271 (87f:58072b)}

\bibitem[Dou83]{DouadyBourbaki82}
Adrien Douady, \emph{Syst\`emes dynamiques holomorphes}, Bourbaki seminar,
  {V}ol. 1982/83, Ast\'erisque, vol. 105, Soc. Math. France, Paris, 1983,
  pp.~39--63. \MR{MR728980 (85h:58090)}

\bibitem[Duc07a]{BourbakiDucros}
Antoine Ducros, \emph{Espaces analytiques $p$-adiques au sens de berkovich},
  Ast\'erisque (2007), no.~311, Exp. No. 958, viii, 137--176, S\'eminaire
  Bourbaki. Vol. 2005/2006. \MR{MR2359043}

\bibitem[Duc07b]{GazetteDucros}
\bysame, \emph{G$p$-adique: La th\'eorie de berkovich}, Gaz. Math. (2007),
  no.~111, 12--27. \MR{MR2289676 (2008b:14035)}

\bibitem[{Fab}11]{FaberRamificationI}
X.~{Faber}, \emph{{Topology and Geometry of the Berkovich Ramification Locus
  for Rational Functions}}, ArXiv e-prints (2011).

\bibitem[FJ04]{ValuativeTree}
Charles Favre and Mattias Jonsson, \emph{Valuative tree, the}, Lecture Notes in
  Mathematics, vol. 1853, Springer-Verlag, Berlin, 2004. \MR{MR2097722
  (2006a:13008)}

\bibitem[Hsi00]{Hsia2000}
Liang-Chung Hsia, \emph{Closure of periodic points over a non-{A}rchimedean
  field}, J. London Math. Soc. (2) \textbf{62} (2000), no.~3, 685--700.
  \MR{MR1794277 (2001j:11117)}

\bibitem[Hub93]{Hubbard3Yoccoz}
J.~H. Hubbard, \emph{Local connectivity of {J}ulia sets and bifurcation loci:
  three theorems of {J}.-{C}. {Y}occoz}, Topological methods in modern
  mathematics ({S}tony {B}rook, {NY}, 1991), Publish or Perish, Houston, TX,
  1993, pp.~467--511. \MR{MR1215974 (94c:58172)}

\bibitem[Kiw01]{KiwiRatLam}
Jan Kiwi, \emph{Rational laminations of complex polynomials}, Laminations and
  Foliations in Dynamics, Geometry and Topology (Stony Brook, NY, 1998),
  Contemp. Math., vol. 269, Amer. Math. Soc., Providence, RI, 2001,
  pp.~111--154. \MR{MR1810538 (2002e:37063)}

\bibitem[Kiw06]{KiwiFourier}
\bysame, \emph{Puiseux series polynomial dynamics and iteration of complex
  cubic polynomials}, Ann. Inst. Fourier (Grenoble) \textbf{56} (2006), no.~5,
  1337--1404. \MR{MR2273859 (2007h:37066)}

\bibitem[McM94]{McMullenBook1}
Curtis~T. McMullen, \emph{Complex dynamics and renormalization}, Annals of
  Mathematics Studies, vol. 135, Princeton University Press, Princeton, NJ,
  1994. \MR{MR1312365 (96b:58097)}

\bibitem[Mil99]{MilnorComplexBook}
John Milnor, \emph{Dynamics in one complex variable}, Friedr. Vieweg \& Sohn,
  Braunschweig, 1999, Introductory lectures. \MR{MR1721240 (2002i:37057)}

\bibitem[Mil00a]{MilLC}
\bysame, \emph{Local connectivity of julia sets: Expository lectures},
  Mandelbrot Set, Theme and Variations, The, London Math. Soc. Lecture Note
  Ser., vol. 274, Cambridge Univ. Press, Cambridge, 2000, pp.~67--116.
  \MR{MR1765085 (2001b:37073)}

\bibitem[Mil00b]{MilnorPOP}
\bysame, \emph{Periodic orbits, externals rays and the mandelbrot set: an
  expository account}, Ast\'erisque (2000), no.~261, xiii, 277--333,
  G{\'e}om{\'e}trie complexe et syst{\`e}mes dynamiques (Orsay, 1995).
  \MR{MR1755445 (2002e:37067)}

\bibitem[Poi09]{PoirierCP}
Alfredo Poirier, \emph{Critical portraits for postcritically finite
  polynomials}, Fund. Math. (2009), no.~203, 107--163.

\bibitem[RL03a]{riveratesis03}
Juan Rivera-Letelier, \emph{Dynamique des fonctions rationnelles sur des corps
  locaux}, Geometric Methods in Dynamics. II, no. 287, 2003, Geometric methods
  in dynamics. II, pp.~xv, 147--230. \MR{MR2040006 (2005f:37100)}

\bibitem[RL03b]{rivera03}
\bysame, \emph{Espace hyperbolique $p$-adique et dynamique des fonctions
  rationnelles}, Compositio Math. \textbf{138} (2003), no.~2, 199--231.
  \MR{MR2018827 (2004k:37090)}

\bibitem[RL06a]{MonographRivera}
\bysame, \emph{Notes sur la droite projective de berkovich}, 2006.

\bibitem[RL06b]{Riv06Personal}
Juan Rivera~Letelier, \emph{Personal communication}, 2006.

\bibitem[Tan92]{LeiMating}
Lei Tan, \emph{Matings of quadratic polynomials}, Ergodic Theory Dynam. Systems
  \textbf{12} (1992), no.~3, 589--620. \MR{MR1182664 (93h:58129)}

\bibitem[Thu09]{thurstonlamination}
William~P. Thurston, \emph{On the geometry and dynamics of iterated rational
  maps}, Complex dynamics, A K Peters, Wellesley, MA, 2009, Edited by Dierk
  Schleicher and Nikita Selinger and with an appendix by Schleicher,
  pp.~3--137. \MR{MR2508255}

\bibitem[Tru09]{trucco09}
Eugenio Trucco, \emph{Wandering fatou components and algebraic julia sets},
  2009.

\end{thebibliography}

\end{document}